\documentclass[11pt,a4paper,reqno]{amsart}
\usepackage[applemac]{inputenc}
\usepackage[T1]{fontenc}
\usepackage{amsmath}
\usepackage{amsthm}
\usepackage{amsfonts}
\usepackage{amssymb}
\usepackage{graphicx}
\usepackage{amsbsy}
\usepackage{mathrsfs}
\usepackage{url,color}
\usepackage{bbm}
\addtolength{\hoffset}{-0.6cm} \addtolength{\textwidth}{1.2cm}
\addtolength{\voffset}{-0.5cm} \addtolength{\textheight}{1.0cm}
\hbadness=100000

\newtheorem{theorem}{Theorem}[section]
\newtheorem{lemma}[theorem]{Lemma}

\newtheorem{proposition}[theorem]{Proposition}

\theoremstyle{remark}
\newtheorem{remark}[theorem]{\it \bf{Remark}\/}

\numberwithin{equation}{section}
\catcode`@=11
\def\section{\@startsection{section}{1}%
  \z@{1.5\linespacing\@plus\linespacing}{.5\linespacing}%
  {\normalfont\bfseries\large\centering}}
\catcode`@=12
%
\newcommand{\be}{\begin{equation}}
\newcommand{\ee}{\end{equation}}
\newcommand{\bea}{\begin{eqnarray}}
\newcommand{\eea}{\end{eqnarray}}
\newcommand{\bee}{\begin{eqnarray*}}
\newcommand{\eee}{\end{eqnarray*}}

\def\pa{\partial}

\def\RR{\mathbb{R}}

\def\lab{\label}

\def\ba{\begin{array}}
\def\ea{\end{array}}

\def\non{\nonumber}
\def\Gammat{\widetilde{\Gamma}}

\def\Le{\mathcal L_{\text{ext}}}

\def\mut{\widetilde{\mu}}
\def\fref#1{{\rm (\ref{#1})}}

\catcode`@=11
\def\supess{\mathop{\operator@font Sup\,ess}}
\catcode`@=12
\def\Psit{\tilde{\Psi}}

\def\bt{\tilde{b}}

\def\RR{\mathbb{R}}

\def\e{\varepsilon}

\def\bar#1{{\overline #1}}
\def\fref#1{{\rm (\ref{#1})}}

\def\N{\mathcal N}

\def\R2+{\RR ^2_+}

\def\pa{\partial}

\def\lim{\mathop{\rm lim}}

\def\l{\lambda}

\def\log{{\rm log}}

\def\pa{\partial}

\def\Lamdba{\Lambda}

\def\pa{\partial}
\def\la{\langle}
\def\matchal{\mathcal}
\def\ra{\rangle}
\def\Mod{{\rm Mod}}
\def\NL{{\rm NL}}
\def\L{\mathcal L}

\begin{document}

\title[]{On strongly anisotropic type II blow up}
\author[C.Collot]{Charles Collot}
\address{Laboratoire J.A. Dieudonn\'e, Universit\'e de la C\^ote d'Azur, France}
\email{ccollot@unice.fr}
\author[F.Merle]{Frank Merle}
\address{LAGA, Universit\'e de Cergy Pontoise, France and IHES}
\email{merle@math.u-cergy.fr}
\author[P. Rapha\"el]{Pierre Rapha\"el}
\address{Laboratoire J.A. Dieudonn\'e, Universit\'e de la C\^ote d'Azur, France}
\email{praphael@unice.fr}

\begin{abstract} 
We consider the energy super critical $d+1$ dimensional semilinear heat equation $$\pa_tu=\Delta u+u^{p}, \ \ x\in \Bbb R^{d+1}, \ \ p\geq 3, \ d\geq 14.$$ A fundamental open problem on this canonical nonlinear model is to understand the possible blow up profiles appearing after renormalization of a singularity. We exhibit in this paper a new scenario corresponding to the first example of strongly anisotropic blow up bubble: the solution displays a completely different behaviour depending on the considered direction in space. A fundamental step of the analysis is to solve the {\it reconnection problem} in order to produce finite energy solutions which is the heart of the matter. The corresponding anistropic mechanism is expected to be of fundamental importance in other settings in particular in fluid mechanics. The proof relies on a new functional framework for the construction and stabilization of type II bubbles in the parabolic setting using energy estimates only, and allows us to exhibit new unexpected blow up speeds.
\end{abstract}

\maketitle


\section{Introduction}



\subsection{On blow up profiles}


We consider in this paper the nonlinear heat equation
\be
\label{heatnonlin}
\left\{\begin{array}{ll} \pa_tu=\Delta u+u|u|^{p-1},\\u_{|t=0}=u_0
\end{array}\right.  \ \ (t,x)\in\Bbb R^+\times \Bbb R^{n}, \ \ p>1, \ \ n\geq 1.
\ee 
Smooth well localized initial data $u_0$ yield unique local in time solutions \cite{BrCa,We} which dissipate the total energy of the flow $$\frac{d}{dt}E(u)\leq0, \ \ E(u)=\frac12\int_{\Bbb R^n}|\nabla u(t,x)|^2dx-\frac1{p+1}\int_{\Bbb R^n}|u(t,x)|^{p+1}dx.$$ We are interested in this paper on the singularity formation problem in the so called energy super critical range $$s_c=\frac n2-\frac 2{p-1}>1.$$ Since the pioneering works by Giga and Kohn \cite{Gi,Gi1,Gi2,Gi3, Gi4}, a fundamental problem is to understand which nonlinear structures emerge after renormalization of the flow for {\it finite energy} singularities. More precisely, let the self similar renormalization 
\be
\label{selfisimialrvariabels}
u(t,x)=\frac{1}{(T-t)^{\frac{1}{p-1}}}v(\tau,y), \ \ \tau=-\log (T-t), \ \ y=\frac{x}{\sqrt{T-t}}
\ee which maps \eqref{heatnonlin} onto the renomalized flow 
\be
\label{renormalizedflow}
\pa_\tau v=\Delta v-\frac12\Lambda v+v^p, \ \ \Lambda v=\frac{2}{p-1}v+y\cdot\nabla v,
\ee
then two classes of scenario have been understood so far.\\

\noindent{\em Type I blow up bubbles} correspond to global in time solutions to  \eqref{renormalizedflow} which to leading order are given by a smooth stationary self similar solution
\be\label{stationaryselfsim}
\Delta v-\frac 12\left(\frac{2}{p-1}v+y\cdot\nabla v\right)+v|v|^{p-1}=0.
\ee
This elliptic problem \eqref{stationaryselfsim} always admits the constant in space solution $
\kappa=\left(\frac{1}{p-1}\right)^{\frac 1{p-1}}$ which has been proved to generate a stable blow up dynamics \cite{Gi,Gi1,Gi2,Gi3, Gi4, BK, MZduke,MeZa2,MCRcritique} emerging from finite energy initial data for $s_c\le 1$. Other classes of radially symmetric profiles for $s_c>1$ with the boundary condition $\lim_{|y|\to +\infty} v(y)=0$ have been constructed using ODE techniques \cite{lepin,troy,buddone,buddselfsim} or a direct bifurcation argument \cite{bizon, CRS}, and they have been shown to be nonlinearily finite codimensionally stable within a suitable class of finite energy initial data, \cite{CRS}. All these solutions have the self similar blow up speed $$\|u(t)\|_{L^\infty}\sim \frac{1}{(T-t)^{\frac{1}{p-1}}}.$$

\noindent{\em Type  II blow-up bubbles}.  Let the Joseph-Lundgren exponent \be
\label{exponentpjl}
p_{JL}(n)=\left\{\begin{array}{ll} +\infty\ \ \mbox{for}\ \ n\leq 10,\\
1+\frac{4}{d-4-2\sqrt{d-1}}\ \ \mbox{for}\ \ n\geq 11,
\end{array}\right.
\ee
then for $p>p_{JL}$, new type II blow-up solutions $$\text{lim}_{t\to T} \ \|u(t)\|_{L^{\infty}}(T-t)^{\frac{1}{p-1}}=+\infty$$ appear in the radial setting as threshold dynamics at the boundary of the ODE blow up set, \cite{MaMe3}, and dynamical proofs were proposed in \cite{HV,Mizo}. Their construction has been revisited in the setting of dispersive Schr\"odinger and wave equations,  \cite{MRR,Co}, geometric models \cite{RaphRod, MRRinvent,RSc}, and for the non radial heat equation, \cite{Co2}. The structure of the singularity is deeply related to the {\it singular} self similar solution \be
\label{defphistar}\Phi^*(r)=\frac{c_\infty}{r^{\frac{2}{p-1}}}, \ \ c_\infty(p,n)=\left[\frac{2}{p-1}\left(n-2-\frac{2}{p-1}\right)\right]^{\frac{2}{p-1}}
\ee 
and its smooth regularization at the origin given by the soliton profile 
\be
\label{eqsoliton}
\left\{\begin{array}{ll}Q''+\frac{d-1}{r}Q'+Q^p=0,\\Q(0)=1,\ \ Q'(0)=0.
\end{array}\right.
\ee The blow up speed is given by a universal countable sequence $$ \|u(t)\|_{L^\infty}\sim \frac{1}{ (T-t)^{\frac 2{p-1}\frac{\ell}{\alpha(d)}}}, \ \ \ell\in \Bbb N, \ \ \ell>\frac \alpha 2$$ where $\alpha=\alpha(d,p)>2$  given by \eqref{defalpha} is one of the key phenomenological number of the super critical numerology, and these rates are known to be the only possibility for radially symmetric data, \cite{Mizo}.


\subsection{Anisotropic blow up bubbles}


Despite substantial efforts, the above known nonlinear structures all correspond to isotropic bubbles with a simple geometry of the blow up sets. In various classical nonlinear models, {\it anisotropy} and the possiblity of completly different behaviours in the various directions in space is expected to be an essential feature of the problem. This is typically the case for the anisotropic nonlinear Schr\"odinger equation $$i\pa_tu+\pa_{xx}u-\pa_{yy}u+u|u|^{p-1}=0, \ \ (x,y)\in \Bbb R^2$$ which arises from classical fluid mechanics models \cite{SS}, and for which the blow up problem is completely open. More generally, the question here is to understand how dramatically unbounded solutions to the {\it time dependent} renormalized flow \eqref{renormalizedflow} may correspond to finite energy singularities. This problem is the {\it reconnection problem} and is the heart of the forthcoming analysis. Let us stress that this is mostly an unexplored domain, and that even formal predictions are to our knowledge unclear.\\

We solve in this paper the anisotropic reconnection problem in a canonical situation. Consider a space dimension $$n=d+1,$$ then any radially symmetric solution $U(t,r)$, $r\in \mathbb R^d$, to the $d$ dimensional nonlinear heat equation provides a solution to the $d+1$ model with cylindrical symmetry, $x=(r,z)\in \mathbb R^d\times \mathbb R$, by considering its natural lift $u(t,r,z)=U(t,r)$. This solution corresponds to a {\em line} singularity along the additional $e_z$ axis, but has infinite energy for the $\Bbb R^{d+1}$ model. We claim that given $U(t,r)$ a type II blow up bubble for the $d$-dimensional radially symmetric problem, we may {\em solve the reconnection problem} for the additional $z$ direction and produce {\em finite energy} $d+1$ dimensional initial data which after renormalization are very elongated along the $e_z$ direction, and eventually reconnect $U(t,r)$ to a decreasing profile in the $z$ direction. The associated reconnection profile is {\it universal and rigid}. This eventually produces a  point and not a line singularity but with an elongated  pancake like profile in self similar variables and new blow up rates.

\begin{theorem}[Existence and finite codimensional stability of anistropic blow up bubbles]
\label{thmmain}
Let $\alpha=\alpha(d,p)$, $\Delta=\Delta(d,p)$ be the super critical numbers given by \eqref{defgamma}, \eqref{defalpha},  and assume: 
\be
\label{confitiondiemsnion}
d\geq 11,\ \  p\geq \max\{3,p_{JL}(d)\}, \ \  \sqrt{\Delta}>2.
\ee 
Pick $$\ell\in\Bbb N^*\ \ \mbox{with}\ \ \ell>\frac \alpha 2.$$ Then there exists a finite codimensional set of initial data $u_0\in \mathcal C^{\infty}_c(\Bbb R^{d+1},\Bbb R)$ with cylindrical symmetry such that the corresponding solution $u(t,x)$ to \eqref{heatnonlin} with $n=d+1$ blows up in finite time $0<T<+\infty$ with the following asymptotics. The solution admits on $[0,T)$ in self similar variables \eqref{selfisimialrvariabels} a decomposition $$ v(t,r,z)=\frac{1}{D(t,z)^{\frac{2}{p-1}}}Q\left(\frac{r}{D(t,z)}\right)+V(t,r,z)$$ where $Q$  denotes the $d$-dimensional smooth radially symmetric soliton profile \eqref{eqsoliton}, and with the following sharp description:\\
\noindent{\em 1. Computation of the reconnection}: there holds 
\be
\label{functionreconnction}
D(t,z)=\sqrt{b(t)}(1+a(t)P_{2\ell}(z))^{\frac 1\alpha}
\ee where $P_{2\ell}(z)$ is the $2\ell$-th one dimensional Legendre polynomial given by \eqref{hermitepolynomial}, and $(a,b)\in \mathcal C^1([0,T),\Bbb R^*_+)$ with the sharp asymptotics near blow up time:
\bea
\label{loideb}
&&b(t)=b^*(1+o_{t\to T}(1))(T-t)^{\frac{2\ell-\alpha}{\alpha}}, \ \ 0<b^*(u_0),\\
\label{loidea}
&&a(t)=a^*(1+o_{t\to T}(1)), \ \ 0<a^*(u_0)\ll1.
\eea
\noindent{\em 2. Soliton profile and blow up speed}: 
\be
\label{linftydecay}
\lim_{t\to T}(\sqrt{b})^{\frac{2}{p-1}}\|V(t,\cdot)\|_{L^{\infty}}=0
\ee
and 
\be
\label{cneoneon}
\|u(t,\cdot)\|_{L^{\infty}}=\frac{c(u_0)(1+o(1))}{(T-t)^{\frac 2{p-1}\frac{\ell}{\alpha}}}, \ \ c(u_0)>0.
\ee
\end{theorem}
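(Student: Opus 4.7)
The plan is to construct a modulated approximate solution whose parameters follow the prescribed laws \eqref{loideb}-\eqref{loidea}, control the remainder via a high-order weighted energy method, and close finite codimensional stability by a topological shooting argument. I describe the four main steps.

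\textbf{Step 1: Anisotropic profile and reconnection.} The backbone is the $z$-modulated soliton
\[
P_{b,a}(r,z) = \frac{1}{D(t,z)^{2/(p-1)}}\, Q\!\left(\frac{r}{D(t,z)}\right), \qquad D(t,z) = \sqrt{b(t)}\bigl(1+a(t)P_{2\ell}(z)\bigr)^{1/\alpha}.
\]
Injecting this into \eqref{renormalizedflow}, the radial part of the error is the one already treated for isotropic radial type II blow up in \cite{Co2}. The genuinely new terms come from $\partial_{zz}$ falling on $P_{b,a}$ through $D(t,z)$, and produce at leading order in $a$ a source proportional to $(\Lambda Q)(r/D)\,P_{2\ell}(z)$; the exponent $1/\alpha$ in $D$ is tuned so that this source resonates with the scaling direction of $Q$ and generates the exponent $(2\ell-\alpha)/\alpha$ in \eqref{loideb}, while $P_{2\ell}(z)$ is selected as the lowest cylindrically symmetric Hermite-type eigenfunction in $z$ compatible with the finite-energy constraint. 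At large $|z|$, the factor $(1+aP_{2\ell})^{1/\alpha}$ grows, $Q(r/D)$ develops its $\Phi^*$ tail in $r/D$, and an outer matching to the singular self similar profile followed by a smooth compactly supported cut-off solves the reconnection problem and produces a datum in $\mathcal{C}^\infty_c(\Bbb R^{d+1})$.

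\textbf{Step 2: Linearization, modulation, spectral structure.} Decompose $v=P_{b,a}+V$ with orthogonality conditions against finitely many directions adapted to the unstable/neutral modes; these conditions uniquely define $C^1$ parameters $(a(t),b(t))$. The linearization takes the form $\partial_\tau V + \mathcal{L}_{b,a}V = \mathrm{Mod}(t) + \mathrm{NL}(V) + \mathrm{Err}(b,a)$, where $\mathcal{L}_{b,a}$ is, modulo conjugation, the radial super critical linearized operator $\mathcal{L}_r = -\Delta_r + \tfrac12 \Lambda - p Q^{p-1}$ acting on $r$-slices, plus the Ornstein-Uhlenbeck operator $\mathcal{L}_z = -\partial_{zz} + \tfrac{z}{2}\partial_z$ in $z$, whose spectrum is the discrete Hermite spectrum. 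The operator $\mathcal{L}_r$ has a finite number of unstable eigenvalues (related to $\ell$) by the spectral analysis of \cite{CRS,Co2}, and its essential spectrum decays at infinity with rate governed by $\alpha$ and the discriminant $\Delta$; the condition $\sqrt{\Delta}>2$ is precisely what ensures integrable decay of the corresponding Sobolev weights.

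\textbf{Step 3: High-order energy estimates and bootstrap.} Following the pure energy strategy announced in the abstract, I build a hierarchy of weighted Sobolev energies $\mathcal{E}_k(V) = \int |\nabla^k V|^2 \rho_k(r,z)\, dr\, dz$ whose weights $\rho_k$ couple the soliton-scale weight in $r$ with the Gaussian weight $e^{-z^2/4}$ natural to $\mathcal{L}_z$, chosen so that $\mathcal{L}_{b,a}$ is coercive on the orthogonality subspace. Differentiating and integrating by parts yields a monotonicity formula of the schematic form
\[
\frac{d}{d\tau}\mathcal{E}_k(V) \leq -c\,\mathcal{E}_k(V) + C\bigl(b^{1+\delta} + \|V\|_{\mathrm{bootstrap}}^{1+\sigma}\bigr),
\]
which, combined with the modulation identities $\dot b/b \sim -c_*\,b^{(2\ell-\alpha)/\alpha}$ and $\dot a\sim 0$ obtained by projecting the equation for $V$ on the kernel directions, closes a bootstrap in self-similar time. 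Integrating in $t$ gives \eqref{loideb}-\eqref{loidea}, and back-substitution yields \eqref{linftydecay} and the blow up rate \eqref{cneoneon}.

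\textbf{Step 4: Topological argument and main obstacle.} To produce actual initial data realising the scenario, I parameterise a finite-dimensional family of data by their projections on the unstable modes of $\mathcal{L}_r$ and the slow Hermite modes in $z$; if the trajectory exits the bootstrap tube on $[0,T)$ it must do so along one of these directions, and a standard Brouwer shooting/outgoing-flux argument produces at least one datum that never exits, yielding the finite codimensional set. The hard part is clearly Steps 1--2: constructing a $z$-dependent profile whose reconnection is finite energy and whose residual is resonant only with the desired mode, and identifying a functional framework in which the mixed $(r,z)$ linearized operator remains coercive uniformly in $z$ and in $b\to 0$. Once this anisotropic spectral and coercivity framework is in place, Steps 3--4 follow the now established machinery of type II blow up \cite{MRR,RaphRod,Co2}.
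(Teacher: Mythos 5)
Your outline reproduces the correct broad architecture of the argument (modulated anisotropic profile with $D=\sqrt b\,(1+aP_{2\ell})^{1/\alpha}$, tensor product spectral structure in $(r,z)$, weighted energy bootstrap, Brouwer/outgoing-flux topological argument), and you correctly identify the Hermite/Gaussian structure in $z$. However, there are two genuine gaps that are precisely the points the paper flags as the heart of the matter, and a third place where your description diverges from what is actually done.

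\textbf{Gap 1: the all-order cancellation in $a$.} You say that the $1/\alpha$ exponent in $D$ is ``tuned'' so that the error from $\partial_{zz}$ resonates with the scaling mode, and then move on. But $a(t)$ is not small of the size of $b$ --- it is $O(1)$ (see \eqref{loidea}); generically a $z$-modulated profile $Q(r/D)$ would produce errors at all orders in $a$ with no reason for smallness. The reason the construction works is the exact algebraic identity of Lemma \ref{lemmacancellationa}: because $P_{2\ell}$ solves the Hermite equation \eqref{equationhermite} and the reconnection profile is a $1/\alpha$ power of an affine function of $P_{2\ell}$, the combination $\ell\tilde\mu-\tfrac12 z\pa_z\tilde\mu+\pa_{zz}\tilde\mu$ collapses to the constant $\ell$, and the full residual \eqref{formulapsi}--\eqref{psitildeone} contains $a$ only through $\tilde\Psi_1$, which vanishes to leading order because $\Lambda^2 Q+\alpha\Lambda Q = O(r^{-\gamma-g})$ by \eqref{id LambdaQ}. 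Without this all-order cancellation, the modulation equation for $a$ would not give $\dot a\approx0$ and the profile would not close; this is not a tuning of exponents but a rigid algebraic fact, and it is the reason the reconnection shape is $(1+aP_{2\ell})^{1/\alpha}$ and nothing else. Your ``outer matching to the singular self similar profile followed by a smooth cut-off'' is not what solves the reconnection: the profile $\Phi_{a,b}$ is already the reconnection, and the matching you are thinking of is part of the \emph{linear} spectral theorem (Sections 2.2--2.4), not the nonlinear profile construction.

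\textbf{Gap 2: the $L^\infty$ control.} Your Step 3 proposes a single hierarchy of weighted $H^k$ energies with Gaussian weight and a schematic monotonicity inequality. This is exactly where the paper says the exponentially weighted norms ``do not provide enough information to control the full nonlinear flow.'' The Gaussian weight $e^{-z^2/4}$ kills all information where $|z|$ is large but $D(t,z)$ is still moderate, which is precisely the reconnection region; meanwhile the $L^2_{\rho_Y}$ control cannot close the nonlinear term near the soliton core where $\Phi_{a,b}\sim r^{-2/(p-1)}$ is singular. The paper's mechanism is qualitatively different from a simple $H^k$ ladder: it splits $V = v+\zeta$, conjugates near the origin by the profile tangent $T=\Lambda\Phi_{a,b}$ (setting $w=v/T$), and runs $W^{1,2q+2}$ energy estimates with weights $1/(\langle z\rangle(1+D^{2Kq}))$ calibrated so that the Hardy constant at the origin beats the potential $-p\Phi_{a,b}^{p-1}$ for $q$ large (this is where $\sqrt\Delta>2$ and $p>p_{JL}$ enter quantitatively). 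The resulting norm $\mathcal N$ controls $L^\infty$ via the anisotropic Sobolev embedding \eqref{estitmiatesobolevpoids}. None of this appears in your Step 3, and a generic weighted $H^k$ energy on the full $V$ would not give you the pointwise bounds \eqref{controllinfty} needed to close the nonlinear term.

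\textbf{A secondary point.} Your Step 2 treats the spectral theory as that of $\mathcal L_r$ plus the Ornstein--Uhlenbeck operator, with ``essential spectrum decay governed by $\alpha$ and $\Delta$.'' The operator $\mathcal L_b$ actually has compact resolvent on $L^2(\rho_Y)$, so there is no essential spectrum. More to the point, the paper's route is to construct the \emph{exact time-dependent eigenbasis} $\phi_{i,b}$ of $H_b$ by a perturbative bifurcation from the explicit eigenbasis of $H_\infty$ (Proposition \ref{spectraltheorem}), using the $(T_i)$ family \eqref{id Ti} for the interior and the Kummer/Laguerre solutions for the exterior, then matching. This is what makes the scalar products \eqref{nondegenone}--\eqref{nondegenonebis} clean and trivializes the computation of the blow up speed. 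Your proposal would work with a frozen spectral basis for $\mathcal L_r$ and separately project on Hermite modes; this is a legitimate alternative in spirit, but it does not directly give the uniform-in-$b$ spectral gap \eqref{spectralgaptotal} with the explicit eigenvalue $i+k-\alpha/2+\tilde\lambda_{i,b}$ that the modulation analysis of Lemma \ref{lemmaeqmodulation} uses. You would need to supply a replacement for that step.

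In summary, your proposal is a sound high-level roadmap and Steps 2 (tensor structure) and 4 (Brouwer) are essentially what the paper does, but the two places you flag as ``the hard part'' --- the resonance structure of the reconnection and the $L^\infty$ coercivity --- are exactly where concrete ideas are missing, and those ideas (the all-order cancellation of Lemma \ref{lemmacancellationa} and the $w=v/T$ conjugated $W^{1,2q+2}$ machinery of Section \ref{linftybound}) are the paper's main contributions.
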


\noindent{\em Comments on the result}.\\

\noindent{\em 1. New $d+1$-dimensional blow up rates.} The blow up speed \eqref{cneoneon} is new and unexpected, and shows that all the type II blow up rates of the $d$-dimensional problem are admissible blow up speeds for  the $d+1$ dimensional problem. This also shows that the classification of all type II blow up speeds for {\em radially symmetric data} obtained in \cite{Mizo2} using maximum principle like arguments {\em no longer holds} for non radially symmetric data. The method clearly designs an iteration process for this dimensional reduction procedure. The fact that the solutions described by Theorem \ref{thmmain} correspond to a smooth finite codimensional manifold of initial data can be addressed similarly as in \cite{Co}. The extension of this result to the energy critical case and the construction of new non radial type II blow up bubbles below the Joseph Lundgren exponent is work in progress.\\

\noindent{\em 2. Structure of the reconnection profile.} In the companion paper \cite{MRS1}, we address a similar result in the context of type I blow up with decreasing at infinity self similar profile. The analysis of type I blow up is simpler, and the reconnection profile is universal 
\be
\label{reconnectionautosim}
D(t,z)\sim \sqrt{1+b(t)z^2}, \ \ b(t)\sim \sqrt{\log(T-t)}
\ee which is reminiscent to the stability of the ODE type I blow up \cite{BK,MZduke}. The structure of the reconnection profile \eqref{functionreconnction} of type II blow up is more complicated and the associated moving free boundary $r=D(t,z)$ locating the region where the singularity is large has a non trivial geometrical description. This shapes follows from an {\em all order} algebraic cancellation, Lemma \ref{lemmacancellationa}, related to the fact that the parameter $a$ is nearly constant in time according to \eqref{loidea}, and this was very much unexpected.\\

\noindent{\em 3. Assumption \eqref{confitiondiemsnion}}. The optimal range of exponents for the existence of radially symmetric type II blow up in $\Bbb R^d$ is $p>p_{JL}(d)$ which is equivalent to $\Delta>0$. However $p_{JL}(d)\to 1$ as $d\to +\infty$ and this causes lack of differentiability of the nonlinearity. From direct check, the assumption \eqref{confitiondiemsnion} is automatically satisfied for $p\geq 3$, $d\geq 14$ and it will avoid additional technicalities when $\Delta$ is small.\\

The heart of the proof of Theorem \ref{thmmain} is the computation of the free boundary \eqref{functionreconnction} which is the zone where the soliton $Q$ profile dominates, and the development of a suitable functional setting to control the flow for solutions which after renormalization are nearly constant in $z$ for $|z|\leq z^*(t)$, $z^*(t)\to +\infty$ as $t\to T$. A similar issue occurs for the study of the type I ODE blow up where the blow up profile is given by the constant profile $\kappa$. The analysis developed in \cite{BK,MZduke} amounts to first the formal derivation of the reconnection profile, and then the control of suitable $L^{\infty}$ bounds for the perturbation. This last step is essential and sees the reconnection procedure, and is performed in \cite{BK,MZduke} using propagator estimates for the linearized flow close to $\kappa$ {\it which is explicit}, and in \cite{MZduke} Liouville type classification theorems to rule out some possible growth scenario.\\ 

This approach seems hardly applicable in the setting of type II blow up bubbles, and the analysis of the proof of Theorem \ref{thmmain} requires two main new inputs.\\

\noindent{1. New approach to type II blow up}. First we propose a {\it new functional framework} for the study of type II blow up bubbles which relies on a Lyapounov type bifurcation argument to study the flow near the soliton $Q$ seen as a {\em non compact} perturbation of the singular profile $\Phi^*$. Here we clearly adopt a point view which is already central in the pioneering breakthrough work \cite{HV}, but the main novelty is that the bifurcation argument allows us to construct the {\em exact time dependent} spectral basis needed for the analysis which avoids complicated matching procedures, and trivializes the computation of modulation equations and the derivation of the type II blow up speeds. Such an approach was implemented for the first time in \cite{HR} for the study of the Stefan melting problem which formally corresponds to a zero soliton case. The computation of the eigenvectors, Proposition \ref{spectraltheorem}, involves a universal sequence of functions, see \eqref{id Ti}, which is already central in the {\it tail computation} developped in \cite{MRR} and makes the link between the approaches developped in \cite{HV} and \cite{MRR}.\\ 

\noindent{2. $L^{\infty}$ bounds}. Once the flow is controlled in suitable weighted norms, it remains to close the nonlinear term using $L^{\infty}$ bounds. For the $d$-dimensional parabolic problem, the difficulty is at the origin $r=0$ and this can be done in various ways using for example an elementary maximum principle like argument \cite{BSeki}, or a direct brute force energy method. Treating the full cylindrical problem is more complicated. We first need to compute the free boundary $D(t,z)$, and here we rely on the spectacular algebra \eqref{formulapsi}. Once the reconnection is computed, we derive $L^{\infty}$ bounds from $W^{1,q}$ energy estimates for the linearized flow which are particularly efficient both at the origin and infinity in space. Here we use the parabolic structure again and the repulsive nature of the linearized operator close to $Q$  {\em when measured in suitable weighted norms}, section \ref{linftybound}.\\

Hence the proof designs a new route map for the  study type II blow up bubbles which uses in an optimal way the parabolic structure of the problem with respect to the pioneering works \cite{RSc,Co2}. The parabolic structure allows us to work with self adjoint operators and energy estimates which considerably simplify the analysis, but the essence of the argument which relies on energy estimates only may in principle be propagated to more complicated dispersive Schr\"odinger or wave like problems.

\subsection*{Acknowledgements}  C.C. and P.R. are supported by the ERC-2014-CoG 646650 SingWave. Part of this work was done while all authors were attending the IHES program 2016 on nonlinear waves, and they wish to thank IHES for its stimulating hospitality.

\subsection*{Notations} We let $$Y=(y,z)\in \Bbb R^{d+1}, \ \ r=|y|=\left(\sum_1^d y_i^2\right)^{\frac 12}$$ and define the japanese bracket: $$\la z\ra =\sqrt{1+z^2}.$$ We say that $u(Y)$ has even cylindrical symmetry if $$u(Y)=u(r,z)=u(r,-z).$$ We note the generator of scaling $$\Lambda=\Lambda_r+z\pa_z, \ \ \Lambda_r=\frac{2}{p-1}+r\pa_r.$$ We let the 
roots of the quadratic equation
\be
\label{defequgamma}
\gamma(d-2-\gamma)=pc_\infty^{p-1}.
\ee 
be
\be
\label{defgamma}
\gamma=\frac{d-2-\sqrt{\Delta}}{2}, \ \ \Delta=\left(\frac{d-2}{2}\right)^2-pc_\infty^{p-1}>0 \ \ \mbox{for}\ \ p>p_{JL}(d)
\ee
and 
\be
\label{defgammtwtp}
\gamma_2=d-2-\gamma>\gamma.
\ee
We let for an arbitrary constant $\epsilon>0$
\be
\label{defalpha}
\alpha=\gamma-\frac{2}{p-1}>2, \ \ g=\min\{\alpha,\sqrt{\Delta},2\}-\epsilon=2-\epsilon
\ee
because of the assumption \fref{confitiondiemsnion}, meaning that $g$ is arbitrarily close to $2$. We let $Q(r)$ be the unique radially symmetric solution to $$\left|\begin{array}{ll}Q''+\frac{d-1}{r}Q'+Q^p=0\\ Q(0)=1,\ \ Q'(0)=0
\end{array}\right.$$ and recall the following properties \cite{Li,KaSt}. $p>p_{JL}$ implies
\be
\label{positivitylamdbaq}
0<Q(r)<\Phi^*(r)=\frac{c_\infty}{r^{\frac{2}{p-1}}}, \ \ \Lambda Q(r)>0
\ee
where $\Phi^*,c_\infty$ are given by \eqref{defphistar}. At infinity, 
\be
\label{asyptotq}
Q(r)=\Phi^*(r)-\frac c{r^\gamma}+O\left(\frac 1{r^{\gamma+\delta }}\right), \ \ \text{as} \ r\rightarrow +\infty, \ c>0, \ \delta \geq g, 
\ee
which propagates for derivatives, where $0<\delta<\text{min}(\sqrt{\Delta},\alpha)$ is a constant arbitrarily close to $\text{min}(\sqrt{\Delta},\alpha)$, and there holds in particular the fundamental cancellation: 
\be
\label{id LambdaQ}
\Lambda Q(r)=\frac{c}{r^{\gamma}}+O\left(\frac{1}{r^{\gamma+\delta}}\right)=\frac{c}{r^{\gamma}}+O\left(\frac{1}{r^{\gamma+g}}\right) \ \ \text{as} \ r\rightarrow +\infty, \ c\neq 0.
\ee
Given $b>0$, we define 
\be
\label{defqbÙ}
Q_b(r)=\frac{1}{(\sqrt{b})^{\frac2{p-1}}}Q\left(\frac{r}{\sqrt{b}}\right),
\ee the linearized operators 
\be
\label{linearizedoperators}
\matchal L_b=-\Delta_Y+\frac 12\Lambda -pQ_b^{p-1}, \ \ H_b=-\Delta_r+\frac 12\Lambda_r -pQ_b^{p-1}
\ee
and the weights 
\be
\rho_Y(Y)=\frac{1}{2^d\pi^{d-\frac{1}{2}}}e^{-\frac{|Y|^2}{4}}, \ \ \rho_r(r)=e^{-\frac{r^2}{4}}, \ \ \rho_z=\frac{1}{2\sqrt{\pi}}e^{-\frac{z^2}{4}}
\ee 
so that for $f:\mathbb R^{d+1}\rightarrow \mathbb R$ with cylindrical symmetry:
$$
\int_{Y\in \mathbb R^{d+1}}f(Y)\rho_YdY=\int_{r=0}^{+\infty}\int_{z\in \mathbb R} f(r,z)r^{d-1}\rho_r\rho_zdrdz.
$$
We denote by $L^2(\rho_Y)$, $L^2(\rho_r)$ the associated spaces of square integrable functions with scalar products $\la \cdot \ra_{L^2_{\rho_Y}}$ and $\la \cdot \ra_{L^2_{\rho_r}}$. We define 
\be
\label{defH}
H=-\Delta_r-pQ^{p-1}
\ee
the linearized operator close to Q. We let 
\be
\label{hermitepolynomial}
P_m(z)=c_m\sum_{k=0}^{[\frac m2]}\frac{m!}{k!(m-2k)!}(-1)^kz^{m-2k}
\ee
 be the $m$-th one dimensional Hermite polynomial which solves 
 \be
 \label{equationhermite}-\pa_z^2 P_m+\frac 12z\pa_z P_m=\frac m2 P_m, \ \ m\in\Bbb N
 \ee
 and the normalization (implying $P_0=1$)
 \be
 \label{orthohonermite}
 (P_m,P_{m'})_{L^2_{\rho_z}}=\delta_{mm'}.
 \ee
All along the paper, we fix once and for all an integer $$\ell>\frac{\alpha}{2}$$
and the associated set of indices
\be \lab{def mathcal I}
\mathcal I :=\left\{ (j,k)\in \mathbb N^2, \ \ 0\leq j\leq \ell, \ \  \left|\begin{array}{ll} 1\leq k\leq \ell-1\ \ \mbox{for}\ \ j=0\\ 0\leq k\leq \ell-j\ \ \mbox{for}\ \ 1\leq j\le\ell \end{array}\right. \right\}
\ee
 A quantity $b^\delta$ denotes a small gain with a constant $\delta=\delta(\ell)>0$ universal and small enough, independent of any of the constants that will appear in a central bootstrap argument. In the whole paper, we use the notation $$\eta(a)=o_{a\to 0}(1).$$  For $x\in \mathbb R$ and $k\in \mathbb N$ we use the notation 
 \be
 \label{ncenceononce}
(x)_0=1, \ \ (x)_k=x\times (x+1)\times ... \times (x+k-1) \ \ \text{for} \ k\geq 1.
\ee


\section{A perturbative spectral theorem}


We start the analysis of the renormalized flow \eqref{renormalizedflow} by the diagonalization of the linearized operator closed to a concentrated soliton given by \eqref{linearizedoperators}. This approach, initiated in \cite{HR} for the Stefan problem, is the conceptual link between the pioneering approach Herrero and Velazquez \cite{HR} and the soliton approach of Merle, Rapha\"el, Rodnianski \cite{MRR}. This will produce an elementary functional framework to compute the blow up speed, and will allow us to control the full flow in the anisotropic geometry.


\subsection{Reduction of the problem}


We first recall that the spectrum of the linearized operator $H_\infty$ close to $\Phi^*$ is explicit.

\begin{proposition}[Diagonalization of $H_\infty$, \cite{HV}]
\label{propinfty}
Assume $p>p_{JL}$. There exists a domain $\mathcal D\subset H^1_{\rho_r}$ with $H^2_{\rho_r}\subset \mathcal D$ such that $H_\infty:\mathcal D \rightarrow L^2(\rho_r)$ given by $$H_\infty=-\Delta+\frac 12\Lambda -\frac{pc_\infty^{p-1}}{r^2}$$ is self adjoint with compact resolvant. The spectrum in the radial sector is given by $$\lambda_{i,\infty}=i-\frac\alpha 2, \ \ i\in \Bbb N$$ with eigenfunctions 
$$
\phi_{i,\infty}(r)=\frac{n!}{\left(\frac d2 -\gamma\right)_n} \frac{L_i^{(\frac d2-\gamma-1)}\left(\frac{r^2}{2}\right)}{r^{\gamma}}=\sum_{j=0}^i c_{i,j}C_jr^{2j-\gamma}
$$
where $L$ is the modified Laguerre polynomial, and $c_{i,j}$ and $C_j$ are defined by \fref{id cij} and \fref{eq:relation Cni}. Moreover, there holds the spectral gap estimate: $\forall u\in H^1_{\rho_r}$ with radial symmetry, \be
\label{asyptoticgap}
\la u,\phi_{i,\infty} \ra_{L^2_{\rho_r}}=0, \ \ 0\leq i\leq \ell \ \Rightarrow \ \ \la H_\infty u,u \ra_{L^2_{\rho_r}}\geq \l_{\ell+1,\infty}\| u\|_{L^2_{\rho_r}}^2.
\ee.
\end{proposition}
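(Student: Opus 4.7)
\emph{Proof plan.} The strategy combines a Friedrichs extension (to make $H_\infty$ self-adjoint on the right domain) with an explicit gauge transformation that reduces the eigenvalue equation to the generalised Laguerre equation.

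I would first write the Ornstein--Uhlenbeck part in divergence form against the Gaussian weight, $-\Delta u+\tfrac{r}{2}\pa_r u=-\rho_r^{-1}\na\cdot(\rho_r\na u)$, so that the natural quadratic form attached to $H_\infty$ is
$$
q(u,u)=\int|\na u|^2\rho_r\,dx+\frac{1}{p-1}\int u^2\rho_r\,dx-pc_\infty^{p-1}\int\frac{u^2}{r^2}\rho_r\,dx.
$$
The Gaussian-weighted Hardy inequality gives $\int r^{-2}u^2\rho_r\leq\tfrac{4}{(d-2)^2}\int|\na u|^2\rho_r$ (with the sharp Euclidean constant, since the singularity is only at the origin where $\rho_r$ is smooth), and the hypothesis $p>p_{JL}$ is \emph{exactly} the strict inequality $pc_\infty^{p-1}=\gamma(d-2-\gamma)<(d-2)^2/4$, which follows from $\Delta>0$ via $\gamma+\gamma_2=d-2$ and AM--GM. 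Hence $q$ is closed and coercive on the radial sector of $H^1_{\rho_r}$, and its Friedrichs extension yields the desired self-adjoint operator with domain $\mathcal D\supset H^2_{\rho_r}$. The compact embedding $H^1_{\rho_r}\hookrightarrow L^2_{\rho_r}$ (Rellich on bounded regions plus Gaussian tightness outside) then gives a compact resolvent, so the spectrum is discrete and bounded below.

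To compute the spectrum explicitly I would pass to the gauge $\phi=r^{-\gamma}\psi(s)$ with $s=r^2/2$. A direct calculation, in which the singular $r^{-\gamma-2}\psi$ term from the Laplacian cancels against the potential precisely thanks to the defining relation $\gamma(d-2-\gamma)=pc_\infty^{p-1}$, yields
$$
r^{\gamma}H_\infty(r^{-\gamma}\psi)=-2s\psi''(s)+\bigl(s-(d-2\gamma)\bigr)\psi'(s)-\tfrac{\alpha}{2}\psi(s).
$$
After a linear rescaling of $s$ this is the generalised Laguerre operator of parameter $\mu=d/2-\gamma-1$, with polynomial eigenfunctions $L_i^{(\mu)}$ and integer eigenvalues $i\in\NN$. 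Translating back delivers $\lambda_{i,\infty}=i-\alpha/2$ and the stated expression for $\phi_{i,\infty}$; expanding the Laguerre polynomial in powers of its argument identifies the coefficients $c_{i,j},C_j$. Completeness of $\{\phi_{i,\infty}\}$ in the radial sector follows from density of polynomials in Gaussian $L^2$ applied after undoing the gauge, legitimate since $\gamma<d/2$ (again a consequence of $\Delta>0$). The spectral gap \eqref{asyptoticgap} is then the immediate Rayleigh-quotient estimate on the orthogonal complement of the first $\ell+1$ eigenfunctions in this Hilbert basis, using monotonicity of $\lambda_{i,\infty}$ in $i$.

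The main technical obstacle is the self-adjointness step. The potential $-pc_\infty^{p-1}/r^2$ is critically singular at the origin, and the two local asymptotics $r^{-\gamma}$ and $r^{-\gamma_2}$ are a priori both admissible. One must verify that the Friedrichs domain $\mathcal D$ selects the less singular branch $r^{-\gamma}$, so that the proposed eigenfunctions genuinely belong to it, while still containing $H^2_{\rho_r}$. Both facts reduce to the strict sub-criticality $\gamma(d-2-\gamma)<(d-2)^2/4$, equivalently $\gamma<\gamma_2$ and $\Delta>0$, which is precisely why the Joseph--Lundgren assumption $p>p_{JL}$ is hard-wired into the statement.
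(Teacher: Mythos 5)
The paper does not actually prove Proposition~\ref{propinfty}: it is recalled verbatim from Herrero--Velazquez, with no argument supplied. Your write-up is therefore a reconstruction rather than a rediscovery of the paper's proof, and on those terms it is sound. The Friedrichs-extension step, the observation that $p>p_{JL}$ is equivalent to $\gamma(d-2-\gamma)<(d-2)^2/4$, the gauge change $\phi=r^{-\gamma}\psi(r^2/2)$ with the cancellation of the $r^{-\gamma-2}$ term through $\gamma(d-2-\gamma)=pc_\infty^{p-1}$, the reduction to the confluent hypergeometric/Laguerre equation with parameter $d/2-\gamma-1$, and the completeness argument needing $\gamma<d/2$ are all correct and match the standard route.

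Two small imprecisions are worth flagging, neither fatal. First, the Gaussian-weighted Hardy inequality does \emph{not} hold with the bare Euclidean constant; integrating by parts against $\rho_r=e^{-r^2/4}$ produces an extra lower-order $L^2_{\rho_r}$ term, exactly as in the paper's own Appendix~\ref{appendcoerc} (see \eqref{hardyinqeualityweighted} with $\beta=0$, which reads
\begin{equation*}
\int |\pa_r u|^2\rho_r\,dY\;\geq\;\Bigl(\tfrac{d-2}{2}\Bigr)^2\int\frac{u^2}{r^2}\rho_r\,dY\;-\;\tfrac{d-2}{4}\int u^2\rho_r\,dY.
\end{equation*}
) This correction gets absorbed by the $\frac{1}{p-1}\int u^2\rho_r$ term and the freedom to add a multiple of the identity, so boundedness below survives, but the inequality as you quoted it is not true. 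Second, and relatedly, the form $q$ is bounded below but not coercive: the bottom of the spectrum is $\lambda_{0,\infty}=-\alpha/2<0$. Boundedness below plus closability is what Friedrichs requires, so the conclusion stands; just replace ``coercive'' with ``semi-bounded.'' Your observation that the Friedrichs extension selects the $r^{-\gamma}$ branch over $r^{-\gamma_2}$ precisely because $\gamma<\gamma_2$ when $\Delta>0$ is the genuinely delicate point, and you state it correctly.
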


We claim that this allows us to diagonalize the operator $H_b$ given by \eqref{linearizedoperators} in the radial sector of $R^d$ in the perturbative regime $0<b\ll1$.

\begin{proposition}[Partial diagonalisation of $H_b$] 
\label{spectraltheorem}
Assume \eqref{confitiondiemsnion}. There exists a universal domain $\mathcal D\subset H^1_{\rho_r}$ with $H^2_{\rho_r}\subset \mathcal D$ such that for $b>0$, $H_b:\mathcal D \rightarrow L^2(\rho_r)$ is self adjoint with compact resolvant. Moreover, for all $\ell\in \mathbb N$, there exists $C(\ell),c(\ell)>0$ and $0<b^*(\ell)\ll1$ such that for all $0<b\leq b^*(\ell)$, the following holds:\\
\noindent{\em 1. Eigenvalue computation}: the $i$-th eigenvalue of $H_b$, $0\leq i \leq \ell$, is:
\be \lab{id lambdaib}
\lambda_{i,b}=i-\frac{\alpha}{2}+\tilde{\lambda}_{i,b}, \ \  \text{with} \ |\tilde{\lambda}_i|\leq C(\ell)b^{\frac{g}{2}},
\ee
and it is associated to the eigenfunction
\be
\label{calculeignefunction}
\phi_{i,b}=\sum_{j=0}^i c_{i,j} \sqrt{b}^{2j-\gamma}T_j\left( \frac{r}{\sqrt{b}}\right) +\tilde{\phi}_{i,b}, \ \  \text{with} \ \|\tilde{\phi}_{i,b}\|_{H^1_{\rho_r}} \leq C(\ell)b^{\frac g2} 
\ee
where the coefficients $c_{i,j}$ are defined by \fref{id cij} and $(T_i)_{i\in \mathbb N}$ is defined by \fref{id Ti}.\\
\noindent{\em 2. Estimates}: there holds
\be\label{estproximity}
\|\phi_{i,b}-\phi_{i,\infty}\|_{H^1_{\rho_r}}\leq C(\ell)b^{\frac g2}.
\ee
and the more precise pointwise bounds: for $k=0,1,2$,
\bea
\label{poitwisepsijk}
&&|\pa_r^k\phi_{i,b}|\lesssim \frac{\la r\ra^{2i+4}}{(\sqrt{b}+r)^{\gamma+k}}, \ \ |\pa_r^kb\pa_b\phi_{i,b}|\lesssim \frac{b^{\frac g2}}{(\sqrt b+r)^g}\frac{\la r\ra^{2i+4}}{(\sqrt{b}+r)^{\gamma+k}}.\\
\label{bd pointwise tildephi}
&&|\pa_r^k\tilde{\phi}_{i,b}|+|\pa_r^kb\pa_b\tilde{\phi}_{i,b}|\lesssim \frac{b^{\frac g2}(1+r)^{2i+4}}{(\sqrt{b}+r)^{\gamma+k}}.
\eea
Moreover:
\be \label{bound:partialb}
|b\partial_b \tilde{\lambda}_{i,b}|\leq C(\ell)b^{\frac g 2 }, \ \ \ \parallel b\partial_b \tilde{\phi}_{i,b}\parallel_{H^1_{\rho}}\leq C(\ell)b^{\frac g 2 }.
\ee
\noindent{\em 3. Spectal gap estimate}: there exists $c(\ell)>0$, $\forall u\in H^1_{\rho_r}$ with radial symmetry, 
\be
\label{spectralgaptotalone}
 (u,\phi_{i,b})_{L^2_{\rho_r}}=0, \ \ 0\leq i\leq \ell \ \Rightarrow \ \ ( H_bu, u)_{L^2_{\rho_r}} \geq \l_{\ell,b}\parallel u\parallel_{L^2_{\rho_r}}^2+c(\ell)\parallel u\parallel_{H^1_{\rho_r}}^2.
\ee
\end{proposition}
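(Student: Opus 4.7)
The plan is a Lyapunov--Schmidt perturbation of the explicit diagonalization of $H_\infty$ provided by Proposition~\ref{propinfty}. Self-adjointness of $H_b$ on the common domain $\mathcal D$ and compactness of its resolvent follow from standard arguments treating $H_b$ as a bounded perturbation of the harmonic-oscillator-type operator $-\Delta_r+\frac12\Lambda_r$, so we focus on the eigenpair construction.

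The conceptual heart of the argument is the construction of an approximate spectral basis adapted to the soliton scale. Building on the tail computation of \cite{MRR}, one introduces the universal sequence $(T_j)_{j\geq 0}$ from \eqref{id Ti}, where $T_0$ is proportional to the kernel element $\Lambda Q$ of $H$ and each $T_j$ is smooth at the origin with the prescribed asymptotics $T_j(\rho)=C_j\rho^{2j-\gamma}+O(\rho^{2j-\gamma-g})$ as $\rho\to\infty$, consistent with \eqref{id LambdaQ}. The ansatz
$$
\phi_{i,b}^{\mathrm{app}}(r) = \sum_{j=0}^i c_{i,j}\,(\sqrt b)^{2j-\gamma}\,T_j\!\left(\frac{r}{\sqrt b}\right)
$$
is designed so that, after rescaling by $\rho=r/\sqrt b$ (under which $H_b$ acts as $b^{-1}H+\frac12\Lambda$ since $\Lambda$ is scale invariant), the leading $b^{-1}$ contributions telescope through the defining recurrence for $T_j$; the surviving pieces reproduce the action of $H_\infty$ on $\phi_{i,\infty}$ modulo a tail remainder controlled by \eqref{asyptotq}. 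A direct computation then yields
$$
(H_b-\lambda_{i,\infty})\phi_{i,b}^{\mathrm{app}}=r_{i,b},\qquad \|r_{i,b}\|_{L^2_{\rho_r}}\lesssim b^{g/2},
$$
together with pointwise analogues of the form $|r_{i,b}(r)|\lesssim b^{g/2}\la r\ra^{2i+2}(\sqrt b+r)^{-\gamma-2}$ that will feed the sharp estimates \eqref{poitwisepsijk}.

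The exact eigenpair $(\lambda_{i,b},\phi_{i,b})$ is then extracted by a Lyapunov--Schmidt contraction. Decompose any candidate correction as $\tilde\phi=\sum_{0\leq j\leq\ell,\,j\neq i}a_j\phi_{j,\infty}+\phi^\perp$ with $\phi^\perp$ in the spectral complement of $\{\phi_{j,\infty}\}_{0\leq j\leq\ell}$, and invert $H_\infty-\lambda_{i,\infty}$ on this complement using the uniform gap \eqref{asyptoticgap}. The resulting finite-dimensional system for $(\tilde\lambda_{i,b},(a_j))$ is a $O(b^{g/2})$ perturbation of a diagonal system invertible by the gap of $H_\infty$, and a standard fixed-point argument delivers simultaneously \eqref{id lambdaib}, \eqref{calculeignefunction} and \eqref{estproximity}. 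Differentiability in $b$ and the bounds \eqref{bound:partialb} follow by differentiating the eigenvalue equation and rerunning the contraction, using simplicity of $\lambda_{i,b}$ for small $b$. The spectral gap \eqref{spectralgaptotalone} follows from \eqref{estproximity} and \eqref{asyptoticgap}, combined with a standard coercivity argument promoting the $L^2_{\rho_r}$ bound to $H^1_{\rho_r}$ via the $-\Delta_r$ part of $H_b$.

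The main obstacle is upgrading the $H^1_{\rho_r}$ control on $\tilde\phi_{i,b}$ to the pointwise bounds \eqref{poitwisepsijk}--\eqref{bd pointwise tildephi}, which are crucial for the subsequent nonlinear analysis. We view the equation $(H_b-\lambda_{i,b})\tilde\phi_{i,b}=-r_{i,b}+\tilde\lambda_{i,b}\phi_{i,b}^{\mathrm{app}}$ as a radial ODE on $(0,\infty)$ and integrate by variation of constants using two fundamental solutions of the homogeneous problem whose respective behaviors at $0$ and $\infty$ are dictated by $H_b$. Near the origin, the regularizing effect of $Q_b$ replaces the $r^{-\gamma}$ singularity of $\phi_{i,\infty}$ by $(\sqrt b+r)^{-\gamma}$ times a polynomial correction; at infinity the Gaussian weight and the $\frac12\Lambda_r$ drift produce the polynomial factor $\la r\ra^{2i+4}$. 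The sharp matching of the $T_j$ asymptotics between the inner region $r\lesssim\sqrt b$ and the outer region $r\gg\sqrt b$ is what guarantees the correct uniform rate $(\sqrt b+r)^{-\gamma-k}$ for $k=0,1,2$.
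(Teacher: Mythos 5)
Your approximate eigenfunction $\phi_{i,b}^{\mathrm{app}}=\sum_j c_{i,j}(\sqrt b)^{2j-\gamma}T_j(r/\sqrt b)$ is exactly the paper's inner ansatz (up to normalization), and your algebra showing that the $c_{i,j}$ recursion \eqref{id cij} makes the $b^{-1}H+\frac12\Lambda$ telescope correctly is sound. The difference, and the gap, lies in how the exact eigenpair is extracted.

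You propose to close a global Lyapunov--Schmidt contraction in $H^1_{\rho_r}$ by inverting $H_\infty-\lambda_{i,\infty}$ on the spectral complement and treating $H_b-H_\infty=p\bigl[(\Phi^*)^{p-1}-Q_b^{p-1}\bigr]$ as a small perturbation. This does not close at the claimed rate. On the soliton core $r\lesssim\sqrt b$ the potential difference is of order $r^{-2}$ (the singular $(\Phi^*)^{p-1}$ is not compensated by the bounded $Q_b^{p-1}$), and any $u\in H^1_{\rho_r}$ generically behaves like $r^{-\gamma}$ near the origin. Hence
$$
\bigl\| (H_b-H_\infty)u \bigr\|_{L^2_{\rho_r}([0,\sqrt b])}^2 \;\lesssim\; \int_0^{\sqrt b} r^{-2\gamma-4}\,r^{d-1}dr \;\sim\; (\sqrt b)^{\,d-2\gamma-4} \;=\; (\sqrt b)^{\,\sqrt\Delta-2},
$$
so the operator norm of $H_b-H_\infty$ from $H^1_{\rho_r}$ to $L^2_{\rho_r}$ is only $O\bigl(b^{(\sqrt\Delta-2)/4}\bigr)$. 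Under the paper's hypothesis $\sqrt\Delta>2$ this is $o(1)$ but is in general much larger than $b^{g/2}\sim b^{1-\epsilon/2}$. A fixed point in these norms would therefore yield $\|\tilde\phi_{i,b}\|_{H^1_{\rho_r}}=O\bigl(b^{(\sqrt\Delta-2)/4}\bigr)$, not the required $O(b^{g/2})$ of \eqref{calculeignefunction}, and the later pointwise and $\partial_b$ estimates would be lost. Your final variation-of-constants step presupposes these bounds, so it cannot repair them.

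The paper's matching strategy is precisely designed to avoid this obstruction. In the inner region $r\leq r_0$ it never perturbs around $H_\infty$: it works with the exact soliton operator $H$ and its explicit Green's function \eqref{id resolvante}, so there is no singular potential difference to control. In the outer region $r\geq r_0$, with $r_0$ a fixed constant, $|H_b-H_\infty|\lesssim(\sqrt b)^\alpha r_0^{-2-\alpha}$ is genuinely small, and a contraction there closes with margin. The matching condition at $r=r_0$ (Lemma~\ref{pr:matching}) then selects the eigenvalue. If you want to salvage a Lyapunov--Schmidt argument you would at minimum need to iterate in $b$-dependent weighted norms that encode the pointwise decay $(\sqrt b+r)^{-\gamma}$ of the true correction near the origin, which is essentially a reformulation of the matching analysis rather than a shortcut around it.
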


Since the potential term $Q_b(r)$ is independent of $z$, the diagonalization of the full linearized operator $\L_b$ for even in $z$ cylindrical functions directly follows from Proposition \ref{spectraltheorem} and an elementary tensorial claim.

\begin{proposition}[Partial diagonalization of $\L_b$]
\label{diaglb}
Assume \eqref{confitiondiemsnion}. There exists a universal domain $\mathcal D\subset H^1_{\rho_Y}$ with $H^2_{\rho_Y}\subset \mathcal D$ such that for all $b>0$, the operator $\L_b:\mathcal D \rightarrow L^2(\rho_Y)$ is self adjoint with compact resolvant. Moreover, for all $\ell\in \mathbb N$, there exists $C(\ell),c(\ell)>0$ and $b^*(\ell)>0$ such that for all $0<b<b^*(\ell)$ and $0\leq i \leq \ell$, the following holds. The function 
$$\psi_{i,k}(r,z)=\phi_{i,b}(r)P_{2k}(z), \ \ k\in\Bbb N$$ with $P_{2k}$ given by \eqref{hermitepolynomial} is an eigenfunction 
\be
\label{valrupropre}
\mathcal L_b\psi_{i,k}=\lambda_{i,k}\psi_{i,k}, \ \ \lambda_{i,k}=i+k-\frac\alpha 2+\tilde{\l}_{j,b},
\ee and for all $u\in H^1_{\rho_Y}$ with even cylindrical symmetry satisfying $$(u,\psi_{i,k})_{L^2_{\rho_Y}}=0, \ \ 0\leq i\leq \ell, \ \ 0\leq k\leq \ell-i,$$ there holds:
\be
\label{spectralgaptotal}
(\mathcal L_bu,u)_{L^2_{\rho_Y}}\geq \l_{\ell,0}\|u\|_{L^2_{\rho_Y}}^2+c(\ell)\|u\|_{H^1_{\rho_Y}}^2.
\ee
\end{proposition}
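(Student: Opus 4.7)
\medskip

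The plan is to exploit the tensorial structure of $\mathcal L_b$: since $Q_b(r)$ is $z$-independent, and $\Delta_Y=\Delta_r+\partial_z^2$, $\Lambda=\Lambda_r+z\partial_z$, one has the commuting decomposition
\[
\mathcal L_b=H_b\otimes\mathrm{Id}_z+\mathrm{Id}_r\otimes M_z,\qquad M_z:=-\partial_z^2+\tfrac12 z\partial_z,
\]
where $H_b$ is the operator of Proposition \ref{spectraltheorem} on $L^2(\rho_r)$ and $M_z$ is the standard one-dimensional Hermite operator on $L^2(\rho_z)$, whose spectrum is $\{m/2\}_{m\in\mathbb N}$ with eigenfunctions $P_m$ by \eqref{equationhermite}, \eqref{orthohonermite}. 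Self-adjointness of $\mathcal L_b$ with compact resolvent on a domain $\mathcal D\subset H^1_{\rho_Y}$ containing $H^2_{\rho_Y}$ then follows from the two single-variable statements (Proposition \ref{spectraltheorem} for $H_b$ and the classical Hermite analysis for $M_z$) together with the tensor product construction of unbounded self-adjoint operators.

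Next I verify \eqref{valrupropre}. For $\psi_{i,k}(r,z)=\phi_{i,b}(r)P_{2k}(z)$, direct computation using $H_b\phi_{i,b}=\lambda_{i,b}\phi_{i,b}$ and $M_zP_{2k}=kP_{2k}$ gives
\[
\mathcal L_b\psi_{i,k}=(\lambda_{i,b}+k)\phi_{i,b}P_{2k}=\bigl(i+k-\tfrac{\alpha}{2}+\tilde\lambda_{i,b}\bigr)\psi_{i,k},
\]
which is exactly \eqref{valrupropre}. The restriction to even indices $2k$ reflects the imposed even-in-$z$ cylindrical symmetry.

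For the spectral gap \eqref{spectralgaptotal} I decompose any even cylindrical $u\in H^1_{\rho_Y}$ as $u(r,z)=\sum_{k\ge 0}u_k(r)P_{2k}(z)$, which converges in $L^2_{\rho_Y}$ by the Hilbert basis property of the $P_{2k}$ on the even sector of $L^2_{\rho_z}$. Orthogonality and $M_zP_{2k}=kP_{2k}$ give the Parseval-type identities
\[
\|u\|_{L^2_{\rho_Y}}^2=\sum_k\|u_k\|_{L^2_{\rho_r}}^2,\qquad (\mathcal L_b u,u)_{L^2_{\rho_Y}}=\sum_k\Bigl[(H_bu_k,u_k)_{L^2_{\rho_r}}+k\|u_k\|_{L^2_{\rho_r}}^2\Bigr],
\]
and similarly $\|u\|_{H^1_{\rho_Y}}^2=\sum_k\bigl(\|u_k\|_{H^1_{\rho_r}}^2+k\|u_k\|_{L^2_{\rho_r}}^2\bigr)$. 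The orthogonality hypothesis $(u,\psi_{i,k})_{L^2_{\rho_Y}}=0$ for $(i,k)$ with $i+k\le\ell$ becomes, slice-by-slice, the condition $(u_k,\phi_{i,b})_{L^2_{\rho_r}}=0$ for $0\le i\le\ell-k$ whenever $0\le k\le\ell$. For each such $k\le\ell$, applying \eqref{spectralgaptotalone} at level $\ell-k$ yields
\[
(H_bu_k,u_k)+k\|u_k\|^2\ge(\lambda_{\ell-k,b}+k)\|u_k\|^2+c(\ell)\|u_k\|_{H^1_{\rho_r}}^2=\bigl(\ell-\tfrac{\alpha}{2}+\tilde\lambda_{\ell-k,b}\bigr)\|u_k\|^2+c(\ell)\|u_k\|_{H^1_{\rho_r}}^2.
\]
For the tail $k\ge\ell+1$ no orthogonality is imposed, but $(H_bu_k,u_k)\ge\lambda_{0,b}\|u_k\|^2$ together with $k\ge\ell+1$ gives $(H_bu_k,u_k)+k\|u_k\|^2\ge(\ell+1-\tfrac{\alpha}{2}+\tilde\lambda_{0,b})\|u_k\|^2$, which beats $\lambda_{\ell,0}$ by a fixed gap of order $1$ for $b$ small.

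Summing over $k$ and using \eqref{id lambdaib} to absorb the $O(b^{g/2})$ differences $\tilde\lambda_{\ell-k,b}-\tilde\lambda_{\ell,b}$ into the $H^1$ coercivity produces \eqref{spectralgaptotal}. The only nonroutine point is handling the high-frequency modes $k>\ell$: to obtain the full $H^1_{\rho_Y}$ coercivity one combines the $L^2$ coercivity just obtained with the ellipticity of $H_b+C$ (which gives $(H_bu_k,u_k)+C\|u_k\|^2\ge c\|u_k\|_{H^1_{\rho_r}}^2$ uniformly in small $b$, as is built into the domain analysis of $H_b$) and uses that $\sum_k k\|u_k\|^2$ controls the $\partial_z$ part of $\|u\|_{H^1_{\rho_Y}}^2$. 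I expect no genuine obstacle: all the substance lies in Proposition \ref{spectraltheorem}, and the present statement is a clean separation-of-variables corollary.
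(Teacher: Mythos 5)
Your proposal is correct, and it organizes the argument somewhat differently from the paper. Both rest on the same tensor/separation-of-variables structure, but you decompose the \emph{test function} $u=\sum_k u_k P_{2k}$ and apply the one-variable spectral gap \eqref{spectralgaptotalone} for $H_b$ slice-by-slice at the levels $\ell'=\ell-k$ (plus the trivial ground-state bound for the tail $k>\ell$), whereas the paper instead \emph{builds the eigenbasis} $\psi_{i,k,\infty}=\phi_{i,\infty}P_{2k}$ for the limiting operator $\L_\infty$, gets the gap for $\L_\infty$ from its totality, and transfers to $\L_b$ via the uniform closeness estimate \eqref{estproximity}. Your route never invokes $\L_\infty$ or $\phi_{i,\infty}$ at all; the price is that you need \eqref{spectralgaptotalone} at every level $\ell'\in\{0,\dots,\ell\}$, so you should replace $c(\ell)$ by $\min_{0\le\ell'\le\ell}c(\ell')$ and shrink $b^*$ accordingly (a finite minimum, so harmless), and you must absorb the level-dependent shifts $\tilde\lambda_{\ell-k,b}-\tilde\lambda_{\ell,b}=O(b^{g/2})$ into the $H^1$ coercivity, exactly as you indicate. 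One advantage of your version is that it produces the $\|u\|_{H^1_{\rho_Y}}^2$ coercivity directly on each slice, whereas the paper's $\L_\infty$ gap \eqref{spectralgaptotalbis} is only stated in $L^2$ and the recovery of $H^1$ is left implicit. The uniform form boundedness of $-pQ_b^{p-1}$ against $-\Delta_r$ that you need for the high $k$ modes is indeed available from \eqref{positivitylamdbaq} together with the sharp Hardy inequality, since $4pc_\infty^{p-1}/(d-2)^2=4\gamma\gamma_2/(\gamma+\gamma_2)^2<1$ whenever $\gamma\neq\gamma_2$, i.e.\ whenever $p>p_{JL}$.
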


\begin{proof} Let $$\L_\infty=-\Delta_Y+\frac 12\Lambda -p(\Phi^*)^{p-1},$$ let $k\in \Bbb N$ and $P_{2k}$ be the $k$-th even Hermite polynomial \eqref{hermitepolynomial}. Let $$\psi_{i,k,\infty}(r,z)=\phi_{i,\infty}(r)P_{2k}(z),$$ then since $Q(r)$ is independent of z, we compute from Proposition \ref{propinfty} and \eqref{equationhermite}:
$$\L_{\infty}\psi_{i,k,\infty}=P_{2k}(z)H_b(\phi_{i,\infty})+\phi_{i,\infty}\left[-\pa_z^2+\frac 12z\pa_z\right]P_{2k}=\left(i+k-\frac \alpha 2\right)\psi_{i,k,\infty}.$$ Since from standard tensorial claim the family $(\psi_{i,k,\infty})_{i,k\ge 0}$ is total in $H^1_{\rho_Y}$ restricted to functions with even cylindrical symmetry, we obtain the spectral gap estimate: $$(u,\psi_{i,k,\infty})_{L^2_{\rho_Y}}=0, \ \ 0\leq i\leq \ell, \ \ 0\leq k\leq \ell-i,$$  with even cylindrical symmetry implies :
\be
\label{spectralgaptotalbis}
(\mathcal L_{\infty}u,u)_{L^2_{\rho_Y}}\geq \left(\ell+k+1-\frac \alpha 2\right)\|u\|_{L^2_{\rho_Y}}^2.
\ee
Similarly from Proposition \ref{spectraltheorem}, $$\L_{b}\psi_{i,k}=\left(\l_{i,b}+k\right)\psi_{i,k,\infty}$$ and the uniform closeness estimate \eqref{estproximity} injected into \eqref{spectralgaptotalbis} now implies \eqref{spectralgaptotal}.
\end{proof}

The rest of this section is devoted to a sketch of proof of Proposition \ref{spectraltheorem} using a direct brute force matching ODE approach, see for example \cite{Wei,CRS} for related approaches.


\subsection{Interior problem} 


We construct in this subsection eigenfunctions of $H_b$ in the zone $0\leq r \leq r_0\ll 1$. We zoom on the soliton core by introducing $$y=\frac{r}{\sqrt b}, \ \ v(r)=u(y), \ \  H_bv(r)=\frac 1b(H+\frac b2\Lambda )u(y)$$ with $H$ given by \eqref{defH}, and study the Schr\"odinger operator $H+\frac b2\Lambda$ in the zone $0\leq y \leq y_0$ for 
\be
\label{defyzero}
1\ll y_0:=\frac{r_0}{\sqrt b}\ll \frac{1}{\sqrt{b}}
\ee 
restricted to radially symmetric functions. Let us recall from Lemmas 2.3 and 2.10 in \cite{Co2}, see also \cite{MRR}, the description of the iterates of the kernel of $H$:

\begin{lemma}[Generators of the generalized kernel of $H$] \lab{lem:Ti} There exists a family of smooth radial functions $(T_i)_{i\in \mathbb N}$ satisfying the identities
\be \lab{id Ti}
T_0=c\Lambda Q, \ c\in\mathbb R, \ c\neq 0, \ \ H T_0=0, \ \ H T_{i+1}=-T_i
\ee
with the expansion 
\be \lab{id expansion Ti}
\left|\ba{l l}
T_i \underset{y\rightarrow 0}{=} \sum_{l=0}^q d_{i,l} y^{2i+2l}+O(y^{2+2q}), \ \ \text{for} \ \text{all} \ q\in \mathbb N, \\
T_i \underset{y\rightarrow +\infty}{=} C_i y^{-\gamma+2i}+O(y^{-\gamma+2i-g}) . \\
\ea\right.
\ee
where
\be \lab{eq:relation Cni}
C_i=\frac{2^{-2i}}{i!\left(\frac d2 -\gamma \right)_i},
\ee
and the additional cancellation at infinity:
\be \lab{eq:Thetani}
\Lambda T_i=(2i-\alpha )T_i+\Theta_i, \ \ \Theta_i=O(\la y\ra^{-\gamma+2i-g}).
\ee
\end{lemma}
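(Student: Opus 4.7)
The plan is to construct the family $(T_i)_{i\in \mathbb N}$ inductively by successively inverting the radial Schr\"odinger operator $H$. First I would set $T_0 = c \Lambda Q$ with the constant $c$ chosen so that $C_0 = 1/(\tfrac d 2-\gamma)_0 = 1$, matching the normalization \eqref{eq:relation Cni}. The identity $HT_0 = 0$ follows from the scale invariance of the equation $-\Delta Q = Q^p$: differentiating $\lambda^{2/(p-1)} Q(\lambda r)$ in $\lambda$ at $\lambda = 1$ produces $\Lambda Q \in \ker H$. The expansion of $T_0$ at the origin comes from the smoothness and even symmetry of $Q$, and the expansion at infinity is precisely the fundamental cancellation \eqref{id LambdaQ}.

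For the inductive step, assume $T_0, \ldots, T_i$ have been constructed with the announced behaviors, and introduce a second independent radial solution $\Psi$ of $H\Psi = 0$. Since $H$ is asymptotically a perturbation of the Euler-type model $H_\infty = -\Delta - pc_\infty^{p-1}/r^2$, whose radial kernel is spanned by $r^{-\gamma}$ and $r^{-\gamma_2}$, one has $\Psi(r) \sim r^{-(d-2)}$ near the origin and $\Psi(r) \sim r^{-\gamma_2}$ at infinity. Denoting by $W$ the (constant) Liouville Wronskian $r^{d-1}(T_0' \Psi - T_0 \Psi')$, I would define
$$
T_{i+1}(r) := -\frac{T_0(r)}{W}\int_0^r \Psi(s)\, T_i(s)\, s^{d-1}\, ds + \frac{\Psi(r)}{W}\int_0^r T_0(s)\, T_i(s)\, s^{d-1}\, ds,
$$
which solves $HT_{i+1} = -T_i$. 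Taking $0$ as the lower limit in both integrals produces a smooth even function at the origin: the inductive hypothesis $T_i = O(r^{2i})$ near $r=0$ makes both integrals vanish fast enough to absorb the $r^{-(d-2)}$ singularity of $\Psi$, yielding $T_{i+1}(r) = d_{i+1,0}\, r^{2i+2} + O(r^{2i+4})$ together with the full even power series.

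The behavior at infinity I would obtain by matching with the model operator. A direct computation using $\gamma \gamma_2 = pc_\infty^{p-1}$ gives $H_\infty(r^{2j-\gamma}) = -4j(j + \tfrac d 2 - \gamma - 1)\, r^{2j-\gamma-2}$, which forces the recursion $C_{i+1} = C_i / [4(i+1)(\tfrac d 2 - \gamma + i)]$, exactly \eqref{eq:relation Cni}. The subleading bound $O(r^{2i+2-\gamma-g})$ is propagated through the variation of parameters formula because $H - H_\infty = p((\Phi^*)^{p-1} - Q^{p-1})$ decays at rate $r^{-2-g}$ by \eqref{asyptotq}, so each iteration preserves the same gap $g$ between the leading term and the remainder. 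The cancellation \eqref{eq:Thetani} then follows at once: since $\Lambda(y^{2i-\gamma}) = (2i-\alpha)\, y^{2i-\gamma}$ under the action $\Lambda = \tfrac{2}{p-1} + y\partial_y$, the leading $C_i\, y^{2i-\gamma}$ part of $T_i$ is annihilated in $\Lambda T_i - (2i-\alpha)T_i$, and the remainder inherits the decay $O(\la y \ra^{-\gamma+2i-g})$ (the boundedness at the origin is trivially absorbed into this weight).

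The main technical point that will need care is the propagation of the remainder exponent $g$ through the iteration. Under the assumption \eqref{confitiondiemsnion}, $g$ is arbitrarily close to $2$, so the subleading corrections coming from $Q - \Phi^*$ and from each $T_i$ stay well separated from the leading Laguerre-type terms $y^{2i-\gamma}$. This separation should be enough to ensure that the integrals defining $T_{i+1}$ are absolutely convergent and to transfer the pointwise asymptotic $C_{i+1}\, y^{2i+2-\gamma} + O(y^{2i+2-\gamma-g})$ directly from the model operator; a patient bookkeeping of these estimates then closes the induction.
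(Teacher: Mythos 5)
The paper does not prove this lemma: it is recalled from Lemmas~2.3 and~2.10 of \cite{Co2} (see also \cite{MRR}), so there is no in-paper proof to compare against. Your construction by iterated inversion via variation of parameters is exactly the standard construction used in those references, and your structural calculations are sound: $H T_0 = 0$ indeed follows from scale invariance; the computation $H_\infty(r^{2j-\gamma}) = -4j\bigl(j+\tfrac d2-\gamma-1\bigr)r^{2j-\gamma-2}$ is correct (using $\gamma\gamma_2 = pc_\infty^{p-1}$ and $\tfrac d2-\gamma-1 = \tfrac{\sqrt\Delta}{2}$), which yields the recursion $C_{i+1} = C_i/\bigl[4(i+1)(\tfrac d2-\gamma+i)\bigr]$ compatible with \eqref{eq:relation Cni} once $c$ is normalized so that $C_0=1$; and the cancellation \eqref{eq:Thetani} is immediate from $\Lambda(y^{2i-\gamma}) = (2i-\alpha)y^{2i-\gamma}$ since $\alpha = \gamma - \tfrac{2}{p-1}$. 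Your observation that the remainder exponent $g$ (strictly below $\min\{\sqrt\Delta,\alpha,2\}$) keeps the leading and subleading exponents separated — thereby avoiding logarithmic resonances and making all integrals absolutely convergent — is precisely the point that closes the induction.

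One small slip: with your sign convention $W := r^{d-1}(T_0'\Psi - T_0\Psi')$, a direct Lagrange computation gives $A'T_0' + B'\Psi' = -T_i$ and hence $\Delta T_{i+1} + pQ^{p-1}T_{i+1} = -T_i$, i.e.\ $H T_{i+1} = +T_i$ rather than $-T_i$. You should either swap the two terms or take the opposite sign in front of the whole formula. This is purely cosmetic and does not affect the asymptotics, the recursion for $C_i$, or the inductive structure. Compare with the paper's explicit inverse \eqref{id resolvante}: $T_{i+1} = -H^{-1}T_i = \Lambda Q\int_0^y\tilde\Gamma\,T_i\,s^{d-1}ds - \tilde\Gamma\int_0^y\Lambda Q\,T_i\,s^{d-1}ds$ is the correctly signed version of what you wrote.
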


These profiles provide the {\it non perturbative} leading order term of inner eigenfunctions which is the heart of the proof of Proposition \ref{spectraltheorem}. Let for $a\in \mathbb R$ the norm
$$
\| f \|_{X_{y_0}^a} =\underset{0\leq y \leq y_0}{\text{sup}} \sum_{i=0}^2\frac{|(\la y\ra\pa_y)^if|}{\la y\ra^a}.$$

\begin{lemma}[Inner eigenfunctions]
 \lab{pr:int}
Let $i\in \mathbb N$, $\tilde \lambda=O(1)$, $0<r_0\ll1$ small enough and $0<b<b ^*(r_0)$ small enough. Then there exists a smooth profile $\phi_{\text{int}}\in\mathcal C^\infty([0,y_0],\Bbb R)$ satisfying
\be \lab{id eigenfunction int}
\left(H+\frac{b}{2}\Lambda \right)\phi_{\text{int}}=b\left(i-\frac{\alpha}{2} +\tilde \lambda \right)\phi_{\text{int}},
\ee
with the decomposition:
\be \lab{id uint}
\phi_{\text{int}}= \sum_{j=0}^i c_{i,j}b^jT_j+\tilde \lambda \sum_{j=0}^i b^{j+1}(-c_{i,j}T_{j+1}+S_j) +bR_i
\ee
where the constants $(c_{i,j})_{0\leq j \leq i}$ are given by
\be \lab{id cij}
c_{i,j}=(-1)^j\frac{i !}{(i-j)!}, \ \ c_{i,j+1}=-c_{i,j}(i-j),
\ee
and the correction functions $R_i,(S_j)_{0\leq j \leq i}:[0,y_0]\rightarrow \mathbb R$ satisfy
\be \lab{bd Sj}
\| S_j \|_{X_{y_0}^{2j+2-\gamma}} \lesssim r_0^2, \ \ \|\partial_b S_j \|_{X_{y_0}^{2j+4-\gamma}}\lesssim 1, \ \ \|\partial_{\tilde \l} S_j\|_{X_{y_0}^{2j+2-\gamma}}\lesssim r_0^2.
\ee
\be \lab{bd Ri}
\|R_i\|_{X_{y_0}^{2-\gamma-g}}\lesssim 1, \ \ \|\partial_b R_i\|_{X_{y_0}^{4-\gamma-g}}\lesssim 1, \ \  \|\partial_{\tilde \l} R_i\|_{X_{y_0}^{4-\gamma-g}}\lesssim b.
\ee
\end{lemma}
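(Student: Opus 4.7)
The plan is an ansatz-plus-contraction argument exploiting the Jordan block structure of $H$ on the family $(T_j)$ from Lemma \ref{lem:Ti}. Substituting the bare sum $\sum_{j=0}^i c_{i,j} b^j T_j$ into $(H+\tfrac{b}{2}\Lambda-b(i-\alpha/2+\tilde\lambda))$ and using the three identities $HT_0=0$, $HT_{j+1}=-T_j$ and $\Lambda T_j=(2j-\alpha)T_j+\Theta_j$, I would extract the coefficient of $b^{k+1}T_k$ for $0\le k\le i-1$ and cancel it at leading order; this yields the recurrence $c_{i,k+1}=-(i-k)c_{i,k}$, which together with the normalization $c_{i,0}=1$ reproduces \eqref{id cij}. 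Only three residual contributions remain: the linear-in-$\tilde\lambda$ term $\tilde\lambda c_{i,j}b^{j+1}T_j$, the $\Theta_j$-corrections (with the extra $\langle y\rangle^{-g}$ decay from \eqref{eq:Thetani}), and an orphan top-order piece $\tfrac12 c_{i,i}b^{i+1}\Lambda T_i$. The $\tilde\lambda$ residual is absorbed at leading order by the explicit term $-\tilde\lambda c_{i,j}b^{j+1}T_{j+1}$ in \eqref{id uint}, via $H(-T_{j+1})=T_j$, reducing the problem to a coupled linear system for $(R_i, S_0,\ldots,S_i)$.

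To solve that system, I would construct an inverse $\mathcal H^{-1}$ of $H$ on $[0,y_0]$ by variation of constants. The two radial solutions of $Hu=0$ are $T_0\propto \Lambda Q$ (smooth at $0$, $\sim y^{-\gamma}$ at infinity) and a second solution $\Gamma$ obtained from the Wronskian (singular at $0$, $\sim y^{-\gamma_2}$ at infinity, the other root of \eqref{defequgamma}). Imposing regularity at the origin yields
$$\mathcal H^{-1}f(y)=T_0(y)\int_0^y \Gamma(s) f(s)\,s^{d-1}ds-\Gamma(y)\int_0^y T_0(s) f(s)\,s^{d-1}ds,$$
and a direct asymptotic computation based on \eqref{id expansion Ti} and the behavior of $\Gamma$ shows $\mathcal H^{-1}:X^{a}_{y_0}\to X^{a+2}_{y_0}$ with constants independent of $y_0$ for the exponents actually used. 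The $(\langle y\rangle\partial_y)^k$ bounds built into $X^a_{y_0}$ follow by commuting derivatives through the Duhamel formula.

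The system for $(R_i, S_0,\ldots,S_i)$ is then cast as a fixed point $\mathcal U=\Phi(\mathcal U)$ with $\Phi=\mathcal H^{-1}\circ\bigl[\text{explicit sources}+\tfrac{b}{2}\Lambda(\cdot)+\tilde\lambda\,(\cdot)\bigr]$. The explicit sources coming from $\Theta_j$ and $\Lambda T_i$ contribute $O(r_0^2)$ in $X^{2j+2-\gamma}_{y_0}$ and $O(1)$ in $X^{2-\gamma-g}_{y_0}$, which matches exactly the right-hand sides of \eqref{bd Sj}--\eqref{bd Ri}; the linear perturbation $\tfrac b2\Lambda$ loses one factor $\langle y\rangle^2$ that is exactly recovered by the $\langle y\rangle^2$-gain of $\mathcal H^{-1}$, with prefactor $by_0^2=r_0^2$ on the cutoff domain \eqref{defyzero}, so $\Phi$ is a strict contraction for $r_0$ small and $b\leq b^*(r_0)$. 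The $\partial_b$ and $\partial_{\tilde\lambda}$ estimates in \eqref{bd Sj}--\eqref{bd Ri} are obtained by differentiating the fixed-point equation and running the same contraction on the derivatives, using that the explicit sources are smooth in $(b,\tilde\lambda)$.

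The main technical obstacle I anticipate is the precise calibration of $\mathcal H^{-1}$ in the weighted norms $X^{a}_{y_0}$: showing that no logarithmic losses appear at either integration boundary, and that the $\langle y\rangle^2$ gain holds uniformly in the cutoff $y_0$ (which is large but $\ll 1/\sqrt b$). Once these linear estimates are established, tracking the three small parameters $r_0$, $b$, $\tilde\lambda$ against the algebraic structure of Step 1 is essentially bookkeeping, and the decomposition \eqref{id uint} together with the bounds \eqref{bd Sj}--\eqref{bd Ri} follows.
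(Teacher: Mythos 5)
Your proposal follows essentially the same route as the paper: inject the ansatz \eqref{id uint}, derive the recurrence \eqref{id cij} from the Jordan block identities $HT_{j+1}=-T_j$ and $\Lambda T_j=(2j-\alpha)T_j+\Theta_j$, invert $H$ by variation of constants with the $\langle y\rangle^2$-gain continuity estimate valid for $a>-\gamma$, and close by a Banach fixed point whose contraction factor $by_0^2=r_0^2$ comes from the restriction to $[0,y_0]$. One small bookkeeping correction: after using $\Lambda T_i=(2i-\alpha)T_i+\Theta_i$ against the eigenvalue term, the $j=i$ residual is $c_{i,i}b^{i+1}\bigl(\tfrac12\Theta_i-\tilde\lambda T_i\bigr)$, so there is no separate uncancelled $\tfrac12 c_{i,i}b^{i+1}\Lambda T_i$ piece; correspondingly the $\Theta_j$ corrections feed only into $R_i$ (yielding the $O(1)$ bound in $X^{2-\gamma-g}_{y_0}$), while the equations for $S_j$ are sourced by the $T_{j+1}$ terms via the $\tilde\lambda$-absorption, which is where the $r_0^2$ smallness comes from.
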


\begin{proof}[Proof of Lemma \ref{pr:int}] Injecting \eqref{id uint} into \eqref{id eigenfunction int} and using \eqref{id Ti}, \eqref{eq:relation Cni}, \eqref{id cij}, we are left with solving
\bee
&&H S_j=b\left[ \left(i-\frac{\alpha}{2}+\tilde \l \right)(-c_{i,j}T_{j+1}+S_j)-\frac{1}{2}\Lambda (-c_{i,j}T_{j+1}+S_j)\right]\\
&&H R_i=b\left[ \left(i-\frac{\alpha}{2}+\tilde \l \right)R_i-\frac{1}{2}\Lambda R_i \right]+\sum_{j=0}^i \frac{c_{i,j}}{2}b^j\Theta_j.
\eee
For this, we use the basis of fundamental solutions to $Hu=0$ given by $$\Lambda Q, \ \ \tilde{\Gamma}=-\Lambda Q\int_1^{y}\frac{d\tau}{(\Lambda Q)^2\tau^{d-1}}d\tau$$ which yields the explicit inverse  
\be \lab{id resolvante}
H^{-1}f(y):= -\Lambda Q(y) \int_0^y \Gammat(s) f(s) s^{d-1}ds+\Gammat (y) \int_0^y \Lambda Q(s)f(s)s^{d-1}ds .
\ee
The behaviour of $\Lambda Q,\tilde{\Gamma}$ at both the origin and $+\infty$ can be computed from \eqref{asyptotq} and yield from direct check the continuity estimate: for $a>-\gamma$, 
\be \lab{bd H-1}
\| H^{-1}f \|_{X_{y_0}^a} \lesssim \underset{0\leq y \leq y_0}{\text{sup}} \la y\ra^{-a+2} |f(r)|.
\ee
This draws a route map for the construction of $S_j,R_j$ using the Banach fixed point theorem with the bounds \eqref{bd Sj}, \eqref{bd Ri}, and the details are left to the reader.
\end{proof}


\subsection{Exterior problem} 


We construct in this subsection eigenfunctions of $H_b$ in the zone $r\in [r_0,+\infty)$ where $0<r_0\ll 1$. Since $pQ_b^{p-1}\sim \Phi^*(r)=pc_{\infty}^{p-1}r^{-2}$ as $b\rightarrow 0$ from \fref{asyptotq}, the problem can be written to leading order as
\be \lab{id equation vp exterieure}
\Le u= \left(i-\frac{\alpha}{2} \right) u, \ \ \Le :=-\Delta +\frac{1}{2}\Lambda-\frac{pc_{\infty}^{p-1}}{r^2}
\ee
We first construct the outer homogeneous basis.

\begin{lemma}[Fundamental solutions of the exterior problem] \label{lem:eigenfunctions ext}
Let $i\in \mathbb N$. Then the solutions of \fref{id equation vp exterieure} are spanned by two functions $\psi_{1},\psi_{2}$ with 
\be
 \lab{id psi1}
\psi_1= \sum_{j=0}^i c_{i,j}C_jr^{2j-\gamma} = \left\{ \ba{l l} r^{-\gamma}+O(r^{-\gamma +2}) \ \ \text{as} \ r\rightarrow 0, \\ c_{i,i}C_ir^{2i-\gamma}+O(r^{2i-\gamma-2}) \ \ \text{as} \ r\rightarrow +\infty, \ea \right.
\ee
where $c_{i,j}$ and $C_j$ are defined by \fref{eq:relation Cni} and \fref{id cij}, and
\be 
\lab{id asymptotique infty psi2}
\psi_{2} =\left\{\begin{array}{ll}  -\frac{2}{c_{i,i}C_i}r^{-2i+\gamma-d}e^{\frac{r^2}{4}}\left[1+O(r^{-2})\right]  \ \ \text{as} \ \ r\rightarrow +\infty ,\\
 \frac{r^{-\gamma_2}}{d-2\gamma-2}+O(r^{-\gamma_2+2})  \ \ \text{as} \ \ r\rightarrow 0.
 \end{array}\right. 
 \ee
Moreover, there exists a unique solution to $[\Le -(i-\alpha/2)]\tilde \psi_1 =\psi_1$ satisfying
\bea
\lab{as tildepsi1 infty}
&&\tilde \psi_1 = r^{2i-\gamma}\left[2c_{i,i}C_i\text{log}(r)+O(1)\right] \ \ \text{as} \ r \rightarrow +\infty,\\
\lab{as tildepsi1 0}
&&\tilde \psi_1 =r^{-\gamma_2}\left[K+O(r^2)\right]\ \ \text{as} \ r \rightarrow 0, \ K\neq 0,
\eea

\end{lemma}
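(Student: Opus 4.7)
The plan is to exploit the Sturm--Liouville structure of $\Le$: the operator is formally self-adjoint with respect to $r^{d-1}e^{-r^2/4}dr$, so the Wronskian of any two solutions of $\Le u = \lambda u$ satisfies $r^{d-1}e^{-r^2/4}\,W(u_1,u_2) = \mathrm{const}$. This identity allows me to build the full fundamental system from one explicit solution at each eigenvalue. The first solution requires no work: by Proposition~\ref{propinfty}, $\phi_{i,\infty}$ is an eigenfunction of $H_\infty = \Le$ at eigenvalue $i-\alpha/2$. Setting $\psi_1 := \phi_{i,\infty}$ and expanding the modified Laguerre polynomial $L_i^{(d/2-\gamma-1)}(r^2/2)$ yields the closed form $\sum_{j=0}^i c_{i,j}C_j r^{2j-\gamma}$, from which \eqref{id psi1} follows at once: near $r=0$ the $j=0$ term dominates (with $c_{i,0}C_0=1$) and at $r\to +\infty$ the top-degree $j=i$ term does. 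An auxiliary check is that the two Frobenius exponents at the regular singular point $r=0$, namely $-\gamma$ and $-\gamma_2$, are the two roots of $\sigma(\sigma+d-2)=-pc_\infty^{p-1}$, consistent with the claimed behaviors of $\psi_1$ and $\psi_2$.

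For $\psi_2$, I apply reduction of order,
$$
\psi_2(r) \;=\; K_1\,\psi_1(r) \int_{r_0}^r \frac{e^{s^2/4}}{s^{d-1}\psi_1(s)^2}\,ds,
$$
with $r_0>0$ arbitrary and $K_1$ chosen to normalize the leading coefficient at infinity. At $r\to +\infty$ the integrand is $\sim e^{s^2/4}/s^{d-1+2(2i-\gamma)}$; a Laplace-type expansion produces the Gaussian growth $r^{-2i+\gamma-d}e^{r^2/4}$ and, with $K_1$ fixed accordingly, the leading coefficient $-2/(c_{i,i}C_i)$ of~\eqref{id asymptotique infty psi2}. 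At $r\to 0$, $\psi_1^2\sim r^{-2\gamma}$ so the integrand behaves like $s^{-1-\sqrt{\Delta}}$, which is non-integrable; integrating from $r$ to $r_0$ yields the divergent piece $r^{-\sqrt{\Delta}}/\sqrt{\Delta}$, and multiplication by $\psi_1\sim r^{-\gamma}$ reproduces the claimed leading behavior $r^{-\gamma_2}/(d-2\gamma-2)$ (using $d-2\gamma-2 = \sqrt{\Delta}$).

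For $\tilde\psi_1$, I use variation of parameters with the basis $(\psi_1,\psi_2)$. The key feature is that $i-\alpha/2$ is a genuine eigenvalue, so the inhomogeneity $\psi_1$ is \emph{resonant} and must force a logarithmic correction. Writing
$$
\tilde\psi_1(r) \;=\; C_W^{-1}\Bigl[\psi_1(r)\int \psi_1\psi_2\, s^{d-1}e^{-s^2/4}\,ds \;-\; \psi_2(r)\int \psi_1^2 \, s^{d-1}e^{-s^2/4}\,ds\Bigr],
$$
with $C_W := r^{d-1}e^{-r^2/4}W(\psi_1,\psi_2)$, I first check using the infinity asymptotics of $\psi_1,\psi_2$ and their derivatives that $C_W = -1$. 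In the first integrand, the Gaussian growth of $\psi_2$ exactly cancels $e^{-s^2/4}$, leaving $-2/s$ at infinity and hence a $-2\log r$ contribution; after multiplication by $\psi_1$ and division by $C_W=-1$, this yields precisely the leading term $2c_{i,i}C_i\, r^{2i-\gamma}\log r$ of~\eqref{as tildepsi1 infty}. The second integral, anchored at $+\infty$, converges by Gaussian decay and contributes $O(r^{2i-\gamma-1})$ after multiplication by $\psi_2$, which is subleading. Near $0$, the free integration constants are chosen so that the dominant behavior is the $r^{-\gamma_2}$ brought in by $\psi_2$, giving~\eqref{as tildepsi1 0}; nonvanishing of $K$ follows from the strict positivity of $\int_0^{r_0}\psi_1^2\, s^{d-1}e^{-s^2/4}\,ds$.

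The main obstacle is the resonant bookkeeping producing the logarithm in $\tilde\psi_1$: its coefficient depends on the precise asymptotic $\psi_1\psi_2\cdot r^{d-1}e^{-r^2/4}\sim -2/s$ at infinity, which requires tracking the leading constants of $\psi_1$, $\psi_2$ and the Wronskian simultaneously. A minor additional technicality is that the reduction-of-order integrand for $\psi_2$ and the variation-of-parameters integrands for $\tilde\psi_1$ are all non-integrable at $0$, forcing one to keep the integration anchored at a fixed $r_0>0$ rather than at the origin; this is harmless but demands care.
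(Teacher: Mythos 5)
The paper offers no proof of this lemma; it says only that it ``follows either by reducing the problem to known Laguerre type polynomials, or proceeding to a brute force expansion, the details are left to the reader.'' Your proposal takes the first of the two suggested routes and fills in the details correctly. Identifying $\psi_1=\phi_{i,\infty}$ from Proposition~\ref{propinfty} is exactly the Laguerre reduction the authors have in mind; the Frobenius indices at $r=0$ being $-\gamma$ and $-\gamma_2$ follows from $\sigma(\sigma+d-2)=-pc_\infty^{p-1}$ and \eqref{defequgamma} as you say. Your reduction-of-order formula for $\psi_2$ produces the Gaussian growth with the coefficient $-2/(c_{i,i}C_i)$ once $K_1=-1$ is fixed, and near $0$ the computation $\sqrt{\Delta}=d-2\gamma-2$ gives the leading coefficient $1/(d-2\gamma-2)$ of \eqref{id asymptotique infty psi2}. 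The Wronskian normalization $C_W=r^{d-1}e^{-r^2/4}W(\psi_1,\psi_2)=-1$ checks out from the leading behaviours at infinity, and the resonant $-2/s$ integrand in the variation-of-parameters formula then yields exactly $2c_{i,i}C_i r^{2i-\gamma}\log r$ as in \eqref{as tildepsi1 infty}; the coefficient $K$ of $r^{-\gamma_2}$ is a nonzero multiple of $\|\psi_1\|_{L^2_{\rho_r}}^2>0$. These are the only nontrivial points, and you handle them correctly.

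One small gap worth noting: you construct \emph{a} solution $\tilde\psi_1$ with the claimed asymptotics but do not address the uniqueness asserted in the lemma. Adding $a\psi_1$ to your $\tilde\psi_1$ preserves both \eqref{as tildepsi1 infty} (the $O(1)$ absorbs the shift) and \eqref{as tildepsi1 0} (since $r^{-\gamma}=o(r^{-\gamma_2+2})$ as $r\to 0$ because $\sqrt\Delta>2$), so uniqueness requires an additional normalization that the lemma's statement does not make explicit; in practice, what the matching argument of Lemma~\ref{pr:matching} uses is a particular $\tilde\psi_1$ with these asymptotics, so the construction you give is what matters. Condition \eqref{as tildepsi1 infty} does rule out adding $\psi_2$, by the Gaussian growth, so the ambiguity is only a one-parameter $\psi_1$-shift.
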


The proof of Lemma \ref{lem:eigenfunctions ext} follows either by reducing the problem to known Laguerre type polynomials, or proceeding to a brute force expansion, the details are left to the reader. For $a,a' \in \mathbb R$, $0<r_0\ll1$, we let the norms:
\bee
\| f \|_{X_{r_0}^{a,a'}} & = & \underset{r_0\leq r\leq 1}{\text{sup}} \ r^{-a}|f|+r^{-a+1}|\partial_rf|+r^{-a+2}|\partial_{rr}f|\\
&&+\underset{r\geq 1}{\text{sup}} \ r^{-a'}(|f|+|\partial_{rr}f|)+r^{-a'+1}|\partial_r f|.
\eee

\begin{lemma}[Eigenfunctions of the exterior problem] 
\label{lem:ext}
For any $i\in \mathbb N$, $0<r_0<1$, there exists $\tilde \lambda^*>0$ and $b^*>0$ such that for all $0<b<b^*$ and $|\tilde{\l}|\leq \tilde{\l}^*$, there exists a solution $ \phi_{\text{ext}}$ on $[r_0,+\infty)$ of
\be \lab{id phiext} 
H_b \phi_{\text{ext}}=\left(i-\frac{\alpha}{2}+\tilde \l\right)\phi_{\text{ext}}, \ \  \phi_{\text{ext}}= \psi_1+\tilde{\lambda}(\tilde \psi_1+\tilde R_1)+\tilde R_2
\ee
satisfying the estimates:
\be \lab{bd tildeR1}
\| \tilde{R}_1\|_{X_{r_0}^{-\gamma_2, 2i+2-\gamma}}\lesssim |\tilde{\l}|, \ \  \partial_b \tilde R_1=0, \ \ \| \partial_{\tilde{\l}}\tilde{R}_1\|_{X_{r_0}^{-\gamma_2, 2i+2-\gamma}}\lesssim 1,
\ee
and for $a=-\gamma_2-2-\alpha$, $a'=2i+2-\gamma$
\be \lab{bd tildeR2}
\| \tilde{R}_2\|_{X_{r_0}^{a,a'}}\lesssim b^{\frac{\alpha}{2}}, \ \ \|\partial_{\tilde{\l}} \tilde{R}_2\|_{X_{r_0}^{a,a'}}\lesssim b^{\frac{\alpha}{2}}, \ \ \ \ \|\partial_b \tilde{R}_2\|_{X_{r_0}^{a,a'}}\lesssim b^{\frac{\alpha}{2}-1}.
\ee
\end{lemma}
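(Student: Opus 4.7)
The plan is to solve \eqref{id phiext} by two successive perturbative fixed-point arguments built from the homogeneous basis of Lemma \ref{lem:eigenfunctions ext}. I first decompose $H_b = \Le + V_b$ with
\[
V_b(r) = \frac{pc_\infty^{p-1}}{r^2} - pQ_b^{p-1}(r).
\]
From the asymptotic \eqref{asyptotq} and the scaling definition of $Q_b$, Taylor expansion around $\Phi^*$ yields the key smallness $|V_b(r)| \lesssim b^{\alpha/2}/r^{\alpha+2}$ on $[r_0, 1]$ (with polynomial corrections at infinity), and the analogous bound for $\partial_b V_b$ losing a factor $b^{-1}$. This is what makes $V_b$ a small perturbation in the relevant weighted norms.

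Next, I would build $\tilde R_1$ as a solution to
\[
\bigl[\Le - (i-\tfrac\alpha 2 + \tilde\lambda)\bigr]\tilde R_1 = \tilde\lambda \tilde\psi_1,
\]
recast as the fixed-point relation $\tilde R_1 = \tilde\lambda\, \mathcal G(\tilde\psi_1 + \tilde R_1)$, where $\mathcal G$ is a right inverse of $\Le - (i-\alpha/2)$ built by variation of parameters from the basis $(\psi_1, \psi_2)$. The relevant branch of $\mathcal G$ integrates $\psi_2$ against the data only on $[r_0, r]$ to avoid its $e^{r^2/4}$ growth at infinity, and the residual $\psi_1$-freedom is fixed by a boundary condition at $r_0$. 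Using the asymptotics of $\psi_1, \psi_2$ from Lemma \ref{lem:eigenfunctions ext} and the Wronskian weight proportional to $r^{-(d-1)} e^{r^2/4}$, one checks that $\mathcal G$ gains two powers of $r$ at infinity and preserves the $r^{-\gamma_2}$ behaviour near $r_0$, so that it maps $X_{r_0}^{-\gamma_2,\, 2i+2-\gamma}$ to itself with uniform operator norm. For $|\tilde\lambda|$ small this is a contraction, producing $\tilde R_1$ with the advertised bound. Since $b$ and $V_b$ never enter the construction, $\partial_b \tilde R_1 = 0$, and differentiating the fixed-point equation in $\tilde\lambda$ yields the bound on $\partial_{\tilde\lambda} \tilde R_1$.

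With $\tilde R_1$ in hand, plugging the ansatz $\phi_{\text{ext}} = \psi_1 + \tilde\lambda(\tilde\psi_1 + \tilde R_1) + \tilde R_2$ into \eqref{id phiext} and using the equations satisfied by $\psi_1, \tilde\psi_1, \tilde R_1$, a direct algebraic simplification reduces the problem to
\[
\bigl[H_b - (i-\tfrac\alpha 2 + \tilde\lambda)\bigr]\tilde R_2 = -V_b\bigl(\psi_1 + \tilde\lambda(\tilde\psi_1 + \tilde R_1)\bigr),
\]
whose right-hand side carries the smallness $O(b^{\alpha/2})$. I would again use $\mathcal G$ as the leading-order parametrix and rewrite
\[
\tilde R_2 = \mathcal G\bigl[(V_b - \tilde\lambda)\tilde R_2 - V_b\bigl(\psi_1 + \tilde\lambda(\tilde\psi_1 + \tilde R_1)\bigr)\bigr],
\]
which is a contraction on $X_{r_0}^{a, a'}$ with $a = -\gamma_2 - 2 -\alpha$, $a' = 2i+2-\gamma$ provided $b^*$ and $\tilde\lambda^*$ are chosen small enough; the very singular exponent $a$ is exactly what is needed to accommodate the worst amplification produced when $\mathcal G$ is applied to a term of the form $V_b \psi_2$. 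The bounds on $\partial_{\tilde\lambda}\tilde R_2$ and $\partial_b\tilde R_2$ then follow by differentiating the fixed-point relation and feeding in the pointwise bounds on $V_b$ and $\partial_b V_b$ previously established.

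The main obstacle is the delicate simultaneous bookkeeping of the mapping properties of $\mathcal G$ on the two-weight spaces $X_{r_0}^{a,a'}$: the Gaussian factor $e^{r^2/4}$ in the Wronskian makes the exponentially growing homogeneous branch $\psi_2$ at infinity extremely sensitive, so that only the polynomial combinations can be tolerated, while its singularity $r^{-\gamma_2}$ at the origin forces one to use the exact expansions of Lemma \ref{lem:eigenfunctions ext} rather than leading-order estimates alone. Once these operator bounds on $\mathcal G$ are verified, both contraction arguments and the parameter differentiation reduce to the Banach fixed-point theorem and its standard implicit-function counterpart.
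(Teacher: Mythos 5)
Your proposal follows the same route as the paper: decompose $H_b=\Le+V_b$ with $V_b=pc_\infty^{p-1}/r^2-pQ_b^{p-1}=O(b^{\alpha/2}r^{-\alpha-2})$, solve first the $\tilde R_1$ equation and then the $\tilde R_2$ equation by Banach fixed point using the variation-of-parameters right inverse $\mathcal G=[\Le-(i-\alpha/2)]^{-1}$ built from $(\psi_1,\psi_2)$ with the Gaussian Wronskian weight, and obtain the parameter derivatives by differentiating the fixed-point relation; this is exactly what the paper does, with the contraction details left to the reader. (Minor typo: in the $\tilde R_2$ recast the sign of the $(\tilde\lambda-V_b)\tilde R_2$ term is reversed, but this does not affect the argument.)
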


\begin{proof}[Proof of Lemma \ref{lem:ext}] We ensure \eqref{id phiext} by solving
\bee
&&\left( \Le -\left(i-\frac{\alpha}{2}\right) \right)\tilde{R}_1-\tilde \lambda \tilde \psi_1-\tilde{\l}\tilde{R}_1 =0,\\
&&\left( \Le -\left(i-\frac{\alpha}{2}\right) \right)\tilde R_2+\left(\frac{pc_{\infty}^{p-1}}{r^2}-pQ^{p-1}_b \right)(\psi_1+\tilde{\lambda}(\tilde \psi_1+\tilde{R}_1)+\tilde{R}_2)-\tilde \lambda \tilde{R}_2=0.
\eee
For this, we consider the explicit inverse:
$$\left[\Le-(i-\frac{\alpha}{2})\right]^{-1}f(r)=-\psi_1(r)\int_1^{r}f\psi_2s^{d-1}e^{-\frac{s^2}{4}}ds-\psi_2(r)\int_r^{+\infty}f\psi_1s^{d-1}e^{-\frac{s^2}{4}}ds$$ which satisfies from direct check the continuity estimate:  for $a\leq -\gamma_2$, $a\neq \gamma-d$ and $a'>2i-\gamma$, 
$$
\| \Le^{-1}(f)\|_{X_{r_0}^{a,a'}} \lesssim \underset{r_0\leq r \leq 1}{\text{sup}} \ r^{-a}|f|+\underset{ r\geq 1}{\text{sup}} \ r^{-a'}|f|.
$$
This designs again a route map for the construction of $\tilde{R}_1,\tilde{R}_2$ with the Banach fixed point argument which is left to the reader. One additional difficulty is the dependance in $\tilde{\l}$ which is in fact explicit to leading order.
\end{proof}


\subsection{Matching}


We now match the inner and outer solutions provided by Lemma \ref{pr:int} and Lemma \ref{lem:ext}. The following Lemma directly implies Proposition \ref{spectraltheorem}.

\begin{lemma}[Matching]
 \lab{pr:matching}
Fix $\ell \in \mathbb N$. There exists $0<r_0\ll1$ and $b^*(r_0)>0$ small enough, such that for $0<b\leq b^*$ and $j\in \mathbb N$, $0\leq i \leq \ell$, there exists a unique $\tilde \lambda_i=\tilde \l_i(b) \in \mathbb R$ with
\be \lab{bd lambdai}
|\tilde \l_i|\lesssim b^{\frac g2}, \ \ |\partial_b \tilde \l_i|\lesssim b^{\frac g2 -1}
\ee
such that $\phi_i$ defined by
\be \lab{id phi matching}
\phi_i (r)=\left\{ \begin{array}{ll} b^{-\frac{\gamma}{2}}\phi_{\text{int}} \left(\frac{r}{\sqrt b} \right)\ \ \text{if} \ 0\leq r \leq r_0, \\
\frac{b^{-\frac{\gamma}{2}}\phi_{\text{int}} \left(\frac{r_0}{\sqrt b} \right)}{\phi_{\text{ext}(r_0)}} \phi_{\text{ext}}(r) \ \ \text{if} \ r\geq r_0, \end{array} \right.
\ee
where $\phi_{\text{int}}$ and $\phi_{\text{ext}}$ are given by Lemma \ref{pr:int} and Lemma \ref{lem:ext}, is a smooth solution of $H_b \phi_i=\l_i\phi_i$, $\l_i=i-\alpha/2+\tilde \l_i$. Moreover, all the bounds in Proposition \ref{spectraltheorem} hold true.
\end{lemma}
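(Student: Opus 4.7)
The plan is to reduce everything to a single scalar matching equation for $\tilde\l$ and to solve it by the implicit function theorem. Observe first that $\phi_i$ as defined in \fref{id phi matching} is continuous at $r=r_0$ by construction, and each of the two pieces satisfies $H_b u = \l_i u$ on its domain; hence $\phi_i$ will automatically be a classical $C^2$ eigenfunction on $[0,\infty)$ as soon as the first derivative matches at $r_0$. This leaves the single condition
\[
\mathcal W_b(\tilde\l) := \sqrt b\,\phi'_{\text{int}}(y_0)\,\phi_{\text{ext}}(r_0) - b^{\gamma/2}\phi_{\text{int}}(y_0)\,\phi'_{\text{ext}}(r_0) = 0,\qquad y_0 = \frac{r_0}{\sqrt b},
\]
to be solved for $\tilde\l = \tilde\l_i(b)$.

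First I would expand both sides using the asymptotics built into Lemma \ref{pr:int} and Lemma \ref{lem:ext}. On the inner side, the large-$y$ expansion \fref{id expansion Ti} combined with the identifications \fref{eq:relation Cni}, \fref{id cij} gives at $y=y_0$
\[
b^{-\gamma/2}\phi_{\text{int}}(y_0) = \psi_1(r_0) + \tilde\l\, P(r_0) + O(r_0^{g-\gamma}) + O_{r_0}(b^{g/2}),
\]
where $\psi_1$ is the outer homogeneous solution \fref{id psi1} and $P(r) := -\sum_{j=0}^i c_{i,j}C_{j+1}r^{2j+2-\gamma}$ is the polynomial produced by the $T_{j+1}$ part of \fref{id uint}; an analogous expansion holds for the rescaled first derivative. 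On the outer side, \fref{id phiext} together with \fref{bd tildeR1}, \fref{bd tildeR2} gives directly
\[
\phi_{\text{ext}}(r_0) = \psi_1(r_0) + \tilde\l\,\tilde\psi_1(r_0) + O(|\tilde\l|^2) + O(b^{\alpha/2}),
\]
and similarly for $\phi'_{\text{ext}}(r_0)$. Substituting these in $\mathcal W_b$, the $\psi_1$-$\psi_1$ contribution cancels identically (this is the algebraic matching \fref{id psi1} between the outer solution and the inner polynomial leading order), so $\mathcal W_0(0)=0$ and $|\mathcal W_b(0)|\lesssim b^{g/2}$. Moreover
\[
\partial_{\tilde\l}\mathcal W_b(0)\big|_{b=0} = c(r_0)\,W_{r_0}(\psi_1,\tilde\psi_1) + O(r_0^{\delta'}),
\]
where $W_{r_0}$ denotes the ordinary Wronskian at $r_0$ and $c(r_0)\neq 0$ is an explicit factor coming from the inner contribution proportional to $(P,P')$. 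Using $\psi_1(r)\sim r^{-\gamma}$ and $\tilde\psi_1(r)\sim K r^{-\gamma_2}$, $K\neq 0$ as $r\to 0$ from \fref{as tildepsi1 0}, together with $\gamma\neq\gamma_2$ by \fref{defgammtwtp}, this Wronskian is nonzero with a definite polynomial size in $r_0$. For $r_0$ fixed small enough, the non-degeneracy is preserved under the $O(r_0^{\delta'})$ correction and under the $\tilde\l, b$-dependent error terms, and the implicit function theorem produces a unique $\tilde\l_i(b)$ with $|\tilde\l_i(b)|\lesssim b^{g/2}$, which is \fref{bd lambdai}; differentiating the implicit relation in $b$ and inserting $\partial_b R_i = O(1)$ and $\partial_b \tilde R_2 = O(b^{\alpha/2-1})$ yields $|\partial_b\tilde\l_i|\lesssim b^{g/2-1}$.

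I would then read off all remaining bounds of Proposition \ref{spectraltheorem} by reinserting $\tilde\l=\tilde\l_i(b)$ into \fref{id uint} and \fref{id phiext}. The leading term in \fref{calculeignefunction} is obtained by keeping the $\sum_j c_{i,j}b^j T_j(r/\sqrt b)$ piece of the inner representation and absorbing all other contributions (the $\tilde\l$-dependent corrections $S_j$, the remainder $R_i$, the exterior piece minus its matched value) into $\tilde\phi_{i,b}$; the pointwise bounds \fref{poitwisepsijk}--\fref{bd pointwise tildephi} and the $\partial_b$-estimates \fref{bound:partialb} follow in the inner region from \fref{bd Sj}, \fref{bd Ri}, and in the outer region from \fref{bd tildeR1}, \fref{bd tildeR2} combined with the gaussian decay of $\rho_r$ used to convert the sup-norms into weighted $H^1$ norms. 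The closeness estimate \fref{estproximity} follows immediately, and the spectral gap \fref{spectralgaptotalone} follows by perturbing \fref{asyptoticgap} through this closeness and the min-max characterisation of eigenvalues of $H_b$. The main obstacle in executing this plan, and the reason for the strong assumption \fref{confitiondiemsnion} on $g$ and $\alpha$, is the precise bookkeeping of powers of $r_0$, $b$, $\tilde\l$ in $\mathcal W_b$: one must ensure that all error contributions from $S_j$, $R_i$, $\tilde R_1$, $\tilde R_2$ and their derivatives stay strictly below the leading Wronskian uniformly on $0<b<b^*$, so that the implicit function theorem is not merely applied formally but with controlled quantitative constants.
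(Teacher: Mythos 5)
Your proposal follows the paper's own proof closely: both reduce matching to a single scalar equation in $\tilde\l$ parametrized by $b$, expand both sides using the inner and outer lemmas, and solve by exploiting the nondegeneracy of the leading coefficient $(\gamma_2-\gamma)K\neq 0$ coming from the behaviour of $\tilde\psi_1$ near $r=0$. The one genuine structural difference is the finishing move: the paper uses the intermediate value theorem for existence and then invokes the spectral gap \eqref{asyptoticgap} for uniqueness of the eigenvalue, while you invoke the implicit function theorem. This works but requires two caveats. First, the IFT cannot be applied at the base point $(b,\tilde\l)=(0,0)$ because the map degenerates there (the inner scale $y_0=r_0/\sqrt b\to\infty$); what you actually need is either the paper's intermediate-value-plus-monotonicity reading of the same expansion, or a quantitative fixed-point argument performed separately for each fixed small $b>0$. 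Second, the IFT gives uniqueness only of the root $\tilde\l$ in a small interval; to conclude that $\l_i=i-\alpha/2+\tilde\l_i$ is the unique eigenvalue of $H_b$ near $i-\alpha/2$ one still needs the spectral-gap argument the paper cites, since a priori $H_b$ could have a second eigenvalue in that window coming from an eigenfunction whose inner/outer decomposition is not parametrized by your ansatz.

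Two algebraic slips should be repaired before the argument is quantitative. Your $\mathcal W_b$ carries the wrong powers of $b$: differentiating the glued function $\phi_i$ in $r$ and imposing $C^1$ continuity at $r_0$ gives
$$
b^{-1/2}\phi'_{\text{int}}(y_0)\,\phi_{\text{ext}}(r_0)-\phi_{\text{int}}(y_0)\,\phi'_{\text{ext}}(r_0)=0,
$$
not $\sqrt b\,\phi'_{\text{int}}\phi_{\text{ext}}-b^{\gamma/2}\phi_{\text{int}}\phi'_{\text{ext}}=0$; the paper works instead with the equivalent log-derivative form $\Phi[r_0](b,\tilde\l)$. More importantly, the dominant contribution to $\partial_{\tilde\l}$ comes from the \emph{outer} side through $\tilde\psi_1$, producing the paper's coefficient $(\gamma_2-\gamma)Kr_0^{\gamma-\gamma_2-1}$ with $K$ from \eqref{as tildepsi1 0} and the sign controlled by $\gamma_2>\gamma$ from \eqref{defgammtwtp}. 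The inner polynomial $P$ and its derivative give a contribution of size $O(r_0^{2-\gamma})$ against the $O(r_0^{-\gamma_2})$ growth of $\tilde\psi_1$, hence a relative error $O(r_0^{2+\gamma_2-\gamma})$ which is subleading and belongs to your $O(r_0^{\delta'})$ remainder; attributing the nonzero factor $c(r_0)$ to $(P,P')$ misidentifies where the non-degeneracy really comes from and would hand you the wrong constant.
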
 

\begin{proof}[Proof of Lemma \ref{pr:matching}] We rewrite the matching condition \be
\lab{id cond r0} \left\{ \begin{array}{r c l}
 b^{-\frac{\gamma}{2}}\phi_{\text{int}} \left(\frac{r_0}{\sqrt b} \right)=\frac{b^{-\frac{\gamma}{2}}\phi_{\text{int}} \left(\frac{r_0}{\sqrt b} \right)}{\phi_{\text{ext}(r_0)}} \phi_{\text{ext}}(r_0) \\
 b^{-\frac{\gamma}{2}-\frac 12}\phi_{\text{int}}' \left(\frac{r_0}{\sqrt b} \right)=\frac{b^{-\frac{\gamma}{2}}\phi_{\text{int}} \left(\frac{r_0}{\sqrt b} \right)}{\phi_{\text{ext}(r_0)}} \phi'_{\text{ext}}(r_0) \\
\end{array}
\right.  \Longleftrightarrow   \Phi[r_0](b,\tilde \l)=0
\ee
with
$$\Phi [r_0](b,\tilde \l):=\frac{b^{-\frac 12}\phi_{\text{int}}' (y_0)}{\phi_{\text{int}} (y_0)}-\frac{\phi'_{\text{ext}}(r_0) }{\phi_{\text{ext}(r_0)}}
$$
A tedius sequence of computations using the correction bounds of Lemma \ref{pr:int} and Lemma \ref{lem:ext} yields the expansion:
\bee
 &&\Phi[r_0](b,\tilde \l)=\tilde \l (\gamma_2-\gamma)K r_0^{\gamma-\gamma_2-1}[1+O(|\tilde{\l}|)]+O(b^{\frac{g}{2}}) \\
 &&\partial_b \Phi[r_0](b,\tilde \l) = O(b^{-1+\frac g 2})+O(|\tilde \l|b^{-1}r_0^3) -O(b^{\frac{\alpha}{2}-1}) =O(b^{-1+\frac g 2})+O(|\tilde \l|b^{-1}r_0^3)\\
 &&\partial_{\tilde \l} \Phi[r_0](b,\tilde \l)=(\gamma_2-\gamma)K r_0^{\gamma-\gamma_2-1}(1+O(|\tilde{\l}|)).
\eee
Therefore, since $(\gamma_2-\gamma)K\neq 0$, using the intermediate value theorem, for $r_0$ small enough, then for $0<b<b^*$ small enough, there exists at least one $\tilde \lambda_i=\tilde \l_i(b)=O(b^{\frac g2})$ such that the matching condition \fref{id cond r0} is satisfied. The uniqueness of the corresponding eigenvalue is now a simple consequence of the spectral gap \eqref{asyptoticgap} and the smallness \eqref{calculeignefunction}. 
\end{proof}


\section{Setting up the bootstrap argument}
\label{sectionbootstrap}


This section is devoted to the set up of the bootstrap argument at the heart of the proof of Theorem \ref{thmmain}. The initial datum is constructed by following the flow in a suitable regime, and we avoid the instability directions of type II blow up and the additional transversal directions using a nowadays standard outgoing flux argument. In this section, we prepare the analysis and show in particular how the spectral Proposition \ref{diaglb} provides an elementary setting to compute the type II blow up speeds \eqref{loideb} and control the flow in the associated $H^1_{\rho_Y}$ weighted norms.


\subsection{The reconnection function}


In order to build a solution to \eqref{heatnonlin}, we pick $0<T\ll1$ and consider the self similar change of variables
$$u(t,x)=\frac{1}{(T-t)^{\frac{1}{p-1}}}U(\tau,Y), \ \ Y=\frac{x}{\sqrt{T-t}}, \ \ \tau=-\log(T-t),$$ which maps \eqref{heatnonlin} onto the self similar equation 
\be
\label{selfsimilarequation}
\pa_{\tau}U-\Delta U+\frac 12\Lambda U -U|U|^{p-1}=0
\ee
on a global time interval 
\be
\label{renormalizedtime}
\tau\in[\tau_0,+\infty), \ \ \tau_0=-\log T\gg1.
\ee
In order to produce a suitable approximate solution to \eqref{selfsimilarequation}, we let $P_{2\ell}(z)$ be the even Legendre polynomial \eqref{hermitepolynomial} so that 
\be
\label{limiteuig}
\lim_{|z|\to+\infty}P_{2\ell}(z)=+\infty.
\ee Given $0<a<a^*$ small enough, this implies $1+aP_{2\ell}(z)\geq \frac 12$ for all $z\in \Bbb R$ and we may therefore consider 
\be
\label{defmunu}
\mu=\mu(a,z)=(1+\nu(z))^{\frac{1}{\alpha}}, \ \ \nu(z)=aP_{2\ell}(z).
\ee 
Given another parameter $0<b<b^*$ small enough, we introduce the reconnection function $$D=D(a,b,z)=\sqrt{b}\mu=\sqrt{b}(1+aP_{2\ell}(z))^{\frac{1}{\alpha}}$$ and the even cylindrical blow up profile: 
\be
\label{defphia}
\Phi_{a,b}(r,z)=\frac{1}{D^{\frac{2}{p-1}}}Q\left(\frac{r}{D}\right)=\frac{1}{\mu(z)^{\frac 2{p-1}}}Q_b\left(\frac{r}{\mu(z)}\right)
\ee
where we recall the notation \eqref{defqbÙ}. We claim that $\Phi_{a,b}$ is an approximate solution to \eqref{selfsimilarequation} in the following sense:

\begin{lemma}[All order cancellation in $a$]
\label{lemmacancellationa}
Assume that $(a,b)\in \mathcal C^1([\tau_0,\tau^*],(0,a^*]\times(0,b^*])$, then
\be
\label{eqautionprofile}
\pa_\tau\Phi_{a,b}-\Delta\Phi_{a,b}+\frac 12\Lambda\Phi_{a,b}-\Phi_{a,b}|\Phi_{a,b}|^{p-1}=\Psi_1
\ee
with
\be
\label{formulapsi}
\Psi_1  = \left[\frac12 \left(-\frac{b_\tau}{b}+1\right)-\frac{\ell}{\alpha}-\frac1{2\alpha}\frac{\pa_\tau\nu}{1+\nu}\right]\Lambda_r\Phi_{a,b} +\frac{\ell}{\alpha}\Lambda Q_b(r)+\tilde{\Psi}_1
\ee
and
\be
\label{psitildeone}
\Psit_1= \frac{\ell}\alpha \left[\frac{1}{\mu^{\gamma}}\Lambda Q_b\left(\frac{r}{\mu}\right)-\Lambda Q_b(r)\right]-\frac{1}{\alpha^2}\left(\frac{\pa_z\nu}{1+\nu}\right)^2\frac{1}{\mu^{\frac{2}{p-1}}}(\Lambda^2Q_b+\alpha\Lambda Q_b)\left(\frac{r}{\mu}\right).
\ee
\end{lemma}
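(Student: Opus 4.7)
Since $\Phi_{a,b}(r,z) = D^{-2/(p-1)}Q(r/D)$ with $D(a,b,z) = \sqrt{b}\mu(a,z)$ is, for each fixed $z$, a pure rescaling of the soliton profile $Q$ which solves $\Delta_r Q + Q^p = 0$, the change of variables $\rho = r/D$ immediately yields $\Delta_r\Phi_{a,b} + |\Phi_{a,b}|^{p-1}\Phi_{a,b} = 0$ pointwise. The error therefore reduces to
\[
\Psi_1 = \partial_\tau\Phi_{a,b} - \partial_z^2\Phi_{a,b} + \tfrac{1}{2}\Lambda_r\Phi_{a,b} + \tfrac{z}{2}\partial_z\Phi_{a,b},
\]
and the proof is a direct computation of these four terms, packaged using the Hermite identity \eqref{equationhermite}.

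The crucial scaling identity is $D\partial_D\Phi_{a,b} = -\Lambda_r\Phi_{a,b}$, read directly off the explicit formula. Since $\Phi_{a,b}$ depends on $(\tau,z)$ only through $D$, this gives $\partial_\tau\Phi_{a,b} = -\tfrac{D_\tau}{D}\Lambda_r\Phi_{a,b}$ and $\partial_z\Phi_{a,b} = -\tfrac{D_z}{D}\Lambda_r\Phi_{a,b}$; differentiating the latter in $z$, using that $D_z/D$ is $r$-independent and that $[\Lambda_r,\partial_z]=0$, produces
\[
\partial_z^2\Phi_{a,b} = -\partial_z(D_z/D)\,\Lambda_r\Phi_{a,b} + (D_z/D)^2\Lambda_r^2\Phi_{a,b},
\]
with $\Lambda_r^k\Phi_{a,b} = \mu^{-2/(p-1)}\Lambda^k Q_b(r/\mu)$ by the same scaling argument. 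The explicit form $\mu = (1+\nu)^{1/\alpha}$ yields $D_\tau/D = b_\tau/(2b) + \partial_\tau\nu/(\alpha(1+\nu))$ and $D_z/D = \partial_z\nu/(\alpha(1+\nu))$.

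The decisive algebraic step is the Hermite equation for $P_{2\ell}$, which after multiplication by $a$ reads $\partial_z^2\nu - \tfrac{z}{2}\partial_z\nu = -\ell\nu$. This identity collapses the two cross-terms coming from $-\partial_z^2\Phi_{a,b}$ and $\tfrac{z}{2}\partial_z\Phi_{a,b}$ into the single coefficient $-\ell\nu/(\alpha(1+\nu))$ of $\Lambda_r\Phi_{a,b}$. Writing $\nu/(1+\nu) = 1 - 1/(1+\nu)$ produces the constant $-\ell/\alpha$ (the scaling eigenvalue that drives the type II blow-up speed) plus a $\mu$-dependent remainder. Using $\mu^\alpha = 1+\nu$ together with $\Lambda_r\Phi_{a,b} = \mu^{-2/(p-1)}\Lambda Q_b(r/\mu)$, this remainder rewrites as $\tfrac{\ell}{\alpha\mu^\gamma}\Lambda Q_b(r/\mu)$; adding and subtracting $\Lambda Q_b(r)$ isolates the main driving term $\tfrac{\ell}{\alpha}\Lambda Q_b(r)$ and deposits the first bracket of $\tilde\Psi_1$. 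Finally, the two $(\partial_z\nu)^2$ pieces (the coefficient of $\Lambda_r\Phi_{a,b}$ coming from $\partial_z(D_z/D)$, and the coefficient of $\Lambda_r^2\Phi_{a,b}$ coming from $(D_z/D)^2$) recombine, via $\Lambda_r^k\Phi_{a,b} = \mu^{-2/(p-1)}\Lambda^k Q_b(r/\mu)$, into exactly $-\tfrac{1}{\alpha^2}(\partial_z\nu/(1+\nu))^2\mu^{-2/(p-1)}(\Lambda^2 Q_b + \alpha\Lambda Q_b)(r/\mu)$, which is the second bracket of $\tilde\Psi_1$. The main obstacle is pure bookkeeping: the nontrivial content --- what the lemma title refers to as the ``all order cancellation in $a$'' --- is the precise interplay between the Hermite eigenvalue identity and the choice of power $1/\alpha$ in $\mu$, which conspires to collapse the $\mu$-dependence of the coefficient of $\Lambda_r\Phi_{a,b}$ into the compact form displayed and is ultimately responsible for the near-constancy of $a$ asserted in \eqref{loidea}.
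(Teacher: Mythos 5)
Your proposal is correct and follows essentially the same route as the paper's own proof: a brute force computation built on the scaling identity $D\partial_D\Phi_{a,b}=-\Lambda_r\Phi_{a,b}$, the observation that $\Delta_r\Phi_{a,b}+|\Phi_{a,b}|^{p-1}\Phi_{a,b}=0$ pointwise in $z$, and the Hermite eigenvalue equation \eqref{equationhermite} used to collapse the $z$-derivative terms into the coefficient $-\ell\nu/(\alpha(1+\nu))$, with the numerological identity $\alpha=\gamma-\tfrac{2}{p-1}$ then converting $\tfrac{1}{1+\nu}\Lambda_r\Phi_{a,b}$ into $\tfrac{1}{\mu^\gamma}\Lambda Q_b(r/\mu)$. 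One small remark: your computation (like the paper's own displayed computation, which writes this coefficient as $\pa_\tau\mu/\mu=\tfrac1\alpha\tfrac{\pa_\tau\nu}{1+\nu}$) yields the coefficient $\tfrac{1}{\alpha}\tfrac{\pa_\tau\nu}{1+\nu}$ in front of $\Lambda_r\Phi_{a,b}$, whereas \eqref{formulapsi} displays $\tfrac{1}{2\alpha}\tfrac{\pa_\tau\nu}{1+\nu}$; this appears to be a typographical slip in the statement rather than a gap in either proof, and it is immaterial for the subsequent modulation analysis since $a_\tau$ is treated as small in absolute value.
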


\begin{remark} We claim that \eqref{formulapsi}, \eqref{psitildeone} correspond to an all order cancellation in the parameter $a$. Indeed, replace $Q$ by its far away asymptotic expansion \eqref{asyptotq}, then from \eqref{defalpha}, both terms in $\Psit_1$ vanish. The first term in \eqref{formulapsi} will vanish for the laws $$\frac12\left(-\frac{b_\tau}{b}+1\right)-\frac{\ell}{\alpha}=0, \ \ a_\tau=0$$ which yield \eqref{loideb}, \eqref{loidea}, and hence to leading order $$\Psi_1\sim \frac{\ell}{\alpha}\Lambda Q_b(r),$$ see \eqref{degenpsitildeun} and Appendix \ref{appendnonlin} for quantitative statements. This  external force will be adjusted {\em exactly} using an excitation of the eigenmode $\psi_{\ell,0}$.
\end{remark}

\begin{proof} This is a brute force computation. First:
\bee
\pa_\tau \Phi_{a,b}+\frac 12\Lambda_r \Phi_{a,b}&=&\left[\frac12\left(-\frac{b_\tau}{b}+1\right)-\frac{\pa_\tau\mu}{\mu}\right]\Lambda_r\Phi_{a,b}\\
& = & \left[\frac12\left(-\frac{b_\tau}{b}+1\right)-\frac{\ell}{\alpha}-\frac{\pa_\tau\mu}{\mu}\right]\Lambda_r\Phi_{a,b}+\frac{\ell}{\alpha}\Lambda_r\Phi_{a,b}.
\eee
We now let  $$\mu^\alpha=\tilde{\mu}=1+aP_{2\ell}=1+\nu$$ and compute:
\bee
&& \frac{\ell}{\alpha}\Lambda_r\Phi_{a,b}  -\pa_z^2\Phi_{a,b}+\frac 12z\pa_z\Phi_{a,b}\\
& = & \left[\frac{\ell}{\alpha}-\frac{1}{2}z\frac{\pa_z\mu}{\mu}\right]\frac{1}{\mu^{\frac{2}{p-1}}}\Lambda Q_b\left(\frac{r}{\mu}\right)-\frac{1}{\mu^{\frac{2}{p-1}}}\left[\left(\frac{\pa_z\mu}{\mu}\right)^2\Lambda^2Q_b-\pa_{zz}(\log \mu)\Lamdba Q_b\right]\left(\frac{r}{\mu}\right)\\
& = & \left[\frac{\ell}{\alpha}-\frac{1}{2}z\frac{\pa_z\mu}{\mu}+\alpha\left(\frac{\pa_z\mu}{\mu}\right)^2+\pa_{zz}\log \mu\right]\frac{1}{\mu^{\frac{2}{p-1}}}\Lamdba Q_b\left(\frac{r}{\mu}\right)-\frac{1}{\mu^{\frac{2}{p-1}}}\left(\frac{\pa_z\mu}{\mu}\right)^2(\Lambda^2Q_b+\alpha\Lambda Q_b)\left(\frac{r}{\mu}\right)\\
& = & \frac{1}{\alpha \mut}\left[\ell\mut-\frac12z\pa_z\mut+\pa_{zz}\mut\right]\frac{1}{\mu^{\frac{2}{p-1}}}\Lamdba Q_b\left(\frac{r}{\mu}\right)-\frac{1}{\alpha^2}\left(\frac{\pa_z\mut}{\mut}\right)^2\frac{1}{\mu^{\frac{2}{p-1}}}(\Lambda^2Q_b+\alpha\Lambda Q_b)\left(\frac{r}{\mu}\right)\\
& = & \frac{\ell}{\alpha}\frac{1}{1+\nu}\Lambda_r\Phi_{a,b}-\frac{1}{\alpha^2}\left(\frac{\pa_z\nu}{1+\nu}\right)^2\frac{1}{\mu^{\frac{2}{p-1}}}(\Lambda^2Q_b+\alpha\Lambda Q_b)\left(\frac{r}{\mu}\right)
\eee
where we used in the last step \eqref{equationhermite} which ensures: $$\ell\mut-\frac12z\pa_z\mut+\pa_{zz}\mut=\ell-a\left(-\pa_z^2+\frac 12 z\pa_z-\ell\right)P_{2\ell}=\ell.$$ We now observe from \eqref{defalpha}: $$ \frac{\ell}{\alpha}\frac{1}{1+\nu}\Lambda_r\Phi_{a,b}=\frac{\ell}{\alpha}\frac{1}{\mu^\gamma}\Lambda Q_b\left(\frac{r}{\mu}\right)$$ and, since $\Delta_rQ+|Q|^{p-1}Q=0$, \eqref{formulapsi} and \eqref{psitildeone} follow.
\end{proof}


\subsection{Geometrical decomposition of the flow}


We now describe our set of initial data and the associated bootstrap bounds required to control their time evolution through the renormalized flow \eqref{selfsimilarequation}.\\

Let $(\psi_{j,k})_{0\le j\leq \ell, 0\le k\le \ell-j}$ be the eigenmodes of Proposition \ref{diaglb}, then given parameters $$0<b<b^*, \ \ 0<a<a^*$$ small enough, we first claim the non degeneracy of the scalar products generated by the profile $\Phi_{a,b}$ given by \eqref{defphia} with the eigenbasis.

\begin{lemma}[Computation of the scalar products]
There holds for some universal constants $I\neq 0$ and $\delta>0$ for $0\le j\le \ell$, $0\le k\le \ell-j$:
\bea
\label{nondegenone}
&&\left(b\pa_b\Phi_{a,b},\psi_{j,k}\right)_{L^2_{\rho_Y}}=-\frac I2(\sqrt{b})^{\alpha}\left[\delta_{j0}\delta_{k0}+O(b^\delta+a)\right]\\
\label{nondegenonebis}&&\left(\pa_a\Phi_{a,b},\psi_{j,k}\right)_{L^2_{\rho_Y}}=-\frac I\alpha(\sqrt{b})^{\alpha}\left[\delta_{j0}\delta_{k\ell}+O(b^\delta+a)\right].
\eea
\end{lemma}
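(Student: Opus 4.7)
The plan is to reduce both scalar products to the single radial quantity $(\Lambda_r Q_b,\phi_{j,b})_{L^2_{\rho_r}}$ and then use the spectral orthogonality of Proposition \ref{propinfty} at the limiting level $b=0$ to extract the $\delta_{j0}$ factor.

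First, I would perform the elementary chain-rule computations on $\Phi_{a,b}=\mu^{-2/(p-1)}Q_b(r/\mu)$ with $\mu=(1+aP_{2\ell})^{1/\alpha}$. Using $b\partial_b Q_b = -\tfrac12 \Lambda_r Q_b$ (which follows from the scaling $Q_b(r)=b^{-1/(p-1)}Q(r/\sqrt b)$ and $r\partial_r = y\partial_y$), one obtains
$$ b\partial_b \Phi_{a,b} = -\tfrac{1}{2\mu^{2/(p-1)}}(\Lambda_r Q_b)(r/\mu),\qquad \partial_a \Phi_{a,b} = -\tfrac{P_{2\ell}(z)}{\alpha\,\mu^{2/(p-1)+\alpha}}(\Lambda_r Q_b)(r/\mu).$$
Evaluating at $a=0$ kills the $z$-dependence of the $r$-factor and gives $b\partial_b\Phi_{0,b}=-\tfrac12\Lambda_r Q_b(r)$ and $\partial_a\Phi_{a,b}|_{a=0}=-\tfrac{1}{\alpha}P_{2\ell}(z)\Lambda_r Q_b(r)$. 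Inserting $\psi_{j,k}=\phi_{j,b}(r)P_{2k}(z)$, Fubini together with the orthonormality \eqref{orthohonermite} yields
$$ (b\partial_b\Phi_{0,b},\psi_{j,k})_{L^2_{\rho_Y}} = -\tfrac12 \delta_{k0}(\Lambda_r Q_b,\phi_{j,b})_{L^2_{\rho_r}}, \qquad (\partial_a\Phi_{a,b}|_{a=0},\psi_{j,k})_{L^2_{\rho_Y}} = -\tfrac1\alpha \delta_{k\ell}(\Lambda_r Q_b,\phi_{j,b})_{L^2_{\rho_r}},$$
so both scalar products reduce to the computation of the same radial integral.

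The heart of the matter is then the asymptotic evaluation of $(\Lambda_r Q_b,\phi_{j,b})_{L^2_{\rho_r}}$. Using $\Lambda_r Q_b(r)=b^{-1/(p-1)}\Lambda Q(r/\sqrt b)$ together with the tail expansion \eqref{id LambdaQ}, one has on the outer region $r\gtrsim \sqrt{b}$:
$$ \Lambda_r Q_b(r) = c_*(\sqrt{b})^{\alpha} r^{-\gamma}\bigl(1+O((\sqrt b/r)^{g})\bigr),$$
while on the inner region $r\lesssim \sqrt{b}$ one has the uniform bound $|\Lambda_r Q_b|\lesssim b^{-1/(p-1)}$. Splitting the integral at $r=\sqrt b$, using the pointwise bounds \eqref{poitwisepsijk} on $\phi_{j,b}$, the inner contribution is $O((\sqrt b)^{d-4/(p-1)-\alpha})$ which is negligible compared to $(\sqrt b)^{\alpha}$ thanks to \eqref{confitiondiemsnion}. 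On the outer region, the proximity estimate \eqref{estproximity} allows replacing $\phi_{j,b}$ by $\phi_{j,\infty}$ modulo a $O(b^{g/2})$ error, so the leading term is
$$ c_*(\sqrt b)^{\alpha}\int_0^{+\infty} r^{-\gamma}\phi_{j,\infty}(r)\,r^{d-1}\rho_r\,dr.$$
Now the crucial algebraic observation: by Proposition \ref{propinfty}, $\phi_{0,\infty}(r)=r^{-\gamma}$ (using $C_0=1$ and $c_{0,0}=1$), so this integral equals $(\phi_{0,\infty},\phi_{j,\infty})_{L^2_{\rho_r}}=\delta_{j0}\|\phi_{0,\infty}\|^2_{L^2_{\rho_r}}$ by self-adjointness of $H_\infty$. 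Setting $I=c_*\|\phi_{0,\infty}\|_{L^2_{\rho_r}}^2\neq 0$, this yields $(\Lambda_r Q_b,\phi_{j,b})_{L^2_{\rho_r}}=I(\sqrt b)^{\alpha}[\delta_{j0}+O(b^\delta)]$.

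Combining with step two immediately gives the two claimed identities in the case $a=0$. The $a$-dependence is the last obstacle: one must show that, uniformly in $j,k$ and in the other parameters, $b\partial_b\Phi_{a,b}-b\partial_b\Phi_{0,b}=O(a)(\sqrt b)^{\alpha}r^{-\gamma}$ in an adequate sense on the outer region (and analogously for the inner region and for $\partial_a$). This is obtained by a Taylor expansion of $\mu^{-2/(p-1)}(\Lambda_r Q_b)(r/\mu)$ in the parameter $a$ around $a=0$, using $\mu=1+O(a)$ and the pointwise bounds on $\Lambda_r Q_b$ and its derivatives, which give a scalar product contribution of size $O(a)(\sqrt b)^{\alpha}$ absorbable in the error. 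The main subtlety lies in checking that this Taylor-type expansion remains compatible with the Gaussian weight $\rho_r$, since the rescaling $r\mapsto r/\mu$ slightly modifies the measure; this is routine using $|\mu-1|\lesssim a$ uniformly and the pointwise bounds \eqref{poitwisepsijk}.
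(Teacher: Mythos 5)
Your reduction of both scalar products to the single radial integral $(\Lambda_r Q_b,\phi_{j,b})_{L^2_{\rho_r}}$ is correct and is where the paper's proof also begins. For the evaluation of this integral you take a slightly different route from the paper: you use the proximity estimate \eqref{estproximity} to pass from $\phi_{j,b}$ to $\phi_{j,\infty}$, then invoke the explicit formula $\phi_{0,\infty}=r^{-\gamma}$ (valid since $c_{0,0}=C_0=1$) together with orthogonality of the $\phi_{j,\infty}$ under $H_\infty$. The paper instead exploits \eqref{calculeignefunction} for $i=0$ and \eqref{id Ti}, which give $\frac{1}{(\sqrt b)^\alpha}\Lambda Q_b=\phi_{0,b}-\tilde\phi_{0,b}$ up to normalization, and then orthogonality of the $\phi_{j,b}$ for the operator $H_b$ itself. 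Your detour through $H_\infty$ is sound and converges to the same $\delta_{j0}$ cancellation, but it costs one extra step (the $\phi_{j,b}\to\phi_{j,\infty}$ replacement) that the paper avoids, and it leaves the tail expansion of $\Lambda Q$ carrying a bit more weight than strictly necessary: it is used both to compare $\Lambda_r Q_b$ to $c_*(\sqrt b)^\alpha r^{-\gamma}$ and (implicitly) to compute the constant $I$, whereas the paper only needs it for the norm computation \eqref{controlenorme}.

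The genuine gap is in your handling of the $a$-correction, which you describe as ``routine using $|\mu-1|\lesssim a$ uniformly''. This is false: $\mu=(1+aP_{2\ell}(z))^{1/\alpha}$ is \emph{unbounded} in $z$, with $\mu\to+\infty$ as $|z|\to\infty$, so $|\mu-1|$ is not $O(a)$ uniformly. Concretely, a Taylor expansion in the dilation parameter $D=\sqrt b\,\mu$ gives, using $|\Lambda^2Q(y)|\lesssim\min(1,y^{-\gamma})$,
\begin{equation*}
\left|\Lambda_r\Phi_{a,b}-\Lambda Q_b\right|\lesssim r^{-\gamma}\left|D^\alpha-(\sqrt b)^\alpha\right|=r^{-\gamma}(\sqrt b)^\alpha\,|a\,P_{2\ell}(z)|,
\end{equation*}
so the pointwise error you claim, ``$b\partial_b\Phi_{a,b}-b\partial_b\Phi_{0,b}=O(a)(\sqrt b)^\alpha r^{-\gamma}$'', is missing a factor $P_{2\ell}(z)$ which grows like $z^{2\ell}$. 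This growth is only absorbed after integrating against the Gaussian weight $\rho_z$, which is precisely what the paper does in establishing \eqref{controlerreur} (a norm estimate $\|\Lambda_r\Phi_{a,b}-\Lambda Q_b\|_{H^1_{\rho_Y}}\lesssim|a|(\sqrt b)^\alpha$, not a pointwise one), using both the Gaussian decay in $z$ and the integrability condition $d-2\gamma-2=\sqrt\Delta>0$ at the origin. Your remark also misidentifies the source of the subtlety as the $\rho_r$ weight; the rescaling $r\mapsto r/\mu$ is controlled by exactly the same mechanism, but the unboundedness of $\mu$ in $z$ is the part that actually needs the $\rho_z$ weight, and your plan does not acknowledge it.
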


\begin{proof} The proof follows from the asymptotics \eqref{asyptotq} and the relation \eqref{defgamma} to provide integrability at the origin.\\

\noindent{\bf step 1} Norm computation. Let $\zeta\in \Bbb R$, $\mu\geq \frac 12$, $k\in \{0,1\}$, $(i,j)\in \Bbb N^2$, we claim:
\bea
\label{controlenorme}
&& \left\|\frac{1}{(\sqrt{b})^{\alpha}}\pa_r^k\left(\frac{1}{\mu^{\frac{2}{p-1}}}\Lambda^ iQ_b+\zeta\frac{1}{\mu^{\frac{2}{p-1}}}\Lambda^jQ_b\right)\left(\frac{r}{\mu}\right)\right\|_{L^2_{\rho_r}}^2\\
\nonumber &=& c^2(\gamma)_k^2\mu^{2\left(\frac d2-\frac{2}{p-1}-k\right)}\left[1+O(\mu^2-1)\right]\left[(-\alpha)^i+\zeta(-\alpha)^j)+O(b^\delta)\right]^2\int_0^{+\infty}\frac{1}{r^{2\gamma+2k}}\rho_rr^{d-1}dr.
\eea
for some universal constant $c>0$ and $\delta>0$ smal enough. Indeed, recall the asymptotics \eqref{asyptotq} which implies for $c_k=c(-1)^k(\gamma)_k\neq 0$
\be\label{asymptoticexpansion}
\pa_y^k\Lambda^iQ(y)=c_k\frac{(-\alpha)^i}{y^{\gamma+k}}+O\left(\frac{1}{y^{\gamma+\delta+k}}\right)\ \ \mbox{for}\ \ |y|\gg1, \ \ k\in \{0,1\}.\ee Let $A=\frac{1}{\sqrt{b}^\delta}$ for $\delta$ universal small enough, then for $k\in \{0,1\}$ and changing variables:
\bea
\label{degeneracyfundmantal}
\nonumber&&\frac{1}{\mu^{2\left(\frac d2-\frac{2}{p-1}-k\right)}}\left\|\frac{1}{(\sqrt{b})^{\alpha}}\pa^k_r\left[\frac{1}{\mu^{\frac{2}{p-1}}}(\Lambda^i Q_b+\zeta\Lambda^jQ_b)\left(\frac{r}{\mu}\right)\right]\right\|_{L^2_{\rho_r}}^2\\
\nonumber &=&\int_0^{+\infty} \left|\frac{1}{(\sqrt{b})^{\alpha}}\pa^k_r(\Lambda^ i Q_b+\zeta\Lambda^jQ_b)\left(r\right)\right|e^{-\frac{\mu^2r^2}{4}}r^{d-1}dr\\
\nonumber &=& \int_{r\leq A\sqrt{b}}\left|\frac{1}{(\sqrt{b})^{\alpha+\frac{2}{p-1}+k}}\left[\pa^k_y(\Lambda^ i Q+\zeta\Lambda^jQ)\right]\left(\frac{r}{\sqrt{b}}\right)\right|^2e^{-\frac{\mu^2r^2}{4}}r^{d-1}dr\\
\nonumber&+& \int_{r\geq A\sqrt{b}}\left|\frac{c_k}{(\sqrt{b})^{\alpha+\frac{2}{p-1}+k}}\frac{(\sqrt{b})^{\gamma+k}}{r^{\gamma+k}}\left((-\alpha)^i+\zeta(-\alpha)^j+O\left(\frac{1}{A^\delta}\right)\right)\right|^2e^{-\frac{\mu^2r^2}{4}}r^{d-1}dr\\
\nonumber& = & O\left(\frac{(A\sqrt{b})^{d}}{(\sqrt{b})^{2\gamma+2k}}\right)+\left[c_k^2((-\alpha)^i+\zeta(-\alpha)^j))^2+O\left(\frac{1}{A^\delta}\right)\right]\int_{r\geq A\sqrt b}\frac{1}{r^{2\gamma+2k}}e^{-\frac{\mu^2r^2}{4}}r^{d-1}dr\\
& = &  O\left(b^\delta\right)+c_k^2\left[(-\alpha)^i+\zeta(-\alpha)^j)+O\left(b^\delta\right)\right]^2\int_0^{+\infty}\frac{1}{r^{2\gamma+2k}}e^{-\frac{\mu^2r^2}{4}}r^{d-1}dr.
\eea
where we used \be
\label{estfumdamental}
d-1-(2\gamma+2)=-1+\sqrt{\Delta}>-1.
\ee
We now write $$\left|e^{-\frac{\mu^2r^2}{4}}-e^{-\frac{r^2}{4}}\right|=\left|\int_1^{\mu}\mu'\frac{r^2}{2}e^{-\frac{(\mu')^2r^2}{4}}d\mu'\right|\lesssim |\mu^2-1|e^{-\frac{r^2}{16}}$$ from $\mu\geq \frac 12$ and \eqref{controlenorme} follows.\\

\noindent{\bf step 2} $a$ correction. We now estimate the $a$ correction and claim:
\be
\label{controlerreur}
\|\Lambda_r\Phi_{a,b}-\Lambda Q_b(r)\|_{H^1_{\rho_Y}}\lesssim  |a|\sqrt{b}^\alpha.
\ee
Indeed,
$$\frac{d}{d\mu}\Lambda_r\Phi_{a,b}=-\frac{1}{\mu^{1+\frac2{p-1}}}\Lambda^2Q_b\left(\frac{r}{\mu}\right).
$$ The global bound 
\be
\label{roughboundlambdaq}
|\pa_i\Lambda^j Q|\lesssim \frac{1}{1+|y|^{\gamma+i}}, \ \ i=0,1
\ee
implies the global control $$|\pa_\mu \pa_r^i\Lambda_r\Phi_{a,b}|\lesssim \frac{1}{\mu}\frac{1}{(\mu\sqrt{b})^i}\frac{1}{(\mu\sqrt{b})^{\frac{2}{p-1}}}\frac{1}{1+\left(\frac{r}{\mu\sqrt{b}}\right)^{\gamma+i}}\lesssim \frac{\mu^{\alpha-1}(\sqrt{b})^\alpha}{r^{\gamma+i}}$$ which together with the Taylor Lagrange formula yields:
$$\pa_r^i\Lambda_r\Phi_{a,b}=\pa_r^i\Lambda Q_b(r)+O\left(\int_1^\mu (\sqrt{b})^\alpha\frac{(\mu')^{\alpha-1}}{r^{\gamma+i}}d\mu'\right)=  \pa_r^i\Lambda Q_b(r)+O\left((\sqrt{b})^\alpha\frac{|\mu^\alpha-1|}{r^{\gamma+i}}\right)
$$
and hence, since $d-2\gamma-2=\sqrt{\Delta}>0$ and $\mu^{\alpha}-1=aP_{2\ell}$, $$\|\pa_r^i\left(\Lambda_r\Phi_{a,b}-\Lambda Q_b(r)\right)\|_{L^2_{\rho_Y}}\lesssim |a|\sqrt{b}^\alpha\|\frac{P_{2\ell}}{r^{\gamma+i}}\|_{L^2_{\rho_Y}}\lesssim |a|\sqrt{b}^\alpha, \ \ i=0,1$$
 and \eqref{controlerreur} is proved.\\

\noindent{\bf step 3} Proof of \eqref{nondegenone}, \eqref{nondegenonebis}. We compute:
$$\pa_b\Phi_{a,b}=-\frac{\pa_b D}{D}\Lambda_r\Phi_{a,b}=-\frac{1}{2b}\Lambda_r\Phi_{a,b}.$$ Now from \eqref{calculeignefunction}: 
\be
\label{einoenoneoeon}
\phi_{0,b}=\frac{1}{(\sqrt{b})^\alpha}\Lambda Q_b+\tilde{\phi}_{0,b}, \ \ \|\tilde{\phi}_{0,b}\|_{H^1_{\rho_r}}\lesssim b^{\delta}.
\ee
Moreover, \eqref{controlenorme} with $\mu=1$ yields the expansion:
\be
\label{definitioni}
\|\frac{1}{\sqrt{b}^\alpha}\Lambda Q_b\|_{L^2_{\rho_r}}^2=I+O(b^\delta), \ \ I:=c^2\int_0^{+\infty}\frac{r^{d-1}}{r^{2\gamma}}e^{-\frac{r^2}{4}}r^{d-1}dr
\ee
and hence using the $L^2_{\rho_z}$ orthonormality of Hermite polynomials and the $L^2_{\rho_r}$ orthogonality of the $\phi_{i,b}$ and the fact that $P_0=1$ :
 \be
 \label{scalrapronienoevni}
 \left(\frac{1}{(\sqrt{b})^\alpha}\Lambda Q_b,\psi_{j,k}\right)_{L^2_{\rho_Y}}= \left(\frac{1}{(\sqrt{b})^\alpha}\Lambda Q_b,\phi_{j,b}P_{2k}\right)_{L^2_{\rho_Y}}=I\delta_{j0}\delta_{k0}+O(b^\delta).
 \ee 
 We conclude using \eqref{controlerreur}: 
 \bee
 \left(\pa_b\Phi_{a,b},\psi_{j,k}\right)_{L^2_{\rho_Y}}&=&-\frac{1}{2b}(\Lambda_r\Phi_{a,b},\phi_{j,b}P_{2k})_{L^2_{\rho_Y}}=-\frac{1}{2b}\left[(\Lambda Q_b(r),\phi_{j,b}P_{2k})_{L^2_{\rho_Y}}+O(a(\sqrt{b})^\alpha)\right]\\
 &=& -\frac{(\sqrt{b})^\alpha}{2b}\left[I\delta_{j0}\delta_{k0}+O(a+b^\delta)\right]
 \eee
 and \eqref{nondegenone} is proved. similarly, $$\pa_a\Phi_{a,b}=-\frac{\pa_a D}{D}\Lambda_r\Phi_{a,b}=-\frac{1}{\alpha}\frac{P_{2\ell}}{1+aP_{2\ell}}\Lambda_r\Phi_{a,b}$$ and hence using the exponential localization of the $\rho_zdz$ measure and the $L^2_{\rho_z}$ orthogonality of Hermite polynomials:
 \bee
 &&\left(\pa_a\Phi_{a,b},\psi_{j,k}\right)_{L^2_{\rho_Y}}=-\frac{1}{\alpha}\left(\frac{P_{2\ell}}{1+aP_{2\ell}}\Lambda_r\Phi_{a,b},\psi_{j,k}\right)_{L^2_{\rho_Y}}\\
 &=&-\frac{1}{\alpha}\left[\left(\frac{P_{2\ell}}{1+aP_{2\ell}}\Lambda Q_b(r),\psi_{j,k}\right)_{L^2_{\rho_Y}}+O(a(\sqrt{b})^\alpha)\right]\\
 &=&-\frac{(\sqrt{b})^{\alpha}}{\alpha}\left[\left(\frac{P_{2\ell}}{1+aP_{2\ell}}\phi_{0,b},\phi_{j,b}P_{2k}\right)_{L^2_{\rho_Y}}+O(a+b^\delta)\right]= -\frac{(\sqrt{b})^{\alpha}}{\alpha}\left[I\delta_{j0}\delta_{k\ell}+O(a+b^\delta)\right]
 \eee
 and \eqref{nondegenonebis} is proved.
\end{proof}

A standard application of the implicit function theorem using \eqref{nondegenone}, \eqref{nondegenonebis} and the smallness \eqref{bound:partialb} now ensures the following. Given a constant $K>0$ , there exist $0<a^*(K),b^*(K),c^*(K)\ll1$ and a universal constant such that for all $$|a_1|<a^*(K), \ \ 0<b_1<b^*(K)$$ the following holds: recall the definition of the set of indices \fref{def mathcal I} and let parameters
$${\bf b}_1=(b_{j,k})_1, \ \ (j,k)\in \mathcal I$$ with $|{\bf b}_1|<K(\sqrt{b_1})^\alpha$, 
 and define the finite dimensional projection
$$
  \psi_{{\bf b}_1,b_1}=\sum_{j=1}^\ell (b_{j,0})_1\psi_{j,0,b_1}-\left(\sum_{j=1}^\ell (b_{j,0})_1\right)\psi_{0,0,b_1}+\sum_{k=1}^{\ell-1}(b_{0,k})_1\psi_{0,k,b_1}+\sum_{j=1}^{\ell-1}\sum_{k=1}^{\ell-j}(b_{j,k})_1\psi_{j,k,b_1}.
 $$
Let $\chi$ be a cut-off function, $\chi(Y)=1$ for $|Y|\leq 1$ and $\chi(Y)=0$ for $|Y|\geq 2$ and $\kappa>0$ is universal. Then any function of the form
\be
\label{estinittaldata}
U=\Phi_{a_1,b_1}+ \chi(b_1^{\kappa}Y) \psi_{{\bf b}_1,b_1}+\e_1
\ee
with $\|\e_1\|_{L^2_{\rho_Y}}\leq c^*(K)(\sqrt{b_1})^\alpha$ can be uniquely reparametrized as 
\be \lab{decomp U}
U=\Phi_{a,b}+\psi_{{\bf b},b}+\e,
\ee
where $\e$ satisfies the orthogonality conditions:
\be
\label{orthoe}
(\e,\psi_{j,k,b})_{L^2_{\rho_Y}}=0, \ \ 0\leq j\leq \ell, \ \ 0\leq k\leq \ell-j.
\ee
Moreover, $$\left|\frac{b}{b_1}-1\right|+|a_1-a|<\sqrt{c^*(K)}$$ and $$|{\bf b}-{\bf b}_1|+\|\e\|_{L^2_{\rho_Y}}<\sqrt{c^*(K)}(\sqrt{b_1})^\alpha.$$ 
We therefore pick an initial data of the form \eqref{estinittaldata} ie equivalently $$U(\tau_0)=\Phi_{a,b}(\tau_0)+\e(\tau_0)+\psi(\tau_0)$$ with $\e(\tau_0)$ satisfying \eqref{orthoe} and 
\be
\label{initialsmallnessboot}
\|\e(\tau_0)\|_{H^1_{\rho_Y}}<c(\sqrt{b(\tau_0)})^{\alpha}, \ \ |b_{j,k}(\tau_0)|<K(\sqrt{b(\tau_0)})^{\alpha}
\ee
and where we note to ease notations 
\be
\label{defpsifinal}
\psi=\psi_{{\bf b},b}=\sum_{j=1}^\ell b_{j,0}\psi_{j,0}-\left(\sum_{j=1}^\ell b_{j,0}\right)\psi_{0,0}+\sum_{k=1}^{\ell-1}b_{0,k}\psi_{0,k}+\sum_{j=1}^{\ell-1}\sum_{k=1}^{\ell-j}b_{j,k}\psi_{j,k}.
\ee  Then the corresponding solution \eqref{selfsimilarequation} admits a unique time dependent decomposition 
\be
\label{deompe}
U=\Phi_{a,b}+V, \ \ V=\psi+\e
\ee 
with $\e(\tau,\cdot)$ satisfying \eqref{orthoe} on any time interval $[0,\tau^*]$ such that:
$$
\forall \tau\in[0,\tau^*], \ \ 0<b(\tau)<b^*(K), \ \ |a(\tau)|<a^*(K)
$$
and 
$$\|\e(\tau)\|_{H^1_{\rho_Y}}<c(\sqrt{b(\tau)})^{\alpha}, \ \ |b_{j,k}(\tau)|<K(\sqrt{b(\tau)})^{\alpha}$$
For initial data satisfying additional regularity assumptions as the one that we will now consider, the regularity $(a,b,b_{j,k})\in \mathcal C^1([\tau_0,\tau^*],\Bbb R)$ is a standard consequence of the smoothness of the flow for \eqref{heatnonlin}. We refer to \cite{HR,CMR,MRR} for related statements.


\subsection{Bootstrap}


We now specify more carefully a set of bootstrap estimates in the variables of the decomposition \eqref{deompe}. In particular the exponentially weighted norms {\em do not provide enough information} to control the full nonlinear flow. Let us define
\be
\label{renormalizedbvariables}
b_{j,k}= b_{\ell,0}\tilde{b}_{j,k}, \ \ (\sqrt{b})^\alpha=-\alpha b_{\ell,0}(1+\tilde{b}).
\ee 
We pick two small enough universal constants $0<\tilde{\eta}\ll \eta\ll1$, a large enough integer $q\gg1$, a large enough universal constant $K\gg1$, and assume the following bounds on the initial data:
\begin{itemize}
\item normalization of the dominant mode:
\be
\label{initmode}
b_{\ell,0}(\tau_0)=-e^{-(\ell-\frac\alpha 2)\tau_0};
\ee
\item $a_0$ is very large compared to $b(\tau_0)$ and nonnegative:
\be
\label{condiotnazero}
0<\frac{1}{|\log b(\tau_0)|^{\frac 1K}}<a(\tau_0)<a^*\ll1;
\ee
\item initial exit condition:
\be
\label{exitcondition}
|\tilde{b}(\tau_0)|^2+\sum_{(j,k)\neq(\ell,0), j+k\leq \ell} |\tilde{b}_{j,k}(\tau_0)|^2< (\sqrt{b_0})^{2\eta};
\ee
\item initial smallness of the weighted norm:
\be
\label{smallnorminit}
\|\e(\tau_0)\|_{L^2_{\rho_Y}}< (\sqrt{b_0})^{\alpha+2\eta},
\ee
\item inital smallness of the global $W^{1,2q+2}$ like norm:
\be
\label{smallnorminitsobolev}
\mathcal N(\tau_0)< (\sqrt{b_0})^{2\tilde{\eta}}
\ee
where $\mathcal N$ is given \eqref{defnorme} and is constructed to control in particular 
\be
\label{controllinfty}
\|\phi V\|_{L^\infty}\lesssim \mathcal N(V).
\ee
where $\phi$ is a smooth function satisfying 
 \be
 \label{defphi}
\phi(r)=\left|\begin{array}{ll} 
r^{\frac{2}{p-1}}\ \ \mbox{for}\ \ 0\leq r\leq 1\\
1 \ \ \mbox{for}\  \ r\geq 2.\end{array}\right.
 \ee

\end{itemize}

The fact that our set of initial data is non empty and contains smooth profiles with finite energy follows from an elementary localization claim which is left to the reader, see for example \cite{CRS} for related computations. We then define $\tau_0\leq \tau^*\leq +\infty$ as the supremum of times such that the following bounds hold on $[\tau_0,\tau^*)$:
\begin{itemize}
\item control of the dominant modes:
\be
\label{initmodeboot}
-2<b_{\ell,0}e^{(\ell-\frac\alpha 2)\tau}<-\frac 12;
\ee
\item control of $a$:
\be
\label{controlea}
\frac{a(\tau_0)}{2}<a<2a(\tau_0),
\ee
\item exit condition:
\be
\label{exitconditionboot}
|\tilde b|^2+\sum_{(j,k)\in \mathcal I}|\tilde b_{j,k}|^2<(\sqrt b)^{2\eta}
\ee
\item smallness of the weighted norm:
\be
\label{smallnorminitboot}
\|\e\|_{L^2_{\rho_Y}}<(\sqrt{b})^{\alpha+\eta},
\ee
\item smallness of the global $W^{1,2q+2}$ like norm:
\be
\label{smallnorminitsobolevboot}
\mathcal N(V)< (\sqrt{b})^{\tilde{\eta}}
\ee
\end{itemize}

The heart of our analysis is the following bootstrap proposition:

\begin{proposition}[Bootstrap]
\label{propbootstrap}
There exists $K,q,\tau^*_0\gg 1$ and $0<\tilde{\eta}\ll \eta \ll 1$ such that the following holds. Let $ \tau_0'\geq \tau_0^*$, $(b_{\ell,0}',\tau_0',a_0',\e_0')$ satisfy \fref{initmode}, \fref{condiotnazero}, \fref{orthoe}, \fref{smallnorminit}, \fref{smallnorminitsobolev}. Then there exists $(\tilde{b}_0', \tilde{b}_{j,k,0}')$ satisfying \fref{exitconditionboot} such that the solution to \eqref{selfsimilarequation} with data at $\tau_0$ given by  \eqref{estinittaldata}, \fref{decomp U} satisfies 
$$
\tau^*=+\infty.
$$
\end{proposition}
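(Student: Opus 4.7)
The plan is a continuity argument on $\tau^*$ combined with a topological (Brouwer-type) selection. I will show that on $[\tau_0,\tau^*)$ the bounds \eqref{initmodeboot}, \eqref{controlea}, \eqref{smallnorminitboot}, \eqref{smallnorminitsobolevboot} on $b_{\ell,0}$, $a$, $\|\varepsilon\|_{L^2_{\rho_Y}}$ and $\mathcal{N}(V)$ are all strictly improved, so that the only way to reach $\tau^*<+\infty$ is by saturation of the finite-dimensional exit condition \eqref{exitconditionboot} through an outgoing direction. Since those directions are in fact outgoing, a deformation argument on the initial unstable parameters $(\tilde b_0',(\tilde b_{j,k,0}')_{j+k<\ell})$ selects a trajectory that avoids the exit, hence $\tau^*=+\infty$.

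The first step is the derivation of modulation equations. Inserting \eqref{deompe} into \eqref{selfsimilarequation} gives
\begin{equation*}
\partial_\tau \varepsilon + \mathcal{L}_b \varepsilon = -\Psi_1 - (\partial_\tau+\mathcal{L}_b)\psi + \mathrm{Mod}(a,b,\mathbf{b}) + N(V),
\end{equation*}
where $\mathrm{Mod}$ collects $b_\tau\partial_b\Phi_{a,b}$, $a_\tau\partial_a\Phi_{a,b}$, the terms $(b_{j,k})_\tau \psi_{j,k,b}$ and the $b$-derivatives of the eigendata, while $N(V)$ is the fully nonlinear term. Differentiating the orthogonality \eqref{orthoe} in $\tau$ and pairing with each $\psi_{j,k,b}$, the non-degeneracy \eqref{nondegenone}--\eqref{nondegenonebis} together with the structure \eqref{formulapsi} of $\Psi_1$ produces a closed system of schematic form
\begin{equation*}
\tfrac{b_\tau}{b}=1-\tfrac{2\ell}{\alpha}+O(b^\delta),\qquad a_\tau=O(b^\delta),\qquad (b_{j,k})_\tau=-\lambda_{j,k}\, b_{j,k}+F_{j,k}.
\end{equation*}
The cancellation in Lemma \ref{lemmacancellationa} is essential: to leading order $\Psi_1$ projects only onto $\psi_{0,0}$ and $\psi_{0,\ell}$, enforcing the $b$-law \eqref{loideb} and the near-constancy of $a$ \eqref{loidea}; the forcings $F_{j,k}$ are controlled using the pointwise bounds \eqref{poitwisepsijk}--\eqref{bd pointwise tildephi}, the bootstrap bounds on the other parameters, $\|\varepsilon\|_{L^2_{\rho_Y}}$ and $\mathcal{N}(V)$. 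After the renormalization \eqref{renormalizedbvariables}, the system for $(\tilde b,(\tilde b_{j,k})_{j+k<\ell})$ becomes $(\tilde b_{j,k})_\tau = (\ell-j-k)\tilde b_{j,k} + (\text{small})$, which is strictly outgoing, and it is these parameters which will be selected.

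The second step is the $L^2_{\rho_Y}$ energy estimate. The identity
\begin{equation*}
\tfrac{1}{2}\tfrac{d}{d\tau}\|\varepsilon\|_{L^2_{\rho_Y}}^2 + (\mathcal{L}_b\varepsilon,\varepsilon)_{L^2_{\rho_Y}} = (\text{forcing},\varepsilon)_{L^2_{\rho_Y}}
\end{equation*}
combined with the spectral gap \eqref{spectralgaptotal} of Proposition \ref{diaglb} yields the coercivity $(\mathcal{L}_b\varepsilon,\varepsilon)_{L^2_{\rho_Y}}\geq \lambda_{\ell,0}\|\varepsilon\|_{L^2_{\rho_Y}}^2 + c(\ell)\|\varepsilon\|_{H^1_{\rho_Y}}^2$, with $\lambda_{\ell,0}=\ell-\alpha/2>0$. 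The nonlinear remainder is handled by Cauchy--Schwarz, the weighted norm on $\varepsilon$ being absorbed into the dissipation and the $L^\infty$ factor being controlled through \eqref{controllinfty} and the cutoff \eqref{defphi}; combined with \eqref{smallnorminitsobolevboot} this strictly improves \eqref{smallnorminitboot} into $\|\varepsilon\|_{L^2_{\rho_Y}}\lesssim (\sqrt b)^{\alpha+2\eta}$. The companion bound $\mathcal{N}(V)\lesssim (\sqrt b)^{2\tilde\eta}$ is obtained by a $W^{1,2q+2}$ energy estimate on the full equation for $V$, exploiting the parabolic structure and the repulsive sign of the linearized operator in a well-chosen subcritical weighted norm as announced in Section \ref{linftybound}.

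I expect the main obstacle to lie in this last $W^{1,2q+2}$ estimate. The linearized operator transitions across the reconnection free boundary $r\sim D(a,b,z)$, and one must exhibit a uniform coercive sign simultaneously in the soliton core $r\lesssim\sqrt{b}$, in the intermediate region $\sqrt{b}\ll r\ll 1$ where the singular profile $\Phi^*$ dominates, and in the far field $r\gtrsim 1$, all while treating $z$ as a parameter through $\mu(z)$. Once this is in place, every bootstrap bound except \eqref{exitconditionboot} is strictly improved, and a standard Brouwer/degree argument on the compact ball of unstable initial data concludes $\tau^*=+\infty$, as in \cite{CRS,Co,Co2}.
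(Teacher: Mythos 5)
Your high‑level plan — improve all continuous bounds on $[\tau_0,\tau^*)$ so that exit can only occur through the finite‑dimensional condition \eqref{exitconditionboot}, then exclude this by Brouwer — is exactly the route of the paper, and your summaries of the modulation equations (Lemma~\ref{lemmaeqmodulation}) and of the $L^2_{\rho_Y}$ dissipation estimate via the spectral gap \eqref{spectralgaptotal} (Lemma~\ref{lemmal2loc}) are consistent with the paper. But the actual content of the proof is the control of $\mathcal N(V)$, which you explicitly leave open (``I expect the main obstacle to lie in this last $W^{1,2q+2}$ estimate''), and the approach you sketch for it would not work: a $W^{1,2q+2}$ energy estimate ``on the full equation for $V$'' in ``a well‑chosen subcritical weighted norm'' does not produce a repulsive sign. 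First, $V=\psi+\e$ contains the polynomially growing tail $\psi\sim(1+r)^{2\ell+2}$, so no unweighted $W^{1,2q+2}$ quantity on $V$ is small; the paper subtracts the dominant finite‑dimensional part $\zeta=b_{\ell,0}(\psi_{\ell,0}-\psi_{0,0})$ and works with $v=V-\zeta$. Second, near the origin the potential $p\Phi_{a,b}^{p-1}\sim 1/b$ in the soliton core and the repulsivity you invoke is \emph{not} a property of any weighting of the $V$ (or $v$) equation: it only appears after conjugating by the soliton tail, $w=v/T$ with $T=\Lambda\Phi_{a,b}$, and then follows from $\Delta_r(\log T)\geq -\gamma(d-2)/r^2$ together with the sharp Hardy constant $\left(\tfrac{d-2}{2}\right)^2>\gamma(d-2-\gamma)$. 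This in turn requires the Sturm–Liouville pointwise bounds $-\gamma/r<\partial_r\log\Lambda Q\leq 0$ and $\partial_{rr}\log\Lambda Q\leq \gamma/r^2$ of Appendix~\ref{appendsoliton}, none of which appear in your sketch.

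Two further points that your plan glosses over and that are genuinely needed. The norm $\mathcal N$ of \eqref{defnorme} is not a single $W^{1,2q+2}$ quantity but a sum of five pieces ($\|v\|_{\rm ext\,loc}$, $\|V\|_{\rm ext\,global,q}$, $\|\la z\ra\pa_z V\|_{\rm ext\,global,q}$, $\|w\|_{\rm int,q}$, $\|\la z\ra w\|_{\rm int,q}$, together with the specific interpolation $(\,\cdot\,)^{1-d/(2q+2)}\|\pa_r\cdot\|^{d/(2q+2)}$): this is designed precisely so that the radial‑derivative bounds, which are lossy by a factor $b^{-Cq}$ (Lemma~\ref{lemmaestwone}), enter only with the small exponent $d/(2q+2)$ and can be absorbed by taking $q$ large — a uniform coercive sign in a single norm is not what is proved. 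Finally, the topological step requires a diagonalization $(\tilde b,\tilde b_{j,k})=P\tilde B_{j,k}$, since the $\tilde b$ equation is coupled to $\sum j\tilde b_{j,0}$, and the eigenvalues $\mu_{j,k}$ are only $\geq 0$ (some vanish when $j+k=\ell$); the strict outgoing sign of \eqref{outgoingcondition} is then produced by the normalization factor $(\sqrt b)^{-\eta}$, which contributes $\tfrac{\eta}{2}(\tfrac{2\ell}{\alpha}-1)>0$. Your phrase ``strictly outgoing'' for the raw system is therefore slightly off; it is the renormalized flow that is outgoing.
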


Following a now classical path, we prove Proposition \ref{propbootstrap} by contradiction and assume that $\tau^*<+\infty$ for all $(\tilde{b}_0', \tilde{b}_{j,k,0})'$ satisfying \fref{exitconditionboot}. We then study the flow on $[\tau_0,\tau^*)$ and prove that the finite dimensional exit condition \eqref{exitcondition} is necessarily saturated at $\tau^*$. The control of the modulation equations will ensure that this condition corresponds to a strictly outgoing finite dimensional vector field, hence yielding a contradiction to Brouwer's theorem.\\

\noindent The application of Brouwer's fixed point theorem requires the study of all small enough initial perturbations along the instable modes. These latter being unbounded at infinity, we use a localisation and take initial data of the form \eqref{estinittaldata}. We however now always use the decomposition \fref{decomp U} in the analysis. We claim that passing from one decomposition to another preserves the bootstrap bounds and do not give the technical but straightforward proof here.\\

From now on, we therefore consider a time interval $[\tau_0,\tau^*)$ on which \eqref{initmodeboot}, \eqref{controlea}, \eqref{exitconditionboot}, \eqref{smallnorminitboot}, \eqref{smallnorminitsobolevboot} hold.\\

\noindent We observe that the bootstrap regime implies the following bounds.

\begin{lemma}[Direct bootstrap estimates]

There exists universal constants $c=c(\ell)>0$ and $C>0$ such that on $[\tau_0,\tau^*]$:
\be \lab{bd bjk bootstrap}
C^{-1} b^{\frac \alpha 2}\leq |b_{\ell,0}| \leq C b^{\frac \alpha 2} \ \ \text{and} \ \  |b_{j,k}|\lesssim b^{\frac \alpha 2+\eta} \ \text{for} \ (j,k)\in \mathcal I, \ (j,k)\neq (\ell,0),
\ee
\be \lab{bd psi bootstrap}
|\psi|\lesssim \frac{b^{\eta}}{(\sqrt b +r)^{\frac{2}{p-1}}} \la r\ra^c\la z \ra^c, \ \ |\psi|\lesssim  \frac{b^{\frac{\alpha}{2}}}{(\sqrt b +r)^{\gamma}} \la r\ra^c\la z \ra^c
\ee
\be \lab{bd V Linftyphi bootstrap}
|V|\lesssim \frac{(\sqrt{b})^{\tilde{\eta}}}{r^{\frac{2}{p-1}}} \ \ \text{for} \ r\leq 2, \ \ |V|\lesssim (\sqrt{b})^{\tilde{\eta}}\ \ \text{for} \ r\geq 2.
\ee

\end{lemma}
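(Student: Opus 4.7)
The three estimates have very different levels of difficulty. For \eqref{bd bjk bootstrap}, inverting the parametrization \eqref{renormalizedbvariables} using $|\tilde b|<(\sqrt b)^\eta\ll 1$ from \eqref{exitconditionboot} immediately yields $|b_{\ell,0}|\sim b^{\alpha/2}$, and the identity $b_{j,k}=b_{\ell,0}\tilde b_{j,k}$ combined with $|\tilde b_{j,k}|<(\sqrt b)^\eta$ delivers a gain of $(\sqrt b)^\eta$ for the off-diagonal coefficients. The $L^\infty$ bound \eqref{bd V Linftyphi bootstrap} is an immediate consequence of the embedding \eqref{controllinfty} applied under the bootstrap hypothesis \eqref{smallnorminitsobolevboot}: one has $\|\phi V\|_{L^\infty}\lesssim \mathcal N(V)<(\sqrt b)^{\tilde\eta}$, which together with the explicit form \eqref{defphi} of the weight produces the two regimes $r\leq 2$ and $r\geq 2$ of the statement.

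The real content is the pair \eqref{bd psi bootstrap}. I would expand $\psi$ according to \eqref{defpsifinal} and combine the pointwise bound $|\phi_{i,b}|\lesssim \la r\ra^{2i+4}/(\sqrt b+r)^\gamma$ of \eqref{poitwisepsijk} with the trivial estimate $|P_{2k}(z)|\lesssim \la z\ra^{2k}$. The second bound of \eqref{bd psi bootstrap}, with envelope $b^{\alpha/2}/(\sqrt b+r)^\gamma$, is then a direct plug-in of $|b_{j,k}|\lesssim b^{\alpha/2}$ into every term. The first bound is strictly sharper than the second only near the soliton core $r\lesssim \sqrt b$; for the contributions indexed by $(j,k)\neq (\ell,0)$ it follows from the improvement $|b_{j,k}|\lesssim b^{\alpha/2}(\sqrt b)^\eta$ supplied by step one, combined with the trivial inequality $b^{\alpha/2}\leq (\sqrt b+r)^\alpha$ which converts the missing $(\sqrt b+r)^{-\alpha}$ factor into a power of $\sqrt b$. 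The delicate piece is the dominant one, namely $b_{\ell,0}\psi_{\ell,0}-b_{\ell,0}\psi_{0,0}=b_{\ell,0}(\phi_{\ell,b}-\phi_{0,b})$ appearing in \eqref{defpsifinal}, for which no prefactor smallness beyond $b^{\alpha/2}$ is available and the bare bound \eqref{poitwisepsijk} is insufficient at the origin. Here one must exploit the cancellation of the leading inner profile $\sqrt b^{-\gamma}T_0(r/\sqrt b)$, which is possible because the coefficient $c_{i,0}=1$ is independent of $i$ in \eqref{calculeignefunction}. Injecting the expansions \eqref{id expansion Ti} of $T_j$ for $j\geq 1$ together with the remainder bound \eqref{bd pointwise tildephi} then produces $|\phi_{\ell,b}-\phi_{0,b}|\lesssim \la r\ra^c(\sqrt b+r)^{2-\gamma}$, so that $|b_{\ell,0}|\,|\phi_{\ell,b}-\phi_{0,b}|$ gains an extra $(\sqrt b+r)^2$ factor over the naive estimate and meets the first target.

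The genuine obstacle is precisely this cancellation step. The subtraction $\bigl(\sum_{j=1}^\ell b_{j,0}\bigr)\psi_{0,0}$ built into \eqref{defpsifinal} is not arbitrary: it has been arranged so that the singular kernel $T_0(r/\sqrt b)/\sqrt b^{\gamma}$, common to all the $\phi_{i,b}$, drops out from the dominant piece. Without this structural cancellation the first estimate of \eqref{bd psi bootstrap} would fail near the soliton core by a positive power of $\sqrt b$, and the $L^\infty$ control of the nonlinear terms feeding the bootstrap argument in the subsequent sections would not close.
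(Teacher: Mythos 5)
Your approach coincides with the paper's: read \eqref{bd bjk bootstrap} off the definition \eqref{renormalizedbvariables} and the exit condition \eqref{exitconditionboot}, read \eqref{bd V Linftyphi bootstrap} off the embedding \eqref{boundlinfty}--\eqref{controllinfty} and the bootstrap bound \eqref{smallnorminitsobolevboot}, and obtain \eqref{bd psi bootstrap} by isolating the dominant block $b_{\ell,0}(\psi_{\ell,0}-\psi_{0,0})$ where the cancellation $c_{i,0}=1$ kills the common inner singularity $\sqrt{b}^{-\gamma}T_0(r/\sqrt b)$. Your structural remark about the design of \eqref{defpsifinal} is also accurate and in fact slightly more illuminating than what the paper states explicitly.

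One intermediate claim is however imprecise: you assert $|\phi_{\ell,b}-\phi_{0,b}|\lesssim \la r\ra^c(\sqrt b+r)^{2-\gamma}$, but the remainder estimate \eqref{bd pointwise tildephi} contributes an extra term $b^{g/2}/(\sqrt b+r)^{\gamma}$ from $\tilde\phi_{\ell,b}-\tilde\phi_{0,b}$, and since $g<2$ this term actually dominates $(\sqrt b+r)^{2-\gamma}$ in the region $r\lesssim\sqrt b$. The correct intermediate bound, as in the paper, is
\[
|\phi_{\ell,b}-\phi_{0,b}|\lesssim \la r\ra^c\left[(\sqrt b+r)^{2-\gamma}+\frac{b^{g/2}}{(\sqrt b+r)^{\gamma}}\right].
\]
The end result \eqref{bd psi bootstrap} is unaffected, because after multiplying by $|b_{\ell,0}|\sim b^{\alpha/2}$ the extra term is bounded by $b^{g/2}/(\sqrt b+r)^{2/(p-1)}\leq b^{\eta}/(\sqrt b+r)^{2/(p-1)}$ using $g/2>\eta$, but the intermediate claim as you wrote it is false at the origin and should carry this correction term.
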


\begin{proof}

\fref{bd bjk bootstrap} is a direct consequence of \fref{renormalizedbvariables} and \fref{exitcondition}. From \fref{defpsifinal}
$$
\psi=b_{\ell,0}(\psi_{\ell,0}-\psi_{0,0})+ \sum_{j=1}^{\ell-1} b_{j,0}\psi_{j,0}-\left(\sum_{j=1}^{\ell-1} b_{j,0}\right)\psi_{0,0}+\sum_{k=1}^{\ell-1}b_{0,k}\psi_{0,k}+\sum_{j=1}^{\ell-1}\sum_{k=1}^{\ell-j}b_{j,k}\psi_{j,k}.
$$
For the first term from \fref{calculeignefunction}, \fref{id expansion Ti}, \fref{bd pointwise tildephi} and \fref{bd bjk bootstrap}, there holds since $c_{i,0}=1$:
\bee
&&\left| b_{\ell,0}(\psi_{\ell,0}-\psi_{0,0}) \right| = | b_{\ell,0}|\left|\sum_{j=1}^{\ell} c_{i,j}\sqrt b^{2j-\gamma}T_j\left(\frac{r}{\sqrt{b}} \right)+\tilde \phi_{\ell}-\tilde{\phi}_0 \right| \\
&\lesssim & \sqrt b^{\alpha}\left|\sum_{j=1}^{\ell} \sqrt b^{2j-\gamma}\left(1+\frac{r}{\sqrt{b}} \right)^{2j-\gamma}+\frac{b^{\frac{g}{2}}}{(\sqrt b +r)^{\gamma}} \right|\lesssim  \frac{\sqrt b^{\alpha}}{(\sqrt b +r)^{\gamma}} \left| \left(\sqrt b+r \right)^{2}+b^{\frac{g}{2}} \right|\la r\ra^c.
\eee
For the second term from \fref{poitwisepsijk} and \fref{bd bjk bootstrap}:
\bee
&&\left|\sum_{j=1}^{\ell-1} b_{j,0}\psi_{j,0}-\left(\sum_{j=1}^{\ell-1} b_{j,0}\right)\psi_{0,0}+\sum_{k=1}^{\ell-1}b_{0,k}\psi_{0,k}+\sum_{j=1}^{\ell-1}\sum_{k=1}^{\ell-j}b_{j,k}\psi_{j,k} \right| \\
&\lesssim &\sum_{\mathcal I \backslash \{(\ell,0) \}} |b_{j,k}|\frac{1}{(\sqrt b +r )^{\gamma}}\la r\ra^c\la z \ra^c \lesssim \frac{b^{\frac{\alpha}{2}+\eta}}{(\sqrt b +r)^{\gamma}} \la r\ra^c\la z \ra^c
\eee
and \fref{bd psi bootstrap} follows from the two above identities. Finally, \fref{bd V Linftyphi bootstrap} is a direct consequence of \fref{smallnorminitsobolevboot} and \fref{boundlinfty}.

\end{proof}


\subsection{Modulation equations}


Injecting the decomposition \eqref{deompe} and the identity \fref{eqautionprofile} into \eqref{selfsimilarequation} yields the $\e$ equation:
\be
\label{eqe}
\pa_\tau\e+\L_b\e=-\Psi+L(V)+\NL(V), \ \ V=\psi+\e
\ee 
with 
\bea
\label{defpsi}
&&\Psi=\Psi_1+\Psi_2, \ \ \Psi_2=\pa_\tau\psi+\L_b\psi\\
\label{defle}
&&L(V)=p(\Phi_{a,b}^{p-1}-Q_b^{p-1})V\\
\label{defnlv}
&& NL(V)=(\Phi_{a,b}+V)|\Phi_{a,b}+V|^{p-1}-\Phi_{a,b}|\Phi_{a,b}|^{p-1}-p\Phi^{p-1}_aV
\eea
We start the study of the flow on $[\tau_0,\tau^*)$ by deriving the modulation equations for the geometrical parameters $(a,b,b_{j,k})$ as a consequence of the orthogonality conditions \eqref{orthoe} and the non degeneracies \eqref{nondegenone}, \eqref{nondegenonebis}. We let 
\be
\label{defgibbg}
B=\frac12 \left(-\frac{b_\tau}{b}+1\right)-\frac{\ell}{\alpha}.
\ee 
In order to prepare the outgoing flux argument, we compute the modulation equations in the variables \eqref{renormalizedbvariables}.

\begin{lemma}[Modulation equations]
\label{lemmaeqmodulation}
Let 
\bee
\Mod & = & \sum_{(j,k)\in \mathcal I}\left|\pa_\tau \bt_{j,k}-(\ell-(k+j))\bt_{j,k}\right|+|a_\tau|+\left|\bt_\tau-\ell\bt+\sum_{j=1}^{\ell-1}\bt_{j,0}\right|\\
&&+ \sum_{(j,k)\in \mathcal I}\frac{|\pa_\tau b_{j,k}+\l_{j,k}b_{j,k}|}{(\sqrt{b})^\alpha},
\eee

then there holds for some small enough  universal constant $\delta>0$:
\be
\label{estparameters}
\Mod\lesssim  \frac{(a+(\sqrt{b})^{\tilde \eta})\|\e\|_{L^2_{\rho_Y}}}{(\sqrt{b})^\alpha}+(\sqrt{b})^\delta+|a|\left(|\bt|+\sum_{j=1}^{\ell-1}|\bt_{j,0}|\right)
\ee

\end{lemma}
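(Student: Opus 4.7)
The plan is to exploit the orthogonality conditions \eqref{orthoe} by differentiating them in $\tau$ and substituting the evolution equation \eqref{eqe}. Differentiating $(\e(\tau),\psi_{j,k,b(\tau)})_{L^2_{\rho_Y}}=0$ yields
\begin{equation*}
(\pa_\tau\e,\psi_{j,k,b})_{L^2_{\rho_Y}}+b_\tau(\e,\pa_b\psi_{j,k,b})_{L^2_{\rho_Y}}=0.
\end{equation*}
Substituting \eqref{eqe}, using the self-adjointness of $\L_b$ together with $\L_b\psi_{j,k,b}=\l_{j,k}\psi_{j,k,b}$ and \eqref{orthoe} to kill the linear term, I obtain the master identity
\begin{equation*}
(\Psi,\psi_{j,k,b})_{L^2_{\rho_Y}}=(L(V),\psi_{j,k,b})_{L^2_{\rho_Y}}+(\NL(V),\psi_{j,k,b})_{L^2_{\rho_Y}}+b_\tau(\e,\pa_b\psi_{j,k,b})_{L^2_{\rho_Y}}
\end{equation*}
for every $(j,k)$ with $0\le j\le \ell$, $0\le k\le \ell-j$. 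This is the one identity from which all the modulation equations fall out.

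Next I expand the left-hand side using $\Psi=\Psi_1+\Psi_2$ with $\Psi_2=\pa_\tau\psi+\L_b\psi$. For $\Psi_2$, writing $\psi$ as in \eqref{defpsifinal}, its time derivative yields on the one hand terms $\pa_\tau b_{j',k'}\psi_{j',k',b}$ and on the other $b_\tau b_{j',k'}\pa_b\psi_{j',k',b}$, while $\L_b\psi$ contributes $\l_{j',k'}b_{j',k'}\psi_{j',k',b}$. Pairing with $\psi_{j,k,b}$ and using the $L^2_{\rho_Y}$-orthogonality of the $\psi_{j',k',b}$ (recall \eqref{definitioni}), the diagonal component extracts precisely $\pa_\tau b_{j,k}+\l_{j,k}b_{j,k}$, modulo off-diagonal $b_\tau$-contributions bounded through \eqref{bound:partialb}. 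For $\Psi_1$ from \eqref{formulapsi}, the two principal pieces $B\,\Lambda_r\Phi_{a,b}$ and $\frac{\ell}{\alpha}\Lambda Q_b$ both project non-trivially on the $(0,0)$-mode via \eqref{nondegenone} (with opposite signs combining into the equation for $\tilde b$); the $a_\tau$-term $-\frac{1}{2\alpha}\frac{\pa_\tau\nu}{1+\nu}\Lambda_r\Phi_{a,b}$ projects non-trivially on the $(0,\ell)$-mode by \eqref{nondegenonebis} and the structure of $\nu=aP_{2\ell}$, producing the equation for $a_\tau$. The remainder $\tilde\Psi_1$ from \eqref{psitildeone} gains a factor $a+b^\delta$ thanks to the all-order cancellation pointed out in the remark following Lemma \ref{lemmacancellationa}, so it contributes at most $O((\sqrt b)^{\alpha+\delta})$ to each projection.

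The right-hand side is controlled by Cauchy-Schwarz plus the bootstrap bounds. The term $b_\tau(\e,\pa_b\psi_{j,k,b})$ is absorbed using the $b^{g/2}$-smallness of $b\pa_b\tilde\phi_{i,b}$ in \eqref{bound:partialb} together with the pointwise control \eqref{poitwisepsijk} and the smallness of $\|\e\|_{L^2_{\rho_Y}}$ from \eqref{smallnorminitboot}. The linear correction $L(V)=p(\Phi_{a,b}^{p-1}-Q_b^{p-1})V$ carries an overall factor $a$ because the coefficient vanishes when $a=0$, and $V=\psi+\e$ is bounded in $L^2_{\rho_Y}$ using \eqref{bd psi bootstrap}; this is what generates the term $|a|(|\tilde b|+\sum|\tilde b_{j,0}|)$ on the right of \eqref{estparameters}. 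The nonlinear piece $\NL(V)$ is quadratic in $V$ and is closed using the pointwise bound \eqref{bd V Linftyphi bootstrap}, yielding an $O((\sqrt b)^{\tilde\eta})$ gain against the weighted norm of $\e$.

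Finally I convert the resulting ODE system for $(a,b,b_{j,k})$ to the renormalized variables from \eqref{renormalizedbvariables}: the equation for $b_{\ell,0}$ combined with $(\sqrt b)^\alpha=-\alpha b_{\ell,0}(1+\tilde b)$ yields both the law for $b$ and for $\tilde b$, while $b_{j,k}=b_{\ell,0}\tilde b_{j,k}$ converts each $(j,k)$-equation into one for $\tilde b_{j,k}$ with shifted eigenvalue $\l_{j,k}-\l_{\ell,0}=-(\ell-j-k)+O(b^{g/2})$. The main obstacle is the bookkeeping of the $a$-dependent corrections: every scalar product involving $\Phi_{a,b}$ must be expanded to leading order in $a$, and it is the non-degenerate coupling of $\pa_a\Phi_{a,b}$ to $\psi_{0,\ell,b}$ from \eqref{nondegenonebis}, rather than to the dominant $(0,0)$-mode, that decouples the equation for $a_\tau$ from that for $b$ and ultimately produces the quantitative bound \eqref{estparameters}.
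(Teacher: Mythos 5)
Your master identity and the overall projection strategy match the paper's Step~1: pairing \eqref{eqe} with $\psi_{j,k}$, using $\L_b\psi_{j,k}=\l_{j,k}\psi_{j,k}$ and the orthogonality \eqref{orthoe} to remove the linear term, and reading the modulation laws off the non-degeneracy scalar products \eqref{nondegenone}, \eqref{nondegenonebis} is exactly how the paper proceeds. The treatment of $\Psi_1$ (the $a_\tau$-piece coupling to $\psi_{0,\ell}$, the $B$-piece coupling to $\psi_{0,0}$, and $\tilde\Psi_1$ being a perturbation) is also aligned.

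There is however a genuine gap in your estimate of $L(V)$. You argue that $L(V)=p(\Phi_{a,b}^{p-1}-Q_b^{p-1})V$ carries an overall factor $a$ and that bounding $V=\psi+\e$ in $L^2_{\rho_Y}$ produces the term $|a|\bigl(|\bt|+\sum|\bt_{j,0}|\bigr)$ on the right of \eqref{estparameters}. This does not follow: the $L^2_{\rho_Y}$-size of $\psi$ is dominated by the $(\ell,0)$-mode, $\|\psi\|_{L^2_{\rho_Y}}\sim |b_{\ell,0}|\sim(\sqrt b)^\alpha$, so your bookkeeping gives $|(L(\psi),\psi_{j,k})|\lesssim a(\sqrt b)^\alpha$, i.e.\ after dividing by $(\sqrt b)^\alpha$ an $O(a)$ contribution to $\Mod$ --- which is too large to close the argument (integrating $|a_\tau|\lesssim a$ allows $a$ to drift and violates \eqref{controlea}). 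In the paper the term $|a|\bigl(|\bt|+\sum|\bt_{j,0}|\bigr)$ does \emph{not} come from $L(V)$: the estimate \eqref{estlv} shows $|(L(\psi),\psi_{j,k})|\lesssim b^{\alpha/2+\delta}$, which is obtained by exploiting the spatial localization of $\Phi_{a,b}^{p-1}-Q_b^{p-1}$ through the pointwise bound \eqref{esterrorpotential} (splitting at $r\sim\sqrt b$ and using the strong decay for $r\ge\sqrt b$), not the crude factor $a$. The term $|a|\bigl(|\bt|+\sum|\bt_{j,0}|\bigr)$ instead arises in the inversion of the coupled modulation system (Step~7), from the $B\,O(a)$ and $a_\tau\,O(a)$ errors in the projections \eqref{calculezerozero}--\eqref{estimationejk} combined with the expression \eqref{estimportante} of $B$ in terms of $\pa_\tau\bt$. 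Without the refined pointwise estimate on the potential difference, your chain of bounds does not reproduce the right-hand side of \eqref{estparameters}.
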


\begin{proof}[Proof of Lemma \ref{lemmaeqmodulation}] 
We take the $L^2_{\rho_Y}$ scalar product of \eqref{eqe} with $\psi_{j,k}$ for $0\leq j \leq \ell$ and $0\leq k \leq j-1$, and compute the resulting identity using \eqref{orthoe}:
\be
\label{identitymodulation}
(\Psi_1,\psi_{j,k})_{L^2_{\rho_Y}}+(\Psi_2,\psi_{j,k})_{L^2_{\rho_Y}}=(\e,\pa_\tau\psi_{j,k})_{L^2_{\rho_Y}}+(L(V)+N(V),\psi_{j,k})_{L^2_{\rho_Y}}.
\ee
We now estimate all terms in the above identity.\\

\noindent{\bf step 1} Scalar products on $\Psi_1$. We first compute from \eqref{formulapsi}, \fref{defgibbg}:
\bee
&&E_{j,k}:= (\Psi_1,\psi_{j,k})_{L^2_{\rho_Y}}= \left(\left[B-\frac{1}{2\alpha}\frac{\pa_\tau\nu}{1+\nu}\right]\Lambda_r\Phi_{a,b} +\frac{\ell}{\alpha}\Lambda Q_b(r),\psi_{j,k}\right)_{L^2_{\rho_Y}}+(\tilde{\Psi}_1,\psi_{j,k})_{L^2_{\rho_Y}}.
\eee

\noindent \emph{The lower order term}. We claim the degeneracy
\be
\label{degenpsitildeun}
\|\tilde\Psi_1\|_{H^1_{\rho_Y}}\lesssim (\sqrt{b})^{\alpha} b^\delta , \ \ |(\tilde{\Psi}_1,\psi_{j,k})_{L^2_{\rho_Y}}|\lesssim (\sqrt{b})^{\alpha} b^\delta
\ee
The second estimate in the above identity is a direct consequence of the first one. To prove the first one, recalling \fref{psitildeone}, we estimate using \eqref{controlenorme} and $\mu\geq \frac 12$:
\bee
&&\|\frac{1}{(\sqrt{b})^\alpha}\frac{1}{\alpha^2}\left(\frac{\pa_z\nu}{1+\nu}\right)^2\frac{1}{\mu^{\frac{2}{p-1}}}(\Lambda^2Q_b+\alpha\Lambda Q_b)\left(\frac{r}{\mu}\right)\|^2_{H^1_{\rho_Y}}\lesssim  \sum_{k=0,1} \int b^\delta \frac{\mu^{C}}{r^{2\gamma+2k}} \rho_YdY\lesssim b^\delta,
\eee
and similarly:

\bea
\label{eecnoenoen}
\nonumber &&\frac{1}{\sqrt{b}^\alpha}\left\|\frac{1}{\mu^{\gamma}}\Lambda Q_b\left(\frac{r}{\mu}\right)-\Lambda Q_b(r)\right\|_{H^1_{\rho_r}}\\&=& \sum_{k=0,1} \frac{1}{\sqrt{b}^\alpha}\left\| \int_1^\mu\frac{1}{(\mu')^{\gamma+1}}\partial_r^k\left[-\gamma\Lambda Q_b-r\pa_r(\Lambda Q_b)\right]\left(\frac{r}{\mu'}\right)d\mu'\right\|_{L^2_{\rho_r}}\\
\non & \lesssim & \sum_{k=0,1} \frac{1}{\sqrt{b}^\alpha}\int_1^\mu\frac{1}{(\mu')^{\gamma+1}}\left\|\partial_r^k(\alpha\Lambda Q_b+\Lambda^2Q_b)\left(\frac{r}{\mu'}\right)\right\|_{L^2_{\rho_r}} d\mu'\lesssim b^\delta\int_1^\mu(\mu')^C \lesssim b^\delta \mu^C
\eea

from which using the exponential weight in $z$:
$$\frac{1}{\sqrt{b}^\alpha}\left\|\frac{1}{\mu^{\gamma}}\Lambda Q_b\left(\frac{r}{\mu}\right)-\Lambda Q_b(r)\right\|^2_{H^1_{\rho_Y}}\lesssim b^{\delta}\int \mu^C\rho_zdz\lesssim b^{\delta},$$
and from \fref{psitildeone} and the above identities, \eqref{degenpsitildeun} is proved.\\

\noindent \emph{The main order term}. By definition: $$-\frac{\pa_\tau\nu}{\alpha(1+\nu)}\Lambda_r\Phi_{a,b} =a_{\tau}\pa_a\Phi_{a,b}$$ and hence from \eqref{nondegenonebis}:
$$
\left(-\frac{1}{2\alpha}\frac{\pa_\tau\nu}{1+\nu}\Lambda_r\Phi_{a,b},\psi_{j,k}\right)_{L^2_{\rho_Y}}=-\frac{a_\tau}{2\alpha}(\sqrt{b})^{\alpha}\left[I\delta_{j0}\delta_{k\ell}+O(b^\delta+a)\right].
$$
Similarly, $$\Lambda_r\Phi_{a,b}=-2b\pa_b\Phi_{a,b}$$ and hence from \eqref{nondegenone}:
$$\left(B\Lambda_r\Phi_{a,b} ,\psi_{j,k}\right)_{L^2_{\rho_Y}}=B(\sqrt{b})^{\alpha}\left[I\delta_{j0}\delta_{k0}+O(b^\delta+a)\right].$$
Finally from \eqref{scalrapronienoevni}: $$\left(\Lambda Q_b,\psi_{j,k}\right)_{L^2_{\rho_r}}=(\sqrt{b})^\alpha\left[I\delta_{j0}\delta_{k0}+O(b^\delta)\right].$$
Using \fref{degenpsitildeun}, this yields the values:
\bea
\label{calculezerozero}
\frac{E_{0,0}}{(\sqrt{b})^{\alpha}}&=&B\left[I+O(b^\delta+a)\right]+\frac{\ell}{\alpha}\left(I+O(b^\delta)\right)+ a_\tau O\left(b^\delta +a\right),
\eea
and
\be
\label{calculezero2l}
\frac{E_{0,\ell}}{(\sqrt{b})^{\alpha}}=-\frac{a_\tau}{2\alpha}\left[I+O(b^{\delta}+a)\right]+BO(b^{\delta}+a)+O(b^\delta)
\ee
and for $(j\geq 1, k\geq 0)$ and $(j=0, 1\leq k\leq \ell-1)$:
\be
\label{estimationejk}
\frac{E_{j,k}}{(\sqrt{b})^\alpha}= BO(b^{\delta}+a)+ a_\tau O(b^{\delta}+a)+O( b^\delta).
\ee

\noindent{\bf step 2} $\Psi_2$ term. We compute from \eqref{defpsifinal}, \eqref{defpsi}:
\be
\label{expressionpsitwo}
\Psi_2 = \sum_{(j,k)\in \mathcal I}(\pa_\tau b_{j,k}+\l_{j,k}b_{j,k})\psi_{j,k}-\left(\sum_{j=1}^\ell(\pa_\tau b_{j,0}+\l_{0,0}b_{j,0})\right)\psi_{0,0}+\tilde{\Psi}_2
\ee
with
\bea
\label{expresionpsitwotilde}
\Psit_2= \frac{b_\tau}{b}\left[\sum_{j=1}^\ell b_{j,0}(b\pa_b\psi_{j,0}-b\pa_b\psi_{0,0})+\sum_{k=1}^{\ell-1}b_{0,k}b\pa_b\psi_{0,k}+\sum_{j=1}^{\ell}\sum_{k=0}^{\ell-j}b_{j,k}b\pa_b\psi_{j,k}\right].
\eea
From \eqref{bound:partialb}, \fref{bd bjk bootstrap}:
\be
\label{esterroru}
\|\Psit_2\|_{H^1_{\rho_Y}}= \frac{b_\tau}{b}O\left(b^\delta\sum_{j,k}|b_{j,k}|\right)= \frac{b_\tau}{b}O\left(b^\delta(\sqrt{b})^\alpha\right)
\ee
and hence using the $L^ 2_{\rho_Y}$ orthogonality of eigenfunctions:
\bea
\label{computationpsijktwo}
\nonumber (\Psi_2,\psi_{j,k})_{L^2_{\rho_Y}}&=&\left|\begin{array}{ll}\left(\pa_\tau b_{j,k}+\l_{j,k}b_{j,k}\right)\|\psi_{j,k}\|_{L^2_{\rho_Y}}^2\ \ \mbox{for}\ \ (j,k)\in \mathcal I\\
-\sum_{j=1}^\ell(\pa_\tau b_{j,0}+\l_{0,0}b_{j,0})\|\psi_{0,0}\|_{L^2_{\rho_Y}}^2 \ \ \mbox{for}\ \ (j,k)= (0,0) \\
0 \ \ \text{for} \ (j,k)=(0,\ell)
\end{array}\right.\\
&&+\frac{b_\tau}{b}O\left(b^{\delta}(\sqrt{b})^\alpha\right).
\eea

\noindent{\bf step 3} $L(V)$ terms. We now estimate the rhs of \eqref{identitymodulation} which are lower order and start with the $L(V)$ term given by \eqref{defle}. We claim:
\be
\label{estlv}
\left|(L(V),\psi_{j,k})_{L^2_{\rho_Y}}\right|\lesssim b^\delta(\sqrt{b})^\alpha+a\|\e\|_{L^2_{\rho_Y}}.
\ee

We compute:
\be
\label{deriveeqmu}
\frac{d}{d\mu}\Phi_{a,b}^{p-1}=-\frac{p-1}{\mu^{1+\frac{2}{p-1}}}\Phi_{a,b}^{p-2}\Lambda Q_b\left(\frac{r}{\mu\sqrt{b}}\right)
\ee
We now use the global bounds $$|Q(y)|\lesssim \frac{1}{1+|y|^{\frac{2}{p-1}}}, \ \ |\Lambda Q(y)|\lesssim \frac{1}{1+|y|^\gamma}$$ to estimate:
\bea
\label{enkn eonenekno}
\nonumber&& \left|\frac{d}{d\mu}\Phi_{a,b}^{p-1}\right|\lesssim \frac{1}{\mu}\left[\frac{1}{(r+\mu\sqrt{b})^{\frac{2}{p-1}}}\right]^{p-2}\frac{(\mu\sqrt{b})^\alpha}{(r+\mu\sqrt{b})^\gamma}\\
\label{enoneoneon}&\lesssim & \frac{(\mu\sqrt{b})^\alpha}{(r+\mu\sqrt{b})^\alpha}\frac{1}{\mu(r+\mu\sqrt{b})^2}\\
& \lesssim &  \min\left\{\frac{\mu^{\alpha-3}(\sqrt{b})^{\alpha-2}}{r^\alpha}, \frac{1}{\mu r^2},\frac{(\mu\sqrt{b})^\alpha}{\mu}\frac{1}{(r+\sqrt{b})^{\alpha+2}}\right\}
\eea
where we used $\mu\geq \frac 12$ for the last estimate,
and hence the pointwise bound by integration in $\mu$:
\be
\label{esterrorpotential}
\left|\Phi_{a,b}^{p-1}-Q_b^{p-1}\right|\lesssim \min\left\{\frac{(\mu^{\alpha-2}-1)(\sqrt{b})^{\alpha-2}}{r^\alpha},\frac{|\log \mu|}{r^2},\frac{|\mu^\alpha-1|(\sqrt{b})^\alpha}{(r+\sqrt{b})^{\alpha+2}}\right\}.
\ee
This allows us to control the $\psi$ term using the pointwise bound \fref{bd psi bootstrap} and \fref{poitwisepsijk}:
\bee
\left|(L(\psi),\psi_{j,k})_{L^2_{\rho_Y}}\right| & \lesssim & \int_{r\leq \sqrt b} \frac{|\text{log}\mu |}{r^2}\frac{b^{\eta}\la z \ra^c}{b^{\frac{1}{p+1}}}\frac{1}{b^{\frac{\gamma}{2}}}\rho_YdY+\int_{r\geq \sqrt b} \frac{\mu^{\alpha-2}\sqrt{b}^{\alpha-2}}{r^{\alpha}}\frac{\sqrt{b}^{\alpha}}{r^{\gamma}}\frac{\la r\ra^c \la z\ra^c}{r^{\gamma}}\rho_YdY\\
& \lesssim & b^{\frac{d}{2}-1-\frac{1}{p-1}-\frac{\gamma}{2}+\eta}+b^{\alpha-1}\text{max}(1,b^{\frac{d}{2}-\gamma-\frac{\alpha}{2}}) \lesssim b^{\frac{\alpha}{2}+\delta}
\eee
since $\alpha>2$ and $d-2\gamma-2>0$. We then control the $\e$ term using the second estimate in \eqref{esterrorpotential}, \eqref{poitwisepsijk}:
\bee
&&|\left((\Phi_{a,b}^{p-1}-Q_b^{p-1})\e,\psi_{j,k}\right)_{L^2_{\rho_Y}}|\lesssim  \int |\e|\frac{a}{r^{2}}\frac{\la z\ra^c+\la r\ra^c}{r^\gamma}\rho_YdY\\
&\lesssim &a\left(\int \e^2\rho_YdY\right)^{\frac 12}\left(\int \frac{\la z\ra ^c\la r\ra^c}{r^{2\gamma+4}}\rho_YdY\right)^{\frac 12}
\lesssim a\|\e\|_{L^2_{\rho_Y}}
\eee
where we used in the last step \eqref{confitiondiemsnion} which ensures: 
\be
\label{cnenceoneno}
d-(2\gamma-4)=\sqrt{\Delta}-2>0
\ee 
to ensure the convergence of the integrals at the origin.
 This concludes the proof of \eqref{estlv}.\\

\noindent{\bf step 4} $\NL(V)$ terms. We claim the bound for some universal $\delta>0$ small enough:
\be
\label{borneamontrer}
|(\NL(v),\psi_{j,k})_{L^2_{\rho_Y}}|\lesssim (\sqrt{b})^{\alpha+\delta}+(\sqrt{b})^{\tilde{\eta}}\|\e\|_{L^2_{\rho_Y}}.
\ee
First observe that \fref{bd psi bootstrap} yields the two following bounds for $C,c>0$ and $0< \delta<\delta^*$ universal small enough:
  \be
  \label{estnonlinearvnkevone}
  \int\frac{\psi^2}{r^{2+2\delta}}\rho_YdY\lesssim b^{\alpha}\int \frac{\la r \ra^C \la z \ra^C}{r^{2\gamma+2+2\delta}}\rho_YdY \lesssim b^{\alpha}
  \ee
  \be
  \label{estlinftypsi}
  \|r^{\frac 2{p-1}+\delta}\psi\|_{L^\infty(r\le 1, |z|\leq K\sqrt{|\log b|})}\lesssim (\sqrt{b})^{\eta+c\delta}|K\log b|^C\lesssim (\sqrt{b})^{c\delta}.
  \ee
and that from \fref{bd V Linftyphi bootstrap} and \fref{positivitylamdbaq}:
\be \lab{bd pointwise Phiab V}
|\Phi_{a,b}|\lesssim \frac{1}{r^{\frac{2}{p-1}}}, \ \ |V|\lesssim \frac{(\sqrt{b})^{\tilde{\eta}}}{r^{\frac{2}{p-1}}} \ \text{for} \ r\leq 2, \ \ |V|\lesssim (\sqrt{b})^{\tilde{\eta}} \ \text{for} \ r\geq 2 .
\ee

\noindent{\em Estimate for $|z|>M\sqrt{|\log b|}$}: We estimate from the above identity, using the Gaussian weight and the fact that $d-2-2/(p-1)-\gamma>d-2\gamma>0$:
  \bee
&&  \int_{|z|\geq M\sqrt{|\log b|}} |\NL(V)||\psi_{j,k}|\rho_YdY \lesssim \int_{|z|\geq M\sqrt{|\log b|}} (|V|^{p}+\Phi_{a,b}^{p-2}|V|^2)|\psi_{j,k}|\rho_YdY\\
& \lesssim &  \int_{|z|\geq M\sqrt{|\log b|}} \left(1+\frac{1}{r^{2+\frac{2}{p-1}}} \right)\frac{\la r \ra^c \la z \ra^c}{r^{\gamma}}\rho_YdY\lesssim (\sqrt{b})^{cM} \leq (\sqrt{b})^{\alpha+\delta}
 \eee
 for $M$ universal large enough.\\
 
 \noindent{\em Estimate for $|z|<M\sqrt{|\log b|}$}. First one decomposes
  \bee
  &&\int_{|z|\leq M\sqrt{|\log b|}} |\NL(V)|\psi_{j,k}\rho_YdY\lesssim\int_{|z|\leq M\sqrt{|\log b|}}\left( |V|^p+\Phi_{a,b}^{p-2}|V|^2\right)|\psi_{j,k}|\rho_YdY\\
 &\lesssim& \int_{|z|\leq M\sqrt{|\log b|}}\left( |\e|^p+|\psi|^p+ \Phi_{a,b}^{p-2}|\e|^2+ \Phi_{a,b}^{p-2}|\psi|^2\right)|\psi_{j,k}|\rho_Y dY
 \eee
 Near the origin, we estimate using \eqref{estlinftypsi} and \fref{bd pointwise Phiab V}
 \bee
&& \int_{r\le 1, |z|\leq K\sqrt{|\log b|}}(|\e|^p+\Phi_{a,b}^{p-2}|\e|^2)|\psi_{jk}|\rho_YdY\\
&\lesssim& \int_{r\leq 1}\left[\|r^{\frac{2}{p-1}}V\|_{L^\infty(r\le 1)}+\|r^{\frac{2}{p-1}}\psi\|_{L^\infty(r\le 1,|z|\leq K\sqrt{|\log b|})}\right]\frac{|\e\psi_{j,k}|}{r^2}\rho_YdY\\
 & \lesssim &((\sqrt b)^{\tilde{\eta}}+(\sqrt{b})^{\eta})\|\e\|_{L^2_{\rho_Y}} \lesssim (\sqrt{b})^{\tilde{\eta}}\|\e\|_{L^2_{\rho_Y}} 
  \eee
 where we used \eqref{cnenceoneno}, and similarly:
  \bee
 &&\int_{r\le 1,|z|\leq K\sqrt{|\log b|}}(|\psi|^p+\Phi_{a,b}^{p-2}|\psi|^2)|\psi_{j,k}|\rho_YdY\\
 & \lesssim& \|r^{\frac{2}{p-1}+\delta}\psi\|_{L^\infty(r\le 1,|z|\leq K\sqrt{|\log b|})}\int_{r\le 1,|z|\leq K\sqrt{|\log b|}}\frac{|\psi|}{r^\delta}|\psi_{j,k}|\frac{dY}{r^2}\\
 &\lesssim &\|r^{\frac{2}{p-1}+\delta}\psi\|_{L^\infty(r\le 1,|z|\leq K\sqrt{|\log b|})}\left(\int \frac{\psi^2}{r^{2+2\delta}}\rho_YdY\right)^{\frac 12}\left(\int \frac{\psi_{j,k}^2}{r^2}\rho_YdY\right)^{\frac 12} \leq (\sqrt{b})^{\alpha+c\delta}.
 \eee
 Away from the origin, we use the Gaussian weight in $r$, \fref{bd V Linftyphi bootstrap} and \fref{bd psi bootstrap} to estimate:
 \bee
 &&\int_{r\geq 1,|z|\leq K\sqrt{|\log b|}}\left( |V|^p+\Phi_{a,b}^{p-2}|V|^2\right)|\psi_{j,k}|\rho_YdY\lesssim \int_{r\ge 1,|z|\leq K\sqrt{|\log b|}}|V|^2\la r \ra^c \la z \ra^c\rho_YdY\\
 & \lesssim & \int_{r\ge 1}|\varepsilon |^2\la r \ra^c \la z \ra^c\rho_YdY+\int_{r\ge 1}|\psi |^2\la r \ra^c \la z \ra^c\rho_YdY \lesssim   \int_{r\ge 1}|\varepsilon |(|V|+|\psi|)\la r \ra^c \la z \ra^c\rho_YdY+b^{\alpha}\\
 &\lesssim & \|\varepsilon \|_{L^2_{\rho_Y}}\left(\| V \|_{L^{\infty}_{\rho_Y}(r\geq 1)}+\|\la r\ra^c\la z \ra^c\psi \|_{L^2_{\rho_Y}}\right)+b^{\alpha} \lesssim (\sqrt b)^{\tilde{\eta}} \|\varepsilon \|_{L^2_{\rho_Y}}+b^{\alpha}
 \eee
 This concludes the proof of \eqref{borneamontrer}.\\
 
\noindent{\bf step 6} Computation of the modulation equations. We estimate from \eqref{bound:partialb}, \eqref{defgibbg}:
$$|(\e,\pa_\tau\psi_{j,k})_{L^2_{\rho_Y}}|=\frac{b_\tau}{b}O(b^\delta\|\e\|_{L^2_{\rho_Y}})=BO(b^\delta\|\e\|_{L^2_{\rho_Y}})+O(b^\delta\|\e\|_{L^2_{\rho_Y}}).$$ Injecting this together with \eqref{estlv}, \eqref{borneamontrer} and \fref{smallnorminitboot} into \eqref{identitymodulation} yields:
$$(\Psi,\psi_{j,k})_{L^2_{\rho_Y}}= BO(b^\delta\|\e\|_{L^2_{\rho_Y}})+O\left((a+(\sqrt{b})^{\tilde{\eta}})\|\e\|_{L^2_{\rho_Y}}\right)+O((\sqrt{b})^{\alpha+\delta}).$$ We then combine  the estimates \eqref{computationpsijktwo}, \eqref{calculezerozero}, \eqref{calculezero2l}, \eqref{estimationejk} and obtain the following:\\

\noindent\underline{\em law for $b_{j,k}$,  $(j,k)\neq\{(0,0);(0,\ell)\}$}. We obtain: 
\bee
\label{cekencnoeco}
\non &&(\pa_\tau b_{j,k}+\l_{j,k}b_{j,k})\|\psi_{j,k}\|_{L^2_{\rho_Y}}^2= BO\left(a(\sqrt{b})^\alpha+(\sqrt b)^{\alpha+\delta}+b^{\delta}\|\e\|_{L^2_{\rho_Y}}\right)\\
&+&  a_\tau (O(a(\sqrt{b})^\alpha)+O(\sqrt b^{\alpha+\delta}))+ O\left((\sqrt{b})^{\alpha+\delta}\right)+  O\left((a+(\sqrt b)^{\tilde{\eta}})\|\e\|_{L^2_{\rho_Y}}\right).
\eee
which implies using \eqref{renormalizedbvariables}, \fref{bd bjk bootstrap} and \eqref{valrupropre}:
\bea
\label{cjbeonoenoe}
\nonumber
&&(\sqrt{b})^\alpha\left[\pa_\tau \bt_{j,k}-(\ell-(k+j))\bt_{j,k}\right]=BO\left(a(\sqrt{b})^\alpha+(\sqrt b)^{\alpha+\delta}+b^\delta\|\e\|_{L^2_{\rho_Y}}\right)\\
&+&  a_\tau (O(a(\sqrt{b})^\alpha)+O(\sqrt b^{\alpha+\delta}))+ O\left((\sqrt{b})^{\alpha+\delta}\right)+  O\left((a+(\sqrt b)^{\tilde{\eta}})\|\e\|_{L^2_{\rho_Y}}\right).
\eea

\noindent\underline{\em law for $a$}. We obtain:
\bee
\nonumber -\frac{a_\tau}{2\alpha}I(\sqrt{b})^\alpha &= &BO\left(a(\sqrt{b})^\alpha+(\sqrt b)^{\alpha+\delta}+b^\delta\|\e\|_{L^2_{\rho_Y}}\right)+ a_\tau (O(a(\sqrt{b})^\alpha)+O(\sqrt b^{\alpha+\delta}))\\
 &+& O\left((\sqrt{b})^{\alpha+\delta}\right)+  O\left((a+(\sqrt b)^{\tilde{\eta}})\|\e\|_{L^2_{\rho_Y}}\right).
\eee
which can be rewritten as, since $|a|,b\ll 1$ and $I\neq 0$:
\bea
\label{ceknenoneeeno}
\nonumber -\frac{a_\tau}{2\alpha}I(\sqrt{b})^\alpha &= &BO\left(a(\sqrt{b})^\alpha+(\sqrt b)^{\alpha+\delta}+b^\delta\|\e\|_{L^2_{\rho_Y}}\right)\\
 &+& O\left((\sqrt{b})^{\alpha+\delta}\right)+  O\left((a+(\sqrt b)^{\tilde{\eta}})\|\e\|_{L^2_{\rho_Y}}\right).
\eea

\noindent \underline{\em law for $b$}.  Finally:
\bee
&&(\sqrt{b})^\alpha BI+I\frac{\ell}{\alpha}(\sqrt{b})^\alpha- \sum_{j=1}^\ell(\pa_\tau b_{j,0}+\l_{0,0}b_{j,0})\|\psi_{0,0}\|_{L^2_{\rho_Y}}^2\\
\nonumber &= & BO\left(a(\sqrt{b})^\alpha+(\sqrt b)^{\alpha+\delta}+b^\delta\|\e\|_{L^2_{\rho_Y}}\right)+ a_\tau (O(a(\sqrt{b})^\alpha)+O(\sqrt b^{\alpha+\delta}))\\
 &+& O\left((\sqrt{b})^{\alpha+\delta}\right)+  O\left(a+(\sqrt b)^{\tilde{\eta}})\|\e\|_{L^2_{\rho_Y}}\right).
\eee
Since from \fref{calculeignefunction} and \fref{definitioni}, $\|\psi_{0,0}\|_{L^2_{\rho_Y}}^2=I+O(b^{\delta})$, this last expression can be reformulated 
\bee
&&(\sqrt{b})^\alpha B+\frac{\ell}{\alpha}(\sqrt{b})^\alpha- \sum_{j=1}^\ell(\pa_\tau b_{j,0}+\l_{0,0}b_{j,0})\\
\nonumber &= & BO\left(a(\sqrt{b})^\alpha+(\sqrt b)^{\alpha+\delta}+b^\delta\|\e\|_{L^2_{\rho_Y}}\right)+ a_\tau (O(a(\sqrt{b})^\alpha)+O(\sqrt b^{\alpha+\delta}))\\
 &+& O\left((\sqrt{b})^{\alpha+\delta}\right)+  O\left((a+(\sqrt b)^{\tilde{\eta}})\|\e\|_{L^2_{\rho_Y}}\right).
\eee
We now reformulate these estimates using the renormalized variables \eqref{renormalizedbvariables}.
 First recalling \eqref{defgibbg} and using \eqref{valrupropre}:
\be
\label{estimateB}
B=\frac12 \left[-\frac2{\alpha}\left(\frac{\pa_{\tau}b_{\ell,0}}{b_{\ell,0}}+\frac{\pa_\tau\tilde{b}}{1+\tilde{b}}\right)+1\right]-\frac{\ell}{\alpha}=-\frac 1{\alpha}\frac{\pa_\tau\tilde{b}}{1+\tilde{b}}-\frac 1\alpha\frac{\pa_{\tau}b_{\ell,0}+[\l_{\ell,0}+O(b^\delta)])b_{\ell,0}}{b_{\ell,0}}
\ee
and hence from \eqref{bd bjk bootstrap}, \eqref{cjbeonoenoe}:
\bea
\label{estimportante}
\nonumber (\sqrt{b})^\alpha\left(B+\frac 1{\alpha}\frac{\pa_\tau\tilde{b}}{1+\tilde{b}}\right)&=& BO\left(a(\sqrt{b})^\alpha+(\sqrt b)^{\alpha+\delta}+b^\delta\|\e\|_{L^2_{\rho_Y}}\right)+ a_\tau (O(a(\sqrt{b})^\alpha)+O(\sqrt b^{\alpha+\delta}))\\
 &+& O\left((\sqrt{b})^{\alpha+\delta}\right)+  O\left((a+(\sqrt b)^{\tilde\eta})\|\e\|_{L^2_{\rho_Y}}\right).
 \eea
Moreover from \eqref{cjbeonoenoe} again and \eqref{valrupropre}, \fref{bd bjk bootstrap}:
\bee
&&\frac{\ell}{\alpha}(\sqrt{b})^\alpha-\sum_{j=1}^\ell(\pa_\tau b_{j,0}+\l_{0,0}b_{j,0})= \frac{\ell}{\alpha}(\sqrt{b})^\alpha-\sum_{j=1}^\ell(\pa_\tau b_{j,0}+\l_{j,0}b_{j,0})+\sum_{j=1}^{\ell}jb_{j,0}+O(b^\delta(\sqrt{b}^\alpha))\\
&=&\frac{\ell}{\alpha}(\sqrt{b})^\alpha+\ell b_{\ell,0}+\sum_{j=1}^{\ell-1}jb_{j,0}+  BO\left(a(\sqrt{b})^\alpha+(\sqrt b)^{\alpha+\delta}+b^\delta\|\e\|_{L^2_{\rho_Y}}\right) \\
&&+a_\tau (O(a(\sqrt{b})^\alpha)+O(\sqrt b^{\alpha+\delta}))+O\left((\sqrt{b})^{\alpha+\delta}\right)+  O\left((a+(\sqrt b)^{\tilde \eta})\|\e\|_{L^2_{\rho_Y}}\right).\eee
Moreover 
$$\frac{\ell}{\alpha}(\sqrt{b})^\alpha+\ell b_{\ell,0} = \ell\left[\frac{(\sqrt{b})^\alpha}{\alpha}+ b_{\ell,0}\right]=-\ell b_{\ell,0}\tilde{b}=\frac{\ell}{\alpha}(\sqrt{b})^\alpha \frac{\tilde{b}}{1+\tilde{b}}$$ 
and hence the bound:
\bee
&&\frac{1}{\alpha}(\sqrt{b})^\alpha\left[\frac{-\pa_\tau \tilde{b}+\ell\tilde{b}}{1+\tilde{b}}\right]+\sum_{j=1}^{\ell-1}jb_{j,0}=\pa_\tau\tilde{b}O(a(\sqrt{b})^\alpha+b^{\frac{\alpha}{2}+\delta}+b^\delta\|\e\|_{L^2_{\rho_Y}}) \\
&&+a_\tau (O(a(\sqrt{b})^\alpha)+O(\sqrt b^{\alpha+\delta}))+O\left((\sqrt{b})^{\alpha+\delta}\right)+  O\left((a+(\sqrt b)^{\tilde \eta})\|\e\|_{L^2_{\rho_Y}}\right).
\eee
Now using \eqref{renormalizedbvariables}: $$b_{j,0}=\bt_{j,0}b_{\ell,0}=-\frac{(\sqrt{b})^\alpha}{\alpha}\frac{\bt_{j,0}}{1+\bt}$$
and hence the law:
\bea
\label{lqsnconoe}
\nonumber &&(\sqrt{b})^\alpha\left[\bt_\tau-\ell\bt+\sum_{j=1}^{\ell-1}j\bt_{j,0}\right]=\pa_\tau\tilde{b}O(a(\sqrt{b})^\alpha+b^{\frac{\alpha}{2}+\delta}+b^\delta\|\e\|_{L^2_{\rho_Y}})+a_\tau (O(a(\sqrt{b})^\alpha)\\
&+& O(\sqrt b^{\alpha+\delta}))+O\left((\sqrt{b})^{\alpha+\delta}\right)+  O\left((a+(\sqrt b)^{\tilde \eta})\|\e\|_{L^2_{\rho_Y}}\right).
\eea

\noindent{\bf step 7} Conclusion. The estimates \eqref{cjbeonoenoe}, \eqref{ceknenoneeeno}, \eqref{lqsnconoe} together with \eqref{estimportante} 
yield the system
\bee
&&\sum_{(j,k)\in \mathcal I}(\sqrt{b})^\alpha\left|\pa_\tau \bt_{j,k}-(\ell-(k+j))\bt_{j,k}\right|+(\sqrt{b})^\alpha|a_\tau|+(\sqrt{b})^\alpha\left|\bt_\tau-\ell\bt+\sum_{j=1}^{\ell-1}j\bt_{j,0}\right|\\
&\lesssim&O(|\pa_\tau\tilde{b}|)O(a(\sqrt{b})^\alpha+b^{\frac{\alpha}{2}+\delta}+b^\delta\|\e\|_{L^2_{\rho_Y}})+|a_\tau |(O(a(\sqrt{b})^\alpha)+O(\sqrt b^{\alpha+\delta}))\\
&&+O\left((\sqrt{b})^{\alpha+\delta}\right)+  O\left((a+(\sqrt b)^{\tilde{\eta}})\|\e\|_{L^2_{\rho_Y}}\right).
\eee
which is invertible thanks to \eqref{controlea}, \eqref{smallnorminitboot} and implies:
\bee
&&\sum_{(j,k)\in \mathcal I}\left|\pa_\tau \bt_{j,k}-(\ell-(k+j))\bt_{j,k}\right|+|a_\tau|+\left|\bt_\tau-\ell\bt+\sum_{j=1}^{\ell-1}j\bt_{j,0}\right|\\
&\lesssim & \frac{(a+(\sqrt{b})^{\tilde \eta})\|\e\|_{L^2_{\rho_Y}}}{(\sqrt{b})^\alpha}+b^\delta +|a|\left(|\bt|+\sum_{j=1}^{\ell-1}|\bt_{j,0}|\right)
\eee
this is \eqref{estparameters}. Injecting this into \eqref{cekencnoeco} with \eqref{estimportante} yields \eqref{estparameters} and concludes the proof of Lemma \ref{lemmaeqmodulation}.
\end{proof}


\subsection{Local $L^2_{\rho_Y}$ bound}


The geometrical decomposition \eqref{deompe} build on the eigenbasis constructed in Proposition \ref{diaglb} yields an elementary setting to compute the modulation equations of Lemma \ref{lemmaeqmodulation} and the underlying outgoing vector field structure. A second elementary fruit is the control of the flow in the $L^2_{\rho_Y}$ topology.

\begin{lemma}[$L^2_{\rho_Y}$ control]
\label{lemmal2loc}
There holds the pointwise bound: 
\be
\label{controlponctuelenergy}
\|\e\|_{L^2_{\rho_Y}}\lesssim \eta(a)(\sqrt{b})^{\alpha+\eta}
\ee
where $\eta(a)=o(1)$ as $a\to 0$.

\end{lemma}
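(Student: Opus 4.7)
\medskip

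\textbf{Proof proposal.} The plan is a weighted energy estimate on $\|\e\|_{L^2_{\rho_Y}}^2$ using \eqref{eqe}, closed by the spectral coercivity of Proposition \ref{diaglb} and the modulation estimates of Lemma \ref{lemmaeqmodulation}. Pairing \eqref{eqe} with $\e$ gives
$$
\frac12\frac{d}{d\tau}\|\e\|_{L^2_{\rho_Y}}^2+(\L_b\e,\e)_{L^2_{\rho_Y}}=-(\Psi,\e)_{L^2_{\rho_Y}}+(L(V),\e)_{L^2_{\rho_Y}}+(\NL(V),\e)_{L^2_{\rho_Y}},
$$
and by \eqref{orthoe} together with \eqref{spectralgaptotal} the quadratic form is coercive, $(\L_b\e,\e)_{L^2_{\rho_Y}}\geq \lambda_{\ell,0}\|\e\|_{L^2_{\rho_Y}}^2+c(\ell)\|\e\|_{H^1_{\rho_Y}}^2$ with $\lambda_{\ell,0}>0$ since $\ell>\alpha/2$.

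The main task is to control the three source terms by $C(a+(\sqrt b)^\delta)(\sqrt b)^{\alpha+\eta}\|\e\|_{L^2_{\rho_Y}}$ for some universal $\delta>0$. Using \eqref{orthoe} once more, $(\Psi,\e)=(\Pi^\perp\Psi,\e)$ where $\Pi^\perp$ is the $L^2_{\rho_Y}$-projection onto the orthogonal complement of $\mathrm{span}\{\psi_{j,k}\}_{(j,k)\in\mathcal I\cup\{(0,0),(0,\ell)\}}$. For $\Psi_2$, the decomposition \eqref{expressionpsitwo}--\eqref{esterroru} exhibits all the main terms as elements of this span up to $\tilde\Psi_2$, which satisfies $\|\tilde\Psi_2\|_{H^1_{\rho_Y}}\lesssim b^\delta(\sqrt b)^\alpha$ under the bootstrap. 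For $\Psi_1$, Lemma \ref{lemmacancellationa} together with \eqref{degenpsitildeun}, the near-alignment $\Lambda Q_b=(\sqrt b)^\alpha\phi_{0,b}+O_{H^1_{\rho_r}}((\sqrt b)^{\alpha+\delta})$ from \eqref{calculeignefunction}, the bound \eqref{controlerreur} for $\Lambda_r\Phi_{a,b}-\Lambda Q_b$, and the identity $-(2\alpha)^{-1}(\partial_\tau\nu/(1+\nu))\Lambda_r\Phi_{a,b}=a_\tau\partial_a\Phi_{a,b}$ with $\partial_a\Phi_{a,b}$ aligned with $\psi_{0,\ell}$ modulo $O(a+(\sqrt b)^\delta)$, show that after projecting out the span, the remainder is bounded by $(|B|+|a_\tau|+a)(\sqrt b)^{\alpha+\delta}+(\sqrt b)^{\alpha+\delta}$. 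Feeding in $|B|\lesssim (\sqrt b)^\eta$ and $|a_\tau|\lesssim (\sqrt b)^\delta+a(\sqrt b)^\eta$ from \eqref{estparameters} under the bootstrap yields
$$
\|\Pi^\perp\Psi\|_{L^2_{\rho_Y}}\lesssim \bigl(a+(\sqrt b)^\delta\bigr)(\sqrt b)^\alpha.
$$
For the nonlinear terms, direct Cauchy--Schwarz versions of the arguments used to prove \eqref{estlv} and \eqref{borneamontrer}, combined with \eqref{esterrorpotential}, \eqref{bd V Linftyphi bootstrap}, \eqref{bd psi bootstrap} and the fact that $L(V)$ and $\NL(V)$ involve at least one power of $V=\e+\psi$, give
$$|(L(V),\e)|+|(\NL(V),\e)|\lesssim \bigl(a+(\sqrt b)^{\tilde\eta}\bigr)\|\e\|_{L^2_{\rho_Y}}^2+(\sqrt b)^{\alpha+\delta}\|\e\|_{L^2_{\rho_Y}}.$$

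Combining these and absorbing the quadratic-in-$\e$ contributions into the coercive term (possible for $b$ small enough since $\tilde\eta\ll \eta$), one gets the differential inequality
$$
\frac{d}{d\tau}\|\e\|_{L^2_{\rho_Y}}+\lambda_{\ell,0}\|\e\|_{L^2_{\rho_Y}}\lesssim \bigl(a+(\sqrt b)^\delta\bigr)(\sqrt b)^\alpha.
$$
Under the bootstrap, $(\sqrt b(\tau))^\alpha\sim e^{-(\ell-\alpha/2)(\tau-\tau_0)}(\sqrt{b_0})^\alpha$, and $\lambda_{\ell,0}>(\ell-\alpha/2)(1+\eta/\alpha)$ for $\eta$ small, so a Duhamel/Gronwall integration against the sharp decaying source together with the initial data \eqref{smallnorminit} yields
$$
\|\e(\tau)\|_{L^2_{\rho_Y}}\lesssim e^{-\lambda_{\ell,0}(\tau-\tau_0)}(\sqrt{b_0})^{\alpha+2\eta}+\bigl(a+(\sqrt{b(\tau)})^\delta\bigr)(\sqrt{b(\tau)})^{\alpha+\eta}.
$$
The first term decays strictly faster than $(\sqrt{b(\tau)})^{\alpha+\eta}$, so it is bounded by $\eta(a)(\sqrt{b(\tau)})^{\alpha+\eta}$ by virtue of the lower bound \eqref{condiotnazero} on $a(\tau_0)$ and \eqref{controlea}; the second term is of the claimed form with $\eta(a)=Ca+o_{a\to 0}(1)$.

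The main obstacle is the estimate of $\|\Pi^\perp\Psi\|_{L^2_{\rho_Y}}$: the raw size of $\Psi_1$ is $(\sqrt b)^\alpha$, the same order as the anticipated bound on $\|\e\|$, so closure hinges on extracting the full gain $a+(\sqrt b)^\delta$ after subtracting the projection onto the spectral basis. This forces us to exploit simultaneously the all-order cancellation of \eqref{formulapsi}--\eqref{psitildeone} in the parameter $a$, the perturbative closeness of $\phi_{i,b}$ to $\phi_{i,\infty}$ provided by Proposition \ref{spectraltheorem}, the approximate cancellation $B=0+O((\sqrt b)^\eta)$ dictated by the type II law \eqref{loideb}, and the $a$-near-constancy $a_\tau=O(a(\sqrt b)^\eta+(\sqrt b)^\delta)$ dictated by \eqref{loidea}.
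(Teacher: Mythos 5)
Your proposal has the right skeleton — pair \eqref{eqe} with $\e$, use the spectral gap \eqref{spectralgaptotal} and the orthogonality \eqref{orthoe}, project out the spectral basis from $\Psi$, feed in the modulation estimates, and Gronwall — but the Gronwall step as written does not close, and this is not a cosmetic issue.

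You assert $\lambda_{\ell,0}>(\ell-\alpha/2)(1+\eta/\alpha)$. This is false: by \eqref{id lambdaib} and \eqref{valrupropre}, $\lambda_{\ell,0}=\ell-\frac\alpha2+\tilde\lambda_{\ell,b}$ with $|\tilde\lambda_{\ell,b}|\leq C(\ell)b^{g/2}$, which vanishes as $b\to 0$, so for any fixed $\eta>0$ the eigenvalue does not beat $(\ell-\alpha/2)(1+\eta/\alpha)$ once $b$ is small. Hence your Duhamel integral against a source of exact size $(\sqrt b)^\alpha\sim e^{-(\ell-\alpha/2)\tau}$ is essentially resonant and only yields $\|\e\|\lesssim(\sqrt b)^\alpha$ (with a logarithmic loss, in fact), not the stated $\eta(a)(\sqrt b)^{\alpha+\eta}$. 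The paper avoids this by \emph{retaining} the full spectral gap output $\frac{c^*}{2}\|\e\|_{H^1_{\rho_Y}}^2$ on the left of \eqref{controlenormee}; since $\|\e\|_{H^1_{\rho_Y}}^2\geq\|\e\|_{L^2_{\rho_Y}}^2$, this strengthens the effective decay rate from $\lambda_{\ell,0}$ to $\lambda_{\ell,0}+\tfrac{c^*}{2}$ with a gap $c^*$ that is a universal constant independent of $b$, and then multiplies through by $e^{2(\ell-\alpha/2+c^*/2)\tau}$. That fixed $b$-independent gap is the engine of the integration, and you have dropped it.

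A secondary but equally necessary ingredient: your final source estimate $\|\Pi^\perp\Psi\|_{L^2_{\rho_Y}}\lesssim(a+(\sqrt b)^\delta)(\sqrt b)^\alpha$ does not have the extra factor $(\sqrt b)^\eta$ on the $a$-term, so even with the $c^*$-gain the resulting Gronwall output would be $a(\sqrt b)^\alpha$, which is not $\lesssim\eta(a)(\sqrt b)^{\alpha+\eta}$ (this would force $1\lesssim(\eta(a)/a)(\sqrt b)^\eta$, false for, e.g., $\eta(a)\sim a$). The paper's \eqref{energyestimatepsiterm} carries the gain as $\eta(a)(|\tilde b|+\sum|\tilde b_{j,0}|)$, with the $a$-smallness \emph{multiplying} the unstable-mode coordinates; only after inserting the bootstrap \eqref{exitconditionboot}, $|\tilde b|+\sum|\tilde b_{j,0}|\lesssim(\sqrt b)^\eta$, does one arrive at the source $\eta(a)(\sqrt b)^{\alpha+\eta}$, while the residual $(\sqrt b)^{2\alpha+\delta}$ is absorbed via \eqref{condiotnazero} which forces $b_0$ to be super-exponentially small in $a_0$. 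You mention these cancellations in the narrative but lose them in the final bound; without them and without the $c^*$ gap, the argument degenerates to $\|\e\|\lesssim a(\sqrt b)^\alpha$, which is strictly weaker than \eqref{controlponctuelenergy}.
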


\begin{proof}[Proof of Lemma \ref{lemmal2loc}] We claim that \eqref{controlponctuelenergy} follows from the differential inequality
\bea
\label{controlenormee}
\nonumber&& \frac 12\frac{d}{d\tau}\|\e\|_{L^2_{\rho_Y}}^2+\l_{\ell,0}\|\e\|_{L^2_{\rho_Y}}^2+\frac{c^*}{2}\|\e\|_{H^1_{\rho_Y}}^2\\
&\lesssim &(\sqrt{b})^{2\alpha}\left[(\sqrt{b})^{\delta}+\eta(a)(\tilde{b}^2+\sum_{j=1}^{\ell-1}|\tilde{b}_{j,0}|^2)\right]
\eea
for some universal constant $c^*>0$ and $\eta(a)=o_{a\to 0}(1)$. Indeed, assume \eqref{controlenormee}, it implies from \eqref{bd bjk bootstrap}:
\bee
&&\frac{d}{d\tau}\left(e^{2(\ell-\frac\alpha 2+\frac{c^*}{2})\tau}\|\e\|_{L^2_{\rho_Y}}^2\right)+c^*e^{2(\ell-\frac\alpha 2+\frac{c^*}{2})\tau}\|\e\|_{H^1_{\rho_Y}}^2\\
&\lesssim& \eta(a)(\sqrt{b})^{2\alpha+2\eta}e^{2(\ell-\frac\alpha 2+\frac{c^*}{2})\tau}\lesssim \eta(a)e^{c^*\tau}(\sqrt{b})^{2\eta}
\eee
whose time integration using \eqref{bd bjk bootstrap}, \eqref{smallnorminit} yields for $\eta$ universal small enough:
\bee
\|\e(\tau)\|_{L^2_{\rho_Y}}^2&\lesssim& e^{-2(\ell-\frac\alpha 2+\frac{c^*}{2})(\tau-\tau_0)}\|\e(\tau_0)\|_{L^2_{\rho_Y}}^2+\eta(a)e^{-2(\ell-\frac\alpha 2+\frac{c^*}{2})\tau}\int_{\tau_0}^{\tau}e^{c^*\tau'}(\sqrt{b})^{2\eta}d\tau'\\
& \lesssim & \left(\frac{\sqrt{b(\tau)}}{\sqrt{b(\tau_0)}}\right)^{2\alpha+c\delta}\|\e(\tau_0)\|_{L^2_{\rho_Y}}^2+\eta(a)e^{-2(\ell-\frac\alpha 2+\frac{c^*}{2})\tau}e^{c^*\tau}(\sqrt{b(\tau)})^{2\eta}\\
&\lesssim & \eta(a)(\sqrt{b(\tau)})^{2\alpha+2\eta}
\eee
and \eqref{controlponctuelenergy} is proved. We now turn to the proof of \eqref{controlenormee}.\\

\noindent{\bf step 1} Energy identity. We compute from \eqref{eqe}:
\be
\label{cnommemep}
\frac12 \frac{d}{d\tau}\|\e\|^2_{L^2_{\rho_Y}}=(\pa_\tau\e,\e)_{L^2_{\rho_Y}}=(-\mathcal L_b\e-\Psi+L(V)+\NL(V),\e)_{L^2_{\rho_Y}}.
\ee
The linear term is coercive from the spectral gap estimate \eqref{spectralgaptotal} and the choice of orthogonality conditions \eqref{orthoe}: \be \lab{spectralgap energy estimate}
(\mathcal L_b\e,\e)_{L^2_{\rho_Y}}\geq \l_{\ell,0}\|\e\|_{L^2_{\rho_Y}}^2+c^*\|\e\|_{H^1_{\rho_Y}}^2\ee for some $c^*>0$. We now estimate all remaining terms in \eqref{cnommemep}.\\

\noindent{\bf step 2} $\Psi$ terms. We claim:
\be
\label{energyestimatepsiterm}
|(\e,\Psi)_{L^2_{\rho_Y}}|\lesssim \|\e\|_{H^1_{\rho_Y}}(\sqrt{b})^\alpha\left[b^\delta+\eta(a)(|\tilde{b}|+\sum_{j=1}^{\ell-1}|\bt_{jk}|)\right]+ \left[\eta(a)+b^{c\eta}\right]\|\e\|^2_{H^1_{\rho_Y}}.
\ee

We observe from \eqref{estparameters},  \eqref{bd bjk bootstrap} the rough bounds:
\be
\label{roughestimatesmodulation}
|\pa_\tau b_{j,k}|+|b_{j,k}|+(\sqrt{b})^\alpha|a_\tau|+(\sqrt{b})^\alpha(|\tilde{b}_\tau|+|\tilde{b}|)\lesssim (\sqrt{b})^\alpha, \ \ \left|\frac{b_\tau}b\right|\lesssim 1.
\ee
We now estimate the $\Psi_2$ term. Recall \eqref{expressionpsitwo}, then from \eqref{esterroru}, \eqref{orthoe}, \eqref{roughestimatesmodulation}:
$$|(\e,\Psi_2)_{L^2_{\rho_Y}}|=|(\e,\Psit_2)_{L^2_{\rho_Y}}|\lesssim \|\e\|_{H^1_{\rho_Y}}b^\delta(\sqrt{b})^\alpha.$$ 
We now estimate $\Psi_1$ and recall \eqref{formulapsi}, \eqref{psitildeone}. From \eqref{degenpsitildeun}:
$$|(\e,\Psit_1)_{L^2_{\rho_Y}}|\lesssim \|\e\|_{H^1_{\rho_Y}}b^\delta(\sqrt{b})^\alpha$$ and from \eqref{einoenoneoeon}, \eqref{orthoe}:
$$|(\e,\Lambda Q_b)_{L^2_{\rho_Y}}|=|(\e,(\sqrt{b})^\alpha \tilde{\phi}_{0,b})_{L^2_{\rho_Y}}|\lesssim b^\delta(\sqrt{b})^\alpha\|\e\|_{H^1_{\rho_Y}}.$$ Finally, from \eqref{estimportante}, \eqref{estparameters}:
\bee
(\sqrt{b})^\alpha(|B|+|\pa_\tau a|)&\lesssim& (\sqrt{b})^\alpha\left[b^\delta+ |\tilde{b}|+\sum_{j=1}^{\ell-1}|\bt_{j,0}|\right]+  \left[a+(\sqrt b)^{\tilde \eta}\right]\|\e\|_{L^2_{\rho_Y}}
\eee
and from \eqref{controlerreur}, \eqref{einoenoneoeon}, \eqref{orthoe}:
\bee
|(\e,\Lambda \Phi_{a,b})_{L^2_{\rho_Y}}|&\lesssim &\|\e\|_{L^2_{\rho_Y}}\left[\|\Lambda \Phi_{a,b}-\Lambda Q_b\|_{L^2_{\rho_Y}}+\|\Lambda Q_b-(\sqrt{b})^\alpha\psi_{0,0}\|_{L^2_{\rho_Y}} \right]\\
&\lesssim & (\sqrt{b})^\alpha(a+b^\delta)\|\e\|_{H^1_{\rho_Y}}.
\eee
We conclude:
$$|(\e,\Psi_1)|_{L^2_{\rho_Y}}\lesssim \|\e\|_{H^1_{\rho_Y}}(\sqrt{b})^\alpha\left[b^\delta+|a|(|\tilde{b}|+\sum_{j=1}^{\ell-1}|\bt_{j,0}|)\right]+ \left[\eta(a)+b^{c\eta}\right]\|\e\|^2_{H^1_{\rho_Y}}$$
and \eqref{energyestimatepsiterm} is proved.\\

\noindent{\bf step 3} $L(V)$ term. We claim:
\be
\label{estlvenergy}
(L(V),\e)_{L^2_{\rho_Y}}\lesssim b^\delta(\sqrt{b})^\alpha\|\e\|_{H^1_{\rho_Y}}+a\|\e\|_{H^1_{\rho_Y}}^2.
\ee
Indeed, first compute:
\bee
(L(V),\e)_{L^2_{\rho_Y}}=p\left((\Phi_{a,b}^{p-1}-Q_b^{p-1})\e,\e\right)_{L^2_{\rho_Y}}+p\left((\Phi_{a,b}^{p-1}-Q_b^{p-1})\psi,\e\right)_{L^2_{\rho_Y}}.
\eee
For the linear term, we recall \eqref{enoneoneon} which implies for $r\leq \sqrt{b}$ using $\mu\ge \frac 12$:
$$\left|\frac{d}{d\mu}\Phi_{a,b}^{p-1}\right|\lesssim \frac{(\mu\sqrt{b})^\alpha}{(r+\mu\sqrt{b})^\alpha}\frac{1}{\mu(r+\mu\sqrt{b})^2}\lesssim \frac 1b.$$ Hence $$|\Phi_{a,b}^{p-1}-Q_b^{p-1}|\lesssim \frac{1+|\mu|}{b}\ \ \mbox{for}\  \ r\le \sqrt{b}$$  and 
we estimate using \eqref{poitwisepsijk}, \eqref{bd bjk bootstrap}, \eqref{hardyapoids}:
\bee
&&\int_{r\leq \sqrt{b}}|\Phi_{a,b}^{p-1}-Q_b^{p-1}||\psi||\e|\rho_YdY \lesssim\sum|b_{j,k}|\int_{r\leq \sqrt{b}}\frac{|\e|\la z\ra^c}{br^{\gamma}}\rho_YdY\\
& \lesssim & (\sqrt{b})^\alpha\|\frac{\e}{r}\|_{L^2_{\rho_Y}}\left(\int_{r\leq \sqrt{b}}\frac{r^4}{b^2r^{2\gamma+2}}r^{d-1}dr\right)^{\frac 12}\lesssim b^\delta(\sqrt{b})^\alpha\|\e\|_{H^1_{\rho_Y}}.
\eee
where we used \eqref{estfumdamental} in the last step. 
For $r\geq \sqrt{b}$, we use \eqref{enoneoneon} again which implies:
$$\left|\frac{d}{d\mu}\Phi_{a,b}^{p-1}\right|\lesssim \frac{(\mu\sqrt{b})^\alpha}{(r+\mu\sqrt{b})^\alpha}\frac{1}{\mu(r+\mu\sqrt{b})^2}\lesssim \frac{(\sqrt{b})^\alpha \mu^{\alpha-1}}{r^{\alpha+2}}.$$ Hence $$|\Phi_{a,b}^{p-1}-Q_b^{p-1}|\lesssim \frac{(\mu\sqrt{b})^\alpha}{r^{\alpha+2}}\ \ \mbox{for}\  \ r\ge \sqrt{b}$$  and 
we estimate using \eqref{poitwisepsijk}, \eqref{bd bjk bootstrap}, \eqref{hardyapoids}:
\bee
&&\int_{r\geq \sqrt{b}}|\Phi_{a,b}^{p-1}-Q_b^{p-1}||\psi||\e|\rho_YdY \lesssim\sum|b_{j,k}|\int_{r\geq \sqrt{b}}\frac{(\sqrt{b})^\alpha|\e|\la z\ra^c\la r\ra^c}{r^{2+\alpha+\gamma}}\rho_YdY\\
& \lesssim & (\sqrt{b})^\alpha\|\frac{\e}{r}\|_{L^2_{\rho_Y}}\left(\int_{r\geq \sqrt{b}}\frac{(\sqrt{b})^{2\alpha} \la r\ra^c}{r^{2+2\alpha+2\gamma}}\rho_rr^{d-1}dr\right)^{\frac 12}\\
&\lesssim & (\sqrt{b})^\alpha\|\e\|_{H^1_{\rho_Y}}\left(\int_{r\geq \sqrt{b}}\left(\frac{\sqrt{b}}{r}\right)^{2\alpha-\delta}\frac{(\sqrt{b})^\delta \la r\ra^c}{r^{2+2\gamma+\delta}}\rho_rr^{d-1}dr\right)^{\frac 12}\lesssim b^\delta(\sqrt{b})^\alpha\|\e\|_{H^1_{\rho_Y}}
\eee
where we used \eqref{estfumdamental} again in the last step.
For the quadratic term,
we recall \eqref{deriveeqmu}, \eqref{positivitylamdbaq} which imply  $$\frac{d}{d\mu}\Phi_{a,b}^{p-1}<0$$ from which $\Phi_{a,b}^{p-1}-Q_b^{p-1}\leq 0$ for $\mu\geq 1$. From \eqref{limiteuig}, \eqref{controlea}, $\nu(z)=1+aP_{2\ell}(z)>1$ for $|z|\geq z^*$ universal large enough,
and we thus conclude using the bound \eqref{esterrorpotential} and \eqref{hardyapoids}:
\bee
\left((\Phi_{a,b}^{p-1}-Q_b^{p-1})\e,\e\right)_{L^2_{\rho_Y}}&\leq&  \int_{|z|\leq z^*}|\Phi_{a,b}^{p-1}-Q_b^{p-1}|\e^2\rho_Y dY\lesssim \int_{|z|\leq z^*}\frac{|aP_{2\ell}(z)|}{r^2}\e^2\rho_Y dY\\
&\lesssim & a\|\e\|_{H^1_{\rho_Y}}^2.
\eee
 This concludes the proof of \eqref{estlvenergy}.\\
 
 \noindent{\bf step 4} Nonlinear term. We treat the nonlinear carefully and claim for some $\delta$ universal:
 \be
 \label{estnonlienar}
 |(\NL(V),\e)_{L^2_{\rho_Y}}|\leq \eta(a) \|\e\|_{H^1_{\rho_Y}}^2+(\sqrt{b})^{2\alpha+\delta}.
 \ee
 \noindent{\em Estimate for $|z|>M\sqrt{|\log b|}$}: We estimate from \eqref{bd V Linftyphi bootstrap}, using the Gaussian weight and \eqref{bd psi bootstrap}:
  \bee
  \int_{|z|\geq M\sqrt{|\log b|}} |\NL(V)|\e\rho_YdY&\lesssim& \int_{|z|\geq M\sqrt{|\log b|}} (|V|^{p}+\Phi_{a,b}^{p-2}|V|^2)(|V|+|\psi|)\rho_YdY\\
 & \lesssim & \frac{(\sqrt{b})^{cM}}{(\sqrt{b})^c}\leq (\sqrt{b})^{2\alpha+\delta}
 \eee
 for $M$ universal large enough.\\
 
 \noindent{\em Estimate for $|z|<M\sqrt{|\log b|}$}. By homogeneity:
 \bee
  \int_{|z|\leq M\sqrt{|\log b|}} |\NL(V)|\e\rho_YdY&\lesssim&\int_{|z|\leq M\sqrt{|\log b|}}\left( |V|^p+\Phi_{a,b}^{p-2}|V|^2\right)|\e|\rho_YdY\\
 &\lesssim& \int_{|z|\leq M\sqrt{|\log b|}}\left( |\e|^p+|\psi|^p+ \Phi_{a,b}^{p-2}|\e|^2+ \Phi_{a,b}^{p-2}|\psi|^2\right)|\e|\rho_Y dY
 \eee
 Near the origin, we estimate using \eqref{hardyapoids}, \eqref{bd psi bootstrap} and \eqref{bd V Linftyphi bootstrap}:
\bee
&& \int_{r\le 1, |z|\leq M\sqrt{|\log b|}}(|\e|^p+\Phi_{a,b}^{p-2}|\e|^2)|\e|\rho_YdY\\
&\lesssim& \int_{r\leq 1}\left[\|r^{\frac{2}{p-1}}V\|_{L^\infty(r\le 1)}+\|r^{\frac{2}{p-1}}\psi\|_{L^\infty(r\le 1,|z|\leq M\sqrt{|\log b|})}\right]\frac{\e^2}{r^2}\rho_YdY\\
 & \lesssim & ((\sqrt b)^{\tilde{\eta}}+\sqrt{b}^{\eta}) \|\e\|_{H^1_{\rho_Y}}^2
  \eee
  and similarly:
 \bee
 &&\int_{r\le 1,|z|\leq M\sqrt{|\log b|}}(|\psi|^p+\Phi_{a,b}^{p-2}|\psi|^2)|\e|\rho_YdY\\
 & \lesssim& \|r^{\frac{2}{p-1}+\delta}\psi\|_{L^\infty(r\le 1,|z|\leq M\sqrt{|\log b|})}\int_{r\le 1,|z|\leq M\sqrt{|\log b|}}|\psi| |\e|\frac{dY}{r^{2+\delta}}\\
 &\lesssim &\sqrt{b}^{\eta+c\delta}\left(\int \frac{\psi^2}{r^{2+2\delta}}\rho_YdY\right)^{\frac 12}\left(\int \frac{\e^2}{r^2}\rho_YdY\right)^{\frac 12} \lesssim  \sqrt{b}^{\delta}\|\e\|_{H^1_{\rho_Y}}^2+(\sqrt{b})^{2\alpha+\delta}.
 \eee

 Away from the origin, we use the Gaussian weight in $r$ and \eqref{bd psi bootstrap} to estimate:
 \bee
 &&\int_{r\ge 1,|z|\leq K\sqrt{|\log b|}}(|\psi|^p+\Phi_{a,b}^{p-2}|\psi|^2)|\e|\rho_YdY\lesssim (\sqrt{b})^{2\alpha}\int_{r\ge 1,|z|\leq K\sqrt{|\log b|}}\la r \ra^c \la z \ra^c |\e|\rho_YdY\\
 & \lesssim &\sqrt{b}^{\delta}\|\e\|_{H^1_{\rho_Y}}^2+(\sqrt{b})^{2\alpha+\delta}
 \eee
 
 This concludes the proof of \eqref{estnonlienar}.\\
 
\noindent{\bf step 5} Conclusion. The collection of estimates \fref{spectralgap energy estimate}, \fref{energyestimatepsiterm}, \fref{estlvenergy} and \fref{estnonlienar}, injected in \fref{cnommemep}, yields \fref{controlenormee}.

\end{proof}


\section{$L^{\infty}$ bound through energy estimates}
\label{linftybound}

The energy estimate \eqref{controlenormee} easily closes the control of  the modulation equations of Lemma \ref{lemmaeqmodulation} {\it provided the $L^\infty$ control of $V=\psi+\e$}. It is a classical difficulty in the study of singularity formation that the description of the solution near the singularity involves growing in space profiles like $\psi$ which are unbounded in $L^{\infty}$, and exponentially localized norms which are too weak to control the nonlinear term both at the origin and infinity in space. In the setting of the radially symmetric type II blow up, the $L^{\infty}$ bound for $r$ large is obvious and relies on a scaling argument, section \ref{faraway}, see \cite{MRR}, \cite{CRS} for related arguments. At the origin, the unconditional $L^2_{\rho_Y}$ control provides an outer estimate on the sphere $r= 1$ which can easily be propagated inside using upper and lower solutions and the maximun principle, see for example \cite{Mizo,BSeki}. We propose a more energetic proof based on $W^{1,q}$ estimates which is well suited for the cylindrical geometry and handles both the difficulties of type II and the ode type I blow up as in \cite{BK,MZduke}. This provides a pure energy method for the control of the non linear flow.


\subsection{Definition of $\N$ and weighted Sobolev bound}


We prepare the analysis for the $L^\infty$ bound by introducing two new decompositions of the flow. We let $$0<r^*,\nu\ll 1, \ \ A,q\gg 1$$  universal constants independent of $a,b$ to be chosen later.\\

\noindent{\it New decomposition away from the origin}. We extract from the decomposition $V=\e+\psi$ the leading order term and consider 
\be
\label{dezeta}
V=v+\zeta, \ \ \zeta=b_{\ell,0}(\psi_{\ell,0}-\psi_{0,0})(r).
\ee 
We estimate from \eqref{calculeignefunction}, \fref{id expansion Ti}, \fref{bd pointwise tildephi}, \fref{bd bjk bootstrap} for $r\leq A$ using the {\it essential degeneracy} $c_{i,0}=1$, $\alpha>2$ and $g\leq 2$:
\bea
\label{estzeta}
&&\nonumber |\zeta(r)|=|b_{\ell,0}|\left| \sum_{j=1}^{\ell} (\sqrt b)^{2j-\gamma}T_j\left(\frac{r}{\sqrt{b}}\right)+\tilde{\phi}_{\ell}(r)-\tilde{\phi}_0(r) \right|\\
\non &\lesssim & (\sqrt{b})^{\alpha} \left( \sum_{j=1}^{\ell} (\sqrt b)^{2j-\gamma}\left(1+\frac{r}{\sqrt{b}}\right)^{2j-\gamma}+(\sqrt{b})^g\frac{(1+r)^{2\ell+4}}{(\sqrt b+r)^{\gamma}}\right) \\
& \lesssim & (\sqrt{b})^\alpha \left|\begin{array}{lll} (\sqrt{b})^{g-\gamma}\ \ \mbox{for} \ \ r\leq \sqrt{b}\\ r^{-\gamma}(r^2+(\sqrt{b})^g)\ \ \mbox{for}\ \ \sqrt{b}\leq r\leq 1\\ r^{2\ell+2}\ \ \mbox{for}\ \ r\geq 1\end{array}\right.  \lesssim (\sqrt{b})^{g-\frac{2}{p-1}}(1+r^{2\ell+2})
\eea
which implies in particular
\be
\label{estphibrutale}
\|r^{\frac{2}{p-1}}\zeta\|_{L^{\infty}(r\leq 1)}\lesssim  (\sqrt{b})^g.
\ee

We moreover compute from \fref{renormalizedbvariables}
\bee
&&\pa_\tau\zeta+\L_b\zeta=(\pa_\tau b_{\ell,0}+\l_{\ell,0}b_{\ell,0})\psi_{\ell,0}-(\pa_\tau b_{\ell,0}+\l_{0,0}b_{\ell,0})\psi_{0,0}+  b_{\ell,0}\frac{b_\tau}{b} b\pa_b(\psi_{\ell,0}-\psi_{0,0})\\
&=&(\pa_\tau b_{\ell,0}+\l_{\ell,0}b_{\ell,0})(\psi_{\ell,0}-\psi_{0,0})+(\l_{\ell,0}-\l_{0,0})b_{\ell,0}\psi_{0,0}+ b_{\ell,0}\frac{b_\tau}{b} b\pa_b(\psi_{\ell,0}-\psi_{0,0})\\
& = & (\pa_\tau b_{\ell,0}+\l_{\ell,0}b_{\ell,0})(\psi_{\ell,0}-\psi_{0,0})-\frac{\ell+\tilde{\lambda}_{\ell,0}-\tilde{\lambda}_{0,0}}{\alpha}\frac{(\sqrt{b})^\alpha}{1+\tilde{b}} \psi_{0,0}+ b_{\ell,0}\frac{b_\tau}{b} b\pa_b(\psi_{\ell,0}-\psi_{0,0})\\
& = &  (\pa_\tau b_{\ell,0}+\l_{\ell,0}b_{\ell,0})(\psi_{\ell,0}-\psi_{0,0})+\frac{\ell}{\alpha}(\sqrt{b})^\alpha\frac{\tilde{b}}{1+\tilde{b}} \psi_{0,0}- \frac{\ell}{\alpha}(\sqrt{b})^\alpha\psi_{0,0}\\
&+&   b_{\ell,0}\frac{b_\tau}{b} b\pa_b(\psi_{\ell,0}-\psi_{0,0})-\frac{\tilde{\lambda}_{\ell,0}-\tilde{\lambda}_{0,0}}{\alpha}\frac{(\sqrt{b})^\alpha}{1+\tilde{b}} \psi_{0,0}\\
& = & (\pa_\tau b_{\ell,0}+\l_{\ell,0}b_{\ell,0})(\psi_{\ell,0}-\psi_{0,0})-\frac{(\sqrt{b})^\alpha}{\alpha(1+\tilde{b})}\left[-\ell\tilde{b}+\tilde{\lambda}_{\ell,0}-\tilde{\lambda}_{0,0}\right]\psi_{0,0}\\
&+&  b_{\ell,0}\frac{b_\tau}{b} b\pa_b(\psi_{\ell,0}-\psi_{0,0})-\frac{\ell}{\alpha}(\sqrt{b})^\alpha\left[\psi_{0,0}-\frac{1}{(\sqrt{b})^\gamma}\Lambda Q\left(\frac{r}{\sqrt{b}}\right)\right]-\frac{\ell}{\alpha}\Lambda_r Q_b(r).
\eee
This yields using Lemma \ref{lemmacancellationa} the $v$ equation:
\be
\label{eqebis}
\pa_\tau v+\L_a v =F,\ \ F=-\Psi_3+L(\zeta)+\NL(V), \ \ \L_a=-\Delta+\frac 12\Lambda -p\Phi_{a,b}^{p-1},
\ee 
with
\be
\label{defpsithree}
\Psi_3=\Psit_1+\Psit_3, \ \ L(\zeta)=p(\Phi_{a,b}^{p-1}-Q_b^{p-1})\zeta
\ee
and
\bea
\label{defpsitthree}
\Psit_3&=&\left[\frac12 \left(-\frac{b_\tau}{b}+1\right)-\frac{\ell}{\alpha}-\frac1{2\alpha}\frac{\pa_\tau\nu}{1+\nu}\right]\Lambda_r\Phi_{a,b} \\
\nonumber &+ & (\pa_\tau b_{\ell,0}+\l_{\ell,0}b_{\ell,0})(\psi_{\ell,0}-\psi_{0,0})-\frac{(\sqrt{b})^\alpha}{\alpha(1+\tilde{b})}\left[-\ell\tilde{b}+(\tilde{\lambda}_{\ell,0}-\tilde{\lambda}_{0,0})\right]\psi_{0,0}\\
\nonumber &-& \frac{\ell}{\alpha}(\sqrt{b})^\alpha\left[\psi_{0,0}-\frac{1}{(\sqrt{b})^\gamma}\Lambda Q\left(\frac{r}{\sqrt{b}}\right)\right]+   b_{\ell,0}\frac{b_\tau}{b} b\pa_b(\psi_{\ell,0}-\psi_{0,0})
\eea

\noindent{\it Change of functions at the origin}.  For the derivation of $L^\infty$ bounds near the origin $0<r\lesssim r^*\ll1$, it is more convenient to change variables and define: \be
\label{defww}
w=\frac{v}{T},\ \ T=\Lambda \Phi_{a,b}.
\ee 

\noindent{\it Definition of $\mathcal N$}. We now consider the norms
\bee
&&\| v\|^{2}_{{\rm ext loc}}= \sum_{0\leq i+j\leq 3}  \int_{r\ge \frac{r^*}{2}} \frac{|\pa_r^i(\la z\ra \pa_z)^jv|^{2}}{D^{2\alpha(1+\nu)}\la z\ra}\rho_rdY\\
&&\| V\|^{2q+2}_{{\rm ext global},q}=\int_{\sqrt{r^2+D^2}\geq A} \frac{V^{2q+2}}{\la z\ra}dY\\
&& \| w\|^{2q+2}_{{\rm int,q}}= \int_{r\leq r^*} \frac{w^{2q+2}}{\la z\ra (1+D^{2Kq})}dY
\eee
and the quantity:
\bea
\label{defnorme}
&& \mathcal N=\| v\|_{{\rm ext loc}}+ \| V\|_{{\rm ext global},q}+\|\la z\ra \pa_z V\|_{{\rm ext global},q}\\
\nonumber &+&\left(\| V\|_{{\rm ext global},q}+\|\la z\ra \pa_z V\|_{{\rm ext global},q}\right)^{1-\frac{d}{2q+2}}\|\pa_rV\|_{{\rm ext global},q}^{\frac{d}{2q+2}}\\
\nonumber& + & \| w\|_{{\rm int,q}}+\| \la z\ra w\|_{{\rm int,q}}+ \left(\| w\|_{{\rm int,q}}+\| \la z\ra w\|_{{\rm int,q}}\right)^{1-\frac{d}{2q+2}}\|\pa_rw\|_{{\rm int,q}}^{\frac{d}{2q+2}}
\eea
We claim the weighted Sobolev embedding:

\begin{lemma}[ Weighted Sobolev embedding]
\label{weightn}
Recall \eqref{defphi}, then:
\be
\label{boundlinfty}
    \|\phi V\|_{L^\infty}+\|V\|_{L^\infty(D\geq 2A)}+\left\|\frac{w}{D^{\alpha \nu}}\right\|_{L^\infty(\frac{r^*}{2}\leq r \leq 2A)}+\|w\|_{L^\infty(r\leq 2A,D\leq 2A)}\lesssim_{r^*,A,q} \N+(\sqrt{b})^g.
\ee
and
\be
\label{estpotiwiselinfty}
\left\|\pa_r^i(\la z\ra \pa_z)^j\left(\frac{v}{D^{\alpha(1+\nu)}}\right)\right\|_{L^\infty(\frac{r^*}{2}\leq r\leq 2A)}\lesssim_{r^*,A} \| v\|_{{\rm ext loc}}, \ \ 0\leq i+j\leq 1.
\ee
\end{lemma}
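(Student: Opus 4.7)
The plan is to split the $L^\infty$ targets according to the three blocks of $\N$: the interior $r\leq r^*$ (where $w=v/T$ is controlled by the $\text{int},q$ norms), the global exterior $\sqrt{r^2+D^2}\geq A$ (where $V$ is controlled by the $\text{ext global},q$ norms), and the bounded annulus $r^*/2\leq r\leq 2A$ (controlled by $\|v\|_{\text{ext loc}}$). On each region I would apply a Gagliardo--Nirenberg--Sobolev embedding adapted to the weights, then patch the resulting pointwise bounds, converting between $V=v+\zeta$ and $v$ (or $w$) as needed. The $(\sqrt b)^g$ remainder in \eqref{boundlinfty} is precisely the error in passing from $V$ to $v$ on $r\leq 1$, supplied by the pointwise bound \eqref{estphibrutale}.

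For the global exterior, I would compose two embeddings. First, a weighted one dimensional Sobolev embedding in $z$ (in which the weight $\la z\ra^{-1}$ in the definition of $\|\cdot\|_{\text{ext global},q}$ forces the natural derivative to be $\la z\ra\pa_z V$ rather than $\pa_z V$) converts $L^\infty_z$ into $L^{2q+2}_z$ control on $V$ and $\la z\ra\pa_z V$. Then, since $V$ is radial in $y\in\RR^d$, the standard Gagliardo--Nirenberg interpolation
\begin{equation*}
\|f\|_{L^\infty(\RR^d)}\lesssim \|f\|_{L^{2q+2}(\RR^d)}^{1-d/(2q+2)}\|\nabla_y f\|_{L^{2q+2}(\RR^d)}^{d/(2q+2)}
\end{equation*}
(valid for $2q+2>d$) produces exactly the interpolation exponent $d/(2q+2)$ appearing in $\N$, yielding $\|V\|_{L^\infty(\sqrt{r^2+D^2}\geq A)}\lesssim\N$. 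The same two step embedding applied to $w$ with the weight $(1+D^{2Kq})^{-1}$, which is bounded above and below on $\{r\leq r^*,\,D\leq 2A\}$, controls $\|w\|_{L^\infty(r\leq r^*,\,D\leq 2A)}$. The complementary piece $r^*\leq r\leq 2A$ of both $\|w\|_{L^\infty(r\leq 2A, D\leq 2A)}$ and of $\|\phi V\|_{L^\infty(r\leq 1)}$ is covered by the local estimate below via $w=v/T$ and the smoothness and nonvanishing of $T=\Lambda\Phi_{a,b}$ away from the origin.

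On the bounded annulus $r^*/2\leq r\leq 2A$, the weights $r^{d-1}\rho_r$ and $D^{-2\alpha(1+\nu)}$ are bounded above and below by constants depending only on $r^*$ and $A$, so $\|v\|_{\text{ext loc}}$ is equivalent to the $H^3$ norm of $v/D^{\alpha(1+\nu)}$ on the two dimensional region $[r^*/2,2A]\times\RR$ (with the $\la z\ra$ factors absorbed into the allowed derivatives $\la z\ra\pa_z$). The Sobolev embedding $H^3(\RR^2)\hookrightarrow C^2(\RR^2)$ then yields \eqref{estpotiwiselinfty}, and in particular $\|w/D^{\alpha\nu}\|_{L^\infty(r^*/2\leq r\leq 2A)}\lesssim \N$ once we divide by the pointwise bounded factor $T\cdot D^{\alpha(1+\nu)-\alpha\nu}$. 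The main work is bookkeeping: checking that the weights $D^{-2\alpha(1+\nu)}$, $(1+D^{2Kq})^{-1}$ and $\la z\ra$ flow cleanly through the chain of weighted one dimensional and Gagliardo--Nirenberg embeddings so that the constants depend only on $(r^*,A,q)$ and not on $(a,b)$, and tracking how the $\zeta$ piece of $V=v+\zeta$ contributes the $(\sqrt b)^g$ correction via \eqref{estphibrutale}; I do not anticipate any conceptual new difficulty.
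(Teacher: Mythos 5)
Your decomposition into the three regions and your identification of the $\zeta$ piece as the source of the $(\sqrt{b})^g$ error are both correct, and the local annulus argument via $H^3\hookrightarrow C^1$ on $[r^*/2,2A]\times\mathbb R$ is essentially what the paper does for \eqref{estpotiwiselinfty}. However, your plan for the exterior and interior $L^\infty$ bounds has a real gap. You propose to \emph{iterate} a weighted one-dimensional Sobolev embedding in $z$ with a Gagliardo--Nirenberg interpolation in $y\in\mathbb R^d$. Whichever order you attempt this in, the second step is applied to a quantity that already involves a $\partial_z$ derivative (or respectively a $\partial_r$ derivative), so the iteration forces you to control the \emph{mixed} derivative $\partial_r(\la z\ra\partial_z V)$ (resp.~$\partial_r(\la z\ra\partial_z w)$) in $L^{2q+2}$. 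That term is not present in the definition \eqref{defnorme} of $\mathcal N$, which records only $\|V\|$, $\|\la z\ra\partial_z V\|$ and $\|\partial_r V\|$ (and likewise for $w$), so the two-step embedding simply does not close against $\mathcal N$.

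The paper avoids this by proving the anisotropic weighted Sobolev estimate \eqref{estitmiatesobolevpoids} in one step rather than iterating: on each dyadic slab $\mathcal C_A=\{r\geq 0,\ A/2\leq|z|\leq A\}$ one rescales to the unit cylinder and applies the full $(d+1)$-dimensional Sobolev embedding $W^{1,2q+2}(\mathcal C_1)\hookrightarrow L^\infty$ (valid for $2q+2>d+1$), which uses only the first-order derivatives $\partial_r v$ and $\partial_z v$ separately and never the cross term; one then rescales $v_\lambda(r,z)=v(r/\lambda,z)$ and optimizes in $\lambda$ to produce exactly the anisotropic exponent $d/(2q+2)$ appearing in $\mathcal N$. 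This is the ingredient your proposal needs to replace the two-step iteration. There is also a technical point you do not address: the regions $\{D\geq 2A\}$ and $\{r\leq r^*, D\leq 2A\}$ are localized via cutoffs $\chi(D/A)$ that depend on $z$ through $D=\mu(z)\sqrt b$, and one must check that $\la z\ra|\partial_z\chi(D/A)|$ stays bounded uniformly in $(a,b)$ (this is the estimate \eqref{loclacsocso}); it works, but it is not automatic and belongs in the bookkeeping you flag at the end.
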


\begin{proof}[Proof of Lemma \ref{weightn}] Let a smooth cut off function $$\chi(x)=\left|\begin{array}{ll} 1\ \ \mbox{for}\ \ |x|\geq 2,\\ 0\ \ \mbox{for}\ \ |x|\leq 1\end{array}\right..$$
\noindent\underline{\it $r\geq 2A$ or $D\geq 2A$}. Note that $|\pa_r(\chi (r/A))|=A^{-1}|\pa_r\chi (r/A)|\lesssim1 $ since $A\geq 1$. We apply \eqref{estitmiatesobolevpoids} to $\chi (r/A)V$ and conclude for $q$ large enough:
\bea
\non &&\|V\|_{L^\infty(r\geq 2A)}^{2q+2}\lesssim \|\chi\left(\frac{r}{A} \right)V\|_{L^\infty}^{2q+2} \\
\non & \lesssim_q & \left(\int \frac{|\chi\left( \frac{r}{A}\right)V|^{2q+2}+|\la z\ra \pa_z\left(\chi\left( \frac{r}{A}\right)V\right)|^{2q+2}}{\la z\ra}dY\right)^{1-\frac{d}{2q+2}}\left(\int \frac{|\pa_r\left(\chi\left( \frac{r}{A}\right)V\right)|^{2q+2}}{\la z\ra}dY\right)^{\frac{d}{2q+2}}\\
\non & \lesssim_q & \left(\int_{r\geq A} \frac{|V|^{2q+2}+|\la z\ra \pa_z\left(V\right)|^{2q+2}}{\la z\ra}dY\right)^{1-\frac{d}{2q+2}}\left(\int_{r\geq A } \frac{|\pa_rV|^{2q+2}+|V|^{2q+2}}{\la z\ra}dY\right)^{\frac{d}{2q+2}}\\
\non &\lesssim_{q} &  \int_{r\ge A} \frac{|V|^{2q+2}+|\la z\ra \pa_zV|^{2q+2}}{\la z\ra}dY\\
\non &+&\left(\int_{r\ge A} \frac{|V|^{2q+2}+|\la z\ra \pa_zv|^{2q+2}}{\la z\ra}dY\right)^{1-\frac{d}{2q+2}}\left(\int_{r\geq A } \frac{|\pa_rV|^{2q+2}}{\la z\ra}dY\right)^{\frac{d}{2q+2}}\\
\non & \lesssim_q & \left[\| V\|_{{\rm ext global},q}+\|\la z\ra \pa_z V\|_{{\rm ext global},q}\right]^{2q+2}\\
\lab{bd V Linfty rgeq2A} & + &\left[ \left(\| V\|_{{\rm ext global},q}+\|\la z\ra \pa_z V\|_{{\rm ext global},q}\right)^{1-\frac{d}{2q+2}}\|\pa_rV\|_{{\rm ext global},q}^{\frac{d}{2q+2}}\right]^{2q+2}\lesssim_q \N^{2q+2}
\eea
since if $r\geq A$ one has $\sqrt{r^2+D^2}\geq A$. similarly, consider $\chi_A(z)=\chi\left(\frac{D}{A}\right)$, then 
$$
D\sim A \ \ \mbox{implies}\ \  z^{2\ell}\sim\frac 1a\left(\frac{A}{(\sqrt{b})}\right)^\alpha\gg 1
$$
and hence 
\be
\label{loclacsocso}\la z\ra \left|\pa_z\left(\chi \left(\frac{D}{A}\right)\right)\right|= \frac{\la z \ra|\pa_z D|\left|\chi' \left(\frac{D}{A}\right)\right|}{A}\lesssim \frac{|z|a|\pa_z P_{2\ell}|}{1+aP_{2\ell}(z)}\frac{\sqrt{b}(1+aP_{2\ell}(z))^{\frac{1}{\alpha}}}{A}\lesssim 1,
\ee
and hence applying \eqref{estitmiatesobolevpoids} to $\chi_A(z)V$ ensures:
\bea
\non \|V\|_{L^\infty(D\geq 2A)}^{2q+2}&\lesssim_q&\left(\int_{D\ge A} \frac{|V|^{2q+2}+|\la z\ra \pa_zV|^{2q+2}}{\la z\ra}dY\right)^{1-\frac{d}{2q+2}}\left(\int_{D\geq A } \frac{|\pa_rV|^{2q+2}}{\la z\ra}dY\right)^{\frac{d}{2q+2}}\\
\lab{bd V Linfty Dgeq2A} &\lesssim_q & \N^{2q+2}
\eea
since $\sqrt{r^2+A^2}\geq D\geq A$.\\

\noindent\underline{\em $r,D\leq 2A$}. For $\frac{r^*}{2}\le r\leq 2A$, first note that
$$
\la z \ra \frac{|\pa_zD|}{D}=\frac{\la z \ra a |\pa_z P_{2\ell}(z)|}{1+aP_{2\ell}(z)}\lesssim 1
$$
we then apply \eqref{estitmiatesobolevpoidsbis} to $\frac{v}{D^{\alpha(1+\nu)}}$ and conclude that:
\bea
\label{eenoeneoe}
\nonumber&&\left\|\frac{v}{D^{\alpha(1+\nu)}}\right\|_{L^\infty(\frac{r^*}{2}\leq r\leq 2A)}^{2}\lesssim_{r^*,A} \sum_{0\leq i+j\leq 2}\int_{\frac{r^*}{2}\leq r\leq 2A} \frac{|\pa_r^i(\la z\ra \pa_z)^j(\frac{v}{D^{\alpha(1+\nu)}})|^{2}}{\la z\ra}dY\\
 & \lesssim_{r^*,A}& \sum_{0\leq i+j\leq 2}\int_{\frac{r^*}{2}\leq r}  \frac{|\pa_r^i(\la z\ra \pa_z)^jv|^2}{D^{2\alpha(1+\nu)}}\frac{\rho_rdY}{\la z\ra}\lesssim_{r^*,A}\|v\|_{{\rm extloc}}^2
\eea
where we used the lower bound $\rho_r\geq e^{-A^2}$ for $r\leq 2A$. The above estimate \fref{eenoeneoe} implies from \eqref{estzeta} the rough bound: 
\be \lab{bd V Linfty rleq2ADleq2A}
\|V\|_{L^\infty(\frac{r^*}{2}\leq r\leq 2A,D\leq 2A)}\lesssim \N+\|\zeta\|_{L^\infty(\frac{r^*}{2}\leq r\leq 2A,D\leq A)}\lesssim \N +(\sqrt{b})^{\alpha}.
\ee
Near the origin $0<r\leq r^*$, we apply \eqref{estitmiatesobolevpoids} to $(1-\chi(\frac{2r}{r^*}))(1-\chi(\frac{D}{2A}))w$, using \fref{loclacsocso}, and obtain:
\bea
\non &&\|w\|_{L^\infty(r\le \frac{r^*}{2},D\leq 2A)}^{2q+2}\lesssim_{q,r^*}\int_{r\leq r^*,D\leq 4A} \frac{|w|^{2q+2}+|\la z\ra \pa_zw|^{2q+2}}{\la z\ra}dY\\
\non & + & \left(\int_{r\leq r^*,D\leq 4A} \frac{|w|^{2q+2}+|\la z\ra \pa_zw|^{2q+2}}{\la z\ra}dY\right)^{1-\frac{d}{2q+2}}\left(\int_{r\leq r^*,D\leq 4A}\frac{|\pa_rw|^{2q+2}}{\la z\ra}dY\right)^{\frac{d}{2q+2}}\\
\lab{bd w Linfty rleqr*2 Dleq2A} & \lesssim & \N^{2q+2}.
\eea
Observing that the global rough bound $|\Lambda Q(y)|\lesssim |y|^{-2/(p-1)}$ implies that $r^{2/(p-1)}T=r^{2/(p-1)}D^{-2/(p-1)}\Lambda Q(r/D)\lesssim 1$, the above bound implies for $r\leq \frac{r^*}{2},D\leq 2A$:
\be \lab{bd r2p-1v rleqr*Dleq2A}
\| r^{\frac {2}{p-1}}v(r)\|_{L^{\infty}(r\leq \frac{r^*}{2},D\leq 2A)} = \|wr^{\frac{2}{p-1}}T\|_{L^{\infty}(r\leq \frac{r^*}{2},D\leq 2A)} \lesssim \|w\|_{L^{\infty}(r\leq \frac{r^*}{2},D\leq A)}\lesssim \mathcal N .
\ee
Finally, we observe that $T\gtrsim \frac{D^\alpha}{r^\gamma}\ \ \mbox{for}\ \ r\geq D$ and hence from \fref{eenoeneoe} 
\be \lab{bd w/Dalphanu r*2leqrleq2A}
\|\frac{w}{D^{\alpha \nu}}\|_{L^\infty(\frac{r^*}{2}\leq r\leq 2A)}\lesssim_A\left\|\frac{v}{D^{\alpha(1+\nu)}}\right\|_{L^\infty(\frac{r^*}{2}\leq r\leq2A)}\lesssim_A \N .\ee

\noindent\underline{Conclusion}. We conclude from \fref{bd V Linfty rgeq2A}, \fref{bd V Linfty Dgeq2A}, \fref{bd V Linfty rleq2ADleq2A}, \fref{bd r2p-1v rleqr*Dleq2A} and \eqref{estphibrutale}:
\bee
&& \|\phi V\|_{L^\infty} \lesssim \|r^{\frac{2}{p-1}} V\|_{L^\infty(r\leq \frac{r^*}{2},D\leq 2A)}+\| V\|_{L^\infty(D\geq 2A)}+ \| V\|_{L^\infty(r\geq \frac{r^*}{2})} \\
 &\lesssim & \|r^{\frac{2}{p-1}} v\|_{L^\infty(r\leq \frac{r^*}{2},D\leq 2A)}+\|r^{\frac{2}{p-1}} \zeta \|_{L^\infty(r\leq \frac{r^*}{2},D\leq 2A)}+ \| V\|_{L^\infty(r\geq2A)} + \| V\|_{L^\infty(D\geq 2A)}\\
 &+&  \| V\|_{L^\infty(\frac{r^*}{2}\leq r\leq2A,D\leq 2A)} \lesssim  \mathcal N +\sqrt{b}^{\alpha}+\sqrt{b}^{g} \lesssim \mathcal N +\sqrt{b}^{g}
\eee
as $\alpha \geq g$. We infer from \fref{bd w Linfty rleqr*2 Dleq2A} and \fref{bd w/Dalphanu r*2leqrleq2A} that:
$$
\| w \|_{L^{\infty}(r\leq 2A, D\leq 2A)} \leq \| w \|_{L^{\infty}(r\leq \frac{r^*}{2}, D\leq 2A)} +\| \frac{w}{D^{\alpha \nu}} \|_{L^{\infty}( \frac{r^*}{2}\leq r \leq 2A, D\leq 2A)} \lesssim \mathcal N
$$
The two above inequalities, with \fref{bd V Linfty Dgeq2A} and \fref{bd w/Dalphanu r*2leqrleq2A} give \fref{boundlinfty}. The bound \fref{eenoeneoe} can be proven similarly for derivatives of $v$, this is \eqref{estpotiwiselinfty}.

\end{proof}

The rest of this section is devoted to the control of the various compenents of $\matchal N$ in \eqref{defnorme} which each require a separate analysis.


\subsection{$L^2$ bound away from the origin}


We start with the outer $L^2$ bound $\| v\|_{{\rm ext loc}}$. It is a consequence of the spectral structure of the linearized operator near $\Phi_{a,b}$ in a polynomially weighted space, and from the fact that we know from the bootstrap assumptions that the first modes below $\ell-\alpha/2$ are not excited.

\begin{lemma}[Weighted $H^1_{\rho_r}$ bound outside the origin]
\label{lemmah1poly} 
There holds the bound for all $0< \nu<\nu^*$ with $\nu^*$ depending on $\eta$ but independent on $\tilde{\eta}$:
\be
\label{boundL^2inz}
e^{c_1\nu \tau}\int \frac{v^{2}}{D^{2\alpha(1+\nu)}}\frac{\rho_rdY}{\la z\ra}+\int_{\tau_0}^\tau e^{c_1\nu \tau'}\int\frac{|\nabla v|^2+r^{-2}v^2}{D^{2\alpha(1+\nu)}}\frac{\rho_rdY}{\la z\ra}d\tau' \leq 1
\ee
for some universal constant $c_1>0$.
\end{lemma}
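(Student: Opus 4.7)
\noindent The plan is to run a weighted energy estimate directly on the $v$-equation \eqref{eqebis} with the weight $\omega(\tau,Y) := D^{-2\alpha(1+\nu)}\rho_r/\la z\ra$. Setting $\mathcal E_\nu(\tau) := \int v^2\,\omega\,dY$ and substituting $\partial_\tau v = -\L_a v + F$ yields
$$
\tfrac{1}{2}\frac{d}{d\tau}\mathcal E_\nu + \int \L_a v\cdot v\,\omega\,dY \;=\; \tfrac{1}{2}\int v^2\,\partial_\tau \omega\,dY + \int F\,v\,\omega\,dY.
$$
The weight time derivative is explicit: since $\partial_\tau\omega/\omega = -\alpha(1+\nu)b_\tau/b - 2\alpha(1+\nu)\partial_\tau\mu/\mu$ and the modulation laws \eqref{estparameters}, \eqref{estimportante} give $b_\tau/b = 1-2\ell/\alpha + O(\text{small})$ and $|a_\tau|\ll 1$, the first right hand side term contributes $[2\ell(1+\nu)-\alpha(1+\nu)+O(\text{small})]\mathcal E_\nu$.

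\noindent The heart of the proof is the coercivity
$$
\int \L_a v\cdot v\,\omega\,dY \;\geq\; \Lambda_\nu\,\mathcal E_\nu + c\int \Big(|\nabla v|^2+\frac{v^2}{r^2}\Big)\omega\,dY,\qquad \Lambda_\nu = \ell-\tfrac{\alpha}{2}+\alpha\nu+O(\nu^2,a,b^\delta),
$$
chosen so that $2\Lambda_\nu$ strictly exceeds the weight contribution above by a margin $c_1\nu$. This I would establish in three steps. First, for the frozen outer operator $\L_\infty$ and the flat weight $r^{-2\alpha(1+\nu)}\rho_r$, the conjugation $v\mapsto v/r^{\alpha(1+\nu)}$ turns $\L_\infty$ into a self-adjoint Schr\"odinger operator on $L^2(\rho_r)$ whose spectrum is shifted by $+\alpha\nu+O(\nu^2)$, giving the target gap on radial functions orthogonal to the first $\ell$ eigenmodes of $H_\infty$. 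Second, the $z$-dependent piece $\mu^{-2\alpha(1+\nu)}/\la z\ra$ is polynomially bounded and controlled through the tensorial Hermite decomposition underlying Proposition \ref{diaglb}, so it preserves the shifted gap on cylindrical functions orthogonal to all $\psi_{j,k}$ with $j+k\leq\ell$. Third, the passage from $\L_\infty$ to $\L_a$ requires controlling $(pQ_b^{p-1}-pc_\infty^{p-1}/r^2)$ and $p(\Phi_{a,b}^{p-1}-Q_b^{p-1})$ in the weighted norm, which is done using the soliton asymptotics \eqref{asyptotq}, \eqref{id LambdaQ} and the bound \eqref{esterrorpotential}, exactly as in Step 3 of Lemma \ref{lemmal2loc}; the resulting error is absorbed by the $H^1_{\rho_r}$-gain and a small $a$-prefactor.

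\noindent Orthogonality is the subtle point: recall $v = \varepsilon + (\psi - \zeta)$ where $\varepsilon$ is orthogonal to the full eigenbasis by \eqref{orthoe}, while $\psi - \zeta$ collects only lower modes $\psi_{j,k}$, $(j,k)\neq(\ell,0)$, whose coefficients satisfy $|b_{j,k}|\lesssim b^{\alpha/2+\eta}$ by \eqref{bd bjk bootstrap}. Projecting $v$ onto each $\psi_{j,k}$ and exploiting that $\lambda_{j,k}\neq \ell-\alpha/2$ on these subspaces, the deficit between $\lambda_{j,k}$ and $\ell-\alpha/2$ is exactly compensated by the $b^\eta$-smallness of these modes; the net contribution is $b^{\eta-\alpha\nu}$, which for $\nu<\nu^*(\eta)$ beats the claimed bound. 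The forcing is handled by Cauchy-Schwarz, $|\int Fv\,\omega|\leq \tfrac{c}{2}\|v/r\|_\omega^2 + \tfrac{1}{c}\|rF\|_\omega^2$, bounding $\Psi_3$ via improved modulation $\partial_\tau b_{\ell,0}+\lambda_{\ell,0}b_{\ell,0} = O((\sqrt b)^{\alpha+\delta})$ and \eqref{degenpsitildeun}; $L(\zeta)$ via the $a$-decay \eqref{esterrorpotential} and \eqref{estphibrutale}; and $\NL(V)$ via the $L^\infty$ bootstrap \eqref{bd V Linftyphi bootstrap}. Combining, $\mathcal E_\nu$ obeys a differential inequality of the form $\frac{d}{d\tau}\mathcal E_\nu + c_1\nu\,\mathcal E_\nu + c\|v\|_{H^1,\omega}^2 \leq (\text{forcing})$ with a forcing that integrates against $e^{c_1\nu\tau}$ to yield \eqref{boundL^2inz}. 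The principal obstacle is the sharp spectral gap $\Lambda_\nu$: one must extract the precise $+\alpha\nu$ shift from the polynomial weight while simultaneously absorbing the $z$-dependent factor $\mu^{-2\alpha(1+\nu)}$ and the non-orthogonality of $v$ to the low modes, which is the reason for introducing $\zeta$ in \eqref{dezeta} in the first place.
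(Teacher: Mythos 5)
Your proposal takes a genuinely different route from the paper, and it contains a conceptual confusion that undermines the central coercivity step. The weight in the lemma is $D^{-2\alpha(1+\nu)}\rho_r/\la z\ra$ with $D=\mu(z)\sqrt{b}$; this is a function of $z$ and $\tau$ only, and the radial weight is simply $\rho_r$. Your proposed "conjugation $v\mapsto v/r^{\alpha(1+\nu)}$" and the claimed "$+\alpha\nu$ spectral shift" refer to an $r$-power weight that does not appear here. Moreover, even if one tries to extract a spectral gain from an $r^{-2\sigma}\rho_r$ weight, the quadratic form identity $\int H_\infty v\cdot v\, r^{-2\sigma}\rho_r = \int H_\infty u\cdot u\,\rho_r - \sigma^2\int u^2 r^{-2}\rho_r$ (with $v=r^\sigma u$) shows the lower bound is \emph{worse}, not better, absent further Hardy-type absorption. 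The $\nu$-gain in the lemma is not a spectral phenomenon.

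The paper's mechanism is much cruder and sidesteps orthogonality entirely in the radial sector. The quadratic form $\mathcal Q_\chi$ is bounded below using only the uniform estimate $-\Delta_r+\tfrac12\Lambda_r-p\Phi_{a,b}^{p-1}\geq -\Delta_r+\tfrac12\Lambda_r-pc_\infty^{p-1}r^{-2}\geq -\alpha/2$ on all of $L^2(\rho_r)$ (Proposition \ref{propinfty}, no orthogonality needed), while the $z$-weight $\chi(z)=\la z\ra^{-4\ell(1+\nu)-1}$ contributes $\ell(1+\nu)$ via the Hermite drift identity $-\pa_z^2\chi-\tfrac12\pa_z(z\chi)=[2\ell(1+\nu)+O(\la z\ra^{-1})]\chi$. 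Since $\ell>\alpha/2$, the form is coercive uniformly, and the extra $\nu^2$-gain for $\|v\|_{H^1}$ and $\|v/r\|$ is carved out by the Hardy inequality \eqref{hardyapoids}. The crucial $\nu$-decay then materializes only in the last step, when one differentiates $e^{c\nu\tau}(\sqrt b)^{-2\alpha(1+\nu)}\int\chi v^2\rho_r$: the modulation law $b_\tau/b=1-2\ell/\alpha+O((\sqrt b)^\eta)$ combines with the $(\sqrt b)^{-2\alpha(1+\nu)}$ normalization to leave a net $-\alpha\nu$ coefficient in the resulting differential inequality. Finally, the $O(\la z\ra^{-1})$ error term localized to $|z|\leq z^*$ is not handled by orthogonality or smallness of the $b_{j,k}$, but by invoking the previously proved $L^2_{\rho_Y}$ bound \eqref{controlponctuelenergy} on $\e$ together with the smallness of $\psi-\zeta$. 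Your plan of projecting $v$ onto each $\psi_{j,k}$ and compensating with $b^\eta$-smallness could in principle be developed, but it would need to reconcile the orthogonality conditions \eqref{orthoe} (in $L^2(\rho_Y)$) with the differently-weighted inner product here, and as sketched the coercivity you invoke is not actually established.
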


\begin{remark} The key feature of this lemma is the weighted $D$ gain and $\la z\ra$ which are both sharp for the analysis.
\end{remark}

\begin{proof}[Proof of Lemma \ref{lemmah1poly}] \noindent{\bf step 1} General weighted $L^{2}$ energy identity. We compute from \eqref{eqebis} for any function $\chi(\tau,r,z)$:
\bee
&&\frac{1}{2}\frac{d}{d\tau}\int v^{2}\chi \rho_rdY =  \frac{1}{2}\int v^{2}\pa_\tau \chi \rho_rdY+\int v \chi\pa_\tau v \rho_rdY\\
&=& \frac{1}{2}\int v^{2}\pa_\tau \chi \rho_rdY+\int v \left[\Delta v+p\Phi_{a,b}^{p-1}v-\frac 12\Lambda v+F\right]\chi  \rho_rdY.
\eee
We integrate by parts in $r$:
\bee
&&\int v \left[\Delta_r v-\frac 12r\pa_rv\right]\chi\rho_rdY=\int \chi v \pa_r(\rho_rr^{d-1}\pa_r v)drdz\\
& = & - \int \chi (|\pa_r v|^2\rho_rdY+\frac{1}{2}\int v^{2}\left[\Delta_r\chi-\frac 12r\pa_r\chi\right]\rho_rr^{d-1}drdz
\eee
and in $z$:
\bee
&&\int v \left[\pa_z^2v-\frac 12z\pa_z v\right]\chi  \rho_rdY= -\int \chi |\pa_z v|^2\rho_rdY+\frac{1}{2}\int v^{2}\left[\pa_z^2\chi+\frac12\pa_z(z\chi)\right]\rho_rdY
\eee
to derive the algebraic relation:
\be
\label{nkoneoneo}
\frac{1}{2}\frac{d}{d\tau}\int v^{2}\chi \rho_rdY=-\matchal Q_{q,\chi}(v,v)+\int v F\chi\rho_rdY
\ee
where we introduced the quadratic form:
\bea
\label{deqqh}
&&\mathcal Q_{\chi}(h,h)=\int \chi|\nabla h|^2\rho_rdY\\
\nonumber & - & \frac{1}{2}\int h^2\left[2p\Phi_{a,b}^{p-1}\chi+\pa_\tau\chi+\pa_z^2\chi-\frac{2}{p-1}\chi+\frac12\pa_z(z\chi)+\Delta_r\chi-\frac 12r\pa_r\chi\right]\rho_rdY
\eea

\noindent{\bf step 2} $L^2_{\rho_r}$ identity with polynomial weight. Let 
\be
\label{decchiz}
\chi(z)=\frac{1}{\la z\ra^{4\ell(1+\nu)+1}},
\ee 
then for $z$ large:
$$-\pa_z^2\chi-\frac 12\pa_z(z\chi)=\left[2\ell(1+\nu)+O\left(\frac{1}{\la z\ra}\right)\right]\chi.$$
We apply \eqref{deqqh} and conclude:
\bea
\label{expressionformequadra}
\nonumber Q_{\chi}(v,v)&=& \int \chi|\nabla v|^2\rho_rdY+\int\left(\ell(1+\nu)+\frac 1{p-1}- p\Phi_{a,b}^{p-1}\right)\chi v^2\rho_rdY\\
&+& O\left(\int_{|z|\leq z^*}v^2\rho_rdY+\nu^2 \int \chi v^2\rho_rdY\right).
\eea
for some large enough $z^*(\nu)$.
We recall from \eqref{positivitylamdbaq} and Proposition \ref{propinfty} the lower bound in terms of quadratic form on $H^1_{\rho_r}$: $$-\Delta_r+\frac 12\Lambda_r-p\Phi_{a,b}^{p-1}\geq -\Delta_r+\frac 12\Lambda_r-\frac{pc_\infty^{p-1}}{r^2}\geq -\frac{\alpha}{2}$$ which implies from \fref{hardyapoids} that for some universal constant $c>0$:
\bee
Q_{\chi}(v,v)&\geq & \int \left(\ell(1+\nu)-\frac{\alpha}{2} -c\nu^2\right)v^2\chi\rho_rdY+\nu^2\left[\int|\nabla v|^2\rho_r\chi dY+\int \frac{v^2}{r^2}\chi\rho_rdY\right]\\
&+& O\left(\int_{|z|\leq z^*}v^2\rho_rdY\right).
\eee
We estimate the well localized quadratic term using \eqref{decchiz}, \eqref{condiotnazero}, \eqref{controlea}:
\be
\label{cobeoeeeo}
\int_{|z|\leq z^*}v^2\rho_rdY\lesssim_{\nu}\int \left[\e^2+(\psi-\zeta)^2\right]\rho_YdY\lesssim (\sqrt{b})^{2\alpha+2\eta}
\ee
where we used \eqref{controlponctuelenergy} and \eqref{exitcondition} in the last step, and the fact that $\zeta$ is the leading order term of $\psi$ given by \eqref{defpsifinal}.
We have therefore obtained from \fref{nkoneoneo}:
\bea
\label{encoencoene} &&\frac{d}{d\tau}\int \chi v^{2}\rho_rdY+2\nu^2\int|\nabla v|^2\rho_r\chi dY\\
\nonumber &+& \left[(2\ell(1+\nu)-\alpha-c\nu^2)\int \chi  v^{2}+2\nu^2\int \frac{v^{2}}{r^2}\chi dY\right]\lesssim  |\int \chi Fv\rho_rdY|+(\sqrt{b})^{2\alpha+2\eta}.
\eea
\noindent{\bf step 4} Estimate for the forcing term.  We now estimate the $F$ term given by \eqref{eqebis} and claim:
\be \lab{bd forcing L2rhor}
\left|\int \chi F v \rho_rdY \right|\lesssim \sqrt{b}^{2\alpha+2\eta}+\nu^2 \int \chi v^2 \rho_r dY.
\ee
To prove it, for the $\Psi_3$, $L(\zeta)$ terms, we estimate from  \eqref{boundspsioneone}, \eqref{estlinearterm}, H\"older and Young inequality:
\bee
\left|\int \chi(\Psi_3+L(\zeta)v\rho_rdY\right|&\lesssim& \left(\int\frac{|\Psi_3|^{2}+|L(\zeta)|^2}{\la z\ra^{2(2\ell+2\nu)+1}}\rho_rdY\right)^{\frac 12}\left(\int \chi v^{2}\rho_rdY\right)^{\frac 12}\\
&\leq&  (\sqrt{b})^{2\alpha+2\eta}+\nu^2\int \chi v^{2}\rho_rdY.
\eee
For the $\NL(V)$ term, we estimate by homogeneity with \eqref{bd V Linftyphi bootstrap}, \eqref{boundlinfty}:
\bee
&&\int \chi|\NL(V)||v|\rho_rdY\lesssim \int |v|\left[|V|^p+|\Phi_{a,b}|^{p-2}V^2\right]\chi \rho_rdY\\
& \lesssim &  \int |v|\left(1+\frac{1}{r^{\frac{2}{p-1}}}\right)^{p-2}V^2\chi \rho_rdY\lesssim  \int |v|\left(1+\frac{1}{r^2}\right)\phi V^2\chi \rho_rdY
\eee
and split the integral in two parts for $M\gg 1$ large enough using the Gaussian weight in the $dz$ integrability provided by $\chi$. Indeed, using \eqref{estzeta} and \fref{bd V Linftyphi bootstrap}:
\bee
&&\int_{r\leq M|\log b|} |v|\left(1+\frac{1}{r^2}\right)\phi V^2\chi \rho_rdY\lesssim \int_{r\leq K|\log b|} |v|\left(1+\frac{1}{r^2}\right)\phi(r)(v^2+\zeta^2)\chi \rho_rdY\\
& \lesssim & (\|\phi V\|_{L^{\infty}}+\|\phi \zeta\|_{L^{\infty}(r\leq M|\log b|)})\int v^{2}\left(1+\frac{1}{r^2}\right)\chi\rho_rdY\\
& + & \nu^2\int v^{2}\left(1+\frac{1}{r^2}\right)\chi\rho_rdY+c_\nu\int_{r\leq M|\log b|}\zeta^{2}(\phi\zeta)^{2}\left(1+\frac{1}{r^2}\right)\chi\rho_rdY\\
& \leq & C\nu^2\int v^{2}\left(1+\frac{1}{r^2}\right)\chi\rho_rdY+(\sqrt{b})^{2\alpha+2g}
\eee
as $\alpha>g$ and for some $c>0$
\bee
&&\int_{r\geq M|\log b|} |v|\left(1+\frac{1}{r^2}\right)\phi V^2\chi \rho_rdY \lesssim  \int_{r\geq M|\log b|} |\zeta|\left(1+\frac{1}{r^{\frac{2}{p-1}}}\right)^{p-2}V^2\chi \rho_rdY+b^{cM}\|\phi V\|_{L^{\infty}}^{3}\\
& \lesssim & (\sqrt{b})^{cM}\leq (\sqrt{b})^{2\alpha+1}
\eee
provided $M$ has been chosen large enough. The collection of above bounds, injected in \fref{eqebis}, gives \fref{bd forcing L2rhor}.

\noindent{\bf step 5} Conclusion. From \fref{encoencoene} and \fref{bd forcing L2rhor} we infer the pointwise differential inequality:
\bea
\label{nceknoenoenvoenv}
 &&\frac{d}{d\tau}\int \chi v^{2}\rho_rdY+\left[2\ell(1+\nu)-\alpha-C\nu^2\right]\int \chi v^{2}\rho_rdY\\
\nonumber&+& \frac{\nu^2}{2}\left[\int|\nabla v|^2\rho_r\chi dY+\int \frac{v^{2}}{r^2}\chi\rho_rdY\right]\lesssim  (\sqrt{b})^{2\alpha+2\eta}.
\eea
We now compute from \eqref{defgibbg}, \eqref{estimateB}, \fref{estparameters} and \eqref{nceknoenoenvoenv}:
\bee
&&\frac{d}{d\tau}\left\{e^{c\nu \tau}\frac{\int \chi v^{2}\rho_rdY}{(\sqrt{b})^{2\alpha(1+\nu)}}\right\}\\\
&=& \frac{e^{c\nu \tau}}{(\sqrt{b})^{2\alpha(1+\nu)}}\left[\frac{d}{d\tau}\int \chi v^{2}\rho_rdY+c\nu \int \chi v^{2}\rho_rdY-\alpha(1+\nu)\frac{b_\tau}{b}\int \chi v^{2}\rho_rdY\right]\\
& = & \frac{e^{c\nu \tau}}{(\sqrt{b})^{2\alpha(1+\nu)}}\left[\frac{d}{d\tau}\int \chi v^{2}\rho_rdY+[(2\ell-\alpha)(1+\nu )+c\nu+O((\sqrt{b})^{\eta})]\int \chi v^{2}\rho_rdY\right\}\\
& \leq & \frac{e^{c\nu \tau}}{(\sqrt{b})^{2\alpha(1+\nu)}}\left\{(-(\alpha-c)\nu+C\nu^2)\int \chi v^{2}\rho_rdY-\frac{\nu^2}{2}\int\left[|\nabla v|^2+ \frac{v^{2}}{r^2}\right]\chi\rho_rdY\right\}\\
& + & e^{c\nu \tau}\frac{(\sqrt{b})^{2\alpha+2\eta}}{(\sqrt{b})^{2\alpha(1+\nu)}}\leq  -\nu^2\frac{e^{c\nu \tau}}{(\sqrt{b})^{2\alpha(1+\nu)}}\left[\int|\nabla v|^2\rho_r\chi dY+\int \frac{v^{2}}{r^2}\chi\rho_rdY\right]+O(\sqrt{b}^{\eta})
\eee
provided $\nu$ small enough and $c<\alpha$. Integration in time ensures:
$$e^{c\nu \tau}\frac{\int v^{2}\rho_r\chi dY}{(\sqrt{b})^{2\alpha(1+\nu)}}+\int_{\tau_0}^\tau\frac{e^{c\nu \tau'}}{(\sqrt{b})^{2\alpha(1+\nu)}}\left[ \int|\nabla v|^2\rho_r\chi dY+\int \frac{v^{2}}{r^2}\chi\rho_rdY\right]d\tau' \lesssim 1.$$ We now observe for $|z|\geq z^*$: $$\frac{1}{\la z\ra}\frac{1}{\mu^{2\alpha(1+\nu)}}\lesssim \frac{1}{a^C\la z\ra^{4\ell(1+\nu)+1}}= \frac{\chi}{a^C}$$ which together with \eqref{cobeoeeeo} and \fref{controlea} yields \eqref{boundL^2inz} for some $0<c=c_1\ll1$ small enough.
\end{proof}


\subsection{Control of derivatives outside the origin} 


We now propagate the $L^2$ bound \eqref{boundL^2inz} to higher derivatives. 

\begin{lemma}[Control of derivatives outside the origin]
\label{lemmah1polyderivative} 
For $c_2<c_1$ independent of $\tilde{\eta}$ ($c_1$ is defined in Lemma \ref{lemmah1poly}), there holds the bounds for $0<r^*\ll 1$ for $\nu$ small enough depending on $\eta$ but independent on $\tilde{\eta}$:
\be
\label{boundL^2inzout}
\int_{r\geq \frac{r^*}{2}} \frac{(\pa_r^i(\la z\ra\pa_z)^jv)^{2}}{[(\mu\sqrt{b})^\alpha]^{2(1+\nu)}}\frac{\rho_rdY}{\la z\ra}\leq c(r^*) e^{-c_2\nu \tau}, \ \ 0\leq i+j\leq 3.
\ee
\end{lemma}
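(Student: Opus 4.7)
The plan is to proceed by induction on the derivative order $i+j$, with Lemma \ref{lemmah1poly} providing the base case $(i,j) = (0,0)$. Fix $(i,j)$ with $1 \leq i+j \leq 3$, assume \eqref{boundL^2inzout} holds at all orders $i'+j' < i+j$, and set $v_{i,j} := \partial_r^i(\langle z\rangle \partial_z)^j v$.

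First I would derive an evolution equation for $v_{i,j}$ by commuting \eqref{eqebis} with $\partial_r^i(\langle z\rangle\partial_z)^j$. The key commutator identities are $[\partial_r^i, \frac{1}{2}r\partial_r] = \frac{i}{2}\partial_r^i$ and, after expanding $(\langle z\rangle\partial_z)^j = \langle z\rangle^j\partial_z^j + (\text{lower order in $\partial_z$})$, $[\partial_z^j, \frac{1}{2}z\partial_z] = \frac{j}{2}\partial_z^j$, together with the fact that on $\{r\geq r^*/2\}$ the radial Laplacian commutator $[\partial_r,\Delta_r]$ produces the bounded coefficient $-\frac{d-1}{r^2}\partial_r$, and $[\partial_r^i(\langle z\rangle\partial_z)^j, -p\Phi_{a,b}^{p-1}]$ only produces smooth uniformly bounded coefficients multiplying strictly lower order derivatives of $v$. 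This yields an equation of the schematic form
\[
\partial_\tau v_{i,j} + \mathcal{L}_a v_{i,j} + \tfrac{i+j}{2}\, v_{i,j} \; = \; \partial_r^i(\langle z\rangle\partial_z)^j F + \Omega_{i,j},
\]
where $\Omega_{i,j}$ is a linear combination of $v_{i',j'}$ with $i'+j' < i+j$, multiplied by coefficients bounded on $\{r\geq r^*/2\}$.

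Second, I would localize with a radial cutoff $\chi^*(r)$ equal to $1$ on $r \geq r^*$ and vanishing on $r \leq r^*/2$; this introduces an additional commutator source $[-\Delta,\chi^*]v_{i,j}$ compactly supported in the annulus $\{r^*/2\leq r\leq r^*\}$, where the induction hypothesis applied on a slightly larger set provides the required control. I would then run the weighted energy identity \eqref{nkoneoneo} for $\chi^* v_{i,j}$ against the weight $\widetilde\chi_j = \langle z\rangle^{2j-1}\rho_r / \mu^{2\alpha(1+\nu)}$, which matches the target weight once the factor $\langle z\rangle^{2j}$ implicit in $(\langle z\rangle\partial_z)^j$ is moved inside the norm. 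A direct repeat of the quadratic form computation \eqref{expressionformequadra} reveals that the extra $\langle z\rangle^{2j}$ in the weight contributes $-\tfrac{j}{2}$ to the $z$-coercivity constant, exactly compensating the $+\tfrac{j}{2}$ scaling shift coming from $\partial_z^j$, while the radial shift $+\tfrac{i}{2}$ remains as a pure gain. The resulting coercivity $\ell(1+\nu) - \alpha/2 + i/2 - C\nu^2$ is at least as strong as in Lemma \ref{lemmah1poly}, and suffices to run the Gronwall scheme of step 5 of that lemma.

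The principal obstacle is the top-order nonlinear contribution $\partial_r^i(\langle z\rangle \partial_z)^j \NL(V)$: by Leibniz one picks up terms in which a single factor of $V$ carries all $i+j$ derivatives, and these cannot be controlled by the induction hypothesis but must instead be absorbed into the $\nu^2$-dissipation produced by the coercivity of $\mathcal{L}_a$. This absorption will work by Cauchy--Schwarz combined with the bootstrap $L^\infty$-bound \eqref{boundlinfty}, which gives $\|\phi V\|_{L^\infty} \lesssim (\sqrt b)^{\tilde\eta}+(\sqrt b)^g$ via \eqref{smallnorminitsobolevboot}; since $\nu^*$ is chosen depending only on $\eta$, the smallness $(\sqrt b)^{\tilde\eta}\ll 1$ suffices. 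The slightly weakened rate $c_2 < c_1$ then emerges naturally to accommodate the finite accumulated loss incurred across the lower-order commutator couplings in $\Omega_{i,j}$.
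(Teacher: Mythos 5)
Your proposal is essentially the paper's own argument, reorganized. The paper proves the same bound in two sequential steps (first the $z\partial_z$ derivative via the equation satisfied by $zv_2$, then the $\partial_r$ and mixed derivatives with a multiplier localized away from the origin), whereas you run a single induction on $i+j$ with an explicit cutoff $\chi^*$ and a shifted weight. The two formulations are equivalent: commuting the operator through versus multiplying the function by $z$ gives the same evolution equation, and your observation that the $\langle z\rangle^{2j}$ in the weight costs $-j/2$ in coercivity while the commutator $[\partial_z^j,\tfrac12 z\partial_z]=\tfrac{j}{2}\partial_z^j$ gives back $+j/2$ is precisely what makes the paper's choice of $\chi(z)=\langle z\rangle^{-4\ell(1+\nu)-1}$ uniformly valid across derivative orders. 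One small inaccuracy in your phrasing: the commutator $[\langle z\rangle\partial_z,-\Delta]$ produces $\frac{2z}{\langle z\rangle}\partial_z^2 v + \langle z\rangle^{-3}\partial_z v$, which is \emph{not} lower order in $\partial_z$; it is the same order but comes with a $\langle z\rangle^{-1}$ gain and, when tested against $v_{i,j}\widetilde\chi_j$, has a favorable sign after integration by parts (the paper's version is the explicit $+2\partial_z v_2$ term in the $zv_2$ equation, which it handles the same way). Similarly, the commutator $[-\Delta,\chi^*]$ from the radial cutoff is genuinely of the \emph{same} derivative order as $v_{i,j}$, not lower, so "the induction hypothesis on a slightly larger set" alone does not close it; one must also invoke the integrated dissipation $\int_{\tau_0}^\tau e^{c\nu\tau'}\int |\nabla v_{i-1,j}|^2\ldots$ from the previous step, which is why the space-time bound in Lemma \ref{lemmah1poly} (and its analogues at intermediate orders) is needed and not merely the pointwise-in-time control. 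These are the same technicalities the paper itself dispatches with "the elementary details are left to the reader."
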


\begin{proof} This is a standard parabolic regularity claim. We briefly sketch the proof of the main steps to take care of the $\la z\ra$ weight. We let $$v_1=\pa_rv, \ \ v_2=\pa_z v.$$ We let $\chi$ be given by \eqref{decchiz}.\\

\noindent{\bf step 1} Control of $v_2$. From \eqref{eqebis}:
$$\pa_\tau(zv_2)+\L_a (zv_2)+2\pa_zv_2 =z\pa_z F+p(z\pa_z\Phi_{a,b}^{p-1})v.$$  The same chain of estimates like for the proof of \eqref{encoencoene} leads to:
\bee
&&\frac{d}{d\tau}\int \chi (zv_2)^{2}\rho_rdY+\nu^2\int\left[|\nabla (zv_2)|^2+\frac{(zv_2)^{2}}{r^2}\right]\rho_r\chi dY\\
\nonumber &+& (2\ell(1+\nu)-\alpha-\nu^2)\int (zv_2)^{2}\rho_r\chi dY\\
&\lesssim& |\int \left[z\pa_z F+p(z\pa_z\Phi_{a,b}^{p-1})v\right](z v_2)\chi\rho_rdY|+(z^*)^C\int_{|z|\leq z^*}v_2^2\rho_rdY
\eee
for some large enough $z^*(\nu)$.
We now estimate all terms in the above identity. The crossed term is estimated using the rough bound:
$$|(z\pa_z\Phi_{a,b}^{p-1}|\lesssim \frac{z|\pa_zD|}{D}\frac{1}{D^2}\left|[2Q^{p-1}+(p-1)yQ^{p-2}Q'](\frac{r}{D})\right|\lesssim \frac{1}{D^2}\left(\frac{D}{r}\right)^2\lesssim \frac1{r^2}$$ and hence from H\"older:
\bee
\int \chi |z\pa_z\Phi_{a,b}^{p-1}v||z v_2|\rho_rdY\leq \nu^3\int \frac{(zv_2)^{2}}{r^2}\rho_rdY+c(\nu)\int \frac{v^{2}}{r^2}\rho_rdY.
\eee
The $\Psi_3$ and $L(\zeta)$ terms are estimated from \eqref{boundspsioneone}, \fref{estlinearterm}:
 \bee
&& \int \chi (|z\pa_z \Psi_3|+|z\pa_z L(\zeta)|)|z v_2|\rho_rdY\\
 &\lesssim& \left(\int\frac{|z\pa_z\Psi_3|^{2}+|z\pa_z L(\zeta)|^2}{\la z\ra^{2(2\ell+2\eta)+1}}\chi \rho_rdY\right)^{\frac 12}\left(\int \chi (zv_2)^{2}\rho_rdY\right)^{\frac 12}\\
 &\lesssim & (\sqrt{b})^{2\alpha+2\eta}+\nu^3\int \chi (zv_2)^{2}\rho_rdY.
\eee
For the nonlinear term, we compute using $\pa_z\zeta=0$:
\bee
\pa_z \NL(V)=p\pa_z\Phi_{a,b}((\Phi_{a,b}+V)^{p-1}-\Phi_{a,b}^{p-1}-(p-1)\Phi_{a,b}^{p-2}V)+\pa_z v((\Phi_{a,b}+V)^{p-1}-\Phi_{a,b}^{p-1}).
\eee
We use $$|z\pa_z\Phi_{a,b}|\lesssim \frac{|z\pa_zD|}{D}\frac{1}{D^{\frac{2}{p-1}}}\Lambda Q\left(\frac rD\right)\lesssim \Phi_{a,b}$$ to estimate by homogeneity
\bee
&&\left|z\pa_z \NL(V)\right|\lesssim \Phi_{a,b}^{p-2}V^2+\Phi_{a,b}|V|^{p-1}+|z\pa_zv |(|V|^{p-1}+|V|\Phi_{a,b}^{p-2})\\
& \lesssim & \left(1+\frac{1}{r^2}\right)\left[|\phi V|+|\phi V|^{p-2}\right]|V|+|zv_2|\left(1+\frac 1{r^2}\right)(|\phi V|^{p-1}+|\phi V|)
\eee
We therefore split the integral in $r\leq M|\log b|$, $r\ge M|\log b|$ for some $M\gg 1$ as before and estimate using \fref{bd V Linftyphi bootstrap}, \eqref{estzeta}, \fref{estphibrutale}:
\bee
\int_{r\geq M|\log b|} \chi |z\pa_z\NL(v)||z v_2|\rho_rdY\lesssim \nu^3\int (zv_2)^{2}\left(1+\frac1{r^2}\right)\rho_rdY+(\sqrt{b})^{2\alpha+4}
\eee
because of the exponential weight, and as $p\geq 3$
\bee
&&\int_{r\leq M|\log b|} \chi |z\pa_z\NL(v)||z v_2|\rho_rdY\lesssim  \int_{r\leq M|\log b|} \chi |zv_2| \left(1+\frac{1}{r^2}\right)\left[|\phi V|+|\phi V|^{p-2}\right]|V|\\
&+&  \int_{r\leq M|\log b|} \chi |zv_2|^2\left(1+\frac 1{r^2}\right)(|\phi V|^{p-1}+|\phi V|) \\
&\leq& \nu^3\int (zv_2)^{2}\left(1+\frac1{r^2}\right)\rho_rdY+c_\nu\int \left[|\phi V|+|\phi V|^{p-2}\right]^2|V|^2\left(1+\frac1{r^2}\right)\chi\rho_rdY \\
&+&(\sqrt b)^{\tilde \eta} \int_{r\leq M|\log b|} \chi |zv_2|^2\left(1+\frac 1{r^2}\right)\\
&\leq& 2\nu^3\int (zv_2)^{2}\left(1+\frac1{r^2}\right)\rho_rdY+c_\nu\int \phi ^2(\zeta^4+v^4)\left(1+\frac1{r^2}\right)\chi\rho_rdY \\
&\leq& 2\nu^3\int (zv_2)^{2}\left(1+\frac1{r^2}\right)\rho_rdY+c_\nu\int \left[ |\phi \zeta|^2\zeta^2+(|\phi V|^2+|\phi \zeta|^2)v^2\right]\left(1+\frac1{r^2}\right)\chi\rho_rdY \\
&\leq& 3\nu^3\int (zv_2)^{2}\left(1+\frac1{r^2}\right)\rho_rdY+\sqrt{b}^{2\alpha+2g} +(\sqrt b)^{2\tilde{\eta}}\int v^2\left(1+\frac{1}{r^2}\right)\chi \rho_r dY\\
\eee
The collection of above bounds yields the differential inequality:
\bee
&&\frac{d}{d\tau}\int \chi (zv_2)^{2}\rho_rdY+\frac{\nu^2}{2}\int\left[|\nabla (zv_2)|^2+\frac{(zv_2)^{2}}{r^2}\right]\rho_r\chi dY\\
\nonumber &+& (2\ell(1+\eta)-\alpha-C\nu^2)\int (zv_2)^{2}\rho_r\chi dY\\
& \lesssim & \int_{|z|\leq z^*} v_2^2\rho_r\chi dY+(\sqrt{b})^{2\alpha +2\eta} +\int v^2\left(1+\frac{1}{r^2}\right)\chi \rho_r dY.
\eee
Integrating in time using the space time bound \eqref{boundL^2inz} yields the pointwise bound
$$e^{c_2\nu \tau}\int \frac{(zv_2)^{2}}{D^{2(1+\nu)}}\frac{\rho_rdY}{\la z\ra}+\int_{\tau_0}^\tau e^{c_2\nu \tau'}\int\frac{|\nabla (zv_2)|^2}{D^{2(1+\nu)}}\frac{\rho_rdY}{\la z\ra}d\tau' \leq 1$$ by possibly taking any smaller constant $c_2<c_1$ for $\nu$ and $b$ small enough. Iterating $z\pa_z$ derivatives can be done along the same lines, using the estimates \eqref{boundspsioneone} and \fref{estlinearterm} for the source terms, and the nonlinearity being three times differentiable as $p\geq 3$, and is left to the reader.\\

\noindent{\bf step 2} Control of $v_1$. We now let one $\pa_r$ derivative go through \eqref{eqebis}, and we run the energy identity \eqref{encoencoene} with a mulitplier $\chi$ localized strictly away from the origin. Since all terms without derivatives have been estimated at the previous step,
 the control of $\pa_r$ derivatives and mixed derivatives follows. The elementary details are left to the reader.

\end{proof}

\subsection{$L^{2q+2}_{\rho_r}$ bound at the origin}


 We now study $L^\infty$ bounds near the origin $0<r\lesssim r^*\ll1$. For this, it is more convenient to use the variable $w$ given by \eqref{defww} and derive suitable $L^{2q+2}$ monotonicity estimates in $w$, which equivalently correspond to suitably weighted estimates for $v$ near the origin. The key point here is that we may derive such bounds for a uniform weight in $z$.

\begin{lemma}[$L^{2q+2}$ bound at the origin]
\label{boundltwoqueue}
There exists $0<c_3< c_2$ such that the following holds. Pick $K$ large enough, then there holds for all $q>q(d,p)$ large enough:
\be
\label{cneineoneonevnio}
e^{c_3\nu q\tau}\int_{r\leq 1} \frac{w^{2q+2}}{\la z\ra (1+D^{2Kq})}dY+\int_{\tau_0}^\tau\int_{r\leq 1} e^{c_3\nu q\tau'}\frac{w^{2q+2}}{r^2\la z\ra (1+D^{2Kq})}dYd\tau'\leq 1.
\ee
\end{lemma}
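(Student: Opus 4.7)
The strategy is a weighted $L^{2q+2}$ energy estimate on $w = v/T$, the advantage being that conjugation by $T = \Lambda\Phi_{a,b}$ eliminates the singular Hardy potential $-p\Phi_{a,b}^{p-1}$ of $\L_a$ near the origin and replaces it by a smooth transport drift $-2(\na T/T)\cdot\na w$. Indeed, from $v = Tw$, \eqref{eqebis}, and the expansion
$$\L_a(Tw) = T(-\Delta w) - 2\na T\cdot\na w + w\,\L_a T + \frac{T}{2}\Lambda w - pT\Phi_{a,b}^{p-1}w,$$
one obtains the transformed equation
$$\pa_\tau w - \Delta w + \frac12\Lambda w - 2\frac{\na T}{T}\cdot\na w + \mathcal R\,w = \frac{F}{T},$$
with $\mathcal R = (\pa_\tau T + \L_a T)/T$ bounded by $(\sqrt{b})^\delta$ on $r \leq 1$ via a cancellation mechanism analogous to that of Lemma \ref{lemmacancellationa}, applied to the scaling direction.

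Multiplying the $w$-equation by $w^{2q+1}\chi$ with $\chi = \mathbf{1}_{r\leq 1}/(\la z\ra(1+D^{2Kq}))$ (mollified as needed for integration by parts) and integrating, the Laplacian yields the principal dissipation $(2q+1)\int\chi\,w^{2q}|\na w|^2\,dY$ up to a boundary contribution at $r = 1$ and $z$-derivatives of $\chi$. A Hardy inequality on $r \leq 1$ upgrades this to control of $\int\chi\,w^{2q+2}/r^2\,dY$ with an absolute constant, and the crucial point is that the transport $-2(\na T/T)\cdot\na w$ in the annulus $D \leq r \leq 1$, where $\na T/T \sim -\gamma/r$, does not destroy this coercivity precisely because $\gamma < (d-2)/2$ under $p > p_{JL}$. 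The exponential gain $e^{c_3\nu q\tau}$ is produced by the time derivative of the weight $(1 + D^{2Kq})^{-1}$: since $\pa_\tau D/D \sim -(\ell - \alpha/2)$ on the main part of space-time by \eqref{loideb}, a factor of order $Kq\nu$ emerges per unit weight after $K$ is chosen large enough to absorb the losses coming from $\pa_z\chi/\chi \sim -2Kq\,\pa_z D/D$ in the $\frac{1}{2}z\pa_z w$ drift, bounded pointwise thanks to $|z\pa_z D/D| \lesssim 1$.

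The three remaining error sources are standard. The boundary term at $r = 1$ is controlled by raising the exterior $L^2_{\rho_r}$ bound of Lemma \ref{lemmah1polyderivative} to the power $q+1$, which decays in time at rate $(q+1)c_2\nu$, so any $c_3 < c_2$ absorbs it. The forcing $F/T = -(\Psi_3 + L(\zeta))/T + \NL(V)/T$ is estimated using the degeneracy of $\Psi_3$ provided by Lemma \ref{lemmacancellationa}, the pointwise bounds \eqref{esterrorpotential} and \eqref{estzeta} for $L(\zeta)$, and the $L^\infty$ bootstrap \eqref{smallnorminitsobolevboot}--\eqref{boundlinfty} for $\NL(V)$, each producing a polynomial gain in $\sqrt{b}$. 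Assembling these ingredients into an ODE inequality of the schematic form
$$\frac{d}{d\tau}\|w\|_{{\rm int},q}^{2q+2} + c\nu q\,\|w\|_{{\rm int},q}^{2q+2} + c\nu q\int_{r\leq 1}\frac{w^{2q+2}}{r^2\la z\ra(1+D^{2Kq})}\,dY \lesssim e^{-c_2\nu q\tau},$$
and applying Gr\"onwall together with the initial smallness \eqref{smallnorminitsobolev}, yields \eqref{cneineoneonevnio}.

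The main technical obstacle is to preserve the factor $q$ in the exponential rate throughout the estimate. This requires the Hardy inequality to be sharp enough to dominate the transport coefficient $\na T/T$ in the singular annulus $D \leq r \leq 1$, which is precisely where the strict inequality $\gamma < (d-2)/2$, i.e.\ the condition $p > p_{JL}$, enters. A secondary but important issue is the correct ordering of parameters: $\nu$ small (after $\eta$), then $K$ large (to beat the $\pa_z\chi$ loss), then $q$ large (so that the Sobolev embedding of Lemma \ref{weightn} applies), and finally $c_3 < c_2$ (to absorb the boundary contribution from the exterior estimate).
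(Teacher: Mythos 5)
Your overall approach matches the paper's: conjugate by $T=\Lambda\Phi_{a,b}$ to eliminate the singular potential of $\L_a$ at the origin (via $(\Delta_r+p\Phi_{a,b}^{p-1})T=0$), then run a weighted $L^{2q+2}$ energy estimate on $w=v/T$ with a Hardy inequality whose coercivity relies on $d-2-2\gamma>0$, i.e.\ $p>p_{JL}$. The identification of the key constraint and the parameter ordering are both correct.

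There is, however, a genuine gap in the mechanism you give for the exponential factor $e^{c_3\nu q\tau}$. You attribute it to the time derivative of the weight $(1+D^{2Kq})^{-1}$. But
$$
-\frac{\pa_\tau\chi}{\chi}\sim 2Kq\,\frac{D^{2Kq}}{1+D^{2Kq}}\,\Bigl(-\frac{\pa_\tau D}{D}\Bigr),
$$
and $-\pa_\tau D/D\sim \ell/\alpha-1/2$ is a fixed number carrying no factor $\nu$. Moreover, on $\{D\lesssim 1\}$ the prefactor $D^{2Kq}/(1+D^{2Kq})$ is exponentially small in $q$, while on $\{D\gtrsim 1\}$ the weight $\chi$ itself is negligible — so this term never produces a uniform $\nu q$-gain on the part of space where $w$ lives. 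In the paper it is merely estimated as $O(Kq)$ (hence $O(K)$ after the $1/(2q+2)$ prefactor) and absorbed. The actual source of the rate $\nu q$ is different: the Hardy inequality combined with the choice $r^*\sim q^{-1}$ gives an absorption coefficient of order $q$ for the zeroth-order terms (including the $O(1)$ coefficient $V_1$), and the \emph{right-hand side} — the exterior contribution over $r^*\le r\le 2$ bounded via \eqref{estpotiwiselinfty} and Lemma \ref{lemmah1polyderivative}, together with the forcing $\Psi_3/T$, $L(\zeta)/T$, $\NL(V)/T$ of Appendix \ref{appendnonlin} — decays like $e^{-c_2\nu q\tau}$ because the exterior pointwise bound is raised to the $\sim 2q$th power. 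Since $q\gg\nu q$, the solution's decay rate is dictated by the RHS rate $\nu q$, which is what produces $c_3<c_2$. Without invoking the interplay between the $O(q)$-absorption from small $r^*$ and the $\nu q$ decay of the exterior/forcing terms, the estimate does not close.

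Two smaller inaccuracies. First, the coefficient $\mathcal R=(\pa_\tau T+\L_a T)/T$ equals $\tfrac{1}{p-1}-V_1$ in the paper's notation and is only $O(1)$ (see \eqref{estvone}--\eqref{estlogt}); there is no cancellation making it $O((\sqrt b)^\delta)$. The proof does not need smallness here — the $O(q)$ absorption swallows it — but the claimed cancellation does not exist. Second, the drift is $+2\nabla T\cdot\nabla w/T$, not $-2$; after integration by parts it produces $\Delta_r\log T\ge-\gamma(d-2)/r^2$ working \emph{against} the Hardy constant $(d-2)^2/4$, and it is precisely this competition that forces $d-2-2\gamma>0$; with your sign the competition would look trivially favorable. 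Finally, $K$ large is chosen so that the $z$-integrals $\int (1+D^{Cq})/(\la z\ra(1+D^{2Kq}))\,dz$ converge, not to absorb $\pa_z\chi$ losses, which are in any case bounded by $|z\pa_zD/D|\lesssim 1$ and absorbed as above.
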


\begin{proof} Observe 
\be
\label{eqttt}
(\Delta_r+p\Phi_{a,b}^{p-1})T=0
\ee
and hence \eqref{eqebis} becomes:
\be
\label{eqwwfeo}
\pa_t w=\Delta w+2\frac{\nabla T\cdot\nabla w}{T}+V_1 w-\frac 12Y\cdot\nabla w-\frac{1}{p-1}w+\widetilde{F}, \ \ \widetilde{F}=\frac{F}{T}
\ee
with 
\be
\label{devone}
V_1=\frac 1T\left[-\pa_tT+\pa_{zz}T-\frac 12Y\cdot\nabla T\right].
\ee
\noindent{\bf step 1} $L^{2q+2}$ energy identity. 
We compute from \eqref{eqwwfeo} for a given $\chi(t,r,z)$:
\bee
&&\frac{1}{2q+2}\frac{d}{dt}\int  \chi w^{2q+2}dY=\frac{1}{2q+2}\int  \pa_t\chi w^{2q+2}dY\\
&+& \int w^{2q+1} \left\{\Delta w+2\frac{\nabla T\cdot\nabla w}{T}+V_1 w-\frac 12Y\cdot\nabla w-\frac{1}{p-1}w+\widetilde{F}\right\}\chi dY\\
& = & -Q_\chi(w^{q+1},w^{q+1})+\int w^{2q+1}\tilde{F}\chi dY
\eee
with
\bee
&&Q_\chi(h,h)=\frac{2q+1}{(q+1)^2}\int |\nabla h|^2\chi dY+\int h^2\left\{\frac{2}{2q+2}\Delta(\log T)-V_1-\frac{d+1}{2(2q+2)}-\frac{1}{p-1}\right\}\chi dY\\
& -&  \int h^2\left[\frac{1}{2q+2}\Delta \chi-\frac{2}{2q+2}\frac{\nabla T\cdot\nabla \chi}{T}+\frac{1}{2(2q+2)}Y\cdot\nabla\chi+\frac{\pa_t\chi}{2q+2}\right]dY.
\eee
We choose 
\be
\label{defjijcjoc}
\chi(r,z)=\frac{\phi_1(r)}{\la z\ra(1+D^{2Kq})}, \ \ \phi_1(r)=\left\{\begin{array}{ll} 1\ \ \mbox{for}\ \ r\leq 1,\\ 0\ \ \mbox{for}\ \ r\geq 2.\end{array}\right..
\ee

\noindent{\bf step 2} Lower bound on $Q_\chi$. We estimate:
$$
\left|\frac{\pa_t\chi}{\chi}\right|\lesssim \frac{D}{1+D^{2Kq}}\frac{|\pa_tD|}{D}\lesssim \frac{|\pa_t\mu|}{\mu}+\frac{|b_t|}{b}\lesssim 1.
$$ We then estimate in brute force using the definition of $\chi$ and the pointwise bounds \eqref{estvone}, \eqref{estlogt}:
\bee
Q_\chi(h,h)&\gtrsim& \frac{2q+1}{(q+1)^2}\int |\nabla h|^2\chi dY+\int h^2\left\{\frac{2}{2q+2}\Delta_r (\log T)\right\}\chi dY\\
& + & O\left(\int_{r\leq 2}\frac{h^2}{\la z\ra(1+D^{2Kq}) }dY\right).
\eee
We now estimate from \eqref{estpoitnwisepotential}, \eqref{defequgamma}, \eqref{eqttt}, \eqref{positivitylamdbaq}:
$$\Delta_r(\log T)=-p\Phi_{a,b}^{p-1}-\left(\frac{\pa_r T}{T}\right)^2\geq \frac{-\gamma(d-2-\gamma)-\gamma^2}{r^2}=-\frac{\gamma(d-2)}{r^2}.$$
We then use the sharp Hardy inequality 
\be
\label{sharphardy}
\int \chi|\pa_rh|^2r^{d-1}dr\geq \left(\frac{d-2}{2}\right)^2\int \frac{\chi h^2}{r^2}r^{d-1}dr+O\left(\int \frac{|\pa_r\chi|^2}{\chi}h^2r^{d-1}dr\right)
\ee

to lower bound:
\bee 
&&\frac{2q+1}{(q+1)^2}\int |\nabla h|^2\chi dY+\int h^2\left\{\frac{2}{2q+2}\Delta_r(\log T)\right\}\chi dY\\
&\geq&  \left[\frac{2q+1}{(q+1)^2}\left(\frac{d-2}{2}\right)^2-\frac{2(d-2)\gamma}{2q+2}\right]\int\frac{h^2}{r^2}\chi dY+O\left(\int_{r^*\leq r\leq 2}h^2\frac{\rho_r}{\la z\ra(1+D^{2Kq})}dY\right)
\eee
Observe
$$
\frac{2q+1}{(q+1)^2}\left(\frac{d-2}{2}\right)^2-\frac{2(d-2)\gamma}{2q+2}=\frac{d-2}{4(q+1)^2}\left[2q(d-2-2\gamma)+d-2-4\gamma\right]>0
$$
for $q$ large enough from \eqref{defgamma}, and hence for $q>q(d,p)\gg1$ and a universal $c>0$:
\bee
&&\frac{2q+1}{(q+1)^2}\int |\nabla h|^2\chi dY+\int h^2\left\{\frac{2}{2q+2}\Delta_r(\log T)\right\}\chi dY\\
& \geq & \frac c q\int \left(|\nabla h|^2+\frac{h^2}{r^2}\right)\chi dY+O\left(\int_{r^*\leq r\leq 2} h^2\frac{\rho_r}{\la z\ra(1+D^{2Kq})}dY\right).
\eee
and hence the lower bound
\bee
Q_\chi(h,h)&\geq& \frac{c}{q}\int \left(|\nabla h|^2+ \frac{h^2}{r^2}\right)\chi dY+O\left(\int_{ r^*\leq r\leq 2} h^2\frac{\rho_r}{\la z\ra(1+D^{2Kq})}dY\right)
\eee provided $0<r^*(K,q)\ll1$ universal has been chosen small enough.

\noindent{\bf step 2} $L^{2q+2}$ bound. We conclude for $q$ large enough:
\bee
&&\frac{1}{2q+2}\frac{d}{dt}\int  w^{2q+2}\chi dY+\frac{c}{q}\left[\int|\nabla (w^{q+1)}|^2\chi dY+\int\frac{w^{2q+2}}{r^2}\chi dY\right]\\
& \lesssim & \int_{r^*\leq r\leq 2} w^{2q+2}\frac{\rho_r}{\la z\ra(1+D^{2Kq})}dY+\left|\int_{r\leq 2}  w^{2q+1}\widetilde{F}\chi dY\right|
\eee
and we now estimate the rhs.\\

\noindent{\it Quadratic term}. We first observe from \eqref{estpotiwiselinfty} and \fref{boundL^2inzout} the pointwise bound:
\be
\label{linfryvouter}
\left\|\pa_r^i(\la z\ra \pa_z)^j\left(\frac{v}{D^{\alpha(1+\nu)}}\right)\right\|_{L^\infty(\frac{r^*}{2}\leq r\leq 2A)}\lesssim e^{-\frac{c_2}{2}\nu \tau}, \ \ 0\leq i,j\leq 1.
\ee
We then split the integral in two parts. For $\frac{r}{D}\geq 1$, we have $$\Lambda \Phi_{a,b}\gtrsim \frac{c(r^*)D^\alpha}{r^\gamma}$$ and hence using \eqref{linfryvouter}, \eqref{boundL^2inz}:
\bee
&&\int_{r^*\leq r\leq 2, r\geq D} w^{2q+2}\frac{\rho_r}{\la z\ra(1+D^{2Kq})}dY\lesssim \int_{r^*\leq r\leq 2, r\geq D} \left(\frac{v}{D^\alpha}\right)^{2q+2}\frac{dY}{\la z\ra(1+D^{2Kq})}\\
& \lesssim &  \left\|\frac{v}{D^{\alpha(1+\nu)}}\right\|_{L^{\infty}(r^*\leq r\leq 2,D\leq 2 )}^{2q}\int \frac{v^2}{D^{2\alpha(1+\nu)}}\frac{\rho_rdY}{\la z\ra}\lesssim e^{-c_2\nu q\tau}
\eee
with $0<c\ll 1$ independent of $q$. For $\frac rD\leq 1$, we use $\Lambda \Phi_{a,b}\gtrsim \frac{1}{D^{\frac 2{p-1}}}$ and \eqref{linfryvouter} to estimate:
\bee
\int_{r^*\leq r\leq 2, r\leq D} w^{2q+2}\frac{\rho_r}{\la z\ra(1+D^{2Kq})}dY&\lesssim &e^{-c_2\nu q\tau}\int\frac{D^{Cq}}{1+D^{2Kq}}\frac{dz}{\la z\ra} \lesssim e^{-\frac{c_2}{2}\nu q\tau}
\eee
for $K$ large enough independent of $q$.\\

\noindent{\it $\Psi_3,L(\zeta)$ term}. We estimate from H\"older, \eqref{ceibbeokboeoebv} and \eqref{ceibbeokboeoebvL}:
\bee
\left|\int_{r\leq 2}  w^{2q+1}\frac{\Psi_3+L(\zeta)}{T}\chi dY\right|&\leq& \kappa \int \frac{w^{2q+2}}{r^2}\chi dY+c_\kappa \int \left(\frac{r^{2\frac{2q+1}{2q+2}}\Psi_3}{T}\right)^{2q+2}\chi dY\\
&\leq&  \kappa \int \frac{w^{2q+2}}{r^2}\chi dY+b^{c\eta q}
\eee

\noindent{\it Nonlinear term}. We claim for a universal $c>0$:
\be
\label{ceneonoe}
\int_{r\leq 2} |\NL(V)|w^{2q+1}\chi dY\leq \kappa \int \frac{w^{2q+2}}{r^2}\chi dY+b^{c q}.
\ee
Indeed, we estimate by H\"older:
\bee
\int_{r\leq 2} \frac{|\NL(V)|}{T}w^{2q+1}\chi dY\leq \kappa \int \frac{w^{2q+2}}{r^2}\chi dY+c_\kappa \int \left(\frac{r^{\frac{2(2q+1)}{2q+2}}\NL(V)}{T}\right)^{2q+2}\chi dY.
\eee
We now estimate by homogeneity for $r\leq 2$ using \eqref{estphibrutale}, \fref{smallnorminitsobolevboot}:
\bee
\frac{|\NL(V)|}{T}&\lesssim &\frac{1}{T}\left[|v|^p+|\zeta|^p+\Phi_{a,b}^{p-2}(|v|^2+|\zeta|^2)\right]\lesssim \frac{1}{T}\|\phi v\|_{L^{\infty}(r\le 2)}\frac{|v|}{r^2}+\frac{1}{T}\|\phi \zeta\|_{L^\infty(r\le 2)}|\zeta|\\
& \lesssim & \|\phi v\|_{L^{\infty}(r\le 2)}\frac{|w|}{r^2}+(\sqrt b)^g\frac{\zeta}{T} \lesssim  (\sqrt{b})^{\tilde \eta}\frac{|w|}{r^2}+(\sqrt b)^g\frac{\zeta}{T}.
\eee
Moreover, we extract from \eqref{estzeta} the rough bound for $r\leq 2$ $$|\zeta(r)|\lesssim \frac{(\sqrt{b})^\alpha}{r^\gamma}$$  which implies from \eqref{cneoneonve} for $D\leq r\leq 2$:
$$\frac{|\zeta|}{T}\lesssim \frac{(\sqrt{b})^\alpha}{r^\gamma}\frac{D^\gamma+r^\gamma}{D^\alpha}\lesssim \frac{1}{\mu ^\alpha}.$$
On the other hand for $r\leq D$ from \eqref{estzeta}:
$$
\frac{|\zeta|}{T} \lesssim (\sqrt{b})^{g-\frac{2}{p-1}}D^{\frac{2}{p-1}}
$$ and hence the bound for $r\le 2$
\be
\label{cnekoneonvneokv}\frac{|\NL(V)|}{T}\lesssim (\sqrt b)^{\tilde \eta}\frac{|w|}{r^2}+\frac{b^{\frac g2}}{\mu^\alpha}{\bf 1}_{D\leq r\leq 2}+D^{\frac{2}{p-1}}b^{\frac g2-\frac{1}{p-1}}{\bf 1}_{r\leq \min\{2,D\}}
\ee
which implies since $\mu \geq 1/2$, $g\geq 1$ and $p\geq 3$:
$$\int \left(\frac{r^{2\frac{2q+1}{2q+2}}\NL(V)}{T}\right)^{2q+2}\chi dY\lesssim \|\phi v\|^{2q}_{L^{\infty}(r\le 2)}\int \frac{w^{2q+2}}{r^2}\chi dY+b^{c q}$$
for $K$ large enough independent of $q$ and $c$ universal. The collection of above bounds concludes the proof of \eqref{ceneonoe}.\\

\noindent{\bf step 3} Conclusion. We conclude for $r^*(q,K)$ small enough since $0<\nu\ll \eta \ll 1$:
$$\frac{1}{2q+2}\frac{d}{dt}\int  w^{2q+2}\chi dY+\frac{c}{q}\left[\int|\nabla (w^{q+1)}|^2\chi dY+\int\frac{w^{2q+2}}{r^2}\chi dY\right]\lesssim e^{-\frac{c_2}{2}\nu q \tau}$$ which implies provided $r^*(q)$ has been chosen small enough:
\bee
\frac{1}{2q+2}\frac{d}{dt}\int  w^{2q+2}\chi dY+2cq\int  w^{2q+2}\chi dY+\frac{c}{q}\left[\int|\nabla (w^{q+1)}|^2\chi dY+\int\frac{w^{2q+2}}{r^2}\chi dY\right]\lesssim e^{-\frac{c_2}{2}\nu q \tau}
\eee
whose time integration ensures that there exists $0<c_3\leq c_2$ for which:
\bee
\int  w^{2q+2}\chi dY+\int_{\tau_0}^\tau e^{c_3\nu q\sigma }\left[\int|\nabla (w^{q+1)}|^2\chi dY+\int\frac{w^{2q+2}}{r^2}\chi dY\right]d\sigma \lesssim e^{-c_3\nu q \tau}.
\eee
The definition \eqref{defjijcjoc} of $\chi$ now yields \eqref{cneineoneonevnio}.
\end{proof}


\subsection{$W^{1,2q+2}$ bound}


We now turn to the weigted control of derivatives in large $L^{2q+2}$ norms. $\pa_r$ derivatives yield singular term at the origin and we claim a lossy bound which for $q$ large is sufficient thanks to \eqref{defnorme} and the underlying weighted Sobolev estimate \eqref{estitmiatesobolevpoids}.

\begin{lemma}[$W_1^{2q+2}$ estimate]
\label{lemmaestwone}
There holds:
\be
\label{cneineoneonevniobis}
\int_{r\leq 1} \frac{(\pa_rw)^{2q+2}}{\la z\ra(1+D^{2Kq})}dY\lesssim  \frac{1}{b^{Cq}}
\ee
and:
\be
\label{cneineoneonevniobisbis}
\int_{r\leq 1} \frac{(\la z\ra \pa_zw)^{2q+2}}{\la z\ra(1+D^{2Kq})}dY\lesssim  e^{-c_4\nu q \tau}
\ee
for some universal constants $C=C(d,p)$ independent of $q$ and $c_4\lesssim c_3$.
\end{lemma}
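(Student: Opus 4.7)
I will run $L^{2q+2}$ energy estimates on the equations satisfied by $w_1:=\pa_r w$ and $w_2:=\la z\ra \pa_z w$, following the scheme of Lemma \ref{boundltwoqueue}. Recall that $w$ satisfies the schematic equation \eqref{eqwwfeo}
\begin{equation*}
\pa_\tau w = \Delta w + 2\frac{\nabla T\cdot \nabla w}{T} + V_1\, w - \frac 12 Y\cdot \nabla w - \frac{1}{p-1}w + \widetilde F,
\end{equation*}
whose coefficients $T$, $V_1$, $\widetilde F$ admit the pointwise bounds already exploited in the proof of \eqref{cneineoneonevnio}. The idea is that applying $\pa_r$ or $\la z\ra \pa_z$ reproduces the same functional setup, with additional source terms coming from commutators and from differentiating the coefficients.

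\textbf{Step 1: sharp $\la z \ra \pa_z$ estimate \eqref{cneineoneonevniobisbis}.} Since $\pa_z$ commutes with $\Delta_r$, $\pa_r$ and with the radial potentials, the only nontrivial commutators are $[\la z\ra \pa_z, -\frac 12 z\pa_z]$, which is bounded, and the coefficient derivatives $\la z \ra \pa_z(\nabla T/T)$, $\la z\ra \pa_z V_1$. The $z$-dependence of $T$ and $V_1$ enters only through the factor $\mu(z)=(1+\nu(z))^{1/\alpha}$, for which $\la z \ra |\pa_z D|/D\lesssim 1$ by \eqref{loclacsocso}, so these commutators behave no worse than the original coefficients. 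Running the $L^{2q+2}$ identity on $w_2$ against the weight $\chi$ of \eqref{defjijcjoc} therefore produces the same coercive quadratic form $Q_\chi(w_2^{q+1},w_2^{q+1})$ as in the proof of Lemma \ref{boundltwoqueue}, plus a forcing linear in $w$ itself that is absorbed using \eqref{cneineoneonevnio}. The source contributions from $\la z\ra \pa_z \widetilde F$ are treated exactly as the corresponding terms in Lemma \ref{boundltwoqueue} via the pointwise bounds on $\Psi_3$, $L(\zeta)$ and $\NL(V)$. This yields \eqref{cneineoneonevniobisbis} with $c_4\leq c_3$.

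\textbf{Step 2: lossy $\pa_r$ estimate \eqref{cneineoneonevniobis}.} Commuting $\pa_r$ through \eqref{eqwwfeo} generates the commutator $[\pa_r,\Delta_r]=-(d-1)r^{-2}\pa_r$, together with new source terms $2\pa_r(\nabla T/T)\cdot\nabla w$, $(\pa_r V_1)w$ and $\pa_r \widetilde F$. Near the origin $\pa_r(\nabla T/T)$ and $\pa_r V_1$ still behave like $1/r^2$ but pick up an extra $b^{-1}$ from differentiating the rescaled soliton $Q(r/\sqrt b)$, and similarly for $\pa_r \widetilde F$ through the derivatives of $\Psi_3$, $L(\zeta)$ and $\NL(V)$. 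The $L^{2q+2}$ identity on $w_1$ against the same weight $\chi$ again retains a coercive structure provided the analog of the Hardy/$\Delta_r(\log T)$ computation in Lemma \ref{boundltwoqueue} closes for $w_1^{q+1}$; all remaining source terms are then absorbed at the price of a polynomial loss $b^{-Cq}$. This loss is harmless since in \eqref{defnorme} the norm $\|\pa_r w\|_{\mathrm{int},q}$ is coupled to $\mathcal N$ only through the small exponent $d/(2q+2)$, which is arbitrarily small for $q$ large.

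\textbf{Main obstacle.} The delicate point is exactly as in Lemma \ref{boundltwoqueue}: positivity of the effective Hardy constant. For $w_1$ the commutator $-(d-1)r^{-2}\pa_r$ contributes a zero-order piece of size $(d-1)/(q+1)$ on top of the term $2(d-2)\gamma/(2q+2)$ already present in the proof of Lemma \ref{boundltwoqueue}, so the sign-defining combination takes the schematic form
\begin{equation*}
\frac{2q+1}{(q+1)^2}\left(\frac{d-2}{2}\right)^2-\frac{2(d-2)\gamma}{2q+2}-\frac{d-1}{q+1},
\end{equation*}
which is still strictly positive for $q$ large, thanks to the Joseph--Lundgren condition $d-2-2\gamma>0$ encoded in \eqref{defgamma}, but only under a strictly stronger lower bound on $q$ than in Lemma \ref{boundltwoqueue}. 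Verifying this threshold and tracking the $b^{-Cq}$ losses explicitly is the main technical part; once that is in place, the rest is a direct adaptation of the arguments of Lemma \ref{boundltwoqueue}, and the details are left to the reader.
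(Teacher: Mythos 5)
Your Step 1 for the $\la z\ra\pa_z$ bound is roughly in line with the paper, which commutes $\pa_z$ through \eqref{eqwwfeo}, runs the $L^{2q+2}$ energy identity with the modified weight $\tilde\chi=\la z\ra^{2q+2}\chi$, and handles the cross-terms $\pa_{rz}\log T\,w_1$, $\pa_{zz}\log T\,w_2$ and $(\pa_z V_1)w$ by integration by parts and the bounds \eqref{estlogt}, \eqref{estvone}, absorbing the $w$-dependent pieces against the already proved bound \eqref{cneineoneonevnio}.

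Your Step 2, however, misreads the structure of the $\pa_r$ commutators, and the ``main obstacle'' you identify does not exist. Differentiating \eqref{eqwwfeo} in $r$ produces the commutator $[\pa_r,\Delta_r]=-(d-1)r^{-2}\pa_r$, which contributes the zeroth-order term $-(d-1)r^{-2}w_1$ to the $w_1$ equation; in the $L^{2q+2}$ energy identity this gives $-\int (d-1)r^{-2}w_1^{2q+2}\chi\,dY$, which is a \emph{damping} term, not a loss. It enters at order one, not $O(1/q)$: the $(2q+2)^{-1}$ factors you invoke come only from terms that require an integration by parts to express in terms of $h=w^{q+1}$, while this commutator is a direct multiplication. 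The other zeroth-order commutator is $2\pa_{rr}(\log T)\,w_1$, and the whole point of the appendix estimate \eqref{estpoitnwisepotentialbis} is precisely that $\pa_{rr}\log T\le \gamma/r^2$ globally, so that $\frac{d-1}{r^2}-2\pa_{rr}\log T\ge\frac{d-1-2\gamma}{r^2}>0$ (using $d-2-2\gamma=\sqrt\Delta>2$). The paper keeps these two terms bundled as a single nonnegative potential and never revisits the Hardy threshold; no strengthened lower bound on $q$ is required. In your proposed formula the extra $-(d-1)/(q+1)$ has both the wrong sign and the wrong $q$-scaling.

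A second misconception: differentiating the coefficients of \eqref{eqwwfeo} does \emph{not} pick up $b^{-1}$ factors. The bounds \eqref{estpoitnwisepotential}, \eqref{estpoitnwisepotentialbis}, \eqref{estvone}, \eqref{estlogt} are scale-invariant in $r$ (e.g.\ $|\pa_r^j\log T|\lesssim r^{-j}$, $|\pa_r V_1|\lesssim r^{-1}$), precisely because $\log T$ is differentiated, not $T$ itself. The polynomial loss $b^{-Cq}$ in \eqref{cneineoneonevniobis} actually comes from the forcing terms: from $\pa_r(\Psi_3/T)$ via \eqref{ceibbeokboeoebvbis}, and from the nonlinear term, where the paper is forced to use the rough $L^\infty$ bound $\|v\|_{L^\infty(r\le 2)}\lesssim b^{-1/(p-1)}$ since $|T|\lesssim D^{-2/(p-1)}\lesssim b^{-1/(p-1)}$, after which H\"older against $\chi$ yields $b^{-Cq}$ with $C$ independent of $q$. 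Your outline, as written, would not recover where the loss is allowed to enter nor why it is harmless, because you attribute it to the wrong source.
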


\begin{proof} Let $$w_1=\pa_r w, \ \ w_2=\pa_z w.$$

\noindent{\bf step 1} $\pa_r$ bound. Taking $\pa_r$ of \eqref{eqwwfeo}:
\bee
\pa_tw_1&=&\left[\Delta w_1+2\frac{\nabla T\cdot\nabla w_1}{T}+V_1w_1-\frac 12Y\cdot\nabla w_1\right]\\
&-& \left[\frac{d-1}{r^2}-2\pa_{rr}\log T\right]w_1+2\pa_{rz}(\log T)w_2+(\pa_rV_1)w-\frac 12 w_1\\
& + & \pa_r \widetilde{F}.
\eee
We let $\chi$ be given by \eqref{defjijcjoc} and hence arguing as above, we have for $q$ large enough:
\bee
&&\frac{1}{2q+2}\frac d{dt}\int w_1^{2q+2}\chi dY\\
&\leq&  -\frac{c}{q}\int\left[|\nabla (w_1^{q+1})|^2+\int\frac{w_1^{2q+2}}{r^2}\right]\chi dY-\int \left[\frac{d-1}{r^2}-2\pa_{rr}\log T\right]w_1^{2q+2}\chi dY\\
& + & C(r^*)\int_{r^*\leq r\leq 2}w_1^{2q+2}\frac{dY}{\la z\ra(1+D^{2Kq})}+ \int \left[2\pa_{rz}(\log T)w_2+(\pa_rV_1)w+\pa_r \widetilde{F}\right]w_1^{2q+1}\chi dY.
\eee
We now observe from \eqref{estpoitnwisepotentialbis}:
$$ \frac{d-1}{r^2}-2\pa_{rr}\log T\geq \frac{d-1-2\gamma}{r^2}>0$$ and hence using \eqref{estvone}, \eqref{estlogt}:
\bee
&&\frac{1}{2q+2}\frac d{dt}\int w_1^{2q+2}\chi dY+\frac{c}{q}\int\left[|\nabla (w_1^{q+1})|^2+\int\frac{w_1^{2q+2}}{r^2}\right]\chi dY\\
& \lesssim & C(r^*)\int_{r^*\leq r\leq 2}w_1^{2q+2}\frac{dY}{\la z\ra(1+D^{2Kq})}+ \int \left(\frac{|w_2|}{r\la z\ra }+\frac{|w|}{r}+|\pa_r \widetilde{F}|\right)|w_1|^{2q+1}\chi dY.
\eee
We split the crossed term:
\bee
&&\int_{r\leq 2} (|w_2|+|w|)|w_1|^{2q+1}\frac{dY}{r\la z\ra(1+D^{2Kq})}=\int_{r\leq 2} r (|w_2|+|w|)|w_1|^{2q+1}\frac{dY}{r^2\la z\ra(1+D^{2Kq})}\\
&\lesssim & r^*\int_{r\leq 2}[w_2^{2q+2}+w^{2q+2}+w_1^{2q+2}]\frac{dY}{r^2\la z\ra(1+D^{2Kq})}\\
&+& c(r^*)\int_{r^*\leq r\leq 2}\left[w_1^{2q+2}+w^{2q+2}+w_2^{2q+2}\right]\frac{dY}{\la z\ra(1+D^{2Kq})}
\eee
and therefore obtain:
\bee
&&\frac{1}{2q+2}\frac d{dt}\int w_1^{2q+2}\chi dY+\frac{c}{q}\int\left[|\nabla (w_1^{q+1})|^2+\int\frac{w_1^{2q+2}}{r^2}\right]\chi dY\\
& \lesssim & r^*\int_{r\leq 2}[w_2^{2q+2}+w^{2q+2}+w_1^{2q+2}]\frac{dY}{r^2\la z\ra(1+D^{2Kq})}+\int|\pa_r \widetilde{F}||w_1|^{2q+1}\chi dY\\
&+& c(r^*)\int_{r^*\leq r\leq 2}\left[w_1^{2q+2}+w^{2q+2}+w_2^{2q+2}\right]\frac{dY}{\la z\ra(1+D^{2Kq})}.
\eee
We now estimate all terms in the above identity.\\

\noindent{\it Exterior term}. From \eqref{linfryvouter}, for $K$ large enough
$$
\int_{r^*\leq r\leq 2}\left[w_1^{2q+2}+w^{2q+2}+w_2^{2q+2}\right]\frac{dY}{\la z\ra(1+D^{2Kq})}.\lesssim e^{-c_2\nu q\tau}\int_{r\leq 2} \frac{1+D^{Cq}}{\la z\ra(1+D^{2Kq})}dY\lesssim e^{-\frac{c_2}{2}\nu q\tau}.
$$
\noindent{\it $\Psi_3,L(\zeta)$ term}. We estimate from H\"older, \eqref{ceibbeokboeoebvbis}, \eqref{ceibbeokboeoebvbisL}:
\bee
\int_{r\leq 2}  |w_1|^{2q+1}\left|\pa_r\left(\frac{\Psi_3}{T}+\frac{L(\zeta)}{T}\right)\right|\chi dY&\leq&\kappa \int \frac{w_1^{2q+2}}{r^2}\chi dY+c_\kappa \int \left[r^{\frac{2(2q+1)}{2q+2}}\pa_r\left(\frac{\Psi_3}{T}\right)\right]^{2q+2}\chi dY\\
&\leq & \kappa \int \frac{w_1^{2q+2}}{r^2}\chi dY+\frac{1}{b^{c q}}
\eee

\noindent{\it Nonlinear term}. The control of the nonlinear term demands a loss due to the singularity at the origin which will however be manageable.
We integrate by parts to estimate using \eqref{linfryvouter}
$$
\left|\int w_1^{2q+1}\pa_r\left(\frac{\NL}{T}\right)\chi dY\right|\lesssim 1+\int_{r\leq 2} \frac{|\NL|}{|T|}\left[|\pa_r(w_1^{2q+1})|+\frac{|w_1|^{2q+1}}r\right]\chi dY.
$$
We now claim from \eqref{boundlinfty} the rough bound: 
\be
\label{cneneneoeno}
\|v\|_{L^\infty(r\leq 2)}\lesssim \frac{\N}{(\sqrt{b})^{\frac{2}{p-1}}}+(\sqrt b)^{\frac 12}.
\ee
Indeed, let $r\le 2$, then for $D\geq A$, since $g\geq 3/2$ and $p\geq 3$:
\be
\label{estlinfty}
\|v\|_{L^\infty(r\leq 2,D\geq A)}\lesssim \|V\|_{L^\infty(r\leq 2,D\geq A)}+\|\zeta\|_{L^\infty(r\leq 2)}\lesssim \N+\sqrt{b}^{g-\frac{2}{p-1}}\lesssim \N+\sqrt{b}^{\frac 12},
\ee and for $D\leq A$ we use $|T|\lesssim \frac{1}{D^{\frac{2}{p-1}}}\lesssim \frac{1}{(\sqrt{b})^{\frac 2{p-1}}}$ to estimate from \fref{boundlinfty}:
$$\|v\|_{L^\infty(r\leq 2,D\leq A)}\lesssim \|v\|_{L^\infty(r^*\leq r\leq 2,D\leq A)}+\frac{1}{(\sqrt{b})^{\frac 2{p-1}}}\|w\|_{L^\infty( r\leq 2,D\leq A)}\lesssim \frac{\N}{(\sqrt{b})^{\frac{2}{p-1}}}+(\sqrt b)^{\frac 12},$$ and \eqref{estlinfty} is proved. This implies the rough bound $$\|v\|_{L^\infty(r\leq 2)}\lesssim \frac{1}{b^{\frac{1}{p-1}}}.$$ We now use $$T\gtrsim \left|\begin{array}{ll}\frac{D^\alpha}{r^\gamma}\ \ \mbox{for}\  \ r\ge D,\\  \frac{1}{D^{\frac{2}{p-1}}}\ \ \mbox{for}\  \ r\le D\end{array}\right.$$ to derive the rough bound for some universal $C>0$: $$\frac{|\NL|}{T}\lesssim \frac{|v|}{T}\left[|v|^{p-1}+\frac{|v|}{D^{\frac{2(p-2)}{p-1}}}\right]\lesssim \frac{1+D^C} {b^C} $$ which yields the lossy bound:
\bee
\int_{r\leq 2} \frac{|\NL|}{|T|}\left[|\pa_r(w_1^{2q+1})|+|\frac{|w_1|^{2q+1}}r\right]\chi dY\lesssim \frac 1{b^C}\int \left[|\pa_r(w_1^{2q+1})|+|\frac{|w_1|^{2q+1}}r\right](1+D^C)\chi dY.
\eee
Now from H\"older:
\bee
\frac{1}{b^C}\int_{r\leq 2}\frac{|w_1|^{2q+1}}r(1+D^C)\chi dY&\lesssim &\frac{1}{b^C}\left(\int_{r\leq 2}\frac{w_1^{2q+2}}{r^2}\chi dY\right)^{\frac{2q+1}{2q+2}}\left(\int_{r\leq 2} \frac{r^{2q}(1+D^{Cq})}{\la z\ra (1+D^{2Kq})} dY\right)^{2q+2}\\
&\leq&  \kappa \int \frac{w_1^{2q+2}}{r^2}\chi dY+\frac{1}{b^{Cq}}
\eee
for some universal constant $C=c(d,p)$ independent of q.
similarly:
\bee
&&\frac{1}{b^C}\int_{r\leq 2}|\pa_r(w_1^{2q+1})|(1+D^C)\chi dY\lesssim \frac{1}{b^C}\left(\int_{r\leq 2} |\pa_r(w_1^{q+1})|^2\chi dY\right)^{\frac 12}\left(\int_{r\leq 2} w_1^{2q}(1+D^C)^2\chi dY\right)^{\frac 12}\\
& \leq & \kappa \int (|\pa_r(w_1^{q+1})|^2+w_1^{2q+2})\chi dY+\frac{1}{b^{Cq}}\int_{r\le 2} \frac{1+D^{Cq}}{1+D^{Kq}}dY\leq  \kappa \int (|\pa_r(w_1^{q+1})|^2+w_1^{2q+2})\chi dY+\frac{1}{b^{Cq}}
\eee
The collection of above bounds yields the pointwise differential inequation:
\bea
\label{firnsovneno4}
&&\frac{1}{2q+2}\frac d{dt}\int w_1^{2q+2}\chi dY+\frac{c}{q}\int\left[|\nabla (w_1^{q+1})|^2+\int\frac{w_1^{2q+2}}{r^2}\right]\chi dY\\
\nonumber & \leq & \frac1{b^{Cq}}+r^*\int_{r\leq 2}\frac{w_2^{2q+2}+w^{2q+2}}{r^2}\frac{dY}{\la z\ra(1+D^{2Kq})}.
\eea

\noindent{\bf step 2} $\pa_z$ bound. Taking $\pa_z$ of \eqref{eqwwfeo}:

\bee
\pa_tw_2&=&\left[\Delta w_2+2\frac{\nabla T\cdot\nabla w_2}{T}+V_1w_2-\frac 12Y\cdot\nabla w_2\right]\\
&+& \left[2\pa_{zz}\log T\right]w_2+2\pa_{rz}(\log T)w_1+(\pa_zV_1)w-\frac 12 w_2+  \pa_z \widetilde{F}.
\eee
We therefore let  $$\tilde{\chi}=\la z\ra^{2q+2}\chi,$$ and obtain arguing as above:
\bee
&&\frac{1}{2q+2}\frac d{dt}\int w_2^{2q+2}\tilde{\chi} dY\\
&\leq&  -\frac{c}{q}\int\left[|\nabla (w_2^{q+1})|^2+\int\frac{w_2^{2q+2}}{r^2}\right]\tilde{\chi} dY+C(r^*)\int_{r^*\leq r\leq 2}(\la z\ra w_2)^{2q+2}\frac{dY}{\la z\ra(1+D^{2Kq})}\\
&+&  \int \left[2\pa_{zz}(\log T)w_2+2\pa_{rz}\log Tw_1+(\pa_zV_1)w+\pa_z \widetilde{F}\right]w_2^{2q+1}\tilde{\chi} dY.
\eee
We estimate all terms in this identity.\\

\noindent{\it Exterior term}. From \eqref{linfryvouter}:
\bee
\int_{r^*\leq r\leq 2}(\la z\ra w_2)^{2q+2}\frac{dY}{\la z\ra(1+D^{2Kq})}\lesssim \|\la z\ra \pa_z w\|_{L^{\infty}(r^*\leq r\leq 2)}^{2q}\int_{r^*\leq r\leq 2}\frac{(\la z\ra \pa_z w)^2}{\la z\ra(1+D^{2Kq})}dY\lesssim e^{-c_2q\nu \tau}.
\eee

\noindent{\it Crossed term}.
From \eqref{estlogt}, \eqref{linfryvouter}:
$$
\left| \int\pa_{zz}(\log T)w_2\tilde{\chi}w_2^{2q+1} dY\right|\lesssim \int_{r\leq 2}  w_2^{2q+2}\tilde{\chi} dY\leq r^*\int_{r\leq 2}  \frac{w_2^{2q+2}}{r^2}\tilde{\chi} dY+e^{-c_2\nu q \tau}.$$
The crossed term in integrated by parts in $r$ using \eqref{estpr;ogt} and the $L^\infty$ bound \eqref{linfryvouter}:
\bee
&&\left|\int_{r\leq 2}\pa_{rz}\log Tw_1w_2^{2q+1}\tilde{\chi} dY\right|\lesssim e^{-c_2\nu q\tau }\\
&+& \int |w|\left[|\pa_{rz}\log T|\pa_r(w_2^{2q+1})|+\frac{|\pa_{rz}\log T||w_2^{2q+1}|}{r}+|\pa_{rrz}\log T||w_2^{2q+1}|\right]\tilde{\chi} dY\\
& \lesssim & e^{-c_2\nu q \tau}+\int |w|\left[\frac{|w^q_2||\pa_r(w_2^{q+1})|}{r\la z\ra}+\frac{|w_2|^{2q+1}}{r^2\la z\ra}\right]\tilde{\chi}dY
\eee
and then from H\"older:
\bee
 \int |w|\frac{|w^q_2||\pa_r(w_2^{q+1})|}{r\la z\ra}\tilde{\chi}dY&\leq&\kappa \int |\pa_r(w_2^{q+1})|^2\tilde{\chi}dY+\frac{1}{\kappa}\int \frac{w^2w_2^{2q}}{r^2\la z\ra^2}\tilde{\chi}dY\\
 & \leq & \kappa \int \left[|\pa_r(w_2^{q+1})|^2+\frac{w_2^{2q+2}}{r^2}\right]\tilde{\chi}dY+c_\kappa\int \frac{w^{2q+2}}{r^2\la z\ra^{2q+2}}\tilde{\chi}dY\\
 & \leq & \kappa \int \left[|\pa_r(w_2^{q+1})|^2+\frac{w_2^{2q+2}}{r^2}\right]\tilde{\chi}dY+c_\kappa \int \frac{w^{2q+2}}{r^2}\chi dY.
 \eee
 similarly for the second term:
 \bee
  \int |w|\frac{|w_2|^{2q+1}}{r^2\la z\ra}\tilde{\chi}dY&\leq&  \kappa \int\frac{w_2^{2q+2}}{r^2}\tilde{\chi}dY+c_\kappa \int \frac{w^{2q+2}}{r^2\la z\ra^{2q+2}}\tilde{\chi}dY\\
  & \leq &  \kappa \int \left[|\pa_r(w_2^{q+1})|^2+\frac{w_2^{2q+2}}{r^2}\right]\tilde{\chi}dY+c_\kappa \int \frac{w^{2q+2}}{r^2}\chi dY.
  \eee
Next from \eqref{estvone}, \eqref{linfryvouter}:
\bee
&&\left|\int(\pa_zV_1)ww_2^{2q+1}\tilde{\chi} dY\right|\lesssim \int\frac{|w|}{\la z\ra} |w_2^{2q+1}|\tilde{\chi} dY\lesssim \int_{r\leq 2}|w||\la z\ra w_2|^{2q+1}\chi dY\\
& \leq &  r^*\int_{r \leq 2}[w^{2q+2}+(\la z\ra w_2)^{2q+2}]\frac{ \chi dY}{r^2}+c(r^*)\int_{r^*\leq r\leq 2}[w^{2q+2}+(\la z\ra w_2)^{2q+2}]\chi dY\\
& \leq & r^*\int_{r \leq 2}\frac{w^{2q+2}+(\la z\ra w_2)^{2q+2}}{r^2}\chi dY+e^{-c_2\nu q \tau}
\eee

\noindent{\it $\Psi_3,L(\zeta)$ term}. From H\"older and \eqref{ceibbeokboeoebvbisbis}, \eqref{ceibbeokboeoebvbisbisL}:
\bee
\left|\int_{r\leq 2}\pa_z \left(\frac{\psi_3+L(\zeta)}{T}\right)w_1^{2q+1}\tilde{\chi} dY\right|&\leq& \kappa \int \frac{w_2^{2q+2}}{r^2}\tilde{\chi} dY+c_\kappa\int \left[r^{2\frac{2q+1}{2q+2}}\pa_z\left(\frac{\Psi_3+L(\zeta)}{T}\right)\right]^{2q+2}\tilde{\chi} dY\\
&\leq & \kappa \int \frac{w_2^{2q+2}}{r^2}\chi dY+b^{c\eta q}
\eee

\noindent{\it Nonlinear term}. We estimate by homogeneity using $\pa_z\zeta=0$
\bee
|\pa_z\NL|\lesssim |\pa_z\Phi_{a,b}|(|V|^{p-1}+|V|^2|\Phi_{a,b}|^{p-3})+|\pa_zv|(|V|^{p-1}+|\Phi_{a,b}|^{p-2}|V|).
\eee

Moreoever using \eqref{estpr;ogt}: $$|\pa_zv|\lesssim |\pa_zT||w|+|T\pa_zw|\lesssim T\left(\frac{|w|}{\la z\ra}+|w_2|\right)$$ and $$|\pa_z\Phi_{a,b}|\lesssim \frac{T}{\la z\ra}, \ \ |T|\lesssim \Phi_{a,b}\lesssim \frac{1}{r^{\frac{2}{p-1}}}.$$
\underline{$D\leq A$}. This  implies for $r\leq 2$, $D\le A$ using \eqref{estphibrutale}:
\bee
\frac{|\pa_z\NL|}T&\lesssim & \frac{1}{\la z\ra}\left[|\zeta|^{p-1}+\zeta^2\Phi_{a,b}^{p-3}\right]\\
&+ & \frac{1}{\la z\ra}\left[T^{p-1}|w|^{p-1}+w^2T^2\Phi_{a,b}^{p-3}\right]+\left[T^{p-1}|w|^{p-1}+wT\Phi_{a,b}^{p-2}\right]\left(\frac{|w|}{\la z\ra}+|w_2|\right)\\
& \lesssim & \|w\|_{L^\infty(r\le 2,D\leq A)}\left(\frac{|w|}{\la z\ra}+|w_2|\right)\frac{1}{r^2}+\frac{b^g}{\la z \ra r^2}
\eee
similarly $$\left|\NL\frac{\pa_zT}{T^2}\right|\lesssim \frac{|\NL|}{\la z\ra T}\lesssim \|w\|_{L^\infty(r\le 2,,D\leq A)}\frac{|w|}{r^2 \la z\ra}+\frac{b^g}{\la z \ra r^2}$$ and hence the bound using H\"older:
\bee
&&\left|\int_{D\leq A}\pa_z\left(\frac{\NL}{T}\right)w_2^{2q+1}\tilde{\chi}dY\right|\lesssim\int \left[\|w\|_{L^\infty(r\le 2,D\leq A)}\left(\frac{|w|}{\la z\ra}+|w_2|\right)+b^{g}\right]\frac{w_2^{2q+1}}{r^2}\tilde{\chi}dY\\
& \leq & \kappa \left[\int \frac{w_2^{2q+2}}{r^2}\tilde{\chi}dY+\int\frac{w^{2q+2}}{r^2}\chi dY\right]+b^{gq}
\eee
where we used \eqref{boundlinfty} in the last step.\\
\underline{$D\geq A$}. Since $r\leq 2$ and $D\geq A\gg r$, we have $$T\sim \Phi_{a,b}\sim \frac{1}{D^{\frac{2}{p-1}}}\leq 1$$ and hence for $r\leq 2$:
\bee
\frac{|\pa_z\NL|}{T}&\lesssim &\frac1{\la z\ra}\left[|V|^{p-1}+\Phi_{a,b}^{p-3}V^2\right]+(|V|^{p-1}+|\Phi_{a,b}|^{p-2}|V|)\left(\frac{|w|}{\la z\ra}+|w_2|\right)\\
& \lesssim & \frac{1}{\la z\ra}\frac{\|\phi V\|_{L^\infty}}{r^2}\left(\frac{|w|}{\la z\ra}+|w_2|+(\sqrt b)^{\frac 12}\right)
\eee
and the same chain of estimates as above yields for $c$ universal:
$$
\left|\int_{D\geq A}\pa_z\left(\frac{\NL}{T}\right)w_2^{2q+1}\tilde{\chi}dY\right|\leq  \kappa \left[\int \frac{w_2^{2q+2}}{r^2}\tilde{\chi}dY+\int\frac{w^{2q+2}}{r^2}\chi dY\right]+b^{cq}.
$$
The collection of above bounds yields the pointwise differential inequation since $0<\nu \ll \eta \ll 1$:
\be
\label{neicnnoeeno}
\frac{1}{2q+2}\frac d{dt}\int w_2^{2q+2}\tilde{\chi} dY+cq\int\frac{w_2^{2q+2}}{r^2}\tilde{\chi} dY\lesssim \int \frac{w^{2q+2}}{r^2}\chi dY+e^{-c_2\nu q \tau}
\ee

\noindent{\bf step 3} Conclusion. The time integration of \eqref{neicnnoeeno} using the space time bound \eqref{cneineoneonevnio} yields that there exists $0<c_4\lesssim c_3$ such that 
$$
e^{c_4\nu \tau q}\int_{r\leq 1} \frac{(\la z\ra \pa_zw)^{2q+2}}{\la z\ra(1+D^{2Kq})}dY+\int_{\tau_0}^\tau e^{c_4\nu \tau'q}\int_{r\leq 1} \frac{(\la z\ra \pa_zw)^{2q+2}}{\la z\ra(1+D^{2Kq})}dYd\tau'\leq 1
$$
which implies \eqref{cneineoneonevniobisbis}. Injecting this bound into \eqref{firnsovneno4} and integrating in time yields the lossy bound \eqref{cneineoneonevniobis}.
\end{proof}


\subsection{Far away scaling bound}
\label{faraway}

Recall \eqref{eqebis}. We now claim the far away $W^{1,2q+2}$ bound which is a simple consequence of the fact that this norm is always above scaling for $q>q(d,p)$ large enough.
 We let $q\gg1$ and
\be
\label{dfzstr}
(z^*)^{2\ell}=\frac{1}{a(\sqrt{b})^\alpha}\gg 1.
\ee
Given a large enough constant $A$, we let a cut off function $$\chi_{A,b}(r,z)=\chi\left(\frac{r}A,\frac{z}{Az^*}\right), \ \ \chi(Y)=\left|\begin{array}{ll} 0\ \ \mbox{for} \ \ |Y|\leq 1,\\ 1\ \ \mbox{for} \ \ |Y|\geq 2.\end{array}\right.$$ Note that for $az^{2\ell}\gg1$ :
$$D=\mu\sqrt{b}=\left[az^{2\ell}\left(1+O\left(\frac{1}{z^{2}}\right)\right)\right]^{\frac 1\alpha}\sqrt{b}= \left(\frac z{z^*}\right)^{\frac{2\ell}{\alpha}}\left[1+O\left(\frac{1}{z^{2}}\right)\right]$$ and hence 
\be
\label{propertysupport}
Y\in \rm{Supp}\chi_{A,b} \ \ \mbox{implies} \ \ r\gtrsim  A\ \ \mbox{or}\ \ D\gtrsim A.
\ee

\begin{lemma}[Far away scaling bound]
There holds for a constant $c_5$ depending on $\nu$, $c_2$ and $c_3$:
\be
\label{sobolevboundoutside}
\int \chi_{A,b}\left[(\la z\ra\pa_z)^jV\right]^{2q+2}dY\lesssim b^{c_5 q}, \ \ j=0,1
\ee
and for a universal constant $C$ independent of $q$:
\be
\label{sobolevboundoutsidebis}
\int \chi_{A,b}\left[\pa_rV\right]^{2q+2}dY\lesssim \frac{1}{b^{Cq}}.
\ee

\end{lemma}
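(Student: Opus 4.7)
The strategy is an $L^{2q+2}$ energy estimate on the $V$-equation
$$\pa_\tau V+\L_a V=NL(V)-\Psi_1,$$
obtained by subtracting \eqref{eqautionprofile} from \eqref{selfsimilarequation}, exploiting that on $\mathrm{Supp}(\chi_{A,b})$ the soliton potential $\Phi_{a,b}^{p-1}\lesssim A^{-2}$ is a small perturbation by \eqref{propertysupport}, and that $L^{2q+2}(\mathbb R^{d+1})$ is above the scaling critical exponent for $q>q(d,p)$ large. Multiplying by $V^{2q+1}\chi_{A,b}$ and integrating by parts as in the proof of Lemma \ref{boundltwoqueue}, the key structural contribution comes from the scaling term:
$$\int V^{2q+1}\chi_{A,b}\tfrac 12 \Lambda V\,dY=\left[\frac{1}{p-1}-\frac{d+1}{2(2q+2)}\right]\int\chi_{A,b}V^{2q+2}\,dY-\frac{1}{2(2q+2)}\int V^{2q+2}Y\cdot\nabla\chi_{A,b}\,dY.$$
For $q$ large, the leading coefficient is bounded below by a universal $c_0>0$, producing a damping term which, combined with the dissipation $\tfrac{2q+1}{(q+1)^2}\int \chi_{A,b}|\nabla V^{q+1}|^2$, absorbs the potential term $p\Phi_{a,b}^{p-1}V^{2q+2}$ once $A$ has been chosen large.

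The remaining errors are controlled as follows. The cutoff derivatives $\pa_\tau\chi_{A,b}$, $\Delta\chi_{A,b}$, $Y\cdot\nabla\chi_{A,b}$ are all uniformly $O(1)$ by \eqref{loclacsocso} together with the bootstrap consequence $|b_\tau/b|\lesssim 1$ from \eqref{roughestimatesmodulation}, and are concentrated in the transition annulus $r\sim A$ or $D\sim A$, where the already established bounds \eqref{boundL^2inzout} and \eqref{boundlinfty} on the components of $\N$ treated in the previous subsections yield an exponentially decaying contribution in $q\tau$. The source $\Psi_1$ is bounded via \eqref{formulapsi}--\eqref{psitildeone}, the far-field cancellation $\Lambda Q_b(r)\sim (\sqrt b)^\alpha/r^\gamma$ for $r\gtrsim \sqrt b$, and the modulation estimate \eqref{estparameters}, giving a contribution $\lesssim b^{cq}$ after Cauchy-Schwarz with the damping. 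The nonlinear term is handled by $|NL(V)|\lesssim |V|^p+\Phi_{a,b}^{p-2}V^2$ together with the $L^{\infty}$ smallness $\|V\|_{L^\infty(\mathrm{Supp}(\chi_{A,b}))}\lesssim (\sqrt b)^{\tilde\eta}$ from \eqref{bd V Linftyphi bootstrap}. Collecting these estimates yields the pointwise differential inequality
$$\frac{d}{d\tau}\int\chi_{A,b}V^{2q+2}\,dY+c_0 q\int\chi_{A,b}V^{2q+2}\,dY\lesssim b^{cq}$$
for some universal $c>0$, which integrates to \eqref{sobolevboundoutside} for $j=0$ with $c_5=c_5(\nu,c_2,c_3)$ determined by the transition-region inputs.

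The $\la z\ra\pa_z V$ bound \eqref{sobolevboundoutside} for $j=1$ follows verbatim from the same energy scheme applied to the equation satisfied by $\la z\ra\pa_zV$: the commutator $[\la z\ra\pa_z,\tfrac 12 \Lambda]=\tfrac 12\la z\ra \pa_z$ only shifts the damping constant, and differentiation of the source terms preserves the smallness. The $\pa_r V$ bound \eqref{sobolevboundoutsidebis} is obtained analogously but loses polynomial powers of $b^{-1}$: differentiating $\Phi_{a,b}^{p-1}V$ and $\Psi_1$ in $r$ produces factors controlled only by powers of $D^{-1}$, hence of $b^{-1}$ when $D\sim \sqrt b$ is small, giving the stated loss $1/b^{Cq}$. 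This loss is harmless because \eqref{defnorme} weighs $\|\pa_rV\|_{\mathrm{ext\ global},q}$ through the interpolated exponent $d/(2q+2)\ll 1$, so the resulting contribution in $\N$ remains $\lesssim b^{c_5 q/2}$. The main potential obstacle is that the nonlinear closing invokes the $L^\infty$ bound \eqref{boundlinfty}, which itself is a piece of $\N$; this is not circular because \eqref{boundlinfty} is a pure weighted Sobolev embedding relying only on the components of $\N$ estimated in Lemmas \ref{lemmah1poly}--\ref{lemmaestwone}, so that the present lemma constitutes the final step closing the bootstrap on $\N$.
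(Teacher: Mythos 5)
Your proposal follows essentially the same route as the paper: an $L^{2q+2}$ energy estimate on the $V$-equation using the above-scaling damping from the $\frac12\Lambda V$ term, the smallness $\Phi_{a,b}\lesssim A^{-2/(p-1)}$ of the potential on $\mathrm{Supp}(\chi_{A,b})$ from \eqref{propertysupport}, the bootstrap $L^\infty$ bound to close the nonlinearity, and the already-established inner/outer estimates to control the cutoff transition terms (you are in fact slightly more complete than the paper in explicitly retaining the source $\Psi_1$). The one imprecision is your attribution of the $b^{-Cq}$ loss in \eqref{sobolevboundoutsidebis} to $r$-differentiation of $\Phi_{a,b}^{p-1}$ when $D\sim\sqrt b$: by \eqref{propertysupport}, $\mathrm{Supp}(\chi_{A,b})$ forces $r\gtrsim A$ or $D\gtrsim A$, so $\pa_r\Phi_{a,b}^{p-1}$ is in fact small there, and the actual source of the loss is the transition-region boundary term which inherits the lossy $\pa_r w$ bound \eqref{cneineoneonevniobis} from Lemma \ref{lemmaestwone} (as the paper indicates in its Step 2).
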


\begin{proof} 
We compute the $V$ equation: $$\pa_\tau V-\Delta V+\frac 12\Lambda V-R(V)=0, \ \ R(V)=(\Phi_{a,b}+V)|\Phi_{a,b}+V|^{p-1}-\Phi_{a,b}^p.$$

\noindent{\bf step 1} $L^{2q+2}$ estimate. We compute:
\bee
&&\frac{1}{2q+2}\frac{d}{dt}\int \chi_{A,b}V^{2q+2}dY\\
&=&\int\left(\Delta V-\frac 12\Lambda V+R(V)\right)\chi_{A,b}V^{2q+1}dY+\frac{1}{2q+2}\int \pa_t\chi_{A,b}|V|^{2q+2}dY\\
&=&-(2q+1)\int V^{2q}|\nabla V|^2\chi_{A,b}dY-\frac 12\left(\frac2{p-1}-\frac{d}{2q+2}\right) \int |V|^{2q+2}\chi_{A,b}dY\\
&+&\int R(V)V^{2q+1}\chi_{A,b}dY+ \frac{1}{2q+2}\int V^{2q+2}\left[\pa_t\chi_{A,b}-\Delta \chi_{A,b}+\frac 12Y\cdot\nabla\chi_{A,b}\right]
\eee
From \eqref{propertysupport}: $$|\Phi_{a,b}|
\lesssim \frac1{A^{\frac{2}{p-1}}} \ \ \mbox{on}\ \ \rm{Supp}\chi_{A,b}$$ and hence using \eqref{boundlinfty}, \eqref{propertysupport}: $$|V|\lesssim \N, \ \ |R(V)|\lesssim \frac1{A^c}|V|\ \ \mbox{on}\ \ {\rm Supp}(\chi_{A,b}).$$ 
Hence the bound:
$$\int |R(V)V^{2q+1}|\chi_{A,b}dY\leq  (\sqrt b)^{\tilde{\eta}}\int |V|^{2q+2}\chi_{A,b}dY.$$
We then estimate by definition of $\chi_{A,b}$ and \fref{estparameters}: $$|\pa_t\chi_{A,b}|+|\Delta \chi_{A,b}|+|Y\cdot\nabla\chi_{A,b}|\lesssim {\bf 1}_{A\le r\le 2A,D\leq 2A}+{\bf 1}_{r\le 2A,cA\leq D\leq CA}.$$ 
From \eqref{estzeta}, \eqref{linfryvouter}:
\bee
&&\int_{A\le r\le 2A,D\leq 2A}|V|^{2q+2}dY \lesssim \int_{A\le r\le 2A,D\le 2A}|\zeta|^{2q+2}dY+\int_{A\le r\le 2A,D\le 2A}|v|^{2q+2}dY\\
&\lesssim & (\sqrt{b})^{\alpha(2q+2)}\left|\left\{z, \ D\leq 2A \right\} \right|+e^{-c_2\nu \tau(2q+2)}\left|\left\{z, \ D\leq 2A \right\} \right|\\
&\lesssim & (\sqrt{b})^{\alpha(2q+2)}(\sqrt b)^{-\frac{\alpha}{2\ell}}+(\sqrt b)^{\frac{c_2\alpha}{2\ell-\alpha}\nu (2q+2)}(\sqrt b)^{-\frac{\alpha}{2\ell}}\lesssim  b^{c q}
\eee
for $q$ large enough and $c$ depending on $\nu$ and $c_2$. Next from \eqref{estzeta}, \eqref{linfryvouter}, as $g>3/2$ and $p\geq 3$:
\bee
&&\int_{r\leq A}|\zeta|^{2q+2}r^{d-1}dr\lesssim (\sqrt{b})^{\alpha (2q+2)}+\int_{r\le \sqrt{b}}(\sqrt{b})^{(2q+2)(g-\frac{2}{p-1})}r^{d-1}dr\\
&+& \int_{\sqrt{b}\le r\leq 1}\left(\frac{(\sqrt{b})^\alpha(r^2+(\sqrt{b}^g))}{r^{\gamma}}\right)^{2q+2}r^{d-1}dr\lesssim (\sqrt{b})^{q}.
\eee
We then use $$|v|=|Tw|\lesssim \frac{|w|}{D^{\frac{2}{p-1}}}$$ to estimate using \eqref{cneineoneonevnio}:
$$\int_{D\sim A,r\leq 2A}|V|^{2q+2}dY\lesssim \int_{D\sim A,r\leq 2A}\frac{|w|^{2q+2}}{\la z\ra(1+D^{2qK})}dY+\frac{1}{b^c}\int_{r\leq A}|\zeta|^{2q+2}r^{d-1}dr\lesssim b^{c q}$$
with $c$ depending on $\nu$ and $c_3$. The collection of above bounds yields the pointwise differential inequation:
$$ \frac{d}{dt}\int \chi_{A,b}V^{2q+2}dY+cq\int \chi_{A,b}V^{2q+2}dY\lesssim b^{C q}$$
with $C$ depending on $\nu$, $c_2$ and $c_3$, whose time integration yields $$\int \chi_{A,b}V^{2q+2}dY\lesssim b^{C q}.$$

{\bf step 2} Derivative estimate. The derivative bounds follow the exact same path using in particular $\pa_z\zeta=0$ and the inner bounds \eqref{cneineoneonevniobis}, \eqref{cneineoneonevniobisbis}, the details are left to the reader.

\end{proof}


\section{Closing the bootstrap}


We are now in position to close the bootstrap and conclude the proof of Theorem \ref{thmmain}.


\subsection{Proof of Proposition \ref{propbootstrap}} 


We argue by contradiction and conclude using a classical topological argument.\\

\noindent{\bf step 1} The (Exit) condition is saturated. From \eqref{controlponctuelenergy}, $$\|\e\|_{L^2_{\rho_Y}}\leq \eta(a) (\sqrt{b})^{\alpha+\eta}$$ and hence \eqref{smallnorminitboot} is improved. Moreoever, we inject the bounds \eqref{boundL^2inz}, \eqref{cneineoneonevnio}, \eqref{cneineoneonevniobis}, \eqref{cneineoneonevniobisbis}, \eqref{sobolevboundoutside}, \eqref{sobolevboundoutsidebis} and conclude that there exists $\delta >0$ depending on $\nu,c_1,c_2,c_3,c_4,c_5$ but independent of $q$ and $\tilde{\eta}$, and a universal $C>0$ such that:
$$\matchal N^q\lesssim e^{-\delta q\tau}+\left(e^{-\delta q\tau}\right)^{1-\frac{d}{2q+2}}\left(\frac{1}{e^{Cq\tau}}\right)^{\frac{2}{2q+2}}\lesssim e^{-\delta q\tau}$$
provided $q$ has been chosen universal large enough since $\delta,C$ are independent of q, and hence choosing $\tilde{\eta}\ll \delta$, \eqref{smallnorminitsobolevboot} is improved: $\mathcal N\ll (\sqrt b)^{\tilde{\eta}}$. We now oberve that \eqref{estparameters} implies:
\be
\label{cnekneoneon}
|a_\tau|+\left|\frac{d}{d\tau}\left(b_{\ell,0}e^{\left(\ell-\frac \alpha 2\right)\tau}\right)\right|\lesssim (\sqrt{b})^\eta.
\ee
 whose time integration ensures:
$$|a-a(\tau_0)|+\left|b_{\ell,0}(\tau)e^{\left(\ell-\frac \alpha 2\right)\tau}-b_{\ell,0}(\tau_0)e^{\left(\ell-\frac \alpha 2\right)\tau_0}\right|\leq b_0^{c\eta}$$
which using \eqref{initmode}, \eqref{condiotnazero} improves \eqref{initmodeboot}, \eqref{controlea}.
We conclude from a standard continuity argument that the (Exit) condition is saturated, meaning that a solution exits the trapped regime at time $\tau^*$ if and only if the instable modes have grown too big: 
\be
\label{esnokeneon}
(\tilde b,\tilde b_{j,k})\in \frac{(\sqrt{b}(\tau^*))^{\eta}}{(\sqrt{b(\tau_0)})^{\eta}}\mathcal S.
\ee

\noindent{\bf step 2} The topological argument. We now reformulate the (Exit) condition in diagonal form as required from \eqref{estparameters}. Indeed, we may diagonalize the associated matrix which has positive eigenvalues $\mu_{j,k}\geq 0$ such that letting $$(\tilde{b},(\tilde{b}_{jk})_{(j,k)\neq(\ell,0),j+k\leq \ell})=P\tilde{B}_{jk}$$ for some suitable universal invertible matrix,  \eqref{estparameters} implies:
\be
\label{estparametersbis}
\sum_{j,k}\left|(\tilde{B}_{jk})_\tau+\mu_{j,k}\tilde{B}_{jk}\right|\lesssim \eta(a)(\sqrt{b})^\eta.
\ee
We now choose a small enough universal constant $\kappa$
and consider initial values satisfying $$\sum_{j,k} \left|\frac{\tilde{B}_{j,k}(\tau_0)}{(\sqrt{b})^{\eta}(\tau_0)}\right|^2\leq \kappa^2$$ then from \eqref{esnokeneon}, \eqref{controlponctuelenergy}, we may let $\tau^{**}<\tau^*$ the first time such that $$\sum_{j,k} \left|\frac{\tilde{B}_{j,k}}{(\sqrt{b})^{\eta}}(\tau^{**})\right|^2=\kappa^2,$$

we claim that the corresponding vector field is outgoing:
\be
\label{outgoingcondition}
\frac{d}{d\tau}\left\{\sum_{j,k} \left|\frac{\tilde{B}_{j,k}}{(\sqrt{b})^{\eta}}\right|^2\right\}(\tau^{**})>0.
\ee
Assume \eqref{outgoingcondition} and that all solutions leave the trapped regime, then from standard argument, the map 
$$\frac{1}{\kappa}\left(\frac{\tilde{B}_{j,k}}{(\sqrt{b})^{\eta}}(\tau_0)\right)\mapsto \frac{1}{\kappa}\left(\frac{\tilde{B}_{j,k}}{(\sqrt{b})^{\eta}}(\tau^{**})\right)
$$ is continuous. It moreover sends by definition the unit sphere onto its boundary and is the identity when restricted to the boundary, a contradiction to Brouwer's theorem.\\
\noindent{\it Proof of \eqref{outgoingcondition}}: We compute:
\bee
&&I=\frac{d}{d\tau}\left\{\sum_{j,k} \left|\frac{\tilde{B}_{j,k}}{(\sqrt{b})^{\eta}}\right|^2\right\}=  \frac{1}{(\sqrt{b})^{2\eta}}\left[\sum_{j,k}2\tilde{B}_{j,k}\left((\tilde{B}_{j,k})_\tau-\frac\eta 2\frac{b_\tau}{b}\tilde{B}_{j,k}\right)\right].
\eee
We now observe from \eqref{defgibbg}, \eqref{estimateB}, \eqref{estparameters}: $$\frac{b_\tau}{b}=1-\frac{2\ell}{\alpha}+\eta(a)O\left((\sqrt{b})^\eta\right)$$ and hence from \eqref{estparametersbis}:
$$(\tilde{B}_{j,k})_\tau-\frac\eta 2\frac{b_\tau}{b}\tilde{B}_{j,k}=\left[\mu_{j,k}+\frac{\eta}{2}\left(\frac{2\ell}{\alpha}-1\right)\right]\tilde{B}_{j,k}+\eta(a) O\left((\sqrt{b})^\eta\right).
$$
Recalling $\mu_{j,k}\geq 0$ 

and the pointwise differential inequation \eqref{controlenormee} with $0<\eta\ll c_*$, we obtain:
\bee
I &\geq& \frac{c\eta}{(\sqrt{b})^{2\eta}}\left[\sum_{j,k} \left|\frac{\tilde{B}_{j,k}}{(\sqrt{b})^{\eta}}(\tau^{**})\right|^2\right]+  \frac{(\sqrt{b})^\eta}{(\sqrt{b})^{2\eta}}\eta(a)O\left((\sqrt{b})^\eta\right)\\
& \geq & c\kappa -\eta(a)>0,
\eee
and \eqref{outgoingcondition} is proved.


\subsection{Proof of Theorem \ref{thmmain}}


 Let $0<T\ll1$ small enough and an initial data as in the assumptions of Proposition \ref{propbootstrap}, then the corresponding solution to \eqref{heatnonlin} generates a global in time to the self similar solution \eqref{selfsimilarequation} which admits for all $t\in [0,T)$ a decomposition $$u(t,x)=\frac{1}{(T-t)^{\frac{1}{p-1}}}U\left(t,\frac{x}{\sqrt{T-t}}\right),\ \ U(t,r,z)=\Phi_{a,b}+V.$$  The law \eqref{loidea} follows from \eqref{cnekneoneon}, \eqref{smallnorminitsobolevboot} which yield the time integrability $$\int_{\tau_0}^{+\infty}|a_\tau|d\tau<b_0^{c\eta}<\frac{a_0}{10},$$ and similarly for \eqref{loideb}.
  It remains to prove the asymptotic stability \eqref{linftydecay}. Recall \eqref{boundlinfty} which first implies $$\lim_{t\to T}\left(\|V\|_{L^\infty(D\geq A)}+\|V\|_{L^\infty(r\geq 1)}\right)= 0.$$ We then consider $D\leq A,r\le 1$ and estimate in brute force:
\bee
(\sqrt{b})^{\frac{2}{p-1}}\|V\|_{L^\infty(D\leq A,r\leq 1)}&\lesssim &(\sqrt{b})^{\frac{2}{p-1}}\|\zeta\|_{L^\infty(D\leq A,r\leq 1)}+\|(\sqrt{b})^{\frac{2}{p-1}}Tw\|_{L^\infty(D\leq A,r\leq 1)}\\
&\lesssim& b^{\delta}+\|w\|_{L^\infty(D\leq A,r\leq 1)}\to  0\ \ \mbox{as}\ \ t\to T
\eee
where we used $D=\mu\sqrt{b}\geq\frac 12 \sqrt{b}$ and \eqref{estzeta}, \eqref{boundlinfty}. This concludes the proof of Theorem \ref{thmmain}.


\begin{appendix}


\section{Estimates on the soliton}
\label{appendsoliton}

This appendix is devoted to the derivation of sharp estimates on the soliton profile.

\begin{lemma}[Global control of the tails]
Let $V=\log \Lambda Q$, then $\forall r>0$,
\bea
\label{estpoitnwisepotential}
&&-\frac{\gamma}{r}<\pa_rV\leq 0,\\
\label{estpoitnwisepotentialbis}&& \pa_{rr}V\leq \frac{\gamma}{r^2}.
\eea
\end{lemma}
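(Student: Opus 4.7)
The plan is to work with the positive function $W := \Lambda Q$ from \eqref{positivitylamdbaq}, which satisfies $W''+\frac{d-1}{r}W'+pQ^{p-1}W=0$ as a consequence of $H(\Lambda Q)=0$ from \eqref{id Ti}, and to study $V=\log W$ together with its logarithmic derivative $\phi := rV' = rW'/W$. The upper bound $\pa_r V\leq 0$ is immediate: rewriting the equation as $(r^{d-1}W')'=-r^{d-1}pQ^{p-1}W$ and integrating from the origin (using $Q\geq 0$, $W>0$) yields $r^{d-1}W'(r)\leq 0$, hence $V'\leq 0$.

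For the lower bound $\pa_r V>-\gamma/r$, a Taylor expansion at the origin gives $\phi(0)=0$, while \eqref{id LambdaQ} gives $\phi\to -\gamma$ as $r\to +\infty$. Differentiation produces the Riccati equation
\[
\phi' = -\frac{\phi(\phi+d-2)}{r}-rpQ^{p-1},
\]
to which I would apply a maximum principle: at a putative first point $r_0>0$ with $\phi(r_0)=-\gamma$, one would need $\phi'(r_0)\leq 0$; yet using $\gamma(d-2-\gamma)=pc_\infty^{p-1}$ the Riccati reduces to $r_0\phi'(r_0)=pc_\infty^{p-1}-r_0^2 pQ^{p-1}(r_0)$, which is strictly positive by the sharp pointwise control $Q<\Phi^*=c_\infty r^{-2/(p-1)}$ from \eqref{positivitylamdbaq}. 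Contradiction.

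The estimate $\pa_{rr}V\leq \gamma/r^2$ is the main obstacle, as naive bounds such as $V''\leq W''/W$ or substituting $V'>-\gamma/r$ into $V''=-\frac{d-1}{r}V'-pQ^{p-1}-(V')^2$ are too lossy. Instead I would introduce the shifted unknown $m:=\gamma+\phi\in(0,\gamma]$ and the notation $\sigma:=\sqrt{\Delta}=d-2-2\gamma$, and rewrite the algebraic identity $r^2 V''=-\phi^2-(d-1)\phi-r^2 pQ^{p-1}$ so that the desired inequality is equivalent to the positivity of
\[
\tilde\Phi(r):=m(1+\sigma+m)+r^2 pQ^{p-1}-pc_\infty^{p-1}.
\]
One checks $\tilde\Phi(0)=\gamma>0$ and $\tilde\Phi(+\infty)=0$, so if $\tilde\Phi$ were ever negative a first zero $r_0\in(0,+\infty)$ would exist with $\tilde\Phi'(r_0)\leq 0$. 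The key algebraic miracle is that the shifted ODE $rm'=-m(m+\sigma)+pc_\infty^{p-1}-r^2 pQ^{p-1}$ combined with $\tilde\Phi(r_0)=0$ forces $r_0 m'(r_0)=m(r_0)>0$; substituting this into the identity
\[
\tilde\Phi'=m'(1+\sigma+2m)+p(p-1)\,r\,Q^{p-2}\,\Lambda Q
\]
(which uses $2rQ^{p-1}+(p-1)r^2 Q^{p-2}Q'=(p-1)rQ^{p-2}\Lambda Q$) then gives $\tilde\Phi'(r_0)>0$, the desired contradiction. The hard part is identifying the quantity $\tilde\Phi$ with its built-in cancellation; once in place, the maximum principle argument is immediate.
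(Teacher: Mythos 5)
Your proof is correct but takes a genuinely different route from the paper's. For the first-derivative bound, the paper uses a Wronskian comparison between $\Lambda Q$ and the explicit solution $r^{-\gamma}$ of $H_0 u=0$ where $H_0=-\Delta-\gamma(d-2-\gamma)/r^2$, while you run a Riccati barrier argument directly on $\phi=rV'$; both rest on the same two ingredients ($\gamma(d-2-\gamma)=pc_\infty^{p-1}$ and $Q<\Phi^*$), but yours is more self-contained since it never introduces the singular second branch $r^{-\gamma}$. For the second-derivative bound, the paper sets $\Phi=-r\partial_r V\in(0,\gamma)$, which satisfies $r\Phi'=pQ^{p-1}r^2-\Phi(d-2-\Phi)$, and proves the \emph{stronger} monotonicity $\Phi'>0$ by a maximum principle at a first critical point; the desired bound then drops out from $\partial_{rr}V=\Phi/r^2-\Phi'/r$. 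In your notation $\Phi=\gamma-m$, and a short computation from your Riccati identity shows that $\tilde\Phi=m-rm'=\gamma-\Phi+r\Phi'$, i.e.\ precisely $r^2\left(\gamma/r^2-\partial_{rr}V\right)$ — so $\tilde\Phi\geq0$ is the weaker, exactly-sufficient statement $r\Phi'\geq\Phi-\gamma$ rather than $\Phi'>0$. Both barrier arguments ultimately hinge on the same cancellation $(r^2pQ^{p-1})'=p(p-1)\,rQ^{p-2}\Lambda Q>0$; you additionally use that $\tilde\Phi(r_0)=0$ forces $r_0 m'(r_0)=m(r_0)>0$, whereas the paper's touching point has $\Phi'(r_0)=0$, so the two contradictions arise at different points. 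The paper's choice is slightly cleaner because $\Phi'>0$ is a natural monotonicity claim that requires no search for the ``right'' quantity, while your version proves exactly what is needed and, as you note, requires identifying $\tilde\Phi$ and its built-in cancellation.
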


\begin{proof}  The lemma follows from the $Q$ equation in the regime $p>p_{JL}(d)$ and a Sturm Liouville oscillation argument.\\

\noindent{\bf step 1} Proof of \eqref{estpoitnwisepotential}. Let from \eqref{positivitylamdbaq} 
\be
\label{cnknbenon}
H=-\Delta_r-pQ^{p-1}>H_0=-\Delta -\frac{\gamma(d-2-\gamma)}{r^2}.
\ee 
Let $\psi_1=\Lambda Q$,  $\psi_2=\frac{1}{r^\gamma}$, then
\be
\label{eqlamadnjis}\psi_1>0, \ \ H_0\psi_1<H\psi_1=0, \ \ H_0\psi_2=0.
\ee 
This first implies $$\frac{1}{r^{d-1}}\pa_r(r^{d-1} \pa_r\psi_1)=-pQ^{p-1}\psi_1<0$$ from which with $\pa_r\psi_1(0)=0$: $$r^{d-1}\pa_r\psi_1\leq 0, \ \ \pa_r\psi_1\leq 0 \ \ \mbox{and hence} \ \ \pa_rV=\frac{\pa_r\psi_1}{\psi_1}\leq 0.$$ Next from \eqref{eqlamadnjis}, the  Wronskian $W=\psi_1'\psi_2-\psi_1\psi_2'$ satisfies $$\frac{1}{r^{d-1}}\pa_r(r^{d-1}W)=(-H_0\psi_1)\psi_2>0.$$ At the origin, $$r^{d-1}W(r)=O(r^{d-1-(\gamma+1)})\to 0\ \ \mbox{as}\ \ r\to 0$$ and hence $W(r)>0$, $ \psi_1(r)>0$ implies $$-\frac{\psi_1'}{\psi_1}<-\frac{\psi_2'}{\psi_2}=\frac{\gamma}{r}$$ and \eqref{estpoitnwisepotential} is proved.\\

\noindent{\bf step 2} Proof of \eqref{estpoitnwisepotentialbis}. We first compute from \eqref{eqlamadnjis}: 
\be
\label{equationV}
\Delta V=\nabla \cdot\left(\frac{\pa_r\Lamdba Q}{\Lambda Q}\right)=\frac{\Delta \Lambda Q}{\Lambda Q}-\left(\frac{\pa_r\Lambda Q}{\Lambda Q}\right)^2=-pQ^{p-1}-(\pa_r V)^2.
\ee
This implies:
\be
\label{ebceibebiebe}
\pa_{rr}V+\frac{\pa_rV}{r}=\Delta V-\frac{d-2}{r}\pa_r V=-pQ^{p-1}-\pa_rV\left[\frac{d-2}{r}+\pa_rV\right].
\ee
Let $\Phi=-r\pa_rV$, then from \eqref{estpoitnwisepotential}: $0<\Phi<\gamma$ and $\Phi$ satisfies the order one nonlinear equation:
\be
\label{cneneonoe}
r\pa_r\Phi=pQ^{p-1}r^2-\Phi(d-2-\Phi).
\ee 
We claim
\be
\label{moineonvn}
\pa_r\Phi> 0.
\ee
Indeed, at the origin, $\pa_rV(0)=0$ and hence from \eqref{equationV}: $$\Delta V(0)=d\pa_{rr}V(0)=-pQ^{p-1}(0)$$ from which near the origin:
\bee
\frac{\pa_r\Phi}{r}&=& pQ^{p-1}+\frac{\pa_r V}{r}(d-2-\Phi)=pQ^{p-1}(0)+(d-2)\pa_r^2V(0)+o_{r\to 0}(1)\\
& =& pQ^{p-1}(0)\left(1-\frac{d-2}{d}\right)+o_{r\to 0}(1)>0.
\eee
By contradiction, let $r_0>0$ be the first point where $\Phi'(r_0)=0$, then $\Phi''(r_0)\leq 0$ but deriving \eqref{cneneonoe} at $r=r_0$ yields:
$$r_0\Phi''(r_0)=p(p-1)rQ^{p-2}\Lambda Q(r_0)>0$$ and a contradiction follows which concludes the proof of \eqref{moineonvn}. We have therefore proved $$0\leq \Phi\leq \gamma, \ \ \pa_r\Phi>0.$$ Now $$\pa_r\Phi=-r\pa_{rr}V-\pa_rV=-r\pa_{rrV}+\frac{\Phi}{r}, \ \ \pa_{rr}V=\frac{\Phi}{r^2}-\frac{\pa_r\Phi}{r}\leq \frac{\gamma}{r^2},$$ this is \eqref{estpoitnwisepotentialbis}. 
\end{proof}

\begin{lemma}[Estimates on $\log T$ and $V_1$]
Let $T,V_1$ be given by \eqref{defww}, \eqref{devone}, then for all $r\leq 2$:
\be
\label{estvone}
 |V_1|+|r\pa_rV_1|+|\la z\ra \pa_zV_1|\lesssim 1, \ \
 \ee

 \be
 \label{estlogt}
|\pa_z^{k}\pa_r^j(\log T)|\lesssim \frac{1}{r^j\la z\ra^k}, \ \ k,j\geq 0, \ \ k+j\ge 1
 \ee

\end{lemma}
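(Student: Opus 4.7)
The plan is to reduce everything to explicit formulas in the scaled variable $y := r/D$, where $D(\tau, z) = \sqrt{b(\tau)}\,\mu(z)$. Since $\Delta_r$ commutes with functions of $z$, the $r$-scaling identity $(\Delta_r+p\Phi_{a,b}^{p-1})(\Lambda_r\Phi_{a,b})=0$ gives \eqref{eqttt} with
$$
T(r,z) = D^{-\frac{2}{p-1}}\,\Lambda Q(y), \qquad y = r/D,
$$
and a direct chain rule computation yields
$$
\frac{\partial_r T}{T} = \frac{V'(y)}{D},\qquad \frac{\partial_z T}{T} = -\frac{D_z}{D}\,\Pi(y),\qquad \frac{\partial_\tau T}{T} = -\frac{D_\tau}{D}\,\Pi(y),
$$
where $V := \log \Lambda Q$ and $\Pi(y) := \Lambda^2 Q(y)/\Lambda Q(y)$.

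The key structural fact I will establish is $|\Pi^{(k)}(y)|+|y^{k}V^{(k+1)}(y)| \lesssim \langle y\rangle^{-k}$ for $k\geq 0$, uniformly on $[0,+\infty)$. Positivity $\Lambda Q > 0$ from \eqref{positivitylamdbaq} and smoothness handle any compact set; at infinity, \eqref{eq:Thetani} with $T_0 = c\Lambda Q$ yields $\Pi(y) = -\alpha + O(\langle y\rangle^{-g})$, and differentiating this algebraic expansion, together with \eqref{asyptotq}--\eqref{id LambdaQ}, propagates the decay to higher derivatives. On the $z$-side, writing $\log D = \tfrac{1}{2}\log b + \tfrac{1}{\alpha}\log(1+aP_{2\ell}(z))$, the elementary polynomial bound $a|P_{2\ell}^{(k)}(z)|/(1+aP_{2\ell}(z)) \lesssim \langle z\rangle^{-k}$ for $k\geq 1$ gives $|\partial_z^k \log D| \lesssim \langle z\rangle^{-k}$; the same estimate for $\partial_z^k(D_\tau/D)$ picks up an extra factor $|a_\tau|+|b_\tau/b|\lesssim 1$ controlled via \fref{roughestimatesmodulation}.

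Estimate \eqref{estlogt} then follows by induction on $k+j$. The pure $\partial_r$ derivatives are controlled directly from \eqref{estpoitnwisepotential}--\eqref{estpoitnwisepotentialbis} via $\partial_r\log T = V'(y)/D$ and $\partial_{rr}\log T = V''(y)/D^2$; each additional $\partial_z$ applied to the formulas either hits $D_z/D$ (producing $\langle z\rangle^{-1}$) or a universal function of $y$ (producing $-yD_z/D$ times a bounded derivative, again $\langle z\rangle^{-1}$); each additional $\partial_r$ acts as $(1/D)\partial_y$, so $r\partial_r$ corresponds to $y\partial_y$ on bounded universal functions. For \eqref{estvone}, I expand
$$
V_1 = \frac{D_\tau}{D}\Pi(y) + \frac{\partial_{zz}T}{T} - \frac{1}{2}yV'(y) + \frac{z}{2}\frac{D_z}{D}\Pi(y),
$$
using $\partial_{zz}T/T = \partial_{zz}\log T + (\partial_z \log T)^2$. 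The modulation bound \fref{roughestimatesmodulation} gives $|D_\tau/D|\lesssim 1$; the remaining terms $|zD_z/D|\lesssim 1$, $|yV'(y)|\lesssim 1$, and $|\partial_{zz}T/T|\lesssim \langle z\rangle^{-2}$ are all bounded, yielding $|V_1|\lesssim 1$. The $r\partial_r V_1$ and $\langle z\rangle \partial_z V_1$ bounds follow identically: $r\partial_r$ reproduces the same algebraic structure with $y\partial_y$ acting on bounded universal functions, and each $\partial_z$ produces an extra $\langle z\rangle^{-1}$ that the prefactor absorbs.

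The main technical work I foresee is establishing the global boundedness and algebraic decay of the universal one-variable functions $\Pi, yV', y\Pi', \ldots$ on $[0,+\infty)$, since these functions cannot simply be bounded by a soliton asymptotic expansion valid only at infinity. Once this is granted via \eqref{eq:Thetani} combined with $\Lambda Q>0$ and smoothness, every inequality of the lemma reduces to elementary manipulations of $aP_{2\ell}^{(k)}/(1+aP_{2\ell})$ and the rough bootstrap bounds from Section \ref{sectionbootstrap}.
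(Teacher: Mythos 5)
Your proposal is correct and follows essentially the same route as the paper: write $T$ as a power of $D(\tau,z)$ times $\Lambda Q(r/D)$, differentiate $\log T$ via the chain rule, and reduce everything to (a) the algebraic decay of the universal one-variable function $\Pi(y)=\Lambda^2 Q/\Lambda Q$ (the paper uses the equivalent $\zeta(y)=\gamma+y\partial_y\log\Lambda Q=\Pi(y)+\alpha$) together with positivity of $\Lambda Q$, and (b) the bound $\partial_z^k(D_z/D)=O(\langle z\rangle^{-k-1})$ coming from $a P_{2\ell}'/(1+aP_{2\ell})$, plus the rough modulation bounds $|b_\tau/b|+|a_\tau/a|\lesssim 1$ for the $\partial_\tau$ term in $V_1$. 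The only cosmetic discrepancy is the power of $D$ in the prefactor of $T$, which cancels when taking $\log T$ derivatives and hence does not affect any estimate.
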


\begin{proof} Recall $$T=\frac{1}{D^\gamma}\Lambda Q\left(\frac{r}{D}\right), \ \ D=\mu(z)\sqrt{b}=\sqrt{b}\left(1+aP_{2\ell}(z)\right)^{\frac 1\alpha}.$$ Hence $$\frac{\pa_zT}{T}=-\frac{\pa_z D}{D}\zeta\left(\frac{r}{D}\right)\ \ \mbox{where} \ \ \zeta(y)=\gamma+\frac{y\pa_y\Lambda Q}{\Lambda Q}$$ satifies $$\pa_y^k\zeta=O\left(\frac{1}{\la y\ra^{k}}\right), \ \ k\in \Bbb N.$$ Moreover, $$\frac{\pa_z D}{D}=\frac{\pa_z\mu}{\mu}=-\frac{1}{\alpha}\frac{aP'_{2\ell}}{1+aP_{\ell}}$$ so that $$\pa_z^k\left(\frac{\pa_z D}{D}\right)=O\left(\frac{1}{\la z\ra^{k+1}}\right).$$ Similarly, $$|\pa_r\log T|\lesssim \frac 1D\frac{\pa_y\Lambda Q}{\Lambda Q}\left(\frac rD\right)\lesssim \frac 1r,$$ and \eqref{estlogt} follows by induction. 
We now compute:
$$\pa_\tau \log T=-\frac{\pa_t D}{D}\zeta\left(\frac{r}{D}\right)=\left[-\frac{b_\tau}{2b}+\frac{1}\alpha\frac{a_\tau P_{2\ell}}{1+aP_{2\ell}}\right]\zeta\left(\frac{r}{D}\right)$$ and since $$\left|\frac{b_\tau}{b}\right|+\left|\frac{a_\tau}{a}\right|\lesssim 1,$$ we obtain 
\be
\label{estpr;ogt}
\left|\pa_z^k\pa_r^j(\pa_\tau \log T)\right|=O\left(\frac{1}{r^j\la z\ra^k}\right), \ \ k+j\ge 1.
\ee
\end{proof}


\section{Coercivity estimates}
\label{appendcoerc}

This Appendix is devoted to the proof of Hardy and Sobolev like inequalities that are used all along the proof. The proofs are standard and recalled for the convenience of the reader.

\begin{lemma}[Hardy inequality with Gaussian weight]
There holds for $\e$ with cylindrical symmetry:
\be
\label{hardyapoids}
\int \left(|Y|^2+\frac{1}{r^2}\right)\e^2e^{-\frac{|Y|^2}{4}}dY\lesssim \int (|\nabla \e|^2+\e^2)e^{-\frac{|Y|^2}{4}}dY
\ee
\end{lemma}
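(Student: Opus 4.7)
The plan is to establish the two pieces of \eqref{hardyapoids} separately by reducing them to classical identities for the Gaussian weight $\rho = e^{-|Y|^2/4}$.

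\textbf{The $|Y|^2$ piece.} The starting point is the pointwise identity $\nabla \rho = -\frac{Y}{2}\rho$, which can be rewritten as $|Y|^2 \rho = -2\, Y\cdot\nabla \rho$. First I would multiply by $\e^2$ and integrate by parts:
\begin{equation*}
\int |Y|^2 \e^2\rho\, dY = -2\int \e^2\, Y\cdot\nabla \rho\, dY = 2\int \nabla\cdot(Y \e^2)\,\rho\, dY = 2(d+1)\int \e^2\rho\, dY + 4\int \e\, Y\cdot\nabla\e\,\rho\, dY.
\end{equation*}
The crossed term is absorbed by Cauchy--Schwarz and Young: $4|\e\, Y\cdot\nabla\e| \leq \tfrac12 |Y|^2\e^2 + 8|\nabla\e|^2$, which after reabsorbing into the left-hand side yields $\int |Y|^2 \e^2\rho\, dY \lesssim \int(\e^2 + |\nabla\e|^2)\rho\, dY$.

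\textbf{The $1/r^2$ piece.} Here I would invoke the classical Hardy inequality on $\mathbb R^d$ (in the $y$-variable, which is legitimate since $d\geq 11\geq 3$ by \eqref{confitiondiemsnion}):
\begin{equation*}
\int_{\mathbb R^d} \frac{|f|^2}{|y|^2}\, dy \leq \left(\frac{2}{d-2}\right)^{\!2}\int_{\mathbb R^d} |\nabla_y f|^2\, dy.
\end{equation*}
Applying this pointwise in $z$ to $f(y) = \e(y,z) e^{-|Y|^2/8}$ so that $f^2 = \e^2\rho$, expanding $|\nabla_y f|^2 \leq 2|\nabla_y\e|^2 e^{-|Y|^2/4} + \tfrac{r^2}{8}\e^2 e^{-|Y|^2/4}$, and integrating in $z$ gives
\begin{equation*}
\int \frac{\e^2}{r^2}\rho\, dY \lesssim \int |\nabla\e|^2\rho\, dY + \int r^2\e^2\rho\, dY.
\end{equation*}
The last term is bounded by $\int |Y|^2\e^2\rho\, dY$, already controlled by the first step.

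\textbf{Main obstacle.} There is no real obstacle: both steps are standard, and the only place where the structure of the problem enters is the assumption $d\geq 3$ needed to invoke Hardy, which is guaranteed by \eqref{confitiondiemsnion}. Summing the two bounds yields \eqref{hardyapoids}.
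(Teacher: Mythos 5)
Your proof is correct, and it is the classical integration-by-parts argument that the paper refers to and leaves to the reader. The only minor stylistic variant is that for the $1/r^2$ piece you conjugate out the Gaussian and invoke the Euclidean Hardy inequality, whereas the paper's own proof of the companion Lemma on weighted Hardy inequalities integrates by parts directly against $\rho_r$; these are equivalent, and your version correctly tracks the extra $r^2\e^2$ term and reabsorbs it via the $|Y|^2$ piece.
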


\begin{proof} The proof follows a classical integration by parts and is left to the reader.
\end{proof}

\begin{lemma}[Weighted Hardy inequalities]
There holds for all $\beta>-\frac{d-2}{2}$:
\be
\label{hardyinqeualityweighted}
\int r^{2\beta}|\pa_ru|^2\rho_rdY\geq \left(\frac{d-2+2\beta}{2}\right)^2\int \frac{r^{2\beta}}{r^2}u^2\rho_rdY-\frac{d-2+2\beta}{4}\int r^{2\beta}u^2\rho_rdY.
\ee
\end{lemma}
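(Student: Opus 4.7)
The plan is to expand a nonnegative square of the form
\[
0\leq \int r^{2\beta}\Bigl|\partial_r u + \frac{\lambda}{r} u\Bigr|^2\rho_r\,dY = \int r^{2\beta}|\partial_r u|^2\rho_r\,dY + 2\lambda\int r^{2\beta-1} u\,\partial_r u\,\rho_r\,dY + \lambda^2\int r^{2\beta-2}u^2\rho_r\,dY,
\]
and then to optimize over the free parameter $\lambda\in\mathbb{R}$ after rewriting the cross term by integration by parts. The assumption $\beta>-(d-2)/2$ ensures that the weight $r^{2\beta-2+d-1}$ is locally integrable near $r=0$, so all integrals make sense and the boundary contribution in the integration by parts vanishes.

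The main (and essentially only) computation is the cross term. Writing the measure explicitly as $\rho_r\,dY = r^{d-1}e^{-r^2/4}dr\,dz$ and using $2u\,\partial_r u=\partial_r(u^2)$, I would integrate by parts in $r$ to get
\[
2\int r^{2\beta-1} u\,\partial_r u\,\rho_r\,dY
= -\int u^2\,\partial_r\!\left(r^{2\beta+d-2}e^{-r^2/4}\right)dr\,dz
= -(2\beta+d-2)\int \frac{r^{2\beta}}{r^2}u^2\rho_r\,dY + \frac{1}{2}\int r^{2\beta}u^2\rho_r\,dY,
\]
where I have used $\partial_r(r^{2\beta+d-2}e^{-r^2/4}) = (2\beta+d-2)r^{2\beta+d-3}e^{-r^2/4} - \tfrac{1}{2}r^{2\beta+d-1}e^{-r^2/4}$.

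Substituting this identity back into the nonnegativity of the square yields, for every $\lambda$,
\[
\int r^{2\beta}|\partial_r u|^2\rho_r\,dY \;\geq\; \bigl[\lambda(d-2+2\beta)-\lambda^2\bigr]\int\frac{r^{2\beta}}{r^2}u^2\rho_r\,dY \;-\; \frac{\lambda}{2}\int r^{2\beta}u^2\rho_r\,dY.
\]
The quadratic $\lambda\mapsto \lambda(d-2+2\beta)-\lambda^2$ is maximized at $\lambda_\star=(d-2+2\beta)/2$, where it equals $\left(\tfrac{d-2+2\beta}{2}\right)^2$, and simultaneously $\lambda_\star/2 = (d-2+2\beta)/4$; choosing $\lambda=\lambda_\star$ thus produces exactly the inequality \eqref{hardyinqeualityweighted}. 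There is no real obstacle: the only subtlety is justifying the boundary term at $r=0$, which follows from $\beta>-(d-2)/2$ by a standard truncation argument (replacing $u$ by $\chi(r/\varepsilon)u$ with $\chi$ vanishing near $0$ and passing to the limit $\varepsilon\to 0$).
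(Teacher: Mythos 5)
Your proof is correct and follows essentially the same route as the paper's: the heart of both arguments is integrating the Hardy cross term against the Gaussian measure, so that the $r^{d-1}$ factor produces the sharp constant and the $e^{-r^2/4}$ factor produces the lower-order $-\frac{d-2+2\beta}{4}\int r^{2\beta}u^2\rho_r\,dY$ term. The only cosmetic difference is that you package the computation as a completed square $\int r^{2\beta}|\partial_r u + \lambda u/r|^2\rho_r\,dY\geq 0$ and optimize $\lambda$, while the paper keeps a general weight $\chi$, applies Young's inequality with parameter $A$, optimizes $A$, and only then specializes $\chi=r^{2\beta}$; these are two presentations of the same calculation.
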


\begin{proof}
We integrate by parts and use Young inequality to compute:  
\bee
&&\int \frac{u^2}{r^2}\chi \rho_rdY=\frac{1}{d-2}\int \chi u^2 \nabla\cdot \left( \frac {e_r}{r}\right)\rho_rdY=-\frac{1}{d-2}\int \frac1r\left[2\chi u\pa_ru+\chi' u^2-\frac{r}{2}\chi u^2\right]\rho_rdY\\
& \leq & \frac{1}{d-2}\int \frac{u^2}{r^2}\left(-r\chi'+2\beta\chi+\frac{r^2}{2}\chi\right)\rho_rdY-\frac{2\beta}{d-2}\int \frac{u^2}{r^2}\rho_rdY
\\
&+& \frac{1}{d-2}\left[\frac{1}{A}\int \chi|\pa_ru|^2\rho_rdY+A\int \frac{u^2}{r^2}\chi\rho_rdY\right]
\eee
and hence:
\bee
\int \chi|\pa_ru|^2\rho_rdY\geq (d-2)A\int \frac{\chi}{r^2}u^2\left(1+\frac{2\beta-A}{d-2}\right)\rho_rdY+A\int \frac{u^2}{r^2}\left(r\chi'-2\beta\chi-\frac{r^2}{2}\chi\right)\rho_rdY.
\eee
For $\beta>-(d-2)$, the optimal choice $A=\frac{d-2+2\beta}{2}>0$ ensures:
$$\int \chi|\pa_ru|^2\rho_rdY\geq\left(\frac{d-2+2\beta}{2}\right)^2\int \frac{\chi}{r^2}u^2\rho_rdY+\frac{d-2+2\beta}{2}\int \frac{u^2}{r^2}\left(r\chi'-2\beta\chi-\frac{r^2}{2}\chi\right)\rho_rdY.$$ Letting $\chi=r^{2\beta}$ yields the claim.
\end{proof}

\begin{lemma}[Weighted Sobolev bound]
Let $2q+2>d+1$ and $v(r,z)$ with cylindrical symmetry, then:  
\be
\label{estitmiatesobolevpoids}
\|v\|_{L^\infty}^{2q+2}\lesssim_{q}\left(\int \frac{|v|^{2q+2}+|\la z\ra \pa_zv|^{2q+2}}{\la z\ra}dY\right)^{1-\frac{d}{2q+2}}\left(\int \frac{|\pa_rv|^{2q+2}}{\la z\ra}dY\right)^{\frac{d}{2q+2}}.
\ee
\end{lemma}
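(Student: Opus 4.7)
The plan is to first remove the $\la z\ra$ weight by a change of variables, then derive an anisotropic Morrey-type bound on $\RR^{d+1}$ by applying the classical Morrey embedding to a two-parameter rescaling of the function.

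First I would set $\zeta=\sinh^{-1}(z)$, so that $d\zeta=dz/\la z\ra$ and $\la z\ra\pa_z=\pa_\zeta$, and introduce $V(r,\zeta)=v(r,z(\zeta))$. Regarding $V(y,\zeta)=V(|y|,\zeta)$ as a cylindrically symmetric function on $\RR^d_y\times\RR_\zeta$, the claim reduces to the unweighted anisotropic bound
$$
\|V\|_{L^\infty}^p \lesssim_p \tilde A^{1-d/p}\,\tilde B^{d/p},\qquad \tilde A=\int(|V|^p+|\pa_\zeta V|^p)\,dy\,d\zeta,\quad \tilde B=\int|\pa_r V|^p\,dy\,d\zeta,
$$
with $p=2q+2>d+1$.

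Next, since $p>d+1$, the Morrey embedding $W^{1,p}(\RR^{d+1})\hookrightarrow L^\infty$ yields $\|V\|_{L^\infty}\lesssim \|V\|_{L^p}+\|\pa_r V\|_{L^p}+\|\pa_\zeta V\|_{L^p}$. I would apply this to the anisotropically rescaled family $V_{\lambda,\mu}(y,\zeta)=V(\lambda y,\mu\zeta)$, use $\|V_{\lambda,\mu}\|_\infty=\|V\|_\infty$ together with the scaling of each norm, and optimize: first in $\mu$ by balancing the terms carrying no $r$-derivative (taking $\mu=\|V\|_{L^p}/\|\pa_\zeta V\|_{L^p}$), and then in $\lambda$ by balancing the remaining two terms. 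This produces the clean anisotropic estimate
$$
\|V\|_{L^\infty}^p \lesssim \|V\|_{L^p}^{p-d-1}\,\|\pa_\zeta V\|_{L^p}\,\|\pa_r V\|_{L^p}^{d}.
$$

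Finally, Young's inequality with conjugate exponents $\tfrac{p-d}{p-d-1}$ and $p-d$ gives
$$
\|V\|_{L^p}^{p-d-1}\|\pa_\zeta V\|_{L^p}\lesssim \|V\|_{L^p}^{p-d}+\|\pa_\zeta V\|_{L^p}^{p-d}\lesssim\bigl(\|V\|_{L^p}^p+\|\pa_\zeta V\|_{L^p}^p\bigr)^{(p-d)/p}=\tilde A^{1-d/p},
$$
which combined with the previous step yields the claimed estimate. The main subtlety is that the target exponent $d/p$ on the radial derivative (with only the $r$-direction being weighted by $d$) is below what the isotropic Morrey embedding produces, so one must exploit the freedom of scaling $y$ and $\zeta$ independently; once that anisotropic refinement is in hand, the Young-type recombination of $\|V\|_{L^p}$ and $\|\pa_\zeta V\|_{L^p}$ into $\tilde A$ is straightforward.
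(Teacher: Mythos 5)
Your proof is correct, and it takes a genuinely different route from the paper's. The paper handles the $\la z\ra$ weight by a dyadic decomposition in $z$: it covers $\RR_z$ by unit cylinders $\mathcal C_A=\{A/2\leq|z|\leq A\}$, applies Sobolev to $v(r,Az)$ on the fixed reference cylinder $\mathcal C_1$ to get a bound independent of $A$, and then performs a single one-parameter rescaling $r\mapsto r/\lambda$ and optimizes in $\lambda$. You instead eliminate the weight globally by the change of variables $\zeta=\sinh^{-1}(z)$, which identifies $dz/\la z\ra=d\zeta$ and $\la z\ra\pa_z=\pa_\zeta$, reducing everything to a clean unweighted problem on $\RR^{d+1}$; you then apply Morrey and an anisotropic two-parameter rescaling $(y,\zeta)\mapsto(\lambda y,\mu\zeta)$. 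The main advantage of your route is that the intermediate estimate
$$
\|V\|_{L^\infty}^p\lesssim \|V\|_{L^p}^{\,p-d-1}\,\|\pa_\zeta V\|_{L^p}\,\|\pa_r V\|_{L^p}^{\,d},\qquad p=2q+2,
$$
is a sharper anisotropic Gagliardo--Nirenberg inequality than what the paper's one-parameter optimization can produce (the paper combines $\|V\|_{L^p}$ and $\|\pa_\zeta V\|_{L^p}$ into a single quantity before it ever scales); you then deliberately give up this sharpness via Young's inequality to match the form requested in the lemma. The only bookkeeping point worth making explicit is the degenerate cases: if any one of $\|V\|_{L^p},\|\pa_\zeta V\|_{L^p},\|\pa_r V\|_{L^p}$ vanishes while the others are finite, then $V\equiv 0$ and the inequality is trivial, so the two optimizations (division by $\|\pa_\zeta V\|_{L^p}$ and $\|\pa_r V\|_{L^p}$) are legitimate. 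In a paper one would also note that the Morrey constant for $W^{1,p}(\RR^{d+1})\hookrightarrow L^\infty$, $p>d+1$, depends only on $p$ and $d$, so it is unaffected by the rescaling; you implicitly use this.
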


\begin{proof} This follows from Sobolev and a scaling argument. We first claim:
\be
\label{cneiocneon}
\|v\|^{2q+2}_{L^{\infty}}\lesssim \int \frac{|v|^{2q+2}+|\pa_r v|^{2q+2}+|\la z \ra\pa_zv|^{2q+2}}{\la z\ra}dY.
\ee
Indeed, we have from Sobolev for $2q+2>d+1$:
\bee
\|v\|^{2q+2}_{L^{\infty}(|z|\leq 1)}&\lesssim & \int_{|z|\leq 1} \left(|v|^{2q+2}+|\pa_r v|^{2q+2}+|\pa_zv|^{2q+2}\right)dY\\
&\lesssim & \int \frac{|v|^{2q+2}+|\pa_r v|^{2q+2}+|\la z \ra\pa_zv|^{2q+2}}{\la z\ra}dY.
\eee

Let now $A\geq 1$ and the cylinder $\mathcal C_A=\{r\geq 0, \ \ \frac A2\leq |z|\leq A\}$ Let $V(r,z)=v(r,Az),$ then $2q+2>d+1$ and Sobolev in the cylinder $\mathcal C_1$  ensure: \bee
\|v\|^{2q+2}_{L^\infty(\matchal C_A)}&=&\|V\|^{2q+2}_{L^\infty(\mathcal C_1)}\lesssim  \|V\|_{L^{2q+2}(\mathcal C_1)}^{2q+2}+ \|\nabla V\|_{L^{2q+2}(\mathcal C_1)}^{2q+2}\\
& \lesssim & \frac{1}{A}\left[\|v\|_{L^{2q+2}(\mathcal C_A)}^{2q+2}+ \|\pa_r v\|_{L^{2q+2}(\mathcal C_A)}^{2q+2}+\|A\pa_z v\|_{L^{2q+2}(\mathcal C_A)}^{2q+2}\right]\\
& \lesssim  & \int \frac{|v|^{2q+2}+|\pa_r v|^{2q+2}+|\la z \ra\pa_zv|^{2q+2}}{\la z\ra}dY
\eee
and since the bound is independent of $A\geq 1$, \eqref{cneiocneon} is proved. We now apply this estimate to $$v_\l(r,z)=v\left(\frac{r}{\lambda},z\right)$$ which yields:
\bee
\|v\|^{2q+2}_{L^{\infty}}=\|v_\l\|^{2q+2}_{L^{\infty}}\lesssim \l^d\int \left[\frac{|v|^{2q+2}+|\la z \ra\pa_zv|^{2q+2}}{\la z\ra}+\frac{1}{\l^{2q+2}}\frac{|\pa_r v|^{2q+2}}{\la z\ra}\right]dY
\eee
and optimizing in $\l$ yields \eqref{estitmiatesobolevpoids}.
\end{proof}

\begin{lemma}[Weighted outer Sobolev bound]
Let $v(r,z)$ with cylindrical symmetry, then for all $0<\delta<R$:  
\be
\label{estitmiatesobolevpoidsbis}
\|v\|_{L^\infty(\delta\leq r\leq R)}^{2}\lesssim_{\delta,R} \sum_{0\leq i+j\leq 2}\int_{\delta\leq r\leq R} \frac{|\pa_r^i(\la z\ra \pa_z)^jv|^{2}}{\la z\ra}dY.
\ee
\end{lemma}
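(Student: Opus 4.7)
The idea is to reduce to the standard two-dimensional Sobolev embedding $H^2 \hookrightarrow L^\infty$ on a fixed bounded rectangle via a rescaling in the $z$ variable, exploiting cylindrical symmetry (so $v = v(r,z)$ is really a function of two real variables) and the fact that $r^{d-1}$ is comparable to constants depending only on $\delta, R$ on the region $\delta \le r \le R$.

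Fix $z_0 \in \Bbb R$ and set $\lambda = \lambda(z_0) := \max(1, |z_0|/2)$, so that $\la z\ra \sim \lambda$ uniformly on the strip $|z-z_0| \le \lambda$. Define $V(r,\zeta) := v(r, z_0 + \lambda \zeta)$ on the fixed rectangle $[\delta, R] \times [-1,1]$. The usual 2D Sobolev embedding on this rectangle yields, with an implicit constant depending only on $\delta, R$,
$$
\|V\|_{L^\infty([\delta,R]\times[-1,1])}^{2} \lesssim_{\delta,R} \sum_{0\le i+j \le 2}\int_{[\delta,R]\times[-1,1]} |\pa_r^i \pa_\zeta^j V|^2 \, drd\zeta.
$$
Since $\pa_\zeta^j V(r,\zeta) = \lambda^j (\pa_z^j v)(r, z_0+\lambda\zeta)$, the change of variables $z = z_0 + \lambda \zeta$ gives
$$
\int_{[\delta,R]\times[-1,1]} |\pa_r^i \pa_\zeta^j V|^2 drd\zeta = \lambda^{2j-1} \int_{[\delta,R]\times [z_0-\lambda, z_0+\lambda]} |\pa_r^i \pa_z^j v|^2 drdz.
$$

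Next I convert the flat $\pa_z$ derivatives into the weighted $\la z\ra \pa_z$ derivatives used on the right-hand side. An elementary induction using $(\la z\ra \pa_z)(\la z\ra^m \pa_z^m v) = \la z\ra^{m+1} \pa_z^{m+1} v + m z \la z\ra^{m-1} \pa_z^m v$ together with $|z| \le \la z\ra$ gives the pointwise bound
$$
|\pa_z^j v| \lesssim \la z\ra^{-j}\sum_{k=0}^{j}|(\la z\ra \pa_z)^k v|.
$$
Since $\la z\ra \sim \lambda$ on $[z_0-\lambda, z_0+\lambda]$ and $r^{d-1} \sim_{\delta,R} 1$ on $[\delta, R]$, combining the three ingredients yields
$$
\|v\|_{L^\infty([\delta,R]\times[z_0-\lambda, z_0+\lambda])}^{2} \lesssim_{\delta,R} \sum_{0\le i+j \le 2}\lambda^{2j-1}\lambda^{-2j}\sum_{k=0}^{j}\int_{[\delta,R]\times[z_0-\lambda,z_0+\lambda]} |\pa_r^i (\la z\ra \pa_z)^k v|^2 dY,
$$
which, since $\lambda^{-1}\sim \la z\ra^{-1}$ on the integration domain and the strip is contained in $\{\delta \le r \le R\}$, is bounded by the right-hand side of \eqref{estitmiatesobolevpoidsbis}.

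Taking the supremum over $z_0 \in \Bbb R$ concludes the proof, as the slab $[\delta,R]\times \Bbb R$ is covered by the strips $[\delta,R]\times[z_0-\lambda(z_0),z_0+\lambda(z_0)]$. The only mildly subtle step is the $j=0$ contribution, where the scaling factor $\lambda^{-1}$ is precisely what reproduces the weight $\la z\ra^{-1}$ on the right-hand side; for $j\ge 1$ the power $\lambda^{2j-1}\cdot\lambda^{-2j} = \lambda^{-1}$ is produced identically, so no loss occurs.
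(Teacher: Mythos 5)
Your argument is correct and essentially reproduces the paper's proof: both rescale the $z$-variable by a factor comparable to $\la z\ra$ so that a single application of the two-dimensional Sobolev embedding $H^2\hookrightarrow L^\infty$ on a fixed rectangle yields, after undoing the change of variables, precisely the $\la z\ra$-weighted right-hand side of \eqref{estitmiatesobolevpoidsbis}. The only difference is cosmetic---you use sliding windows $[z_0-\lambda,z_0+\lambda]$ with $\lambda=\max(1,|z_0|/2)$ whereas the paper uses dyadic annuli $\{\tfrac A2\leq|z|\leq A\}$ together with a separate treatment of $|z|\leq 1$---and you spell out the passage from $\lambda^{j}\pa_z^{j}$ to the iterated vector field $(\la z\ra\pa_z)^{j}$, a step the paper leaves implicit.
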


\begin{proof} Indeed, we have from the two dimensional Sobolev $H^2\subset L^{\infty}$:
\bee
\|v\|^{2q+2}_{L^{\infty}(\delta\leq r\leq R,|z|\leq 1)}&\lesssim_{q,\delta,R} &
\sum_{0\leq i+j\leq 2} \int_{\delta\leq r\leq R,|z|\leq 1} |\pa_r^i\pa_z^jv|^{2}dY\lesssim  \sum_{0\leq i+j\leq 2}  \int_{\delta\leq r\leq R} \frac{|\pa_r^i(\la z\ra \pa_z)^jv|^{2}}{\la z\ra}dY
\eee
Let now the cube $$\mathcal C_A=\{\delta\leq r\leq R, \ \ \frac A2\leq |z|\leq A\}, \ \  A\geq 1.$$ Let $V(r,z)=v(r,Az),$ then the two dimensional Sobolev in the cylinder $\mathcal C_1$  ensures: 
\bee
\|v\|^{2q+2}_{L^\infty(\matchal C_A)}&=&\|V\|^{2q+2}_{L^\infty(\mathcal C_1)}\lesssim_{\delta,R}
\sum_{0\leq i+j\leq 2} \int_{\delta\leq r\leq R,\frac 12\leq |z|\leq 1} |\pa_r^i\pa_z^jV|^{2}dY\\
& \lesssim_{\delta,R} & \frac{1}{A} \sum_{0\leq i+j\leq 2} \int_{\delta\leq r\leq R,\frac A2\leq |z|\leq A} |\pa_r^iA^j\pa_z^jv|^{2}dY\\
& \lesssim_{\delta,R} & \sum_{0\leq i+j\leq 2}  \int_{\delta\leq r\leq R} \frac{|\pa_r^i(\la z\ra \pa_z)^jv|^{2}}{\la z\ra}dY
\eee
and since the bound is independent of $A\geq 1$, \eqref{estitmiatesobolevpoidsbis} is proved.\end{proof}


\section{Nonlinear estimates on $\Psi_3$}
\label{appendnonlin}

This appendix is devoted to the control of various norms of the leading order driving term $\Psi_3$ and the error term $L(\zeta)$ given by \eqref{defpsithree}. We start with $L^2_{\rho_r}$ bounds with 
sharp weight in $z$.

\begin{lemma}[$L^2_{\rho_r}$ bounds on $\Psi_3$ and $L(\zeta)$]
We claim for $\nu>0$, $|b|<b^*(\nu)$:
\be \lab{boundspsioneone}
\int \frac{|(\la z\ra \pa_z)^j\Psi_3|^{2}}{1+z^{4(\ell+\nu)+1}}\rho_rdY\lesssim (\sqrt{b})^{2\alpha+2\eta}, \ \ j=0,1,2,3,
\ee
and for any $r_0>0$, for $i\geq 1$, $i+j\leq 3$, if $|b|<b^*(\nu,r_0)$:
\be
\label{boundspsioneone par}
\int_{r\geq r_0} \frac{|\pa^i_r(\la z\ra \pa_z)^j\Psi_3|^{2}}{1+z^{4(\ell+\nu)+1}}\rho_rdY\lesssim (\sqrt{b})^{2\alpha+2\eta}, \ \ i\geq 1, \ i+j\leq 3,
\ee
and
\be
\label{estlinearterm}
\int \frac{|(\la z\ra \pa_z)^jL(\zeta)|^2}{1+z^{4(\ell+\nu)+1}}\rho_rdY\lesssim (\sqrt{b})^{2\alpha+2g}, \ \ j=0,1,2,3.
\ee
\be
\label{estlinearterm2}
\int_{r\geq r_0} \frac{|\partial_r^i(\la z \ra \pa_z)^jL(\zeta)|^{2}}{1+z^{4(\ell+\nu)+1}}\rho_rdY\lesssim (\sqrt{b})^{4\alpha}, \ \ i\geq 1, \ i+j\leq 3.
\ee

\end{lemma}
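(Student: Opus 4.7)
\medskip
\noindent\textbf{Proof plan.} The strategy is to reduce each bound to pointwise estimates already established for the constituents of $\Psi_3$ and $L(\zeta)$, combined with the modulation equations from Lemma \ref{lemmaeqmodulation}. The key point is that the polynomial weight $1/(1+z^{4(\ell+\nu)+1})$ is weaker than the Gaussian $\rho_z$ used previously, so I must verify $z$-integrability by hand: since $\mu=(1+aP_{2\ell})^{1/\alpha}$ grows like $|z|^{2\ell/\alpha}$ at infinity, a factor $\mu^C$ is integrable against this weight provided $2C\ell/\alpha<4\ell+4\nu+1$, which holds for the constants $C$ that arise in our estimates below. The operator $(\la z\ra\pa_z)$ behaves polynomially with respect to $\mu$ and $P_{2\ell}$, so up to bounded factors the higher-order $z$-derivative bounds reduce to the $j=0$ case; the $\pa_r$ derivatives for $r\geq r_0$ are handled using the pointwise bounds \fref{poitwisepsijk}, \fref{bd pointwise tildephi}, which, localized to $r\geq r_0$, replace $(\sqrt b+r)^{-\gamma-k}$ by $r_0^{-\gamma-k}$ times a bounded-in-$b$ quantity.

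For the $\Psi_3$ bound, I decompose $\Psi_3=\Psit_1+\Psit_3$ according to \fref{defpsithree}. For $\Psit_1$ given by \fref{psitildeone}, I reproduce the Taylor-expansion argument leading to \fref{eecnoenoen} but replace the Gaussian $z$-integral by integration against $1/(1+z^{4(\ell+\nu)+1})$; the all-order cancellation identified in the remark after Lemma \ref{lemmacancellationa}, together with the cancellation \fref{id LambdaQ}, yields a $(\sqrt b)^{\alpha+\eta}$ gain in each factor $\Lambda^2Q_b+\alpha\Lambda Q_b$ and $\mu^{-\gamma}\Lambda Q_b(r/\mu)-\Lambda Q_b(r)$, provided $\mu^C$ is integrable against the weight. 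For $\Psit_3$, each of the four summands is a product of a modulation coefficient and a known profile: the first coefficient is $O((\sqrt b)^{\alpha+\eta})$ by \fref{estparameters}; the second bracket $[-\ell\tilde b+(\tilde\lambda_{\ell,0}-\tilde\lambda_{0,0})]$ is $O((\sqrt b)^\eta)+O(b^{g/2})$ by \fref{exitconditionboot} and \fref{id lambdaib}; the third is $(\sqrt b)^\alpha\tilde\phi_{0,b}$ with $\|\tilde\phi_{0,b}\|_{H^1_{\rho_r}}\lesssim b^{g/2}$ by \fref{calculeignefunction}; and the fourth is controlled through $|b_\tau/b|\lesssim 1$, $|b_{\ell,0}|\lesssim(\sqrt b)^\alpha$ and \fref{bound:partialb}. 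In each case the weighted $L^2_{\rho_r}$ norm of the profile, estimated with \fref{poitwisepsijk}, \fref{bd pointwise tildephi} and integrated in $z$ against the polynomial weight (which dominates $|P_{2k}|^2\la z\ra^C$ for $2k\leq 2\ell$), is uniformly bounded, yielding \fref{boundspsioneone}. The bound \fref{boundspsioneone par} follows identically after restricting to $r\geq r_0$, where $(\sqrt b+r)^{-\gamma-k}\lesssim r_0^{-\gamma-k}$.

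For $L(\zeta)=p(\Phi_{a,b}^{p-1}-Q_b^{p-1})\zeta$, I combine the pointwise bound \fref{esterrorpotential} for the potential difference with \fref{estzeta} for $\zeta$. Since $\zeta$ is $z$-independent, the $z$-integral reduces to the integrability of $|\mu^\alpha-1|^2(\sqrt b)^{2\alpha}/(1+z^{4(\ell+\nu)+1})$ and related terms, which is controlled. Combining the $(\sqrt b)^{g-2/(p-1)}$ decay in \fref{estzeta} with the $(\sqrt b)^{\alpha}$ gain coming from the third form of \fref{esterrorpotential}, together with the $r$-integrability near the origin guaranteed by \fref{estfumdamental} and \fref{cnenceoneno}, yields the sharp $(\sqrt b)^{2\alpha+2g}$ bound \fref{estlinearterm}. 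The localized version \fref{estlinearterm2} uses the same strategy but only exploits the middle bound in \fref{esterrorpotential} giving $|\log\mu|/r^2$, leading to $(\sqrt b)^{2\alpha}\cdot(\sqrt b)^{2\alpha}$ after integration; derivatives are taken directly on the explicit expressions $Q(r/D)$, $Q_b(r)$ and bounded on $r\geq r_0$ using the smoothness of $Q$.

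\medskip
\noindent\textbf{Main obstacle.} The delicate point is the $\Psit_1$ estimate: the polynomial weight $1/(1+z^{4\ell+4\nu+1})$ barely makes $\mu^C$ integrable, and verifying that the constants $C$ emerging from the Taylor-expansion argument at $Q$ are small enough (essentially $C<\alpha$) requires a careful accounting using \fref{asyptotq} and \fref{id LambdaQ}. This is precisely where the all-order cancellation in $a$ of Lemma \ref{lemmacancellationa} is used: without it, the powers of $\mu$ would not match the weight. The second subtle point is that, for $L(\zeta)$, reaching the gain $g$ (rather than only $\eta$) requires combining the sharpest form of \fref{esterrorpotential} (the one involving $|\mu^\alpha-1|(\sqrt b)^\alpha$) with the decay \fref{estzeta} in the regime $r\sim\sqrt b$, while keeping track that \fref{cnenceoneno} is the condition making the near-origin integral converge.
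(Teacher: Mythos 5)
Your decomposition of $\Psi_3$ into $\Psit_1+\Psit_3$, the use of the Taylor-expansion argument (analogous to \fref{eecnoenoen}) for $\Psit_1$ with the polynomial rather than Gaussian $z$-weight, the reduction of $\Psit_3$ to modulation coefficients times profiles, and the combination of \fref{esterrorpotential} with \fref{estzeta} for $L(\zeta)$ all match the paper's proof. Two points need correction.

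\smallskip
\noindent\emph{The $(\sqrt b)^{4\alpha}$ bound \eqref{estlinearterm2}.} Your claim that the middle alternative $|\log\mu|/r^2$ in \eqref{esterrorpotential} ``leads to $(\sqrt b)^{2\alpha}\cdot(\sqrt b)^{2\alpha}$'' misidentifies where the two factors of $(\sqrt b)^{2\alpha}$ come from. The middle bound carries no $\sqrt b$ gain at all: for $r\ge r_0$ fixed it yields $|\Phi_{a,b}^{p-1}-Q_b^{p-1}|\lesssim |\log\mu|$, and the only $\sqrt b$ smallness then comes from $|\zeta|\lesssim(\sqrt b)^{\alpha}r^{2\ell+5}$, giving $(\sqrt b)^{2\alpha}$ after squaring, which is short by a factor $(\sqrt b)^{2\alpha}$. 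The correct route, which the paper takes, uses the third alternative $\frac{|\mu^\alpha-1|(\sqrt b)^{\alpha}}{(r+\sqrt b)^{\alpha+2}}$, localized to $r\ge r_0$ so that it is $\lesssim\mu^{\alpha}(\sqrt b)^{\alpha}$; combined with the $(\sqrt b)^{\alpha}$ from $\zeta$ this gives $|L(\zeta)|\lesssim(\sqrt b)^{2\alpha}\mu^{\alpha}r^{2\ell+5}$ on $\{r\ge r_0\}$, and squaring and integrating the $\mu^{2\alpha}$ against $1/(1+z^{4(\ell+\nu)+1})$ (which converges by the growth $\mu^{2\alpha}\sim a^2 z^{4\ell}$) yields the stated $(\sqrt b)^{4\alpha}$.

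\smallskip
\noindent\emph{Third-order $r$-derivatives of the eigenvectors.} You assert that $\pa_r$-derivatives on $\{r\ge r_0\}$ are handled directly by \eqref{poitwisepsijk} and \eqref{bd pointwise tildephi}, but these only control $\pa_r^k$ for $k\le 2$. Since the lemma requires $i+j\le 3$ with $i\ge 1$, and hence up to $\pa_r^3\psi_{i,0}$, one must propagate the pointwise control one more derivative. The paper does this by differentiating the eigenvalue equation $H_b\psi_{i,0}=\lambda_i\psi_{i,0}$ once in $r$, using the boundedness of the potential on $\{r\ge r_0\}$ to conclude $\pa_r^3\psi_{i,0}=O(r^{2i+5-\gamma})$; this step should be made explicit rather than absorbed into the two-derivative bounds.

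\smallskip
Aside from these two points, the remaining accounting (the $O((\sqrt b)^\eta)$ modulation factors from \eqref{estparameters}, \eqref{exitconditionboot}, \eqref{id lambdaib}; the $(\sqrt b)^{g/2}$ corrections from \eqref{calculeignefunction}, \eqref{bound:partialb}; the $z$-integrability of $\mu^C$ against the polynomial weight; and the near-origin convergence via \eqref{estfumdamental}, \eqref{cnenceoneno}) is as you describe.
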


\begin{proof} 

\noindent{\bf step 1} Control of $\Psit_1$. Recall \eqref{psitildeone}. Let $\delta<\alpha/\ell$. As $\delta<g$ one computes that for $i\in \mathbb N$:
\bee
&&\pa_r^i\left[\frac{1}{\mu^{\gamma}}\Lambda Q_b\left( \frac{r}{\mu}\right)-\Lambda Q_b(r)\right]\\
&=& \int_1^{\mu}\frac{d\tilde{\mu}}{\tilde{\mu}}\frac{1}{(\sqrt b)^{\frac{2}{p-1}}\tilde \mu^{\gamma}(\sqrt b \tilde \mu)^i}\left[\pa_r^i\left(-\Lambda^2 Q-\alpha\Lambda Q\right)\right]\left(\frac{r}{\sqrt b\mu}\right) \\
&= & \int_1^{\mu} \frac{d\tilde{\mu}}{\tilde{\mu}}\frac{1}{(\sqrt b)^{\frac{2}{p-1}}\tilde \mu^{\gamma}(\sqrt b \tilde \mu)^i}O\left(\left(1+\frac{r}{\sqrt b \mu}\right)^{-\gamma-\delta-i}\right)= O\left( \mu^{\delta} \frac{(\sqrt b)^{\alpha+\delta}}{r^{\gamma+\delta+i}}\right) ,
\eee
\bea
\non &&\pa_r^i\la z \ra \pa_z\left[\frac{1}{\mu^{\gamma}}\Lambda Q_b\left( \frac{r}{\mu}\right)-\Lambda Q_b(r)\right]\\
\non &=&\frac{\la z \ra \pa_z\mu}{\mu}\frac{1}{\mu^{\gamma}(\sqrt b)^{\frac{2}{p-1}}(\mu \sqrt b)^i}\left[\partial_r^i\left(-\Lambda^2 Q-\alpha \Lambda Q\right) \right]\left(\frac{r}{\sqrt b \mu}\right) \\
\lab{bd tildepsi11 paz} &=& O\left( \mu^{\delta} \frac{(\sqrt b)^{\alpha+\delta}}{(\mu \sqrt{b})^{\gamma+\delta+i}+r^{\gamma+\delta+i}}\right) =O\left( \mu^{\delta} \frac{(\sqrt b)^{\alpha+\delta}}{r^{\gamma+\delta+i}}\right) ,
\eea
which can be easily generalized to show that for $j\in \mathbb N$:
$$
\left| \pa_r^i (\la z \ra \pa_z)^j\left[\frac{1}{\mu^{\gamma}}\Lambda Q_b\left( \frac{r}{\mu}\right)-\Lambda Q_b(r)\right]\right| \lesssim  \mu^{\delta} \frac{(\sqrt b)^{\alpha+\delta}}{r^{\gamma+\delta+i}}\lesssim  \mu^{\alpha} \frac{(\sqrt b)^{\alpha+\delta}}{r^{\gamma+\delta+i}}
$$
as $g\leq \alpha$ and $\mu \geq 1/2$. Similarly, as for $j\in \mathbb N$
$$
(\la z \ra \pa_z)^j \left( \frac{\partial_z \nu}{1+\nu}\right)^2=O(\la z \ra^{-2})
$$
there holds the analogue estimate since $\mu^{\delta}\lesssim \la z \ra^{2\ell\delta/\alpha}\lesssim \la z \ra^2$:
\bea
\non &&\left| \pa_r^i (\la z \ra \pa_z)^j\left[ \left( \frac{\partial_z \nu}{1+\nu}\right)^2 \frac{1}{\mu^{\frac{2}{p-1}}}\left(\Lambda^2 Q_b+\alpha\Lambda Q_b\right)\left( \frac{r}{\mu}\right)\right]\right|\\
\non  & \lesssim & \frac{1}{(\sqrt b)^{\frac{2}{p-1}} \la z \ra ^2\mu^{\frac{2}{p-1}}(\sqrt b \mu)^i}O\left(\left(1+\frac{r}{\sqrt b \mu}\right)^{-\gamma-g-i}\right)\\
\lab{bd tildepsi12 paz}  &\lesssim & \frac{(\sqrt b)^{\alpha+\delta}\mu^{\alpha+\delta}}{\la z \ra ^2}\frac{1}{(\sqrt b\mu)^{\gamma+\delta+i}+r^{\gamma+\delta+i}}\lesssim \frac{\mu^{\alpha+\delta}}{\la z \ra^2} \frac{(\sqrt b)^{\alpha+\delta}}{r^{-\gamma-\delta-i}}\lesssim \mu^{\alpha} \frac{(\sqrt b)^{\alpha+\delta}}{r^{-\gamma-\delta-i}}.
\eea
From \eqref{psitildeone} and the above bounds one infers that for $j\in \mathbb N$:
\bea 
\non \int \frac{|(\la z\ra \pa_z)^j\Psit_1|^{2}}{1+z^{4(\ell+\nu)+1}}\rho_rdY &\lesssim & (\sqrt{b})^{2\alpha+2\delta}\int \frac{\mu ^{2\alpha}}{1+z^{4(\ell+\nu)+1}}dz\int \frac{1}{r^{-2\gamma-2\delta}}r^{d-1}\rho_rdr  \\
\lab{bd tildepsi1z} &\lesssim & (\sqrt{b})^{2\alpha+2\delta}\int \frac{(1+aP_{2\ell}(z))^2}{1+z^{4(\ell+\nu)+1}}dz
\lesssim (\sqrt{b})^{2\alpha+2\delta}
\eea
and for $i\in \mathbb N$ for $b$ small enough
\bea 
\non &&\int_{r\geq r_0} \frac{|\pa_r^i(\la z\ra \pa_z)^j\Psit_1|^{2}}{1+z^{4(\ell+\nu)+1}}\rho_rdY \\
\lab{bd tildepsi1r} &\lesssim  &(\sqrt{b})^{2\alpha+2\delta}\int \frac{\mu ^{2\alpha}}{1+z^{4(\ell+\nu)+1}}dz\int \frac{1}{r^{-2\gamma-2\delta+2i}}r^{d-1}\rho_rdr \lesssim (\sqrt{b})^{2\alpha+2\delta}.
\eea

\noindent{\bf step 2} Control of $\Psit_3$. One first computes for the first term in \fref{defpsithree} that for $i\in \mathbb N$,
\bee
\partial_r^i \Lambda_r \Phi_{a,b}&=&\frac{1}{(\sqrt b\mu)^{\frac{2}{p-1}+i}}\left(\partial_r^i\Lambda Q\right) \left( \frac{r}{\sqrt{b}\mu}  \right)=\frac{1}{(\sqrt b\mu)^{\frac{2}{p-1}+i}}O\left( \left(1+ \left( \frac{r}{\sqrt{b}\mu}  \right)\right)^{-\gamma-i}\right)\\
&=& O\left(\frac{(\sqrt b)^{\alpha}\mu^{\alpha}}{r^{\gamma+i}} \right),
\eee
\bee
\partial_r^i \la z \ra \pa_z \Lambda_r \Phi_{a,b}&=&-\frac{\la z \ra \mu_z}{\mu}\frac{1}{(\sqrt b\mu)^{\frac{2}{p-1}+i}}\left(\partial_r^i\Lambda^2 Q\right) \left( \frac{r}{\sqrt{b}\mu}  \right)\\
&=&\frac{1}{(\sqrt b\mu)^{\frac{2}{p-1}+i}}O\left( \left(1+ \left( \frac{r}{\sqrt{b}\mu}  \right)\right)^{-\gamma-i}\right)= O\left(\frac{(\sqrt b)^{\alpha}\mu^{\alpha}}{r^{\gamma+i}} \right),
\eee
which can easily be generalised to produce for $j\in \mathbb N$
$$
\left| \partial_r^i (\la z \ra \pa_z)^j \Lambda_r \Phi_{a,b} \right| \lesssim \frac{(\sqrt b)^{\alpha}\mu^{\alpha}}{r^{\gamma+i}}.
$$
Also, from \fref{estparameters}, \eqref{lemmaeqmodulation}, \eqref{controlponctuelenergy} and \fref{bd bjk bootstrap} one infers that for $j\in \mathbb N$
\be \lab{bd paz pataunu}
(\la z \ra \pa_z)^j \left(\frac{\pa_{\tau}\nu}{1+\nu}\right) = a_{\tau}(\la z \ra \pa_z)^j \left( \frac{P_{2\ell}(z)}{1+aP_{2\ell}(z)}\right)=O(|a_\tau|)=O((\sqrt b)^{\eta}).
\ee
and from \eqref{defgibbg}, \eqref{defmunu}:
\be
\label{cnbeneonoen}
\left|\frac12\left(\frac{-b_\tau}{b}+1\right)-\frac{\ell}{\alpha}\right|=|B|\lesssim|\pa_\tau\tilde{b}|+\frac{|\pa_{\tau}b_{\ell,0}+\l_{\ell,0}b_{\ell,0}|}{b_{\ell,0}}\lesssim   (\sqrt{b})^{\eta}.
\ee
We therefore infer from \fref{defpsitthree} that for $j\geq 0$:
\bee
&&\int \frac{1}{1+z^{4(\ell+\nu)+1}}\left|(\la z\ra \pa_z)^j\left[\left[\frac{1}{2}\left(-\frac{b_{\tau}}{b}+1\right)-\frac{\ell}{\alpha}+\frac{1}{2\alpha}\frac{\pa_\tau \nu}{1+\nu} \right]\Lambda_r \Phi_{a,b}\right]\right|^2\rho_rdY\\
&& \lesssim (\sqrt{b})^{2\alpha+2\eta} \int \frac{\mu^{2\alpha}}{1+z^{4(\ell+\nu)+1}}dz\int r^{-2\gamma}r^{d-1}\rho_r dr \lesssim (\sqrt{b})^{2\alpha+2\eta}
\eee
and for $i\geq 1$:
\bee
&&\int_{r\geq r_0} \frac{1}{1+z^{4(\ell+\nu)+1}}\left|\pa_r^i(\la z\ra \pa_z)^j\left[\left[\frac{1}{2}\left(-\frac{b_{\tau}}{b}+1\right)-\frac{\ell}{\alpha}+\frac{1}{2\alpha}\frac{\pa_\tau \nu}{1+\nu} \right]\Lambda_r \Phi_{a,b}\right]\right|^2\rho_rdY\\
&& \lesssim (\sqrt{b})^{2\alpha+2\eta} \int \frac{\mu^{2\alpha}}{1+z^{4(\ell+\nu)+1}}dz\int_{r\geq r_0} r^{-2\gamma-2i}r^{d-1}\rho_r dr \lesssim (\sqrt{b})^{2\alpha+2\eta}.
\eee
If $j\geq 1$ as this is the only term depending on $z$ in $\Psit_3$ from \fref{defpsithree} we conclude that:
\bea
\non && \int \frac{|(\la z\ra \pa_z)^j\Psit_3|^{2}}{1+z^{4(\ell+\nu)+1}}\rho_rdY\\
\lab{bd tildepsi3z 1}&=&\int \frac{\left|(\la z\ra \pa_z)^j\left[\left[\frac{1}{2}\left(-\frac{b_{\tau}}{b}+1\right)-\frac{\ell}{\alpha}+\frac{1}{2\alpha}\frac{\pa_\tau \nu}{1+\nu} \right]\Lambda_r \Phi_{a,b}\right]\right|^2}{1+z^{4(\ell+\nu)+1}}\rho_rdY \lesssim (\sqrt{b})^{2\alpha+2\eta}
\eea
and similarly for $i \geq 1$:
\be \lab{bd tildepsi3r 1} 
\int_{r\geq r_0} \frac{|\pa_r^i(\la z\ra \pa_z)^j\Psit_3|^{2}}{1+z^{4(\ell+\nu)+1}}\rho_rdY \lesssim (\sqrt{b})^{2\alpha+2\eta}.
\ee
We turn to the other terms in \fref{defpsithree}. For $r\geq r_0$ and $b$ small enough, $\partial_r^i\psi_{k,0}=O(r^{2k+4-\gamma})$ for $i=0,1,2$, and since $H_b\psi_{i,0}=\lambda_i \psi_{i,0}$ and $V_b=O(1)$ one deduces
\be \lab{bd rgeqr0 psii}
\partial_r^3 \psi_{i,0}=\partial_r\left[-\frac{d-1}{r}\partial_r+\frac{1}{p-1}+\frac 12 r\partial_r+V_b-\lambda_i\right]\psi_{i,0}=O(r^{2i+5-\gamma}), \ \ r\geq r_0.
\ee
Therefore, using \fref{poitwisepsijk} and \fref{bd pointwise tildephi} one obtains that for $r\in [0,+\infty)$:
$$
|\psi_{\ell,0}|+|\psi_{0,0}|\lesssim \frac{1+r^{2\ell+5}}{r^{\gamma}}, \ \ \left|\psi_{0,0}-\frac{1}{(\sqrt b)^{\gamma}}\Lambda Q \left(\frac{r}{\sqrt b}\right)\right| \lesssim \frac{(\sqrt b)^g}{r^{\gamma}}(1+r^5)
$$
and for $r\in [r_0,+\infty)$ and $i=1,2,3$:
$$
|\partial_r^i\psi_{\ell,0}|+|\partial_r^i\psi_{0,0}|\leq r^{2\ell+5-\gamma}, \ \ \left|\partial_r^i\left(\psi_{0,0}-\frac{1}{(\sqrt b)^{\gamma}}\Lambda Q \left(\frac{r}{\sqrt b}\right)\right)\right| \leq \sqrt{b}^gr^{5-\gamma}.
$$
From \fref{estparameters}, \eqref{lemmaeqmodulation}, \eqref{controlponctuelenergy}, \fref{id lambdaib} and \fref{bd bjk bootstrap} there holds
$$
|\pa_\tau b_{\ell,0}+\l_{\ell,0}b_{\ell,0}|\lesssim (\sqrt b)^{\alpha+\eta}, \ \ \ |\tilde b|+|\tilde \l_\ell|+|\tilde \l_0|\lesssim (\sqrt b)^{\eta}, \ \ \left|\frac{b_\tau}{b} \right|\lesssim 1
$$
since $\eta \ll g$. We then conclude from the three identities above, \fref{poitwisepsijk} and \fref{bound:partialb} that for
\bee
\bar{\Psi}_3&:=& (\pa_\tau b_{\ell,0}+\l_{\ell,0}b_{\ell,0})(\psi_{\ell,0}-\psi_{0,0})-\frac{(\sqrt b)^{\alpha}}{\alpha(1+\tilde b)}(-\ell \tilde b+\tilde{\l}_{\ell}-\tilde{\l}_0)\psi_{0,0}\\
&&-\frac{\ell}{\alpha}(\sqrt b)^{\alpha}\left[\psi_{0,0}-\frac{1}{(\sqrt b)^{\gamma}}\Lambda Q\left(\frac{r}{\sqrt b}\right) \right]+b_{\ell,0}\frac{b_{\tau}}{b}b\partial_b(\psi_{\ell,0}-\psi_{0,0})
\eee
there holds
\bee
&&\int \frac{|\bar \Psi_3|^{2}}{1+z^{4(\ell+\nu)+1}}\rho_rdY \\
& \lesssim & \sqrt{b}^{2\alpha} \int \frac{1}{1+z^{4(\ell+\nu)+1}}\left((\sqrt b)^{2\eta}\frac{1+r^{4\ell+10}}{r^{2\gamma}}+|b\pa_b(\psi_{\ell,0})|^2+|b\pa_b(\psi_{0,0})|^2\right)\rho_rdY\\
& \lesssim & \sqrt{b}^{2\alpha+2\eta}+ \sqrt{b}^{2\alpha+2g} \lesssim  \sqrt{b}^{2\alpha+2\eta}
\eee
and similarly for $i=1,2,3$:
\bee
\int_{r\geq r_0} \frac{|\partial_r^i\bar \Psi_3|^{2}}{1+z^{4(\ell+\nu)+1}}\rho_rdY\lesssim \sqrt{b}^{2\alpha} \int \frac{ (\sqrt{b}^{2\eta}r^{4\ell+10-2\gamma}+\sqrt{b}^{2g}r^{4\ell+10-2\gamma}) }{1+z^{4(\ell+\nu)+1}}\rho_rdY \lesssim  \sqrt{b}^{2\alpha+2\eta}.
\eee

\noindent{\bf step 3} Control of $\Psi_3$. From \fref{defpsithree}, the bounds \fref{bd tildepsi1z}, \fref{bd tildepsi1r}, \fref{bd tildepsi3z 1}, \fref{bd tildepsi3r 1} and the two bounds above imply the desired bounds \fref{boundspsioneone} and \fref{boundspsioneone par}.\\

\noindent{\bf step 4} Proof of \eqref{estlinearterm}. Recall that $\zeta=b_{\ell}(\psi_{\ell,0}-\psi_{0,0})$ and $L(\zeta)=p(\Phi_{a,b}^{p-1}-Q_b^{p-1})\zeta$. We thus infer from \fref{bd bjk bootstrap} and \fref{bd rgeqr0 psii}:
$$
\partial_z \zeta=0, \ \ \text{and} \ \ |\partial_r^i \zeta|\lesssim (\sqrt b)^{\alpha}r^{2\ell+5} \ \  \ \text{for} \ r\geq r_0 \ \ \text{and} \ i=0,...,3.
$$
Also, one computes for $i\in \mathbb N$
\bee
&&\partial_r^i(\Phi_{a,b}^{p-1}-Q_b^{p-1})=(p-1)\int_1^{\mu} \frac{d\tilde \mu}{\tilde \mu} \frac{1}{(\tilde \mu \sqrt b)^{2+i}} [\partial_r^i(Q^{p-2}\Lambda Q)]\left(\frac{r}{\tilde \mu \sqrt b}\right) \\
&=&\int_1^{\mu} \frac{d\tilde \mu}{\tilde \mu}O\left(\frac{\tilde \mu ^{\alpha}(\sqrt b)^{\alpha}}{(\mu\sqrt{b})^{2+\alpha+i}+r^{2+\alpha+i}} \right)=O\left(\frac{\mu^{\alpha}\sqrt{b}^{\alpha}}{(\sqrt{b})^{2+\alpha+i}+r^{2+\alpha+i}}\right)=O\left(\frac{\mu^{\alpha}\sqrt{b}^{\alpha}}{r^{2+\alpha+i}}\right).
\eee
and
$$
|\partial_r^i \la z \ra\partial_z\Phi_{a,b}| = \la z\ra\frac{|\mu_z|}{\mu}\frac{1}{(\sqrt{b}\mu)^{\frac{2}{p-1}+i}}(\partial_r^i\Lambda Q)\left(\frac{r}{\sqrt b \mu}\right) \lesssim \frac{\mu^\alpha(\sqrt{b})^\alpha}{(\sqrt b \mu)^{\gamma+i}+r^{\gamma+i}}
$$
which can easily be generalized to prove that for $j \geq 1 $:
$$
|\partial_r^i (\la z \ra \partial_z)^j \Phi_{a,b}| \lesssim  \frac{\mu^\alpha(\sqrt{b})^\alpha}{(\sqrt b \mu)^{\gamma+i}+r^{\gamma+i}}\lesssim \text{min}\left(\frac{\sqrt{b}^{\alpha}\mu^{\alpha}}{r^{\gamma+i}},\frac{1}{(\sqrt b\mu)^{\frac{2}{p-1}+i}+r^{\frac{2}{p-1}+i}}\right)
$$
implying that
\be \lab{bd paz Phiap-1-Qbp-1}
|\partial_r^i (\la z \ra \partial_z)^j (\Phi_{a,b}^{p-1}-Q_b^{p-1})|=|\partial_r^i (\la z \ra \partial_z)^j (\Phi_{a,b}^{p-1})| \lesssim \frac{\mu^{\alpha}\sqrt{b}^{\alpha}}{(\sqrt{b})^{2+\alpha+i}+r^{2+\alpha+i}}\lesssim \frac{\sqrt{b}^{\alpha}\mu^{\alpha}}{r^{2+\alpha+i}}.
\ee
From the above estimates we infer that for $i\geq 1$ and $j\in \mathbb N$:
\bee
\int_{r\geq r_0} \frac{|\partial_r^i(\la z \ra \pa_z)^jL(\zeta)|^{2}}{1+z^{4(\ell+\nu)+1}}\rho_rdY\lesssim \int_{r\geq r_0} (\sqrt b)^{\alpha}\frac{\mu^{\alpha}}{1+z^{4(\ell+\nu)+1}}r^{2\ell+5}\rho_rdY\lesssim (\sqrt b)^{4\alpha}
\eee
which proves \fref{estlinearterm2}. We also infer that for $j\in \mathbb N$:
\be
\label{cnkecneoneo}
|(\la z\ra \pa_z)^j L(\zeta)|\lesssim \frac{(\mu\sqrt{b})^\alpha}{r^{\alpha+2}}|\zeta|.
\ee 
Hence from \eqref{estzeta}, \eqref{estfumdamental}, \fref{confitiondiemsnion} as $g\leq 2$:
\bee
&&\int \frac{|(\la z\ra \pa_z)^j L(\zeta)|^2}{(1+z^{4\ell(1+\nu)+1})}dY\lesssim \int \frac{dz}{\la z\ra^{1+4\nu}}\int \frac{(\sqrt b)^{2\alpha}\zeta^2}{r^{2(\alpha+2)}}r^{d-1}\rho_rdr \\
&\lesssim& (\sqrt{b})^{2\alpha} \int_{r\leq \sqrt{b}} \frac{(\sqrt{b})^{2g}}{r^{\frac{4}{p-1}}}\frac{1}{r^{(2\alpha+4)}}r^{d-1}dr\\
&+&(\sqrt b)^{2\alpha} \int_{\sqrt{b}\leq r}(\sqrt{b})^{2\alpha}\frac{r^4+(\sqrt b)^{2g}}{r^{2\gamma+2\alpha+4}}(1+r)^{2\gamma+4\ell+4}\rho_rr^{d-1}dr \lesssim  (\sqrt b)^{2\alpha+2g}
\eee
and \eqref{estlinearterm} is proved. 
\end{proof}

We now turn to $W^{1,2q+2}$ near the origin.

\begin{lemma}[$W^{1,2q+2}$ bounds on $\Psi_3$] There hold the following estimates for some universal $c,C>0$:
\bea
\label{ceibbeokboeoebv}
\int_{r\leq 2}\left(r^{2\frac{2q+1}{2q+2}}\frac{\Psi_3}{T}\right)^{2q+2}\frac{dY}{\la z\ra(1+D^{2Kq})}\lesssim (\sqrt{b})^{c\eta q}\\
\label{ceibbeokboeoebvbis}
\int_{r\leq 2} \left[r^{2\frac{2q+1}{2q+2}}\pa_r\left(\frac{\Psi_3}{T}\right)\right]^{2q+2}\frac{dY}{\la z\ra(1+D^{2Kq})} \lesssim \frac{1}{(\sqrt{b})^{C q}}\\
\label{ceibbeokboeoebvbisbis}
\int_{r\leq 2} \left[r^{2\frac{2q+1}{2q+2}}\la z\ra \pa_z\left(\frac{\Psi_3}{T}\right)\right]^{2q+2}\frac{dY}{\la z\ra(1+D^{2Kq})} \lesssim (\sqrt{b})^{c\eta q}
\eea
and similarly:
\bea
\label{ceibbeokboeoebvL}
\int_{r\leq 2}\left(r^{2\frac{2q+1}{2q+2}}\frac{L(\zeta)}{T}\right)^{2q+2}\frac{dY}{\la z\ra(1+D^{2Kq})}\lesssim (\sqrt{b})^{c q}\\
\label{ceibbeokboeoebvbisL}
\int_{r\leq 2} \left[r^{2\frac{2q+1}{2q+2}}\pa_r\left(\frac{L(\zeta)}{T}\right)\right]^{2q+2}\frac{dY}{\la z\ra(1+D^{2Kq})} \lesssim \frac{1}{(\sqrt{b})^{Cq}}\\
\label{ceibbeokboeoebvbisbisL}
\int_{r\leq 2} \left[r^{2\frac{2q+1}{2q+2}}\la z\ra \pa_z\left(\frac{L(\zeta)}{T}\right)\right]^{2q+2}\frac{dY}{\la z\ra(1+D^{2Kq})} \lesssim (\sqrt{b})^{cq}
\eea

\end{lemma}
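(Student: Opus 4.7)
The plan is to deduce each estimate from a pointwise control of the relevant ratio, followed by brute-force integration against the weight $r^{4q+2}\rho_r/[\la z\ra(1+D^{2Kq})]$ on $\{r\leq 2\}$. The key ingredients I would assemble are: the pointwise bounds on $\widetilde\Psi_1$ derived as in \eqref{bd tildepsi11 paz}--\eqref{bd tildepsi12 paz}; the eigenfunction estimates \eqref{poitwisepsijk}, \eqref{bd pointwise tildephi}, \eqref{bound:partialb} for the $\psi_{\ell,0},\psi_{0,0}$ terms entering $\widetilde\Psi_3$, combined with the modulation-equation smallness $|B|+|a_\tau|+|\tilde b|+|\pa_\tau b_{\ell,0}+\l_{\ell,0}b_{\ell,0}|/(\sqrt b)^\alpha\lesssim (\sqrt b)^\eta$ from Lemma \ref{lemmaeqmodulation}; the sharp lower bound on $T=\Lambda\Phi_{a,b}$, namely $T\gtrsim \mu^\alpha(\sqrt b)^\alpha/r^\gamma$ for $r\geq D$ and $T\gtrsim 1/D^{\frac 2{p-1}}$ for $r\leq D$, which follows from \eqref{asyptotq} and \eqref{positivitylamdbaq}; and the pointwise control \eqref{bd paz Phiap-1-Qbp-1} of $\Phi_{a,b}^{p-1}-Q_b^{p-1}$ together with \eqref{estzeta} on $\zeta$ for the $L(\zeta)$ estimates.

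Granting these, the bounds \eqref{ceibbeokboeoebv} and \eqref{ceibbeokboeoebvbisbis} would follow by splitting the integral into the two regions $\{r\leq D\}$ and $\{r\geq D\}$. In the outer region, the pointwise ratio satisfies $|\Psi_3/T|\lesssim (\sqrt b)^{c\eta}/r^\delta$ for some small universal $\delta$, the singularity at the origin being absorbed by the weight $r^{4q+2}$. In the inner region, the ratio is merely bounded by $\mu^C$ with $C=C(d,p)$ independent of $q$; the $r$-integration then produces the prefactor $D^{4q+2+d}=(\sqrt b)^{4q+2+d}\mu^{4q+2+d}$, and the resulting $z$-integral $\int \mu^{O(q)}dz/[\la z\ra(1+b^{Kq}\mu^{2Kq})]$ is controlled by cutting $z$ at the transition point $z^*\sim b^{-\alpha/(4\ell)}$ and choosing $K$ large compared to $C$, the combined $(\sqrt b)^{4q+O(1)}$ gain from the $r$-integration comfortably overcoming the $b^{-O(q)}$ contribution from large $z$. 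The $L(\zeta)$ bounds \eqref{ceibbeokboeoebvL}, \eqref{ceibbeokboeoebvbisbisL} proceed on the same template, with a further splitting into $\{r\leq \sqrt b\}$, $\{\sqrt b\leq r\leq D\}$, $\{r\geq D\}$ dictated by the three branches of \eqref{estzeta} and \eqref{esterrorpotential}; here the extra $a^{2q+2}$ factor coming from $\Phi_{a,b}^{p-1}-Q_b^{p-1}=O(a)$ and the $(\sqrt b)^{g(2q+2)}$ smallness of $\zeta$ combine to yield the clean gain $(\sqrt b)^{cq}$.

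The hard part is the $\partial_r$ bounds \eqref{ceibbeokboeoebvbis} and \eqref{ceibbeokboeoebvbisL}, where a genuine loss is unavoidable. Differentiating the self-similar component $(\sqrt b)^{-\gamma}\Lambda Q(r/\sqrt b)$ inside $\widetilde\Psi_3$, together with the eigenfunctions $\psi_{j,0}$, introduces a factor $(\sqrt b)^{-1}$ on the self-similar scale $r\sim\sqrt b$, while the $1/T$ factor in the inner region costs an additional $D^{\frac 2{p-1}}$. Raised to the $(2q+2)$-th power, these losses yield the claimed $b^{-Cq}$ bound with $C=C(d,p)$ independent of $q$. This lossiness is harmless in the bootstrap because the $W^{1,2q+2}$ norm \eqref{cneineoneonevniobis} is lossy in exactly the same way, and the $\partial_r$ component contributes to $\mathcal N$ in \eqref{defnorme} only through the small exponent $d/(2q+2)$, so the overall contribution remains controlled for $q\gg d$.
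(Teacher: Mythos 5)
Your outline follows essentially the same approach as the paper's proof: establish pointwise bounds on $\Psi_3/(Tr)$, $\partial_r(\Psi_3/T)$ and $\la z\ra\partial_z(\Psi_3/T)$ from the modulation smallness of Lemma \ref{lemmaeqmodulation}, the eigenfunction estimates, and the lower bound on $T=\Lambda\Phi_{a,b}$, then integrate by splitting at $r\sim\sqrt b$ and $r\sim D$ and letting the weight $(1+D^{2Kq})^{-1}$ absorb the large-$z$ region, with the $\partial_r$ bound suffering the expected $(\sqrt b)^{-Cq}$ loss from differentiating on the self-similar scale. One misattribution worth flagging: for the $L(\zeta)$ estimates, $\Phi_{a,b}^{p-1}-Q_b^{p-1}$ is not uniformly $O(a)$ (it is $\lesssim \mu^\alpha(\sqrt b)^\alpha/r^{\alpha+2}$, and the $\mu^\alpha=1+aP_{2\ell}(z)$ factor is large for large $z$); the $(\sqrt b)^{cq}$ gain in the paper comes instead from the extra smallness $r^2+(\sqrt b)^g$ of $\zeta$ near the origin recorded in \eqref{estzeta} (which hinges on the cancellation $c_{\ell,0}=c_{0,0}=1$ in $\psi_{\ell,0}-\psi_{0,0}$) combined with the $r$-decay of the potential difference and the constraints $p\geq 3$, $g>3/2$ — a correction to your accounting rather than a structural flaw.
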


\begin{proof} 

{\bf step 1} Pointwise bound. We claim the pointwise bound for any $0\leq \delta \leq \eta$ and $r\leq 2$:
\be
\label{tobeprovedepsithree}
\left|\frac{\Psi_3}{Tr}\right|+\left|\pa_r\left(\frac{\Psi_3}{T}\right)\right|\lesssim  \frac{(\sqrt{b})^{\delta}}{r^{1+\delta}}+\frac 1r\left|\begin{array}{ll}\frac{(\sqrt{b})^\delta}{r^\delta\mu^{\alpha-\delta}}\ \ \mbox{for}\ \ r\geq D\\ D^{\frac 2{p-1}}(1+D^{\delta})\left[\frac{(\sqrt{b})^{\alpha+\delta}}{r^{\gamma}+(\sqrt{b})^\gamma}+ \frac{(\sqrt{b})^{\alpha}}{\sqrt{b}^{\gamma+\delta}+r^{\gamma+\delta}}\right]\ \ \mbox{for}\ \ r\leq D
\end{array}\right. 
\ee
\noindent{\it $\Psit_1$ term}. Recall \fref{defpsithree}. We first prove the above bound \fref{tobeprovedepsithree} for $\Psit_1$. We decompose from \eqref{psitildeone}:
$$\frac{\Psit_1}T=G_1+G_2$$ with 
\bea
\label{defgone}
&&G_1=\frac{\ell}{\alpha T} \left[\frac{1}{\mu^{\gamma}}\Lambda Q_b\left(\frac{r}{\mu}\right)-\Lambda Q_b(r)\right],\\
&&\label{defgtwo}  G_2=-\frac{1}{\alpha^2}\left(\frac{\pa_z\nu}{1+\nu}\right)^2\left(\alpha+\frac{\Lambda^2Q_b}{\Lambda Q_b}\right)\left(\frac{r}{D}\right).
\eea

\noindent{\it $G_2$ term}. We estimate in brute force using the asymptotics of $Q$:
$$\left|\alpha+\frac{\Lambda^2Q}{\Lambda Q}(y)\right|\lesssim \frac{1}{1+|y|^g}, \ \ \left|\pa_y\left(\frac{\Lambda^2Q}{\Lambda Q}(y)\right)\right|\lesssim \frac{1}{1+|y|^{1+g}}$$ from which:
\bee
&&|G_2|\lesssim \frac{1}{\la z\ra^2}\frac{D^\delta}{D^\delta+r^\delta}\lesssim (\sqrt{b})^{\delta}\frac{1}{r^\delta \la z\ra}\\
&& |\pa_rG_2|\lesssim  \frac{1}{\la z\ra^2}\frac{1}{D}\frac{1}{1+\left(\frac{r}{D}\right)^{\delta+1}}\lesssim  \frac{1}{a^c\la z\ra^2}\frac{D^\delta}{r^{\delta+1}}\lesssim \frac{1}r(\sqrt{b})^{\delta}\frac{1}{r^\delta \la z\ra}.
\eee
\noindent{\it $G_1$ term}. 
Next we estimate using $\mu\geq \frac 12$:
\bea
\label{cneneneovnoe}
\nonumber &&\left|\frac{1}{\mu^\gamma}\Lambda Q_b\left(\frac{r}{\mu}\right)-\Lambda Q_b(r)\right|\lesssim\int_1^\mu\frac{d\sigma}{\sigma^{\gamma+1}}\left|(\gamma+r\pa_r)\Lambda Q_b\right|\left(\frac{r}{\sigma}\right)\\
\nonumber & \lesssim & \int_1^\mu\frac{d\sigma}{\sigma^{\gamma+1}}\frac{1}{(\sqrt{b})^{\frac 2{p-1}}}\frac{1}{1+\left(\frac{r}{\sigma\sqrt{b}}\right)^{\gamma+\delta}}\lesssim (\sqrt{b})^{\alpha+\delta}\int_1^\mu\frac{d\sigma}{\sigma^{1-\delta}}\frac{1}{(\sigma\sqrt{b})^{\gamma+\delta}+r^{\gamma+\delta}}\\
& \lesssim &  \frac{(\sqrt{b})^{\alpha+\delta}}{\sqrt{b}^{\gamma+\delta}+r^{\gamma+\delta}}\int_1^\mu\frac{d\sigma}{\sigma^{1-\delta}}\lesssim \frac{(\sqrt{b})^{\alpha+\delta}}{\sqrt{b}^{\gamma+\delta}+r^{\gamma+\delta}}\mu^\delta.
\eea
Moreoever, 
\be
\label{cneoneonve}
T=\Lambda \Phi_{a,b}\gtrsim \frac{D^\alpha}{D^\gamma+r^\gamma}.
\ee and hence the pointwise bound for $r\geq D$:
$$\frac{1}{T}\left|\frac{1}{\mu^\gamma}\Lambda Q_b\left(\frac{r}{\mu}\right)-\Lambda Q_b(r)\right|\lesssim \frac{(\sqrt{b})^{\alpha+\delta}}{r^{\gamma+\delta}}\mu^\delta\frac{D^\gamma+r^\gamma}{D^\alpha}\lesssim \frac{(\sqrt{b})^{\alpha+\delta}}{r^{\gamma+\delta}}\mu^\delta\frac{r^\gamma}{D^\alpha} \lesssim  \frac{(\sqrt{b})^\delta}{r^\delta\mu^{\alpha-\delta}}
$$
and for $r\leq D$:
$$\frac{1}{T}\left|\frac{1}{\mu^\gamma}\Lambda Q_b\left(\frac{r}{\mu}\right)-\Lambda Q_b(r)\right|\lesssim \frac{(\sqrt{b})^{\alpha+\delta}}{\sqrt{b}^{\gamma+\delta}+r^{\gamma+\delta}}\mu^\delta D^{\frac{2}{p-1}}\lesssim \frac{(\sqrt{b})^{\alpha}}{\sqrt{b}^{\gamma+\delta}+r^{\gamma+\delta}}D^{\frac{2}{p-1}+\delta}.$$
We now estimate the $\pa_r$ derivative. First:
\bee
&&\left|\pa_r\left[\frac{1}{\mu^\gamma}\Lambda Q_b\left(\frac{r}{\mu}\right)-\Lambda Q_b(r)\right]\right|=\left|\frac{1}{\mu^{\gamma+1}}(\pa_r\Lambda Q_b)\left(\frac{r}{\mu}\right)-\pa_r\Lambda Q_b(r)\right|\\
&\lesssim& \int_1^\mu\frac{d\sigma}{\sigma^{\gamma+2}}\left|(\gamma+1+r\pa_r)(\pa_r\Lambda Q_b)\right|\left(\frac{r}{\sigma}\right)\\
& \lesssim & \int_1^\mu\frac{d\sigma}{\sigma^{\gamma+2}}\frac{1}{(\sqrt{b})^{\frac 2{p-1}+1}}\frac{1}{1+\left(\frac{r}{\sigma\sqrt{b}}\right)^{\gamma+1+\delta}}\lesssim (\sqrt{b})^{\alpha+\delta}\int_1^\mu\frac{d\sigma}{\sigma^{1-\delta}}\frac{1}{(\sigma\sqrt{b})^{\gamma+1+\delta}+r^{\gamma+1+\delta}}\\
& \lesssim &  \frac{(\sqrt{b})^{\alpha+\delta}}{\sqrt{b}^{\gamma+1+\delta}+r^{\gamma+1+\delta}}\int_1^\mu\frac{d\sigma}{\sigma^{1-\delta}}\lesssim \frac{(\sqrt{b})^{\alpha+\delta}}{\sqrt{b}^{\gamma+1+\delta}+r^{\gamma+1+\delta}}\mu^\delta\lesssim \frac 1r \frac{(\sqrt{b})^{\alpha+\delta}}{\sqrt{b}^{\gamma+\delta}+r^{\gamma+\delta}}\mu^\delta
\eee
Moreover, 
\bea
\non &&\frac{\pa_r\Lambda \Phi_{a,b}}{(\Lambda \Phi_{a,b})^2}\lesssim \left(\frac{D^\gamma+r^\gamma}{D^\alpha}\right)^2\frac{1}{DD^{\frac{2}{p-1}}}\pa_r(\Lambda Q_b)\left(\frac{r}{D}\right)\lesssim \left(\frac{D^\gamma+r^\gamma}{D^\alpha}\right)^2\frac{1}{DD^{\frac{2}{p-1}}}\frac{1}{1+\left(\frac{r}{D}\right)^{\gamma+1}}\\
\lab{bd partialrT-1}& \lesssim &  \left(\frac{D^\gamma+r^\gamma}{D^\alpha}\right)^2\frac{D^\alpha}{D^{\gamma+1}+r^{\gamma+1}}\lesssim \frac 1r\frac{D^\gamma+r^{\gamma}}{D^\alpha}
\eea
and hence:
\bee
&&\left|\pa_r\left(\frac{1}{T}\left[\frac{1}{\mu^\gamma}\Lambda Q_b\left(\frac{r}{\mu}\right)-\Lambda Q_b(r)\right]\right)\right|\lesssim\frac 1r \frac{(\sqrt{b})^{\alpha+\delta}}{\sqrt{b}^{\gamma+\delta}+r^{\gamma+\delta}}\mu^\delta\frac{D^\gamma+r^{\gamma}}{D^\alpha}\\
& + & \frac{(\sqrt{b})^{\alpha+\delta}}{\sqrt{b}^{\gamma+\delta}+r^{\gamma+\delta}}\mu^\delta\frac{D^{\gamma}+r^{\gamma}}{rD^\alpha}.
\eee
For $r\ge D$, this yields:
\bee
&&\left|\pa_r\left(\frac{1}{T}\left[\frac{1}{\mu^\gamma}\Lambda Q_b\left(\frac{r}{\mu}\right)-\Lambda Q_b(r)\right]\right)\right|\lesssim\frac 1r \frac{(\sqrt{b})^\delta}{r^\delta\mu^{\alpha-\delta}}
\eee
and for $r\le D$:
\bee
&&\left|\pa_r\left(\frac{1}{T}\left[\frac{1}{\mu^\gamma}\Lambda Q_b\left(\frac{r}{\mu}\right)-\Lambda Q_b(r)\right]\right)\right|\lesssim \frac{1}{r}\frac{D^{\frac{2}{p-1}+\delta}(\sqrt{b})^\alpha}{\sqrt{b}^{\gamma+\delta}+r^{\gamma+\delta}}
\eee

\noindent{\it Eigenvectors terms}. Recall now \fref{defpsithree}, \eqref{defpsitthree}. We now prove the bound \fref{tobeprovedepsithree} for $\Psit_3$. We have from \fref{estparameters}, \eqref{lemmaeqmodulation}, \eqref{controlponctuelenergy} and \fref{bd bjk bootstrap}:
$$
\left|\frac12 \left(-\frac{b_\tau}{b}+1\right)-\frac{\ell}{\alpha}-\frac1{2\alpha}\frac{\pa_\tau\nu}{1+\nu}\right|\lesssim (\sqrt{b})^{\eta}, \ \ |\pa_\tau b_{\ell,0}+\l_{\ell,0}b_{\ell,0}|\lesssim (\sqrt{b})^{\alpha+\eta}, \ \ |\tilde b|+|\tilde \l_{\ell}|+|\tilde \l_0|\lesssim (\sqrt b)^{\eta}
$$
from what we infer using \fref{poitwisepsijk}, \eqref{cneoneonve} and \fref{bd partialrT-1}:
\bee
&&\left| \frac{1}{T}(\pa_\tau b_{\ell,0}+\l_{\ell,0}b_{\ell,0})(\psi_{\ell,0}-\psi_{0,0})-\frac{(\sqrt{b})^\alpha [-\ell \tilde b+\tilde \l_\ell-\tilde \l _0] }{\alpha (1+\tilde b)}\psi_{0,0}\right| \\
&&+r\left| \partial_r\left[\frac{1}{T}\left((\pa_\tau b_{\ell,0}+\l_{\ell,0}b_{\ell,0})(\psi_{\ell,0}-\psi_{0,0})-\frac{(\sqrt{b})^\alpha [-\ell \tilde b+\tilde \l_\ell-\tilde \l _0]}{\alpha (1+\tilde b)} \psi_{0,0}\right)\right]\right| \\
&\lesssim &  \frac{D^\gamma+r^\gamma}{D^\alpha}\frac{\sqrt{b}^{\alpha+\eta}}{(\sqrt{b})^\gamma+r^\gamma} \lesssim \left|\begin{array}{ll}\frac{(\sqrt{b})^\eta}{\mu^\alpha}\ \ \mbox{for}\ \ r\geq D\\ \frac{D^{\frac 2{p-1}}(\sqrt{b})^{\alpha+\eta}}{r^{\gamma}+(\sqrt{b})^\gamma}\ \ \mbox{for}\ \ r\leq D.\end{array}\right.
\eee
Since Using \fref{bd pointwise tildephi}, \eqref{cneoneonve} and \fref{bd partialrT-1} one has:
\bee
&& \frac{(\sqrt{b})^{\alpha}}T\left| \psi_{0,0}-\frac{1}{\sqrt{b}^\gamma}\Lambda Q\left(\frac{r}{\sqrt{b}}\right)\right|+r\left|\pa_r\left(\frac{(\sqrt b)^{\alpha}}{T}\left( \psi_{0,0}-\frac{1}{\sqrt{b}^\gamma}\Lambda Q\left(\frac{r}{\sqrt{b}}\right)\right)\right)\right|  \\
&=& \frac{(\sqrt{b})^{\alpha}}T\left| \tilde \phi_0\right|+r\left|\pa_r\left(\frac{(\sqrt b)^{\alpha}}{T}\left( \tilde \phi_0\right)\right)\right|  \\
&\lesssim &\frac{D^\gamma+r^\gamma}{D^\alpha}\frac{\sqrt{b}^{\alpha+g}}{r^\gamma+(\sqrt{b})^\gamma} \lesssim \left|\begin{array}{ll}\frac{(\sqrt{b})^g}{\mu^\alpha}\ \ \mbox{for}\ \ r\geq D,\\ \frac{D^{\frac 2{p-1}}(\sqrt{b})^{\alpha+g}}{r^{\gamma}+(\sqrt{b})^\gamma}\ \ \mbox{for}\ \ r\leq D.\end{array}\right.
\eee
Finally, from \fref{poitwisepsijk}, \eqref{cneoneonve} and \fref{bd partialrT-1}, since $(\sqrt b)^{\delta}\lesssim D^{\delta}$:
\bee
&& \frac{(\sqrt{b})^{\alpha}}T\left| b_{\ell,0}\frac{b_\tau}{b}b\pa_b(\psi_{\ell,0}-\psi_{0,0})\right|+r\left|\pa_r\left(\frac{1}{T}\left(  b_{\ell,0}\frac{b_\tau}{b}b\pa_b(\psi_{\ell,0}-\psi_{0,0})\right)\right)\right|  \\
&\lesssim &\frac{D^\gamma+r^\gamma}{D^\alpha}\frac{\sqrt{b}^{\alpha+g}}{r^{\gamma+g}+(\sqrt{b})^{\gamma+g}} \lesssim \left|\begin{array}{ll}\frac{(\sqrt{b})^{\delta}}{r^{\delta}\mu^\alpha}\ \ \mbox{for}\ \ r\geq D,\\ \frac{D^{\frac 2{p-1}+\delta}(\sqrt{b})^{\alpha}}{r^{\gamma+\delta}+(\sqrt{b})^{\gamma+\delta}}\ \ \mbox{for}\ \ r\leq D.\end{array}\right.
\eee
\noindent{\bf step 2} Proof of \eqref{ceibbeokboeoebv}, \eqref{ceibbeokboeoebvbis}. We use the bound \eqref{tobeprovedepsithree} to prove \eqref{ceibbeokboeoebv}, noticing that $|\psi_3|/T\lesssim |\psi_3|/(Tr)$ as $r\leq 2$. Indeed, for any $0< \delta\leq \eta$ for $q$ large enough:
\bee
\int_{r\le2}\left(r^{2\frac{2q+1}{2q+2}}\frac{(\sqrt{b})^{\delta}}{r^{1+\delta}}\right)^{2q+2}\frac{dY}{\la z\ra(1+D^{2Kq})}\lesssim b^{cq\delta}.
\eee
For the second term, we split the integral. First:
\bee
\int_{D\leq r\leq 2}\left(r^{2\frac{2q+1}{2q+2}}\frac{(\sqrt{b})^\delta}{\mu^{\alpha-\delta}r^{1+\delta}}\right)^{2q+2}\frac{dY}{\la z\ra(1+D^{2Kq})}\lesssim  b^{cq\delta}
\eee
and then for the second term, we estimate for $K$ universal large enough, since $r\leq 2$:
\bee
&&\int_{r\leq \min\{2,D\}}\left(r^{2\frac{2q+1}{2q+2}}\frac{D^{\frac 2{p-1}}(1+D^{\delta})(\sqrt{b})^{\alpha+\delta}}{r(r^{\gamma}+(\sqrt{b})^\gamma)}\right)^{2q+2}\frac{dY}{\la z\ra(1+D^{2Kq})}\\
& \lesssim & \int_{r\leq \sqrt{b}}\left((\sqrt{b})^{2\frac{2q+1}{2q+2}-1-\frac{2}{p-1}+\delta}\right)^{2q+2}\frac{dY}{\la z\ra(1+D^{Kq})}\\
&+& \int_{\sqrt{b}\leq r\leq 2}\left(r^{2\frac{2q+1}{2q+2}-1-\frac{2}{p-1}}(\sqrt{b})^\delta\right)^{2q+2}\frac{dY}{\la z\ra(1+D^{Kq})}\\
& \lesssim & \int_{r\leq \sqrt{b}}(\sqrt{b})^{\delta(2q+2)-2}r^{d-1}\frac{dzdr}{\la z\ra(1+D^{Kq})}\\
&+& \int_{\sqrt{b}\leq r\leq 2} r^{-2}(\sqrt{b})^{\delta(2q+2)}r^{d-1}\frac{dzdr}{\la z\ra(1+D^{Kq})}\lesssim b^{c\delta q},
\eee
where we used $p\geq 3$ and $q\gg1$.
Similarly for the last term in \eqref{tobeprovedepsithree}, for $\psi_3/T$:
\bee
&&\int_{r\leq \min\{2,D\}}\left(r^{2\frac{2q+1}{2q+2}}\frac{(\sqrt{b})^{\alpha}}{[\sqrt{b}^{\gamma+\delta}+r^{\gamma+\delta}]}D^{\frac{2}{p-1}}(1+D^\delta)\right)^{2q+2}\frac{dY}{\la z\ra(1+D^{2Kq})}\\
& \lesssim &  \int_{r\leq \sqrt{b}}\left((\sqrt{b})^{2\frac{2q+1}{2q+2}-\frac{2}{p-1}-\delta}\right)^{2q+2}\frac{dY}{\la z\ra(1+D^{qK})}\\
&+ & \int_{\sqrt{b}\leq r\leq 2}\left(r^{2\frac{2q+1}{2q+2}-\frac{2}{p-1}-2\delta}(\sqrt{b})^\delta\right)^{2q+2}\frac{dY}{\la z\ra(1+D^{qK})}\ \lesssim b^{c\delta q}, 
\eee
and for $\pa_r(\psi_3/T)$:
\bee
&&\int_{r\leq \min\{2,D\}}\left(r^{2\frac{2q+1}{2q+2}}\frac{(\sqrt{b})^{\alpha}}{r[\sqrt{b}^{\gamma+\delta}+r^{\gamma+\delta}]}D^{\frac{2}{p-1}}(1+D^\delta)\right)^{2q+2}\frac{dY}{\la z\ra(1+D^{2Kq})}\\
& \lesssim &  \int_{r\leq \sqrt{b}}\left((\sqrt{b})^{2\frac{2q+1}{2q+2}-1-\frac{2}{p-1}-\delta}\right)^{2q+2}\frac{dY}{\la z\ra(1+D^{qK})}\\
&+ & \int_{\sqrt{b}\leq r\leq 2}\left(r^{2\frac{2q+1}{2q+2}-\frac{2}{p-1}-1-2\delta}(\sqrt{b})^\delta\right)^{2q+2}\frac{dY}{\la z\ra(1+D^{qK})}\ \lesssim \frac{1}{b^{C q}}, 
\eee
This concludes the proof of \eqref{ceibbeokboeoebv}, \eqref{ceibbeokboeoebvbis}.\\

\noindent{\bf step 3} $\pa_z$ derivative. We turn to the proof of \eqref{ceibbeokboeoebvbisbis}. 
We first claim the pointwise bound for any $0\leq \delta \leq \eta$:
\be
\label{pointwisepaz}
|\la z\ra\pa_z\Psi_3|\lesssim  \frac{(\sqrt{b})^{\delta}}{r^{\delta}}+\left|\begin{array}{ll}\frac{(\sqrt{b})^\delta}{r^\delta\mu^{\alpha-\delta}}\ \ \mbox{for}\ \ r\geq D\\ D^{\frac 2{p-1}}(1+D^{\delta})\left[\frac{(\sqrt{b})^{\alpha+\delta}}{r^{\gamma}+(\sqrt{b})^\gamma}+ \frac{(\sqrt{b})^{\alpha}}{\sqrt{b}^{\gamma+\delta}+r^{\gamma+\delta}}\right]\ \ \mbox{for}\ \ r\leq D
\end{array}\right. 
\ee
which implies \eqref{ceibbeokboeoebvbisbis} as above.

\noindent{$G_1$ term}. We first estimate:
\bee
|\la z \ra \pa_zT|= \left|\frac{\la z \ra \pa_z\mu}{\mu}\frac{1}{D^{\frac2{p-1}}}\Lambda^2Q\left(\frac{r}{D}\right)\right|\lesssim \frac{1}{ D^{\frac 2{p-1}}}\frac{1}{1+\left(\frac{r}{D}\right)^\gamma}\lesssim \frac{1}{ D^{\frac 2{p-1}}} \frac{D^\gamma}{D^\gamma+r^\gamma}
\eee
and hence using \eqref{cneoneonve}:
\be
\label{estdzderivaive}
\left|\la z\ra \frac{\pa_zT}{T^2}\right|\lesssim \frac{1}{D^{\frac 2{p-1}}}\frac{D^\gamma}{D^\gamma+r^\gamma}\left(\frac{D^\gamma+r^\gamma}{D^\alpha}\right)^2\lesssim \frac{D^\gamma+r^\gamma}{D^\alpha}
\ee
from what we infer using \fref{bd tildepsi11 paz} and $\mu \geq 1/2$
\bee
&&\left|\la z\ra G_1\right|\lesssim \frac{D^\gamma+r^\gamma}{D^\alpha}\frac{(\sqrt{b})^{\alpha+\delta}}{(\mu \sqrt{b})^{\gamma+\delta}+r^{\gamma+\delta}}\mu^\delta\\
& \lesssim & \left|\begin{array}{ll} \frac{(\sqrt{b})^\delta}{r^\delta\mu^{\alpha-\delta}}\ \ \mbox{for}\ \ r\geq D\\\ \ D^{\frac{2}{p-1}}\frac{(\sqrt{b})^{\alpha+\delta}}{\sqrt{b}^{\gamma+\delta}+r^{\gamma+\delta}}\mu^\delta\lesssim \frac{(\sqrt{b})^\alpha}{(\sqrt{b})^{\gamma+\delta}+r^{\gamma+\delta}}D^{\frac2{p-1}+\delta} \ \mbox{for}\ \ r\leq D.\end{array}\right. 
\eee

\noindent{\it $G_2$ term}. We estimate from \fref{bd tildepsi12 paz}, \eqref{cneoneonve} and \fref{estdzderivaive}
\bee
|\la z \ra \pa_z G_2|\lesssim\frac{D^\alpha+r^\gamma}{D^\alpha}\frac{(\sqrt b)^{\alpha+\delta}\mu^{\alpha+\delta}}{\la z \ra^2}\frac{1}{(\sqrt b \mu)^{\gamma+\delta}+r^{\gamma+\delta}}\lesssim \frac{(\sqrt b)^{\delta}}{r^{\delta}}
\eee

\noindent{\it Eigenvectors term}. Finally from \fref{bd paz pataunu}, \fref{cnbeneonoen}:
$$
\left|\la z\ra \pa_z\left(\frac{\Psit_3}{T}\right)\right|\lesssim (\sqrt b)^{\eta}$$ as the other terms do not depend on $z$. This concludes the proof of \eqref{pointwisepaz}.\\

\noindent{\bf step 4} Control of $L(\zeta)$ terms. From the rough bound
$$
|Q_b^{p-1}|+|\Phi_{a,b}|^{p-1}\lesssim \text{min}\left(\frac{1}{r^2},\frac{1}{b}\right),
$$
the bounds \fref{bd paz Phiap-1-Qbp-1}, \eqref{estzeta}, \fref{cneoneonve}, \fref{estdzderivaive} we infer for $j=0,1$ and $r\leq 2$:
\bee
\left|(\la z \ra \pa_z)^j\left(\frac{L(\zeta)}{T}\right) \right| & \lesssim & \frac{D^\gamma+r^\gamma}{D^\alpha}\text{min}\left(\frac{D^{\alpha}}{(\sqrt b)^{2+\alpha}+r^{2+\alpha}},\frac{1}{r^2},\frac{1}{b}\right)\frac{(\sqrt{b})^{\alpha}(r^2+(\sqrt{b})^g)}{(\sqrt b)^{\gamma} +r^{\gamma}} \\
&\lesssim&\left|\begin{array}{lll} D^{\frac{2}{p-1}}(\sqrt b)^{g-2-\frac{2}{p-1}}\ \ \mbox{for}\ \ r\leq \sqrt b\\ D^{\frac{2}{p-1}}\left(\frac{(\sqrt{b})^{\alpha}}{r^{\gamma}}+\frac{(\sqrt{b})^{\alpha+g}}{r^{\gamma+2}}\right)\ \ \mbox{for}\ \ \sqrt{b}\leq r\leq D\\  \frac{(\sqrt b)^{\alpha}}{r^\alpha}+\frac{\sqrt{b}^{\alpha+g}}{r^{2+\alpha}}\ \ \mbox{for}\ \ r\ge D\end{array}\right.\\
\eee
One then computes since $p\geq 3$ and $g\geq 3/2$ that
\bee
&&\int_{r\leq \sqrt b}\left(r^{2\frac{2q+1}{2q+2}}D^{\frac{2}{p-1}}(\sqrt b)^{g-2-\frac{2}{p-1}}\right)^{2q+2}\frac{dY}{\la z\ra(1+D^{2Kq})}\\
& \lesssim &  (\sqrt{b})^{2(2q+1)+d+(g-2-\frac{2}{p-1})(2q+2)-1}\lesssim (\sqrt b)^{q-2},
\eee
\bee
&&\int_{\sqrt b\leq r\leq 2}\left(r^{2\frac{2q+1}{2q+2}}D^{\frac{2}{p-1}}\left(\frac{(\sqrt{b})^{\alpha}}{r^{\gamma}}+\frac{(\sqrt{b})^{\alpha+g}}{r^{\gamma+2}}\right)\right)^{2q+2}\frac{dY}{\la z\ra(1+D^{2Kq})}\\
& \lesssim &  (\sqrt{b})^{2(2q+1)-\gamma (2q+2)+d+\alpha (2q+2)-1}+(\sqrt{b})^{2(2q+1)-(\gamma+2) (2q+2)+d+(\alpha+g) (2q+2)-1}\\
& \lesssim &  (\sqrt{b})^{2(2q+1)-\frac{2}{p-1} (2q+2)+d-1}+(\sqrt{b})^{2(2q+1)-(\frac{2}{p-1}+2-g) (2q+2)+d-1}\lesssim (\sqrt b)^{q-2}
\eee
and that 
\bee
&&\int_{D\leq r\leq 2}\left(r^{2\frac{2q+1}{2q+2}}\left(\frac{(\sqrt b)^{\alpha}}{r^\alpha}+\frac{\sqrt{b}^{\alpha+g}}{r^{2+\alpha}}\right)\right)^{2q+2}\frac{dY}{\la z\ra(1+D^{2Kq})}\\
& \lesssim &  (\sqrt{b})^{2(2q+1)-\alpha (2q+2)+d+\alpha (2q+2)-1}+(\sqrt{b})^{2(2q+1)-(\alpha+2) (2q+2)+d+(\alpha+g) (2q+2)-1}\lesssim (\sqrt b)^{q}
\eee
yielding \eqref{ceibbeokboeoebvL}, \eqref{ceibbeokboeoebvbisbisL}. The estimate \eqref{ceibbeokboeoebvbisL} can be proven with the same arguments and is left to the reader. 
\end{proof}


\end{appendix}

\end{document}